\documentclass{amsart}

\usepackage{amsmath}
\usepackage{amsthm}
\usepackage{amssymb}
\usepackage{paralist}

\usepackage{enumitem}  
\usepackage{stmaryrd}

\usepackage{stmaryrd}
\usepackage{textcomp}

\setenumerate{label={\normalfont(\textit{\roman*})},ref={\textrm{\roman*}}}

\usepackage[usenames,dvipsnames]{color}
\usepackage{ifthen}
\usepackage{scalerel}
\usepackage{hyperref}
\usepackage{mathrsfs}
\usepackage[mathscr]{euscript}

\newtheorem{theorem}{Theorem}[section]
\newtheorem{proposition}[theorem]{Proposition}
\newtheorem{lemma}[theorem]{Lemma}
\newtheorem{corollary}[theorem]{Corollary}

\newtheorem{alphatheorem}{Theorem}

\theoremstyle{definition}

\newtheorem{example}[theorem]{Example}
\newtheorem{remark}[theorem]{Remark}

\newtheorem{question}[theorem]{Question}

\newcommand{\algcl}{\operatorname{alg.cl}}
\DeclareMathOperator*{\alglim}{alg.cl}
\newcommand{\topcl}{\operatorname{top.cl}}
\newcommand{\cl}{\operatorname{cl}}
\newcommand{\inter}{\operatorname{int}}

\newcommand{\linspan}{\operatorname{span}}
\newcommand{\SL}{\operatorname{SL}}
\newcommand{\AGL}{\operatorname{AGL}}
\newcommand{\GL}{\operatorname{GL}}
\newcommand{\Aff}{\operatorname{Aff}}
\newcommand{\Aut}{\operatorname{Aut}}
\newcommand{\End}{\operatorname{End}}
\newcommand{\IP}{\mathrm{IP}}

\newcommand{\KN}{\mathrm{K}(\beta \NN)}
\newcommand{\EN}{\mathrm{E}(\beta \NN)}

\newcommand{\Alpha}{\mathrm{A}}
\newcommand{\Rho}{\mathrm{P}}
\newcommand{\Zeta}{\mathrm{Z}}

\renewcommand{\Xi}{\Delta}

\newcommand{\norm}[1]{\left\lVert #1 \right\rVert}

\newcommand{\fp}[1]{\left\{ #1 \right\} }
\newcommand{\ip}[1]{\left[ #1 \right] }

\newcommand{\bra}[1]{\left( #1 \right)}
\newcommand{\braif}[1]{\left\llbracket #1 \right\rrbracket}

\renewcommand{\tilde}{\widetilde}
\renewcommand{\bar}{\overline}
\newcommand{\bsp}[1]{\left< #1 \right>}

\newcommand{\abs}[1]{\left|#1\right|}
\newcommand{\set}[2]{\left\{ #1 \ \middle| \ #2 \right\} }
\newcommand{\grp}[2]{\left< #1 \ \middle| \ #2 \right> }

\newcommand{\ceil}[1]{\left\lceil #1 \right\rceil}

\newcommand{\e}{\varepsilon}
\renewcommand{\a}{\alpha}

\newcommand{\NN}{\mathbb{N}}
\newcommand{\KK}{\mathbb{K}}
\newcommand{\QQ}{\mathbb{Q}}

\newcommand{\ZZ}{\mathbb{Z}}
\newcommand{\RR}{\mathbb{R}}

\newcommand{\cB}{\mathscr{B}}
\newcommand{\cC}{\mathcal{C}}
\newcommand{\cG}{\mathcal{G}}
\newcommand{\cH}{\mathcal{H}}
\newcommand{\cD}{\mathcal{D}}
\newcommand{\cE}{\mathcal{E}}

\newcommand{\cS}{\mathcal{S}}
\newcommand{\cR}{\mathcal{R}}
\newcommand{\cT}{\mathcal{T}}
\newcommand{\cI}{\mathcal{I}}
\newcommand{\cN}{\mathcal{N}}

\newcommand{\Ball}{\mathrm{B}}

\newcommand{\fI}{\mathsf{I}}
\newcommand{\cP}{\mathcal{P}}

\newcommand{\cU}{\mathcal{U}}
\newcommand{\cV}{\mathcal{V}}
\newcommand{\fV}{\mathsf{V}}
\newcommand{\fS}{\mathsf{S}}

\newcommand{\cW}{\mathcal{W}}
\newcommand{\fg}{\mathfrak{g}}
\newcommand{\fh}{\mathfrak{h}}

\definecolor{White}{gray}{.35}
\definecolor{LightGray}{gray}{0.8}

\definecolor{BurntOrange}{RGB}{179, 134, 0}

\newcommand{\bb}{\mathbf}

\newcommand{\complexity}{\operatorname{cmp}}
\newcommand{\eqab}{\underset{\mathrm{ab}}{\equiv}}
\newcommand{\s}{D}
\newcommand{\HH}{\mathscr{H}}
\newcommand{\GG}{\mathscr{G}}

\newcommand{\tcH}{\Rho}
\newcommand{\tcG}{\tilde\Sigma{}}
\renewcommand{\cH}{\Pi}
\renewcommand{\cG}{\Sigma}
\newcommand{\bA}{\bar{A}{}}

\newcommand{\fpp}[1]{\text{\text{\textlquill}} #1 \text{\text{\textrquill}}}

\newcommand{\ST}{\mathrm{M}}
\newcommand{\st}{\mathfrak{m}}

\newcommand{\STp}{\ST'}
\newcommand{\bSTp}{\bar\ST{}'}
\newcommand{\bST}{\bar{\ST}}

\newcommand{\stp}{\st'}
\newcommand{\bstp}{\bar\st{}'}

\newcommand{\Vp}{V^{\ast}}

\renewcommand{\subset}{\subseteq}

\hyphenation{-semi-multi-plica-tive}
\hyphenation{-quasi-multi-plica-tive}

\newenvironment{customthm}[1]
  {\innercustomthm}
  {\endinnercustomthm}

\renewcommand{\color}[1]{}
\renewcommand{\checkmark}[1]{}

\begin{document}

\author[J.\ Konieczny]{Jakub Konieczny}
\address[J.\ Konieczny]{Camille Jordan Institute, 
Claude Bernard University Lyon 1,
43 Boulevard du 11 novembre 1918,
69622 Villeurbanne Cedex, France}
\address{Faculty of Mathematics and Computer Science, Jagiellonian University in Krak\'{o}w, \L{}ojasiewicza 6, 30-348 Krak\'{o}w, Poland\newline}
\email{jakub.konieczny@gmail.com}

\title{Generalised polynomials and integer powers}

\begin{abstract}
	We show that there does not exist a generalised polynomial which vanishes precisely on the set of powers of two. In fact, if $k \geq 2$ is an integer and $g \colon \mathbb{N} \to \mathbb{R}$ is a generalised polynomial such that $g(k^n) = 0$ for all $n \geq 0$ then there exists infinitely many $m \in \mathbb{N}$, not divisible by $k$, such that $g(mk^n) = 0$ for some $n \geq 0$. As a consequence, we obtain a complete characterisation of sequences which are simultaneously automatic and generalised polynomial.
\end{abstract}

\keywords{}
\subjclass[2010]{Primary: 37A45. Secondary 11J54, 11J71}

\maketitle

\section{\checkmark\ Introduction}\label{sec:intro}

\subsection{\checkmark\ Background}\label{ssec:intro-background}\mbox{}
{
Combinatorial properties of generalised polynomials --- that is, expressions build up from ordinary polynomials with the use of addition, multiplication and the floor function --- have long been studied. The following fundamental result due to Bergelson and Leibman connects generalised polynomials to nilpotent dynamics. (For the terminology used, see Section \ref{sec:prelims}.)
}
\begin{theorem}[\cite{BergelsonLeibman-2007}]\label{thm:BL-intro}
	Let $g \colon \ZZ \to \RR^d$ be a bounded generalised polynomial. Then there exists a minimal nilsystem $(X,T)$, a point $z \in X$ and a piecewise polynomial map $F \colon X \to \RR^d$ such that $g(n) = F(T^n(z))$ for all $n \in \ZZ$.
\end{theorem}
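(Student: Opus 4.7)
The plan is to proceed by induction on the syntactic complexity of the generalised polynomial $g$, where complexity counts nested floor operations. Along the way I will need to strengthen the induction hypothesis: rather than representing only the bounded output $g$, I will track at each stage a polynomial sequence $\mathbf{g}(n)$ in a nilpotent Lie group $G$ (so that $T^n z = \mathbf{g}(n)\Gamma \in G/\Gamma$), together with a piecewise polynomial map $F$ on the fundamental domain. This way, unbounded subexpressions like $\alpha n^2$ are not represented by $F(T^n z)$ directly but appear as entries of $\mathbf{g}(n)$ living in the group, while bounded ones come from evaluating a piecewise polynomial on the reduction $\mathbf{g}(n)\Gamma$.

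For the base case, an ordinary polynomial $p(n)$ with integer coefficients can be written as a polynomial word $\mathbf{g}(n)$ in $\RR$ regarded as a one-dimensional nilpotent group; bounded polynomials are constants, handled trivially. For the inductive steps I would show closure under three operations. \textbf{Sum and product:} given representations $g_i(n) = F_i(T_i^n z_i)$ on nilsystems $(X_i, T_i)$, form the product system $(X_1 \times X_2, T_1 \times T_2)$ and restrict to the orbit closure of $(z_1,z_2)$; by Leibman's theorem on orbit closures of polynomial sequences on nilmanifolds, this closure is itself a sub-nilmanifold, and the pointwise sum/product of two piecewise polynomial maps remains piecewise polynomial on it. \textbf{Floor:} if $g(n) = F(T^n z)$ with $F$ piecewise polynomial and bounded, then $\lfloor g(n) \rfloor = g(n) - \{F(T^n z)\}$, and the fractional part $\{F(\cdot)\}$ is again piecewise polynomial; for unbounded intermediate expressions, apply the floor at the level of the group coordinate $\mathbf{g}(n)$, using that $\lfloor \mathbf{g}(n) \rfloor$ and $\{\mathbf{g}(n)\}$ can be realised as polynomial words in a larger nilpotent group obtained by a Mal'cev-type extension (adjoining a new coordinate for the floor term).

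The hard part, and the technical heart of the argument, is this last manoeuvre: converting a polynomial sequence with a floor applied to one of its coordinates back into an honest polynomial sequence on a (possibly larger) nilmanifold. Concretely, if $\mathbf{g}(n) = (g_1(n), \dots, g_k(n))$ and we want to introduce $\lfloor g_j(n) \rfloor$ as a new coordinate, one must construct an enlarged nilpotent group $G'$ and a lattice $\Gamma' \subset G'$ such that $(g_1(n), \dots, g_k(n), \lfloor g_j(n) \rfloor)$ is a polynomial word in $G'$ whose reduction mod $\Gamma'$ encodes the fractional part. This requires checking that the group commutators generated by the new coordinate remain compatible with the filtration, and it is here that one typically invokes the machinery of Leibman on polynomial mappings into nilpotent groups together with the structure theorems for rational nilmanifolds.

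Finally, once the representation is established on some (not necessarily minimal) nilsystem, minimality is obtained by replacing $(X,T)$ with the orbit closure $\overline{\{T^n z : n \in \ZZ\}}$, which by Leibman's theorem is a sub-nilmanifold on which $T$ acts minimally; the map $F$ is then restricted to this sub-nilmanifold, preserving the piecewise polynomial property. The main obstacle throughout is bookkeeping: maintaining the piecewise polynomial structure under the inductive operations while controlling how the underlying nilpotent group and its filtration grow with each nested floor.
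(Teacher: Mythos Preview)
The paper does not prove this theorem: it is quoted from Bergelson and Leibman \cite{BergelsonLeibman-2007} and used as a black box throughout (it is restated with additional clauses as Theorem~\ref{thm:BL}, again with only a citation). There is therefore no proof in the paper to compare your proposal against.

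Your sketch is a reasonable high-level outline of the strategy in the original Bergelson--Leibman paper: structural induction on the generalised polynomial, tracking a polynomial sequence $\mathbf{g}(n)$ in a nilpotent Lie group rather than just the bounded output, handling the floor operation by enlarging the ambient nilmanifold, and finally passing to minimality via Leibman's orbit-closure theorem. You correctly identify the delicate step as converting a floored coordinate back into an honest polynomial sequence in an enlarged group with a compatible filtration. That said, what you have written is a plan rather than a proof: the actual construction of the enlarged group $G'$ for the floor step, the verification that the resulting sequence remains polynomial with respect to a suitable filtration, and the preservation of the piecewise polynomial structure under restriction to the orbit-closure sub-nilmanifold are each substantial and together occupy most of \cite{BergelsonLeibman-2007}. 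One minor slip: your base case restricts to polynomials ``with integer coefficients,'' but arbitrary real coefficients must be allowed from the outset---even a single monomial $\alpha n$ with irrational $\alpha$ already requires the torus $\RR/\ZZ$ as the nilmanifold, which your framework does accommodate once the restriction is dropped.
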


{
Since the dynamical properties of nilsystems are fairly well-understood, this theorem has numerous corollaries. In particular, it leads to a complete description of possible limiting distributions of bounded generalised polynomial sequences. We also have the following recurrence result: If $g \colon \ZZ \to \RR^d$ is a generalised polynomial, then for every $\e > 0$ and for almost every $n \in \NN$ (with respect to Banach density), the set of $m \in \NN$ such that $\norm{g(n+m) - g(n)} < \e$ is $\IP^*$ (and hence syndetic).
}

{
The corollaries mentioned above do not convey much information for generalised polynomials which are constant away from a set with Banach density $0$. However, there are interesting examples of such ``almost constant'' generalised polynomials. In this context it is more convenient to speak of \emph{generalised polynomial sets}, i.e., sets of zeros of generalised polynomials or, equivalently, sets whose characteristic sequences are generalised polynomials. 
}
\begin{theorem}[{\cite[Thm.\ B and C]{ByszewskiKonieczny-2018}}]\label{thm:BK-gp-set-exples}
	The following sets are generalised polynomial:
	\begin{enumerate}[wide]
	\item The set $\set{F_n}{n \geq 1}$ of Fibonacci numbers, given by $F_0=0,\ F_1 =1$ and $F_{n+2} = F_{n+1}+F_n$;
	\item The set $\set{T_n}{n \geq 1}$ of Tribonacci numbers, given by $T_0 = 0,\ T_1 = 1,\ T_2 =1$ and $T_{n+3} = T_{n+2} + T_{n+1} + T_n$;
	\item Any set $\set{a_{n}}{n \geq 1}$ of positive integers with $\displaystyle \liminf_{n \to \infty} \frac{\log a_{n+1}}{\log a_n} > 1$.
	\end{enumerate}
\end{theorem}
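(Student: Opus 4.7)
The plan is to construct, for each of the three sets $S$, an explicit generalised polynomial $g_S\colon\NN\to\RR$ whose zero set equals $S$. All three constructions follow a common template: encode membership in $S$ by the smallness of $\fpa{n\alpha}$ for a suitable real $\alpha$ (or of a tuple of such quantities), and then realise the indicator of this smallness condition as a bracket polynomial in~$n$.

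For part (1), take $\phi=(1+\sqrt{5})/2$ and $\psi=-1/\phi$. A direct computation gives the identity $F_n\phi-F_{n+1}=-\psi^n$, so $\fpa{F_n\phi}=|\psi|^n$ and $F_n\fpa{F_n\phi}\to 1/\sqrt{5}$. On the other hand, the continued fraction $\phi=[1;1,1,\dots]$ has the Fibonacci numbers as \emph{all} of its convergent denominators and admits no non-trivial semi-convergents; combined with the best-approximation theorem this yields a constant $c>1/\sqrt{5}$ such that $n\fpa{n\phi}\ge c$ whenever $n\notin\{F_k\}$. A small direct check shows that $c=1$ already works, so that $n$ is Fibonacci if and only if $n\fpa{n\phi}<1$, and the indicator of this condition is precisely the vanishing of $g_{\mathrm{Fib}}(n)=\lfloor n\fp{n\phi}\rfloor\cdot\lfloor n(1-\fp{n\phi})\rfloor$, which is visibly a generalised polynomial.

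Part (2) follows the same template with $\phi$ replaced by the tribonacci constant $\tau>1$, the real root of $x^3=x^2+x+1$, whose two complex Galois conjugates have modulus $<1$. Both $\fpa{T_n\tau}$ and $\fpa{T_n\tau^2}$ therefore decay geometrically, so the pair $(\fp{T_n\tau},\fp{T_n\tau^2})$ lies in a shrinking neighbourhood of a specific point of $[0,1)^2$; the Rauzy-fractal picture of the $(\tau,\tau^2)$-rotation on the two-torus, a standard ingredient from the theory of substitutive dynamics, ensures that no non-Tribonacci integer enters this neighbourhood, and the indicator is encoded by a product of four floor terms analogous to the Fibonacci case. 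For part (3), the hypothesis gives $a_{n+1}\ge a_n^{1+\delta}$ for some $\delta>0$ and all large $n$, and I would exploit this super-polynomial growth to \emph{construct} $\alpha$ by hand: prescribing the partial quotients of the continued fraction expansion of $\alpha$ inductively, one can arrange that the $a_n$ appear as a subsequence of the denominators of convergents to $\alpha$ with $\fpa{a_n\alpha}\le a_n^{-1-\delta/2}$, while $\fpa{m\alpha}\ge c/a_n$ for every non-element $m\le a_{n+1}$ and a fixed $c>0$. The gap condition on $(a_n)$ is exactly what keeps these two requirements compatible, and membership in the sequence is then characterised by $n\fpa{n\alpha}<c/2$, to which the same bracket-polynomial encoding as in (1) applies.

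The main obstacle, shared by all three parts, is this separation requirement: the indicator of a condition of the form ``$n\fpa{n\alpha}$ is small'' is a genuine generalised polynomial only when the inequality holds on $S$ and fails on $\NN\setminus S$ with a uniform quantitative gap, so that the boundary of the corresponding region in the torus is never actually hit. For parts (1) and (2) this gap is delivered for free by the algebraic rigidity of $\phi$ and $\tau$ (their minimal polynomials having only one root outside the closed unit disc); for (3) it has to be engineered into the construction of $\alpha$, and this Liouville-type argument is the most delicate step of the whole proof.
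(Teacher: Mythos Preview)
This theorem is quoted from \cite{ByszewskiKonieczny-2018} and is not proved in the present paper, so there is no in-paper proof to compare against. I will nonetheless comment on the substance of your sketch.

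Your treatment of part~(1) is essentially correct and is the standard argument: the identity $F_n\phi-F_{n+1}=-\psi^n$ gives $F_n\fpa{F_n\phi}\to 1/\sqrt5$, while the three-distance/Ostrowski analysis for $\phi=[1;1,1,\dots]$ shows that non-Fibonacci $n$ satisfy $n\fpa{n\phi}\ge c$ for an explicit $c>1/\sqrt5$ (numerically the infimum over non-Fibonacci $n$ is around $4/\sqrt5$, so $c=1$ does work). The bracket encoding you wrote down is fine. Part~(2) points in the right direction --- one does use that $\tau$ is Pisot with two conjugates inside the unit circle --- but the appeal to the Rauzy fractal is not a proof; the separation statement for $(\{n\tau\},\{n\tau^2\})$ requires a genuine quantitative argument, and in the cited paper this is carried out explicitly rather than by invocation.

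Part~(3), however, has a real gap. Your plan is to build $\alpha$ with $\fpa{a_n\alpha}\le a_n^{-1-\delta/2}$ and then characterise membership by $m\fpa{m\alpha}<c$. But once $\fpa{a_n\alpha}$ is that small, every small multiple $m=ja_n$ with $2\le j\le a_n^{\delta/4}$ satisfies $\fpa{ja_n\alpha}\le j\fpa{a_n\alpha}$, hence
\[
(ja_n)\fpa{(ja_n)\alpha}\le j^2\,a_n\fpa{a_n\alpha}\le j^2 a_n^{-\delta/2}\le 1,
\]
so these $ja_n$ (which lie strictly between $a_n$ and $a_{n+1}\ge a_n^{1+\delta}$ and are generically not in the sequence) also pass your test. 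More generally, if the $a_n$ are to occur as convergent denominators $q_{k_n}$ with a large partial quotient $b_{k_n+1}$ immediately after (forced by $\fpa{a_n\alpha}\le a_n^{-1-\delta/2}$), then every $m$ in the long range $[q_{k_n},q_{k_n+1})$ has $\fpa{m\alpha}\ge\fpa{q_{k_n}\alpha}$ but no better, and many such $m$ will have $m\fpa{m\alpha}$ small. A single Liouville-type $\alpha$ with the threshold $m\fpa{m\alpha}<c$ therefore cannot isolate an arbitrary lacunary sequence. The construction in \cite{ByszewskiKonieczny-2018} is more delicate: one has to control $\fpa{a_n\alpha}$ from \emph{both} sides (keeping it comparable to $1/a_n$, not much smaller) and combine this with an additional mechanism --- either further irrationals or a higher-degree bracket expression --- to rule out the spurious multiples. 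Your sketch does not address this, and without it the argument for~(3) does not go through.
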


{
In joint work with Byszewski \cite{ByszewskiKonieczny-2018} we undertook the study of generalised polynomial sets with Banach density $0$. Theorem \ref{thm:BL-intro} implies that a generalised polynomial set with positive Banach density is an $\IP^*_+$ set. We obtained the following complementary result.
}
\begin{theorem}[{\cite[Thm.\ A]{ByszewskiKonieczny-2018}}]
	Let $E \subset \NN$ be a generalised polynomial set with Banach density $0$. Then $E$ is not an $\IP_+$ set.
\end{theorem}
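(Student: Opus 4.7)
The plan is to use Theorem~\ref{thm:BL-intro} to translate the statement into one about piecewise polynomial subsets of a nilmanifold, and then derive a contradiction from an IP-recurrence argument on that nilmanifold.

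First, I would apply Theorem~\ref{thm:BL-intro} to the generalised polynomial $1_E$, obtaining a minimal nilsystem $(X,T)$ with $X = G/\Gamma$ and Haar measure $\mu$, a point $z \in X$, and a piecewise polynomial map $F \colon X \to \{0,1\}$ such that $1_E(n) = F(T^n z)$ for all $n \in \NN$. Let $A = F^{-1}(1) \subset X$. Minimality of $(X,T)$ forces $\{T^n z\}_{n \in \NN}$ to equidistribute with respect to $\mu$, and since the boundary of a piecewise polynomial set is $\mu$-null, the Banach density of $E$ equals $\mu(A)$. The hypothesis $d^*(E) = 0$ therefore yields $\mu(A) = 0$.

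Assume for contradiction that $E$ is $\IP_+$, so there exist $t \in \NN$ and an infinite sequence $(n_i)_{i \geq 1}$ with $t + \sum_{i \in \alpha} n_i \in E$ for every non-empty finite $\alpha \subset \NN$. Writing $y = T^t z$ and $s_\alpha = \sum_{i \in \alpha} n_i$, this translates to $T^{s_\alpha} y \in A$ for all such $\alpha$. I would then apply an IP-compactness argument (Hindman's theorem combined with compactness of $X$) to the sequence $\{T^{n_i}\}_{i \geq 1} \subset G$ to pass to an IP-subsystem along which $T^{s_\alpha} y$ converges to a limit $w \in \bar A$ and, iterating the selection, the associated ``IP-differences'' of orbit points continue to lie in $\bar A$. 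This construction produces a $T$-invariant probability measure $\nu$ on $X$ supported on $\bar A$.

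To conclude, because $A$ is piecewise polynomial with $\mu(A) = 0$, its closure $\bar A$ is contained in a finite union of proper sub-nilmanifolds of $X$, so $\nu$ concentrates on one such proper sub-nilmanifold $Y \subset X$. Applying the same analysis inside $Y$ to the piecewise polynomial set $A \cap Y$ (which is again of zero Haar measure in $Y$ unless $A \supset Y$, a case that can be ruled out by re-examining the orbit) gives an inductive descent on $\dim X$ that terminates at a point, producing the contradiction. The main obstacle is the IP-recurrence step yielding the invariant measure $\nu$, together with the geometric fact that piecewise polynomial $\mu$-null subsets of a nilmanifold are captured by finitely many proper sub-nilmanifolds; the former draws on the polynomial IP-recurrence machinery of Bergelson--McCutcheon adapted to the nilmanifold setting, while the latter is the structural ingredient that powers the inductive descent.
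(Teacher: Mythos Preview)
The paper does not prove this theorem; it is quoted from \cite{ByszewskiKonieczny-2018} as background, so there is no proof here to compare against. I will therefore comment only on the soundness of your outline.

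Your step~4 is a genuine gap. You assert that an IP-compactness argument ``produces a $T$-invariant probability measure $\nu$ on $X$ supported on $\bar A$.'' IP-limits in compact spaces (via Hindman's theorem) produce \emph{points}, not invariant measures; IP-sets are not F{\o}lner, and no averaging along finite sums yields $T$-invariance. The Bergelson--McCutcheon machinery you invoke establishes IP-recurrence \emph{to} sets of positive measure, which is exactly the hypothesis you lack. Worse, minimal nilsystems are uniquely ergodic, so any $T$-invariant probability measure on $X$ is the Haar measure $\mu$; since a piecewise polynomial set with $\mu(A)=0$ also has $\mu(\bar A)=0$, no $T$-invariant $\nu$ can be supported on $\bar A$. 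Thus if step~4 were achievable it would already finish the proof without step~5, but for the same reason it cannot be achieved.

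Your step~5 also mis-states the geometry. In Mal'cev coordinates, a measure-zero piecewise polynomial set sits inside a proper \emph{semialgebraic} subset of the fundamental domain, not in a finite union of sub-nilmanifolds; the induced dynamics on such a subset is not a lower-dimensional nilsystem, so the descent cannot be phrased as ``same problem on a smaller nilmanifold.'' The actual argument in \cite{ByszewskiKonieczny-2018} does use an inductive descent, but it proceeds by analysing how the piecewise-affine $\times k$/translation action in coordinates can carry an orbit into a null semialgebraic set along an IP-set, not via an invariant-measure construction.
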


{
Until now, this theorem provided essentially the only available method of proving that a given set with Banach density $0$ is not generalised polynomial. In fact, to the best of our knowledge, even the following basic question was open.
}

\begin{question}\label{quest:sparse=>gp}
	Does there exist a set $\set{a_n}{n \geq 1} \subset \NN$ with $a_{n+1} - a_n \to \infty$ as $n \to \infty$ which is \emph{not} generalised polynomial?
\end{question}

\subsection{\checkmark\ New results}\label{ssec:intro-new}\mbox{}
{
In this paper, we answer the above question in the affirmative and produce a very explicit example of a non-generalised polynomial set with logarithmic growth, namely the set of powers of $10$ (or of any other integer $k \geq 2$).
}

\begin{alphatheorem}\label{thm:A}
	Let $k \geq 2$ and let $E \subset \NN$ be a generalised polynomial set such that $k^n \in E$ for all $n \geq 0$. Then the set
\[ \set{ m \in \NN}{ m k^n \in E \text{ for infinitely many } n \geq 0} \]
is $\IP^*_+$.
In particular, the set $\set{k^n}{n \geq 0}$ is not generalised polynomial.
\end{alphatheorem}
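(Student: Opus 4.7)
The plan is to use Theorem \ref{thm:BL-intro} to translate the problem into nilsystem dynamics, and then to apply an IP-recurrence argument tailored to the sparse sequence $(k^n)$. Write $1_E(n) = F(T^n z)$, where $(X = G/\Gamma, T = L_a)$ is a minimal nilsystem, $z \in X$, and $F \colon X \to \{0,1\}$ is piecewise polynomial. The hypothesis $k^n \in E$ then reads $w_n := a^{k^n}z \in W := F^{-1}(1)$ for every $n \geq 0$, and the conclusion becomes: the set $\{m \in \NN : a^{mk^n}z \in W \text{ for infinitely many } n\}$ is $\IP^*_+$.

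The key algebraic identity is $a^{mk^n}z = a^{(m-1)k^n} w_n$. Since $w_n \in W$, membership of $a^{mk^n}z$ in $W$ is controlled by how close the group element $a^{(m-1)k^n}$ is to the identity of $G$, modulated by the piecewise polynomial structure of $F$ at $w_n$. I would study the closure of $\{w_n\}_{n \geq 0}$ in $X$ and of $\{a^{k^n}\}_{n \geq 0}$ in $G$, exploiting the self-similarity $a^{k^{n+1}} = (a^{k^n})^k$ and the nilpotent structure of $G$ to obtain a tractable algebraic description. The recurrence statement to prove is: for any open neighborhood $U$ of the identity in $G$, the set $\{m \in \NN : a^{(m-1)k^n} \in U \text{ for infinitely many } n\}$ is $\IP^*_+$. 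This is a sparse IP-recurrence statement on nilpotent groups, which should follow from the classical IP-recurrence for nilsystems applied within the relevant sub-structure.

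The main obstacle is that $W$ is only piecewise polynomial and hence not open: returning close to $w_n$ is not enough to force membership in $W$. I would handle this by using that the boundary $\partial W$ is contained in a proper semi-algebraic subset of $X$ of measure zero (with respect to the Haar measure on $X$), and that after refining to a subsequence the sparse orbit $(w_n)$ may be assumed to lie in the interior of $W$. Combining this with the recurrence statement above yields the $\IP^*_+$ conclusion for the ``good'' set of $m$. The ``in particular'' claim then follows immediately: for $E = \{k^n : n \geq 0\}$, the candidate set of $m$ is $\{k^i : i \geq 0\}$, which is logarithmically sparse and fails to be $\IP^*_+$, since any IP set generated by a sufficiently lacunary sequence misses it entirely.
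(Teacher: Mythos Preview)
Your proposal has the right high-level shape but contains a genuine gap at its core, and the paper's route is substantially different.

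The crux of your argument is the claim that for every identity neighbourhood $U$ in $G$, the set
\[
\{m\in\NN : a^{(m-1)k^n}\in U \text{ for infinitely many } n\}
\]
is $\IP^*_+$, and that this ``should follow from the classical IP-recurrence for nilsystems applied within the relevant sub-structure.'' This is precisely the statement whose proof occupies the bulk of the paper; it is \emph{not} a consequence of standard IP recurrence. Classical IP recurrence (Theorem~\ref{prop:recurrence-IP*}) concerns return times of a single orbit $T^n x$ to an open set; here you need recurrence along the sparse sequence $(k^n)$, multiplied by an arbitrary $m$, and the element $a^{(m-1)k^n}$ lives in the non-compact group $G$ rather than in $G/\Gamma$. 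Understanding the limiting behaviour of these elements is exactly what forces the paper to abandon the nilsystem picture and instead build, via Leibman's representation (Theorem~\ref{thm:L}), explicit piecewise-affine ``generalised $\times k$'' maps $T_k$ on a cube $[0,1)^{\cD}$ (Section~\ref{sec:setup}), and then prove a bespoke recurrence theorem (Theorem~\ref{thm:recurrence-alpha}) by an induction on the index set $\cD$.

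Your treatment of the boundary is also a real gap, not a technicality. You write that the sparse orbit $(w_n)$ ``may be assumed to lie in the interior of $W$'' after passing to a subsequence, because $\partial W$ has measure zero. But $(w_n)=(a^{k^n}z)$ is a specific deterministic sequence, not a generic one, and there is no equidistribution result for $(a^{k^n}z)$ that would let you discard the boundary. The paper confronts this head-on: it shows (Proposition~\ref{prop:semialg-sep-IP-2}, building on Proposition~\ref{prop:poly-sep-IP-2}) that under the relevant ultrafilter hypotheses one can separate the bounded ``spatial'' variable from the divergent ``time'' variable $k^n$ inside a semialgebraic set, producing a genuine open piece $Q_u$ of the Zariski closure on which membership in $S$ is guaranteed for all large $l$. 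This separation-of-variables step, together with the algebraic rigidity of the varieties $V_p^*$ (Propositions~\ref{prop:Vp=G(z)} and~\ref{lem:U-is-Q}), is what replaces your unjustified ``interior'' assumption.

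Your derivation of the ``in particular'' clause is fine: the candidate set $\{k^i:i\ge 0\}$ is not syndetic, hence not $\IP^*_+$.
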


{\color{NavyBlue}
One of the main sources of motivation behind Theorem \ref{thm:A} is the application in \cite{ByszewskiKonieczny-2019}, where we partially classified generalised polynomials which are also $k$-automatic sequences, i.e., sequences whose $n$-th term can be computed by a finite device given  on input the digits of $n$ in base $k$ (for more background on this problem, see \cite{ByszewskiKonieczny-2019}; for background on automatic sequences see \cite{AlloucheShallit-book}). In fact, we obtained a complete classification conditional on the hypothesis that the set $\set{k^n}{n \geq 0}$ is not generalised polynomial. The following result is an immediate consequence of Theorem D in \cite{ByszewskiKonieczny-2019} and Theorem \ref{thm:A} here. Recall that a sequence $a \colon \NN \to \RR$ is \emph{ultimately periodic} if there exist $n_0 \geq 0$ and $d > 0$ such that $a(n+d) = a(n)$ for all $n \geq n_0$.
}

\begin{alphatheorem}\label{thm:B}\color{Mahogany}
	Let $g \colon \NN_0 \to \RR$ be a sequence which is both generalised polynomial and automatic. Then $g$ is ultimately periodic.
\end{alphatheorem}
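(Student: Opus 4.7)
The plan is to obtain Theorem \ref{thm:B} by combining Theorem \ref{thm:A} with the conditional classification of simultaneously automatic and generalised polynomial sequences established in \cite{ByszewskiKonieczny-2019}. That classification (Theorem~D of loc.\ cit.) takes the form of an implication: under the hypothesis that $\set{k^n}{n \geq 0}$ is not a generalised polynomial set, every sequence $g \colon \NN_0 \to \RR$ which is simultaneously $k$-automatic and generalised polynomial is ultimately periodic. In this framework, proving Theorem \ref{thm:B} reduces to verifying the running hypothesis.

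The required verification is supplied by the second conclusion of Theorem \ref{thm:A}, which asserts precisely that $\set{k^n}{n \geq 0}$ is not a generalised polynomial set, for any integer $k \geq 2$. So the argument I would write consists of two steps: first, invoke Theorem~D of \cite{ByszewskiKonieczny-2019} to reduce the classification problem to this non-representability statement; second, apply Theorem \ref{thm:A} to discharge the reduction, concluding that $g$ is ultimately periodic.

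Since the combinatorial substance is already encoded in Theorem~D of \cite{ByszewskiKonieczny-2019} and in Theorem \ref{thm:A}, I do not expect obstacles beyond routine bookkeeping --- the deduction is essentially a one-line combination of these two ingredients. The one point I would double-check is that the conventions align: that ``automatic'' in both papers refers to the same base $k$, that generalised polynomials are allowed to take real (not only integer) values, and that the conditional hypothesis of Theorem~D of \cite{ByszewskiKonieczny-2019} is literally the non-representability of $\set{k^n}{n \geq 0}$. Should the hypothesis there be phrased in a slightly stronger form --- for instance, as an $\IP^*_+$ statement analogous to the first clause of Theorem \ref{thm:A} --- then the stronger first conclusion of Theorem \ref{thm:A} supplies it directly, so under any reasonable formulation of Theorem~D the deduction goes through without modification.
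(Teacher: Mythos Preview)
Your proposal is correct and matches the paper's approach exactly: the paper states that Theorem~\ref{thm:B} is ``an immediate consequence of Theorem~D in \cite{ByszewskiKonieczny-2019} and Theorem~\ref{thm:A}'', which is precisely the deduction you describe.
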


\subsection{\checkmark\ Future directions}
{\color{NavyBlue}
While Theorem \ref{thm:A} gives a new criterion for detecting non-generalised polynomial sets, many questions remain open. We mention two of them.
}

{\color{NavyBlue}
Firstly, we note that there is a dearth of examples of generalised polynomial sets containing infinite geometric progressions. This prompts the following question.
}
\begin{question}\color{OliveGreen}\label{quest:sparse-and-k-rich}
	Does there exist an integer $k \geq 2$ and a generalised polynomial set $E \subset \NN$ with natural density $0$ such that $k^n \in E$ for all $n \geq 0$?
\end{question}

{\color{NavyBlue}
Secondly, we recall that the simplest example of a nilsystem is a rotation on a torus. In Section \ref{sec:torus} we deal with a variant of Theorem \ref{thm:A} corresponding to this spacial case. It is natural to ask if sets with Banach density $0$ can already appear in this context.
}

\begin{question}\color{OliveGreen}\label{quest:sparse-on-torus}
	Does there exist $\alpha \in \RR^d$ and an algebraic variety $V \subsetneq \RR^d$ such that $\fp{n \alpha} \in V$ for infinitely many $n \in \NN$?
\end{question}

\subsection*{\checkmark\ Acknowledgements}
{\color{NavyBlue}
While writing this paper, the author was supported by the ERC grant ErgComNum 682150 at the Hebrew University of Jerusalem. During the review process, the author was working within the framework of the LABEX MILYON (ANR-10-LABX-0070) of Universit\'{e} de Lyon, within the program "Investissements d'Avenir" (ANR-11-IDEX-0007) operated by the French National Research Agency (ANR). The author also acknowledges support from the Foundation for Polish Science (FNP).

The author is grateful to Jakub Byszewski, David Kazhdan and Tamar Ziegler for fruitful discussions and insightful comments.
}

\subsection{Organisation}

The paper is organised as follows.

In Section \ref{sec:prelims} we introduce the relevant background. All results are standard in their respective fields, but since we cover a relatively wide array of subjects, we include all of the prerequisite results and terminology. Readers familiar with each of the subjects can safely skip over the corresponding subsection.

In Section \ref{sec:lemmas} we discuss some basic constructions and prove a number of fundamental facts about them. In particular, we define a notion of an algebraic closure along an ultrafilter, which plays a crucial role in our reasoning and to the best of our knowledge does not appear in the literature.

In Section \ref{sec:torus} we discuss the special case of Theorem \ref{thm:A}, concerning generalised polynomial sets which can be realised on the torus using the construction from Theorem \ref{thm:BL-intro}, or equivalently, whose representation in Theorem \ref{thm:Leibman} involves only basic generalised monomials of degree $1$. Arguably, this is the first non-trivial case of our main theorem, and it allows us to present the main ideas of the proof in a simpler setting. 

In Section \ref{sec:setup} we discuss the way in which an arbitrary generalised polynomial can be represented in terms of generalised monomials (cf.{} Theorem \ref{thm:Leibman}) and we introduce a considerable amount of notation that will be used in the remainder of the paper. We also construct the maps $T_k \colon [0,1)^{\cD} \to [0,1)^{\cD}$, where $k \in \NN$ and $\cD$ is a set of indices. These maps play a role that is closely analogous to the $\times k$ maps $[0,1)^d \ni x \mapsto \{kx\} \in [0,1)^d$, where $\{\cdot\}$ denotes the coordinate-wise fractional part. The fundamental property of these maps is that $\{v(km)\} = T_k(\{v(m)\})$, where $v \colon \NN \to \RR^{\cD}$ is a generalised monomial of a certain special form and $m \in \NN$ (cf.{} Prop. \ref{prop:Tk-basic}(\ref{it:Tk-basic:times-k})). It is also important that $T_k$ are piecewise linear and take a particularly simple form, which is discussed in Proposition \ref{prop:Tk-basic}.

In Section \ref{sec:recurrent}, we prove a type of a recurrence theorem for the maps $T_k$ defined above, which additionally takes into account the behaviour of certain algebraic varieties under these maps (Theorem \ref{thm:recurrence}). This is the technical heart of the paper, and a fundamental component of the proof of Theorem \ref{thm:A}. The argument proceeds by induction with respect to the complexity of the index set $\cD$, and relies in a non-trivial way on the algebra of the Stone--\v{C}ech compactification $\beta \NN$.
 
In Section \ref{sec:main} we complete the proof of Theorem \ref{thm:A}. With the help of  Leibman's representation theorem, Theorem \ref{thm:Leibman}, we can translate Theorem \ref{thm:A} into a statement concerning a generalised monomial sequence whose values ``often'' belong to a certain semialgebraic set. This puts us in the context where Theorem \ref{thm:recurrence} can be applied. 

In Appendices we discuss technical results which would hinder the flow of the paper if they were included in the main body. Appendix \ref{ap:sg-lim} concerns limits of sequences of semialgebraic sets, and Appendix \ref{ssec:lemmas-gp-lim} concerns pointwise limits of sequences of generalised polynomials. We show that, with some natural constraints on complexity, generalised polynomial sequences on $\NN$ are closed under pointwise limits, which is a new and unexpected result. \section{Preliminaries}\label{sec:prelims}

\subsection{Notation}\label{ssec:prelims-notation}\mbox{}
{\color{NavyBlue}
We write $\NN = \{1,2,\dots\}$ and $\NN_0 = \NN \cup \{0\}$.  
As usual, the fractional part and the integer part of a real number $x \in \RR$ are denoted by $\fp{x} \in [0,1)$ and $\ip{x} \in \ZZ$. We also use the same notation for real vectors $x = (x_i)_{i=1}^d \in \RR^d$ ($d \in \NN$), where the operations are defined coordinatewise: $\fp{x} = (\fp{x_i})_{i=1}^d$ and $\ip{x} = (\ip{x_i})_{i=1}^d$. Following the Iverson bracket convention, for a statement $\varphi$ we let $\braif{\varphi}$ denote $1$ if $\varphi$ is true and $0$ if $\varphi$ is false.

For $x \in \RR^d$ ($d \in \NN)$, we let $\norm{x} = \norm{x}_2$ denote the Euclidean norm of $x$ and $\Ball(x,r)$ denote the open ball with radius $r > 0$ centred at $x$. For a linear map $T \in \End(\RR^d)$, we let $\norm{T}$ denote the operator norm, 
\[
	\norm{T} = \sup\set{ \norm{T(x)}}{x \in \RR^d,\ \norm{x} = 1}. 
\]
For a set $E \subset \NN$ we define its \emph{upper and lower densities} as
\begin{align*}
	\overline{d}(E) &= \limsup_{N \to \infty} \frac{\abs{A \cap [1,N]}}{N}, &&&
	\underline{d}(E) &= \liminf_{N \to \infty} \frac{\abs{A \cap [1,N]}}{N},
\end{align*}
where by a slight abuse of notation we use $[1,N]$ to denote the set $\{1,2,\dots,N\}$. 
If $\overline{d}(E) = \underline{d}(E)$ we call the common value \emph{natural density} of $E$, and let it be denoted by $d(E)$. We also define the \emph{(upper) Banach density} 
\begin{align*}
	d^*(E) &= \limsup_{N \to \infty} \sup_{M \geq 0}  \frac{\abs{A \cap [M,M+N)}}{N},
\end{align*}
where $[M,M+N) = \{M,M+1,\dots,M+N-1\}$.
}
 
\subsection{\checkmark\ Ultrafilters}\label{ssec:prelims-ultrafilters}
Throughout this paper, it will be helpful to use the notions of largeness and limit provided by ultrafilters. For an extensive introduction to ultrafilters on semigroups and their applications we refer to \cite{HindmanStrauss-book}; for a more succinct treatment see \cite{Bergelson-2003, Bergelson-2010} and references therein. 

An \emph{ultrafilter} on $\NN$ is an element of the Stone--\v{C}ech compactification of $\NN$ with discrete topology, denoted by $\beta \NN$. We also frequently work with $\beta \NN_0 = \beta(\NN_0) = (\beta \NN) \cup \{0\}$. The Stone--\v{C}ech compactification $\beta \NN_0$ comes equipped with a compact topology and an inclusion map $\NN_0 \hookrightarrow \beta \NN_0$ which allows us to identify $\NN_0$ with an open dense subset of $\beta \NN_0$. Ultrafilters in the image of $\NN_0$ are \emph{principal} and those in $\beta \NN_0 \setminus \NN_0$ are \emph{non-principal}. The defining feature on $\beta \NN_0$ is that any map $f$ from $\NN_0$ to a compact space $X$ can be uniquely extended to a continuous map $\beta f$ from $\beta \NN_0$ to $X$. We will use $\displaystyle\lim_{n \to p} f(n)$ to denote the value of $\beta f$ at the point $p \in \beta \NN_0$.

As a consequence of the extension property, $\beta \NN_0$ inherits from $\NN_0$ the additive semigroup operation given by
\[ \beta \NN_0 \times \beta \NN_0 \ni (p,q) \mapsto p+q := \lim_{n \to p} \lim_{m \to q} (n+m) \in \beta \NN_0.\]
This operation is neither commutative nor jointly continuous; it is continuous in the first argument and the (topological) centre of $\beta \NN_0$ is exactly $\NN_0$. By the same token, $\beta \NN_0$ also carries the multiplicative semigroup structure given by
\[ \beta \NN_0 \times \beta \NN_0 \ni (p,q) \mapsto p \cdot q := \lim_{n \to p} \lim_{m \to q} (n \cdot m) \in \beta \NN_0.\]
By a slight abuse of notation, we also define exponentiation
\[
	\beta \NN_0 \times \NN_0 \ni (p,k) \mapsto k^p := \lim_{n\to p} k^n \in \beta \NN_0.
\]

Ultrafilters can also be identified with families of subsets of $\NN_0$. Under this identification, an ultrafilter $p \in \beta \NN_0$ contains all sets $E \subset \NN_0$ such that $\lim_{n \to p} 1_E(n) = 1$. The property of belonging to a given ultrafilter is monotone,  partition regular and preserved under finite intersections, meaning that: 
\begin{inparaenum}[(1)]
\item\label{it:40:1} if $E \supset F \in p$ then $E \in p$; 
\item\label{it:40:2} if $E \cup F \in p$ then $E \in p$ or $F \in p$; 
\item\label{it:40:3} if $E,F \in p$ then $E \cap F \in p$.
\end{inparaenum}
Together with the non-triviality requirement:\begin{inparaenum}[(1)]\setcounter{enumi}{-1} \item\label{it:40:0}$\emptyset \not \in p$ and $\NN_0 \in p$, \end{inparaenum}
 these properties can be taken as an alternative definition of the set of ultrafilters. Any infinite set $E \subset \NN_0$ is a member of a non-principal ultrafilter. The sets $\overline{E} = \set{ p \in \beta \NN_0}{E \in p}$ are closed and open, and form a basis for the topology of $\beta \NN_0$.

A formula $\varphi(n)$ is true \emph{for $p$-almost all $n$}, denoted $\forall^p_n \ \varphi(n)$, if and only if
\[ \set{ n \in \NN_0}{ \varphi(n) \text{ is true}} \in p.\] 
The properties \eqref{it:40:0}---\eqref{it:40:3} imply that the quantifier $\forall^p$ behaves in a very convenient way with respect to logical connectives:
\begin{align}\label{eq:quantifier-betaN}
	\forall^p_n \ \neg \varphi(n) 
	 \Longleftrightarrow
	\neg \forall^p_n \ \varphi(n), 
	&& \forall^p_n \ \bra{\varphi(n) \wedge \psi(n)}
	 \Longleftrightarrow 
	\bra{ \forall^p_n \ \varphi(n)} \wedge \bra{ \forall^p_n \ \psi(n)},
	\end{align} 
	and likewise with $\vee$ or any other connective in place of $\wedge$. We briefly digress to remark that this behaviour is closely related to \L{}o\'s theorem and the ultraproduct construction.

An ultrafilter $p \in \beta \NN$ is \emph{idempotent} if $p+p = p$ and \emph{minimal} if $\beta \NN_0 + p$ is a minimal (left) ideal, meaning that for any $q \in \beta \NN_0$ there exists $r \in \beta \NN_0$ such that $r+q+p = p$. The sets of idempotent and minimal ultrafilters are denoted by $\EN$ and $\KN$ respectively.  Existence of minimal idempotents follows from the Ellis--Numakura lemma and an application of the Kuratowski--Zorn lemma. In fact, any minimal left ideal in $\beta \NN_0$ takes the form $\beta \NN_0 + q$ where $q$ is a minimal idempotent. For $p,q \in \KN$ we write $p \sim q$ if $\beta \NN_0 + p = \beta \NN_0 + q$. We also note that $\EN \cap \NN = \emptyset$ and more generally $p+n \neq p$ for all $p \in \beta \NN_0$ and $n \in \NN$. If $p \in \beta \NN$ is idempotent then $m\NN \in p$ for all $m \in \NN$.

A set $E \subset \NN$ is $\IP$ if it contains the set of finite sums of a sequence $(n_i)_{i=1}^\infty \subset \NN$,
\[
	\mathrm{FS}\bra{(n_i)_{i=1}^\infty} = \set{ \textstyle \sum_{i \in I} n_i}{ I \subset \NN, \text{ finite}}.
\]
Accordingly, $E \subset \NN$ is $\IP^*$ if it has a non-empty intersection with any set of finite sums (or, equivalently, with any $\IP$ set). More generally, a set is $\IP_+$ (resp.\ $\IP^*_+$) if it is a translation of an $\IP$ (resp.\ $\IP^*$) set. 
One can show that a set $E \subset \NN_0$ is $\IP$ (resp.\ $\IP^*$) if and only if it is an element of an idempotent ultrafilter (resp.\ all idempotent ultrafilters). 
(This is a key step in the ultrafilter proof of the Hindman theorem by Glazer and Galvin.) In the next section we introduce the notion of a central set. For now, we mention that a set $E \subset \NN_0$ is central if and only if it is an element of a minimal idempotent ultrafilter.

More generally, if $\cP$ is a partition regular and monotone property, then there exists a closed family of ultrafilters $\Pi$ such that the sets $E \subset \NN_0$ satisfying $\cP$ are exactly the elements of ultrafilters in $\Pi$. If additionally $\cP$ is preserved under translations and dilations then $\Pi$ is an additive and multiplicative left ideal \cite[Thm.\ 6.79]{HindmanStrauss-book}. In particular, let us put
\begin{equation}\label{eq:def-of-Xi}
	\Xi = \set{ p \in \beta \NN_0}{ {d}^*(E) > 0 \text{ for all } E \in p}.
\end{equation}
Then $\Xi$ is a closed, non-empty left additive and multiplicative ideal. If $E \subset \NN_0$ and ${d}^*(E) > 0$ then there exists $p \in \Xi$ such that $E \in p$. 

We will also be interested in convergence of sets along ultrafilters. To make the notion of the convergence precise, recall that for any set $X$ the space $\{0,1\}^X$ of all subsets $X$ can be endowed with the product topology. For a sequence of sets $S_n \subset X$ ($n \in \NN_0$) and an ultrafilter $p \in \beta \NN_0$ by ``the limit of $S_n$ along $p$'' we mean the limit with respect to this topology, that is, 
\begin{align*}
	\lim_{n \to p} S_n = \set{ x \in X}{ \forall^p_n \ x \in S_n}.
\end{align*}

\begin{lemma}\label{lem:set-limit-basic}
	Let $X_n, Y_n$ ($n \in \NN_0$) be two sequences of sets and let $p \in \beta \NN_0$. Then 
	\begin{align*}
		\lim_{n \to p}(X_n \cap Y_n) = \lim_{n \to p}X_n \cap \lim_{n \to p} Y_n,  
	\end{align*}
	and the same holds with other set-theoretic operations 	$\cup, \setminus, \triangle$ in place of $\cap$.
\end{lemma}
\begin{proof}
	Follows immediately from \eqref{eq:quantifier-betaN}.
\end{proof} 
\subsection{\checkmark\ Dynamical systems}\label{ssec:dynamical-systems}\label{ssec:prelims-dynsys}\mbox{}
{
A \emph{dynamical system} is a pair $(X,T)$ where $X$ is a compact topological space and $T \colon X \to X$ is a continuous map. The dynamical system $(X,T)$ is \emph{minimal} if for any point $x \in X$ the corresponding orbit $\set{T^n(x)}{n \in \NN}$ is dense in $X$. 

A point $x \in X$ is \emph{recurrent} if for any open neighbourhood $x \in U \subset X$, there exists $n \in \NN$ with $T^n(x) \in U$. 
Accordingly, a point $x \in X$ is \emph{uniformly recurrent} if for any open neighbourhood $x \in U \subset X$, the set $\set{n \in \NN}{T^n(x) \in U}$ is syndetic, that is, it has bounded gaps. It is well-known (e.g.\ \cite[{Thm.\ 1.15 and 1.17}]{Furstenberg-book}) that $x \in X$ is uniformly recurrent if and only if it is an element of a minimal subsystem of $(X,T)$. These notions of recurrence can also be characterised in terms of ultrafilters. A point $x \in X$ is recurrent if $\lim_{n \to p} T^n(x) = x$ for some $p \in \beta \NN$, and uniformly recurrent if $x = \lim_{n \to q} T^n(x)$ for some $q \in \mathrm{K}(\beta \NN)$.

If $X$ is additionally a metric space then a pair of points $x, y \in X$ is \emph{proximal} if $\liminf_{n \to \infty} d_X(T^n(x),T^n(y)) = 0$, or equivalently if there exists $p \in \beta \NN$ with $\lim_{n \to p} T^n(x) = \lim_{n \to p} T^n(y)$. The system $(X,T)$ is \emph{distal} if no pair of distinct points $x,y \in X$ is proximal. 
}

{
A \emph{nilsystem} is a dynamical system of the form $(G/\Gamma,T_g)$ where $G$ is a nilpotent Lie group, $\Gamma < G$ is a discrete subgroup such that $G/\Gamma$ is compact and $T_g$ is the action of $g \in G$ on $G/\Gamma$ by (left) multiplication. There exists a natural choice of coordinates on $G/\Gamma$ via Mal'cev basis, identifying $G/\Gamma$ with the cube $[0,1)^d$ (with some of the sides glued), where $d$ is the dimension of $G$. All nilsystems are distal. For more details, see e.g.\ \cite{BergelsonLeibman-2007}.
}

{
A set $E \subset \NN$ is \emph{central} if there exist a dynamical system $(X,T)$, a pair of proximal points $x,y \in X$ with $y$ uniformly recurrent and an open neighbourhood $y \in U \subset X$ such that $E = \set{ n \in \NN}{ T^n(x) \in U}$. 
}

{
We record two recurrence results which use the notions of largeness introduced in the previous section.
}

\begin{theorem}[{\cite[Thm.\ 9.11.]{Furstenberg-book}}]\label{thm:recurrence-IP*}
 Let $(X,T)$ be a minimal distal topological dynamical system. Then for every $x\in X$ and every open set $\emptyset \neq U \subset X$, the set $\set{n\in \NN}{T^n(x)\in U}$ is $\IP^*_+$.
\end{theorem}

\begin{theorem}[{\cite[Thm.\ 3.4]{Bergelson-2010}}]\label{thm:recurrence-central}
	Let $(X,T)$ be a topological dynamical system. Then a point $x \in X$ is uniformly recurrent if and only if there exists a minimal idempotent $q \in \beta \NN$ such that $\lim_{n \to q} T^q(x) = x$.
\end{theorem}
 
\subsection{\checkmark\ Algebraic geometry}\label{ssec:prelims-algebraic-geometry}
{\color{NavyBlue}
An \emph{algebraic variety} (or an \emph{algebraic set}) in $\RR^d$ is the zero locus of a system of polynomial equations,
\[
\fV(F) = \set{ x \in \RR^d }{ f(x) = 0 \text{ for all } f \in F},
\]
where $F \subset \RR[\mathbf{x}_1,\dots,\mathbf{x}_d]$. Conversely, for a set $X \subset \RR^d$ we define the corresponding ideal
\[
	\fI(X) = \set{ f \in \RR[\mathbf{x}_1,\dots,\mathbf{x}_d]}{ f(x) = 0 \text{ for all } x \in X}.
\]
A variety is \emph{irreducible} if it cannot be expressed as the union of proper subvarieties. Any variety is a finite union of its irreducible components.
Note that throughout we work over $\RR$, which is not an algebraically closed field, rendering many basic results in algebraic geometry inapplicable.
}

{\color{Blue}
The \emph{Zariski closure} of a set $X \subset \RR^d$ is the smallest algebraic variety containing $X$; it is denoted by $\algcl(X) = \fV(\fI(X))$ (the topological closure is denoted by $\topcl(X)$). By Hilbert's basis theorem, any ideal in $\RR[\mathbf{x}_1,\dots,\mathbf{x}_d]$ is finitely generated, so any variety can be represented as $\fV(F)$ with $F$ finite. In fact, one can always assume that $\abs{F} = 1$, replacing the conditions $f(x) = 0$ for all $f \in F$ with the single condition $\sum_{f \in F} f^2(x) = 0$. As a consequence, any descending sequence of varieties \[ V_1 \supseteq V_2 \supseteq V_3 \supseteq \dots \supseteq V_{n} \supseteq \dots \qquad (n \in \NN),\] is eventually constant. A variety is \emph{defined over} a field $\KK < \RR$ if it takes the form $\fV(F)$ with $F \subset \KK[\mathbf{x}_1,\dots,\mathbf{x}_d]$; without loss of generality, $\abs{F} = 1$. 
}

{
The following lemmas are standard. We let $\Aff(d) = \RR^d \rtimes \End(\RR^d)$ denote the semigroup of affine maps $T(x) = S(x)+c$ where $S \in \End(\RR^d)$ and $c \in \RR^d$, and $\AGL(d) = \RR^d \rtimes \Aut(\RR^d)$ denote the group of affine invertible maps. Note that $\Aff(d)$ is a vector space over $\RR$, so it makes sense to speak of algebraic varieties in $\Aff(d)$. The following lemma is reminiscent of \cite[Lem.\ 3.6]{ByszewskiKonieczny-2018}.
}

\begin{lemma}[]\label{lem:stab-is-alg}
	Let $U,V \subset \RR^d$ be algebraic varieties. 
	\begin{enumerate}
	\item\label{it:stab-is-alg:A} If $T \in \AGL(d)$ is an invertible affine map and $T(V) \subset V$, then $T(V) = V$.

	\item\label{it:stab-is-alg:B} The set of affine maps $T \in \Aff(d)$ such that $T(V) \subset U$ is algebraic.
	
	\item\label{it:stab-is-alg:C} The set of vectors $v \in \RR^d$ such that $V+v = V$ is a vector space. 
	\end{enumerate}
\end{lemma}
\begin{proof}\color{White}
\begin{enumerate}[wide]
\item It follows from the Hilbert basis theorem that the descending sequence of algebraic varieties $T^n(V)$, $n \in \NN_0$, stabilises. Hence, there exists $n$ such that $T^n(V) = T^{n+1}(V)$. Since $T^n$ is injective, it follows that $V = T(V)$.

\item If $T \in \Aff(d)$, then $T(V) \subset U$ if and only if $T(x) \in U$ for all $x \in V$. For each $x \in V$, the set of $T \in \Aff(d)$ such that $T(x) \in U$ is algebraic. It remains to recall that the intersection of any family of algebraic sets is algebraic. 

\item The set of such $v$ is a subgroup of $\RR^d$ which is also an algebraic variety. Hence, it is a vector space.
\qedhere
\end{enumerate}
\end{proof}

\begin{lemma}\label{lem:alg-cl-Q}\color{BurntOrange}
Let $\KK < \RR$ and let $X \subset \KK^d$. Then $\algcl(X)$ is defined over $\KK$.
\end{lemma}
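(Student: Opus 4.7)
The plan is to identify a defining ideal for $\algcl(X)$ whose elements have coefficients in $\KK$. Set
\[
I = \fI(X) \subset \RR[\mathbf{x}_1,\dots,\mathbf{x}_d], \qquad J = I \cap \KK[\mathbf{x}_1,\dots,\mathbf{x}_d].
\]
By construction $\fV(J)$ is an algebraic variety defined over $\KK$, and the inclusion $\fV(I) \subset \fV(J)$ is immediate from $J \subset I$. Since $\algcl(X) = \fV(I)$, it therefore suffices to show $\fV(J) \subset \fV(I)$, i.e.\ that every $f \in I$ vanishes on $\fV(J)$. For this it is enough to prove that each $f \in I$ lies in the $\RR$-linear span of $J$.

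The key step is the following $\KK$-linear algebra manoeuvre. Fix $f \in I$ and let $V \subset \RR$ be the $\KK$-linear span of the (finitely many) coefficients of $f$, which is a finite-dimensional $\KK$-vector space. Choose a $\KK$-basis $c_1,\dots,c_m$ of $V$. Collecting coefficients, we can write
\[
f = \sum_{i=1}^{m} c_i f_i, \qquad f_i \in \KK[\mathbf{x}_1,\dots,\mathbf{x}_d].
\]
For any $x \in X \subset \KK^d$ we have $f_i(x) \in \KK$ for each $i$, and
\[
0 = f(x) = \sum_{i=1}^{m} c_i f_i(x).
\]
Because $c_1,\dots,c_m$ are $\KK$-linearly independent in $\RR$, this forces $f_i(x) = 0$ for every $i$ and every $x \in X$. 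Hence each $f_i$ belongs to $J$, and $f$ is an $\RR$-linear combination of elements of $J$. In particular $f$ vanishes on $\fV(J)$, completing the proof.

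There is no real obstacle here; the only point that requires any care is producing a $\KK$-rational decomposition of an arbitrary element of $I$, which is handled by the finite-dimensional $\KK$-basis trick above. The argument does not use anything special about $\RR$ beyond the fact that $\KK$ is a subfield, so the same statement holds for any field extension.
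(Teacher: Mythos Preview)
Your proof is correct and essentially identical to the paper's. Both arguments write an arbitrary $f \in \fI(X)$ as $\sum_i c_i f_i$ with $c_i \in \RR$ linearly independent over $\KK$ and $f_i \in \KK[\mathbf{x}_1,\dots,\mathbf{x}_d]$, then use $X \subset \KK^d$ to force each $f_i$ to vanish on $X$; you frame this slightly more explicitly via $J = \fI(X) \cap \KK[\mathbf{x}_1,\dots,\mathbf{x}_d]$, but the substance is the same.
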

\begin{proof}\color{White}
	Let $f \in \fI(X)$. We may expand $f$ as
	\[\textstyle
		f(x) = \sum_{i=1}^s \alpha_i g_i(x),
	\]
	where $\alpha_i \in \RR$ are linearly independent over $\KK$ and $g_i \in \KK[\mathbf{x}_1,\dots,\mathbf{x}_d]$. If $x \in \KK^d$ and $f(x) = 0$, then $g_i(x) = 0$ for all $1 \leq i \leq s$. It follows that $g_i \in \fI(X)$ for all $1 \leq i \leq s$ and consequently that $\fI(X)$ is spanned by polynomials with coefficients in $\KK$.
\end{proof}

\subsection{\checkmark\ Semialgebraic geometry}\label{ssec:prelims-semialgebraic}\mbox{}
{
Even though the fact that $\RR$ is not algebraically closed is often seen as a drawback in algebraic geometry, this very fact gives rise to the rich theory of semialgebraic geometry. For background, see e.g.\ \cite{BochnakCosteRoy-book}.
}

{
A \emph{basic semialgebraic set} in $\RR^d$ is the set of solutions to a system of polynomial equations and inequalities:
\begin{equation}\label{eq:def-of-semialg}
	\fS(F,G) = \set{ x \in \RR^d }{ f(x) = 0 \text{ for all } f \in F \text{ and } g(x) > 0 \text{ for all } g \in G},
\end{equation}
where $F,G \subset \RR[\mathbf{x}_1,\dots,\mathbf{x}_d]$ are finite. We may additionally assume that $\fV(F)$ is the Zariski closure of $\fS(F,G)$. A \emph{semialgebraic set} is a finite union of basic semialgebraic sets. 
}
	
{
Recall that projections of algebraic sets need not be algebraic, as shown already by the hyperbola $\set{(x,y) \in \RR^2}{xy=1}$. However, a foundational theorem of Tarski and Seidenberg show that projections of semialgebraic sets are again semialgebraic. We cite a slightly stronger variant of this result.
}
\begin{theorem}[{\cite[Sec.\ 5]{BochnakCosteRoy-book}}]\label{thm:Tarski-Seidenberg}
	Let $S \subset \RR^d$ be a semialgebraic set and let $f \colon \RR^d \to \RR^e$ be a polynomial map. Then $f(S)$ is semialgebraic.
\end{theorem}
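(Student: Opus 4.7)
The plan is to deduce the theorem from the basic case of coordinate projection via a graph argument, and then establish projection by appeal to the classical root-counting machinery for real univariate polynomials. First I would observe that since $f$ is polynomial, its graph $\Gamma_f = \set{(x,y) \in \RR^d \times \RR^e}{y = f(x)}$ is an algebraic, hence semialgebraic, subset of $\RR^{d+e}$, being cut out by the equations $y_j - f_j(x) = 0$. Then $\Gamma_f \cap (S \times \RR^e)$ is semialgebraic (the class of semialgebraic sets is closed under finite intersections and products with $\RR^e$), and $f(S)$ is its image under the projection onto the last $e$ coordinates. Since such a projection is a composition of maps each forgetting one coordinate, it suffices to prove the following ``one-step'' statement: for any semialgebraic set $T \subset \RR^N \times \RR$, its image $\pi(T) \subset \RR^N$ under the projection forgetting the last coordinate is semialgebraic.

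To prove the one-step statement, I would write $T$ as a finite union of basic semialgebraic sets. Since $\pi$ commutes with finite unions, the task reduces to a single basic set $T = \set{(x,t)}{f_i(x,t) = 0 \text{ for } i \in I,\ g_j(x,t) > 0 \text{ for } j \in J}$ with $I, J$ finite and $f_i, g_j \in \RR[\mathbf{x}_1,\dots,\mathbf{x}_N, \mathbf{t}]$. The problem is now to describe, by polynomial equalities and inequalities in $x$ alone, the set of $x \in \RR^N$ for which the univariate system in $t$ (with coefficients depending on $x$) admits a real solution. Viewing each $f_i(x,\cdot)$ and $g_j(x,\cdot)$ as a polynomial in $t$, one uses Sturm's theorem (equivalently, Thom encodings together with the subresultant/principal subresultant coefficient sequence) to express the number of real roots of any such polynomial in any interval, and the sign of any other polynomial at those roots, as a Boolean combination of sign conditions on explicit polynomials built from the coefficients. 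Since those coefficients are themselves polynomials in $x$, these sign conditions carve out a semialgebraic subset of $\RR^N$, which is exactly $\pi(T)$.

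The main obstacle is the bookkeeping forced by degenerations: the effective degree of $f_i(x,\cdot)$ or $g_j(x,\cdot)$ as a polynomial in $t$ may drop on the algebraic loci where leading coefficients vanish, so Sturm's and the subresultant algorithm must be applied stratum by stratum. Concretely, I would partition $\RR^N$ into finitely many semialgebraic strata on which the degrees of all $f_i(x,\cdot)$ and $g_j(x,\cdot)$ are constant, carry out the root-counting argument on each stratum to obtain a semialgebraic description of $\pi(T)$ restricted to that stratum, and take the (finite) union over strata. Verifying that this stratification is finite and terminates with a genuine semialgebraic description—rather than with some larger first-order definable set—is the substantive content of Tarski's quantifier elimination theorem for real closed fields, and this case analysis is where the real work of the proof resides.
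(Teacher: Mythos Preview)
The paper does not prove this statement at all; it is quoted as a classical result with a reference to \cite[Sec.\ 5]{BochnakCosteRoy-book} and used as a black box. Your proposal is a reasonable sketch of the standard proof (reduction to one-variable projection via the graph, then Sturm sequences / subresultants with stratification by degree), and correctly identifies that the honest work is Tarski's quantifier elimination; but since the paper simply cites the theorem, a one-line appeal to the reference would have sufficed here.
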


{
A map $f \colon \RR^d \to \RR^e$ is \emph{piecewise polynomial} if there exists a partition $\RR^d = S_1 \cup S_2 \cup \dots \cup S_r$ into semialgebraic pieces such for each $1 \leq i \leq r$ the restriction of $f$ to $S_i$ is a polynomial. A piecewise polynomial map on a domain $\Omega \subset \RR^d$ is the restriction of a piecewise polynomial map. It follows from Theorem \ref{thm:Tarski-Seidenberg} that the image of a semialgebraic set through a piecewise polynomial map is again semialgebraic. It is elementary to show that the preimage of a semialgebraic set through a piecewise polynomial map is semialgebraic as well.
}

{
As hinted before, we will be interested in the sets of times that a given bounded sequence hits a semialgebraic subset of $[0,1)^d$. The following lemma allows us to alter the choice of basis. While not strictly speaking necessary, it significantly simplifies notation in several places.
}

\begin{lemma}\label{lem:semialg-wlog-SL}
	Let $S \subset [0,1)^d$ be semialgebraic and let $T \in \SL(d,\ZZ)$. Put 
	\[ S' := \fp{T(S)} = \set{ \fp{T(x)}}{x \in S}.\]
	Then $S'$ is semialgebraic and
	\[ S = \set{ x \in [0,1)^d}{ \fp{T(x)} \in S' }.\]
\end{lemma}
\begin{proof}
	The claim follows directly from the observation that the map $x \mapsto \fp{T^{-1}(x)}$ is piecewise linear with a piecewise linear inverse $y \mapsto \fp{T(y)}$. 
\end{proof}

{
We close this section with a discussion of limits of semialgebraic sets, which are closely related to limits of generalised polynomials discussed in Section \ref{ssec:prelims-gen-poly}. In general, such limits are not guaranteed to be well-behaved: for instance, any open subset of $\RR^d$ can be represented as the limit of a sequence of semialgebraic sets, see Example \ref{ex:set-limit-alg-bad}. Thus, in order to ensure that the limit of a sequence of semialgebraic sets is again semialgebraic we need to impose some additional restrictions.
In a somewhat ad hoc manner, we define the \emph{complexity} of a basic semialgebraic set $S \subset \RR^d$ as the sum of degrees of all polynomials in its  representation \eqref{eq:def-of-semialg},
\[
	\complexity(S) := \min \set{ \sum_{ f \in F} \deg(f) + \sum_{g \in G} \deg(g) }{F,G \subset \RR[\bb x_1,\dots,\bb x_d],\ S = \fS(F,G) }. 
\]
The complexity of a semialgebraic set $S \subset \RR^d$ is the sum of complexities of its basic components,
\[
	\complexity(S) := \min\set{ \sum_{i=1}^r \complexity(S_i)}{
		S_i \subset \RR^d, \text{ basic semialgebraic, } S = \bigcup_{i=1}^r S_i
	}.
\]
It will be convenient to also define $\complexity(S) := \infty$ if $S \subset \RR^d$ is not semialgebraic. 
We will never be concerned with the exact value of the complexity of a given semialgebraic set; rather, we are interested in sequences of semialgebraic sets whose complexity is uniformly bounded. The following example gives the class of such sequences which is the most important for our purposes.
}

\begin{example}\label{ex:semialg-param}
	Let $F,G \subset \RR[\bb x_1,\dots,\bb x_d, \bb y_1, \dots, \bb y_e]$ be finite families of polynomials in $d+e$ variables. The corresponding \emph{parametrised family of basic semialgebraic sets} is given by
\begin{equation*}\fS(F,G;y) = \set{ x \in \RR^d }{ f(x,y) = 0 \text{ for all } f \in F \text{, } g(x,y) > 0 \text{ for all } g \in G},
\end{equation*}
where $y \in \RR^e$. The complexity of $\fS(F,G;y)$ is bounded uniformly in $y$; in fact,
\begin{align*}
	\complexity\bra{\fS(F,G;y)} &\leq  \sum_{f \in F} \deg(f) + \sum_{g \in G} \deg(g) \text{ for all } y \in \RR^e.
\end{align*}
\end{example}

The proof of the following result is fairly routine. For the sake of keeping the preparatory sections reasonably short, we delegate it to Appendix \ref{ap:sg-lim}.
\begin{proposition}\label{prop:semialg-limit}
	Let $S_n \subset \RR^d$ ($n \in \NN$) be a sequence of semialgebraic sets with uniformly bounded complexity and let $p \in \beta \NN$. Then $\displaystyle \lim_{n \to p} S_n$ is also semialgebraic. 
\end{proposition}
\begin{corollary}\label{cor:semialg-limit}
	Let $S \subset \RR^{d+e}$ be a semialgebraic set, let $y_n \in \RR^e$ ($n \in \NN$), and let $p \in \beta \NN$. Then the set \(
		\set{x \in \RR^d }{ \forall^p_n \ (x,y_n) \in S }
	\)
	is semialgebraic.
\end{corollary}
 
\subsection{\checkmark\ Generalised polynomials}\label{ssec:prelims-gen-poly}\mbox{}
{
Recall that generalised polynomials were informally discussed in the introduction. To be more precise, we define generalised polynomial maps from $\RR^d$ to $\RR$ as the smallest family such that the coordinate maps $x \mapsto x_i$ ($1 \leq i \leq d$) and the constant maps $x \mapsto \alpha$ ($\alpha \in \RR)$ are generalised polynomials and if $g,h$ are generalised polynomials then so are $g+h$ and $g \cdot h$ and $\ip{g}$ (given by $\ip{g}(x) = \ip{ g(x)}$ for all $x \in \RR^d$). If $g$ is a generalised polynomial then so is the fractional part $\fp{g} = g -\ip{g}$, and we can similarly characterise generalised polynomials as the smallest family containing polynomials and closed under addition, multiplication and the fractional part. 
}

{
A map $\RR^d \to \RR^e$ is generalised polynomial if its projection on any $1$-dimensional subspace is a generalised polynomial. A generalised polynomial map on a domain $\Omega \subset \RR^d$ is the restriction of a generalised polynomial to $\Omega$ (we usually take $\Omega = \NN^d$ or $\ZZ^d$ and $d = 1$). Any generalised polynomial on $\Omega$ can be extended to $\RR^d$ but the extension is not unique unless $\Omega = \RR^d$.
A generalised polynomial subset $E$ of a domain $\Omega \subset \RR^d$ is the zero locus of a generalised polynomial, i.e., a set of the form $\set{n \in \Omega}{g(n) = 0}$ where $g \colon \RR^d \to \RR$ is a generalised polynomial. For instance, $\ZZ^d$ is a generalised polynomial subset of $\RR^d$ and so is any algebraic variety in $\RR^d$. Note that this notion depends on $\Omega$; in particular $\NN$ is trivially a generalised polynomial subset of $\NN$ but not of $\ZZ$ (\footnote{It follows from Theorem \ref{thm:Bergelson-Leibman} that if $g \colon \ZZ \to \RR$ is a bounded generalised polynomial and $g(n) = 0$ for all $n \in \NN$ then the set of $n \in \ZZ$ such that $g(n) \neq 0$ has Banach density $0$.}). It follows from the Lemma \ref{lem:gp-a<g<b} below that in the above definition we may always further assume that $g$ takes only values $0$ and $1$. As a consequence, generalised polynomial sets form an algebra.
}

\begin{lemma}[{\cite[Lem.\ 1.2]{ByszewskiKonieczny-2018}}]\label{lem:gp-a<g<b}
	Let $g \colon \ZZ \to \RR$ be a generalised polynomial on $\ZZ$. Then the map $n \mapsto \braif{g(n)=0}$ is a generalised polynomial. Moreover, for any $a, b \in \RR$, the map $n \mapsto \braif{a \leq g(n) < b}$ is a generalised polynomial on $\ZZ$.
\end{lemma}

{
We next discuss various ways in which generalised polynomials can be represented. We begin with a lemma relating generalised polynomials to piecewise polynomial maps on a bounded domain.
}

\begin{lemma}[{\cite[Lem.\ 1.6]{BergelsonLeibman-2007}}]\label{lem:gp=pw-poly}
	Let $\Omega \subset \RR^d$ be a bounded set and let $f \colon \Omega \to \RR$ be a map. Then the following conditions are equivalent:
	\begin{enumerate}
	\item $f$ is a generalised polynomial;
	\item $f$ is a piecewise polynomial.
	\end{enumerate}	 
\end{lemma}

{
The representation theorem of Bergelson and Leibman was already mentioned in the introduction. We recall it here and present a more complete statement. 
}

\begin{theorem}[{\cite{BergelsonLeibman-2007}}]\label{thm:Bergelson-Leibman}
Let $g \colon \ZZ \to \RR^d$ be a bounded generalised polynomial.
\begin{enumerate}[wide]
\item\label{it:BL:1} There exists a minimal nilsystem $(X,T)$, a point $z \in X$ and a piecewise polynomial map $F \colon X \to \RR^d$ such that $g(n) = F(T^n(z))$ for all $n \in \ZZ$.
\item\label{it:BL:2}  Conversely, for any nilsystem $(X,T)$, any point $z \in X$ and any piecewise polynomial map $F \colon X \to \RR^d$, the map $n \mapsto F(T^n(z))$ is a bounded generalised polynomial on $\ZZ$.
\item\label{it:BL:3} There exists a semialgebraic set $S \subset \RR^d$ parametrized by a piecewise polynomial map $f \colon [0,1]^e \to S$ and a set $Z \subset \ZZ$ with Banach density $0$ such that $g(n) \in S$ for all $n \in \ZZ \setminus Z$ and $g(n)$ is equidistributed in $S$ with respect to the measure induced by the parametrization.
\item\label{it:BL:4} If $Z$ is the set from \eqref{it:BL:3} above, then for any $n \in \ZZ \setminus Z$ and any $\e > 0$ the set 
$\set{m \in \NN}{ \abs{g(n+m) - g(n)} < \e}$ is $\IP^*$.
\end{enumerate}
\end{theorem}

This theorem also ensures that the densities of generalised polynomial sets exist in a rather strong sense. 
\begin{lemma}[{e.g.\ \cite[Cor.\ 1.4]{ByszewskiKonieczny-2018}}]\label{lem:gp-set-density-exists}
	Let $E \subset \NN$ be a generalised polynomial set. Then $E$ has a natural density. In fact, $\abs{E \cap [M,M+N)}/N \to d(E)$ uniformly in $M$ as $N \to \infty$. In particular, $d^*(E) = d(E)$.
\end{lemma}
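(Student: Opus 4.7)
The plan is to combine the Bergelson--Leibman representation with the uniform unique ergodicity of minimal nilsystems. Since $E$ is a generalised polynomial set, its characteristic function $1_E \colon \NN \to \{0,1\}$ is a bounded generalised polynomial, so by Theorem \ref{thm:BL}.\eqref{it:BL:1} there exist a minimal nilsystem $(X,T) = (G/\Gamma, T_a)$, a point $z \in X$, and a piecewise polynomial map $F \colon X \to \RR$ (in Mal'cev coordinates) such that $1_E(n) = F(T^n(z))$ for all $n \in \NN$. The question thus reduces to showing that the Birkhoff averages $\frac{1}{N} \sum_{n=M}^{M+N-1} F(T^n(z))$ converge as $N \to \infty$, uniformly in $M \geq 0$.

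First I would note that, because $1_E$ only takes the values $0$ and $1$, so does $F$. Hence the level set $F^{-1}(1)$ is a semialgebraic subset of $X$, and its topological boundary $D$ is a semialgebraic set of strictly smaller dimension than $X$. With respect to the normalised Haar measure $\mu$ on $G/\Gamma$, which is smooth in Mal'cev coordinates, the set $D$ has $\mu$-measure zero, so $F$ is Riemann integrable. Next, I would invoke the fact that minimal nilsystems are uniquely ergodic with invariant measure $\mu$, and moreover satisfy \emph{uniform} unique ergodicity: for every continuous $H \colon X \to \RR$ and every $\e > 0$ there exists $N_0$ such that $\abs{\frac{1}{N} \sum_{n=M}^{M+N-1} H(T^n(z)) - \int H \, d\mu} < \e$ for all $M \geq 0$ and all $N \geq N_0$; this is the standard consequence of density of continuous functions together with the distality of nilsystems (see Section \ref{ssec:prelims-dynsys}).

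The main obstacle, and the step requiring care, is to pass from continuous test functions to the piecewise polynomial indicator $F$. I would handle this by a sandwich argument: given $\e > 0$, use the regularity of $\mu$ to find an open neighbourhood $U$ of $D$ with $\mu(U) < \e$, and then by Urysohn's lemma choose continuous functions $F^-, F^+ \colon X \to [0,1]$ with $F^- \leq F \leq F^+$ pointwise and $F^- = F = F^+$ off $U$, so that $\int (F^+ - F^-) \, d\mu < \e$. Applying uniform unique ergodicity to $F^-$ and $F^+$ separately then sandwiches the averages $\frac{1}{N} \sum_{n=M}^{M+N-1} F(T^n(z))$ within $2\e$ of $\int F \, d\mu$, uniformly in $M$, for $N$ large enough. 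Setting $d(E) := \int F \, d\mu$ and letting $\e \to 0$ yields both the existence of the natural density and the claimed uniform convergence.
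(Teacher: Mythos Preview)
Your argument is correct and is essentially the standard proof; the paper does not give its own proof of this lemma but simply cites \cite[Cor.\ 1.4]{ByszewskiKonieczny-2018}, where the same Bergelson--Leibman representation followed by unique ergodicity of the nilsystem and a Riemann-integrability/sandwich argument is used.

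One small imprecision worth noting: the uniform convergence of Birkhoff averages for continuous $H$ that you invoke is a consequence of \emph{unique ergodicity} alone (for any uniquely ergodic system, $\frac{1}{N}\sum_{n=0}^{N-1} H(T^n x) \to \int H\,d\mu$ uniformly in $x$, and uniformity in $M$ then follows by taking $x = T^M z$). Distality plays no role in this step; it is relevant elsewhere in the paper for $\IP^*$-recurrence, but not here.
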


{\color{red}
Another closely related representation theorem is due to Leibman. In \cite{Leibman-2012}, he constructed a family of basic generalised polynomials 
such that any bounded generalised polynomial can be expressed in terms of a finite number of basic generalised polynomials which are jointly equidistributed. Here, we will only need a weak version of this result, stating that any generalised polynomial can be represented in terms of generalised polynomials of a particularly simple form, but not requiring any equidistribution. This allows us to use a simpler family of basic generalised polynomials and also simplifies the statement of the result. 
}

{\color{Blue}
We define \emph{generalised monomial maps} from $\RR^d$ to $\RR$ to be the smallest family containing all monomials $x \mapsto \alpha x_i^k$ ($\alpha \in \RR,\ 1 \leq i \leq d,\ k \in \NN$) and such that if $g$ and $h$ are generalised monomials then so is $g \fp{h}$. Accordingly, a map from $\RR^d$ to $\RR^e$ is a generalised monomial if and only if each of its coordinates is a generalised monomial. 
}

\begin{theorem}[{\cite{Leibman-2012}}]\label{thm:Leibman}\color{RoyalPurple}
	Let $g \colon \ZZ^d \to \RR^e$ be a generalised polynomial. Then there exists a generalised monomial $v \colon \ZZ^d \to \ZZ^m$ as well as a piecewise polynomial map $F \colon [0,1)^{m} \to \RR^e$ such that $g(n) = F(\fp{v(n)})$.
\end{theorem}

Let us now consider pointwise limits of generalised polynomials. Of course, any sequence on $\ZZ$ is the pointwise limit of a sequence of (ordinary) polynomials, which motivates us to additionally impose a constraint on complexity. The most convenient way to do this is to consider sequences $g \colon \Omega \to \RR$ ($\Omega \subset \RR^e$ for some $e \geq 1$) of the form $g(n) = \lim_{i} h(x_i,n)$, where $h \colon \RR^d \times \Omega \to \RR$ is a generalised polynomial and $x_i \in \RR^d$ $(i \in \NN)$ is a bounded sequence. (We may either consider the limit as $i \to \infty$ under the additional assumption that the limit exists, or alternatively use the notion of convergence along an ultrafilter.) Example \ref{ex:gp-limit-not-gp} shows that when $\Omega = \ZZ$, it still can happen that the limit defining $g(n)$ converges as $i \to \infty$ for each $n \in \Omega$ but the sequence $g$ is not a generalised polynomial. The same holds when $\Omega = \NN^2$.
Surprisingly, the situation is different for generalised polynomials on $\NN$.

\begin{proposition}\label{prop:gp-lim-is-gp}
	Let $p \in \beta \NN$, let $x_i \in \RR^d$ $(i \in \NN)$ be a bounded sequence and let $h \colon \RR^d \times \NN \to \RR$ be a generalised polynomial. Define $g \colon \NN \to \RR$ by
\begin{align*}
g(n) &:= \lim_{i \to p} h(x_i,n).
\end{align*}
	 Then $g$ is a generalised polynomial.
\end{proposition}

Since the proof of this result is quite lengthy and independent of the main focus of the paper, we delegate it to Appendix \ref{ssec:lemmas-gp-lim}.  
  \section{Preparatory results}\label{sec:lemmas}

In this section we discuss some basic facts and constructions on which the remainder of the paper relies. While none of the arguments used in this section are particularly novel, to the best of our knowledge they do not appear elsewhere in the literature.

\subsection{\checkmark\ Closures along ultrafilters}\label{ssec:lemmas-alg-cl}\mbox{}
{\color{Blue}
In analogy to the operation of taking the Zariski closure of a set, for a sequence of points $x_n \in \RR^d$ ($n \in \NN$) and $p \in \beta \NN$ we define the ``Zariski closure of $x_n$ along $p$'': 
\begin{equation}\label{eq:def-of-alglim}
	\alglim_{n \to p}\bra{x_n} := \bigcap_{I \in p} \algcl\set{ x_n }{ n \in I}.
\end{equation}
The set defined in \eqref{eq:def-of-alglim} is clearly algebraic since it is the intersection of a family of algebraic varieties. The following lemma characterises it as the smallest algebraic variety containing $x_n$ for $p$-almost all $n$, and lists some other basic properties.}

\begin{lemma}\label{lem:alg-lim-basic}\color{BurntOrange}
	Let $x_n \in \RR^d$ for $n \in \NN$ and let $p \in \beta \NN$. Put $\displaystyle V := \alglim_{n \to p}\bra{x_n}$. 
\begin{enumerate}
\item\label{it:alg-lim-basic:A}
For $p$-almost all $n$, $x_n \in V$. 
\item\label{it:alg-lim-basic:B}
If $U \subset \RR^d$ is algebraic and $x_n \in U$ for $p$-almost all $n$, then $V \subset U$.
\item\label{it:alg-lim-basic:C}
The algebraic variety $V$ is irreducible.
\item\label{it:alg-lim-basic:E}
If $q \in \beta \NN$ and $(x_n)_{n=1}^\infty$ is bounded, then $\displaystyle  \alglim_{n\to p} \bra{\lim_{m \to q} x_{n+m}} \subset \alglim_{n\to p+q}(x_n)$. 
\end{enumerate}
\end{lemma}
\begin{proof}\color{White}
\begin{enumerate}[wide]
\item It follows from the Hilbert basis theorem that there exists a finite sequence of sets $I_j \in p$, $1 \leq j \leq s$, such that 
\[
	V = \bigcap_{1 \leq j \leq s} \algcl\set{ x_n }{ n \in I_j}.
\]
In particular, $x_n \in V$ for all $n \in I_1 \cap I_2 \cap \dots \cap I_s \in p$.
\item Let $J = \set{n}{x_n \in U}$. Then $J \in p$, whence 
\[
V \subset \algcl\set{ x_n }{ n \in J} \subset U.
\]
\item 
Suppose that $V$ is the union of two subvarieties, $V = U_1 \cup U_2$. Let $J_i = \set{n \in \NN}{x_n \in U_i}$ for $i \in \{1,2\}$. Then $J_1 \cup J_2 \in p$, so $J_1 \in p$ or $J_2 \in p$. Hence, $U_1 = V$ or $U_2 = V$, as needed. 
\item Let $\displaystyle U := \alglim_{n\to p+q}(x_n)$. It follows directly from the definition \eqref{eq:def-of-alglim} that
\[
	\forall^p_n \ \forall^q_m \ x_{n+m} \in U.
\]
Since $U$ is (topologically) closed, it follows that
\[
	\forall^p_n \ \lim_{m\to q} x_{n+m} \in U.
\]
It follows from item \eqref{it:alg-lim-basic:B} that $\displaystyle \alglim_{n\to p} \bra{\lim_{m \to q} x_{n+m}} \subset U$. \qedhere
\end{enumerate}
\end{proof}
\begin{remark}\color{NavyBlue}
	The assumption of boundedness in item \eqref{it:alg-lim-basic:E} can be removed by working in the projective plane and altering the definitions accordingly. Because we do not need this generalisation, we omit further details.
\end{remark}

\subsection{\checkmark\ Separating variables}\label{ssec:prelims-sep}\mbox{}
{ 
Our main result, Theorem \ref{thm:A}, can be rephrased as a statement concerning a certain bounded sequence of points $x_n \in \RR^d$ ($n \in \NN$) which all belong to a semialgebraic subset of $\RR^d$ (cf.{} Theorem \ref{thm:main-strong}). In the course of the argument, which proceeds by structural induction, we encounter a more general situation where $(x_n,t_n) \in S$ for a bounded sequence $x_n \in \RR^d$, a divergent sequence $t_n \in \RR$ and a semialgebraic set $S \subset \RR^{d+1}$. In this section we develop tools that allow us to treat the two components $x_n$ and $t_n$ independently. 
Consider the following motivating example. 
}

\begin{lemma}\label{lem:sep-exple}
	Let $(x_n,t_n) \in \RR^2$ ($n \in \NN$) be a sequence of points such that 
\begin{inparaenum}[(1)]
\item\label{it:30:A} $x_n$ is a limit point of the sequence $(x_m)_{m=1}^\infty$ for each $n \in \NN$ and 
\item\label{it:30:B} $t_n \to \infty$ as $n \to \infty$.
\end{inparaenum} 
Put $V := \algcl\set{(x_n,t_n)}{n \in \NN}$ and $X := \set{x_n}{n \in \NN}$. Then either $V = \RR^2$ or $X$ is finite and $V = X \times \RR$.
\end{lemma}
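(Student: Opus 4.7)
The plan is to assume $V \neq \RR^2$ and show that condition (1) together with $t_n \to \infty$ forces $X$ to be finite; the equality $V = X \times \RR$ will then fall out almost immediately.

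I will choose a nonzero polynomial $P \in \fI(V)$ and expand it in $t$ as $P(x,t) = \sum_{i=0}^d a_i(x) t^i$ with $a_d \not\equiv 0$. If $d = 0$, then $a_0(x_n) = 0$ for all $n$ immediately forces $X$ into the finite zero set of $a_0$, so I may assume $d \geq 1$ and set $Y := \set{x \in \RR}{ a_d(x) = 0}$, which is finite.

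The central step is the claim that every limit point of $(x_m)_{m \in \NN}$ in $\RR$ lies in $Y$. I would argue by contradiction: suppose $x_{n_k} \to x^* \in \RR \setminus Y$. On a compact neighborhood $K$ of $x^*$ avoiding $Y$, the coefficient $\abs{a_d}$ is bounded below by some $c > 0$, and standard continuity of roots in the coefficients (valid when the leading coefficient stays away from $0$) yields a uniform bound $B = B(K,P)$ on the roots in $t$ of $P(x,\cdot)$ for $x \in K$. Since $t_{n_k}$ is such a root for large $k$, $\abs{t_{n_k}} \leq B$, contradicting $t_n \to \infty$. This is the only analytically nontrivial piece of the argument.

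Condition (1) then pins down each $x_n$: for each $n$, either $x_n$ is a genuine accumulation point of $(x_m)$, in which case the claim gives $x_n \in Y$; or $x_n = x_m$ for infinitely many $m$, in which case the vertical line $\{x_n\} \times \RR$ contains the infinitely many points $(x_n, t_m)$ of $V$ and therefore must be contained in $V$, forcing $P(x_n, \cdot) \equiv 0$ and in particular $a_d(x_n) = 0$, so again $x_n \in Y$. Either way $X \subset Y$ is finite. Once $X$ is finite, condition (1) applied to the discrete-valued sequence $(x_m)$ forces each $x \in X$ to be attained infinitely often, and the corresponding points $(x, t_m)$ with $t_m \to \infty$ show $\{x\} \times \RR \subset V$; hence $X \times \RR \subset V$, while the reverse inclusion is automatic because $X \times \RR$ is Zariski closed and contains every $(x_n,t_n)$. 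The main obstacle is the limit point claim; after that the rest is a short case analysis using condition (1).
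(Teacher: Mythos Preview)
Your argument is correct. It differs from the paper's proof mainly in language: the paper phrases the key finiteness step projectively (``the line $\{x\}\times\RR$ is tangent to $V$ at infinity for each $x\in X$; $V$ has only finitely many points at infinity, so $X$ is finite''), whereas you make the same point explicitly by picking a nonzero $P\in\fI(V)$, expanding in powers of $t$, and observing that the vertical-asymptote locus is contained in the finite zero set $Y$ of the leading coefficient $a_d$. These are two renderings of the same fact that a plane curve has only finitely many vertical asymptotes; your version is more elementary and self-contained, the paper's more compressed and geometric. The uniform root bound you invoke is the concrete analytic content behind the paper's one-line appeal to ``points at infinity''.

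One small redundancy: your case split when applying condition~(1) is unnecessary. Your central claim already shows that \emph{every} subsequential limit of $(x_m)$ lies in $Y$, and this covers both the ``accumulation point'' and the ``value repeated infinitely often'' cases uniformly (in the latter, take the constant subsequence $x_{m_k}=x_n$; the root bound still forces $t_{m_k}$ bounded, a contradiction). So once the claim is established you may conclude $X\subset Y$ in one line.
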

\begin{proof}	If $V = \RR^2$ we are done, so suppose that this is not the case.
	Then the line $\{x\} \times \RR$ is tangent to $V$ at infinity for each $x \in X$. Since $V$ has only finitely many points at infinity, it follows that $X$ is finite and in particular each point in $X$ is isolated. Hence, for each $x \in X$ there exist infinitely many $n \in \NN$ such that $x_n = x$. As a consequence, the line $\{x\} \times \RR$ contains infinitely many points $(x_n,t_n)$, $n \in \NN$, and thus must be contained in $V$. It follows that $V = X \times \RR$.
\end{proof}

{
The following analogue of Lemma \ref{lem:sep-exple} will be useful for our purposes. A related result was obtained in \cite[Prop.\ 3.8]{ByszewskiKonieczny-2018}. 
}

\begin{proposition}\label{prop:sep-poly-IP}
	Let $q \in \beta \NN$ be idempotent and $p \in \beta \NN + q$. Let $(x_n,t_n) \in \RR^{d} \times \RR$  ($n \in \NN$) be a sequence such that 
\begin{inparaenum}[(1)]
\item\label{it:31:A} $\displaystyle\lim_{m \to q} x_{n+m} = x_n$ for each $n \in \NN$ and 
\item\label{it:31:B} $t_n \to \infty$ as $n \to \infty$.
\end{inparaenum} 
Let $V$ be an algebraic variety such that
	\begin{equation}\label{eq:34:00-2}
		\forall^p_n \ (x_n,t_n) \in V.	
	\end{equation}
Then we have also the ostensibly stronger condition
	\begin{equation}\label{eq:34:01-2}
		\forall^p_n \ \{x_n\} \times \RR \subset V.
	\end{equation}
\end{proposition}
\begin{remark}\label{rmk:separation}
\begin{enumerate}[wide]
\item Using the terminology of algebraic closures of sequences along ultrafilters introduced in Section \ref{ssec:lemmas-alg-cl}, the conclusion of Proposition \ref{prop:sep-poly-IP} can be stated more succinctly as
	\[
		\alglim_{n \to p} (x_n,t_n) = \alglim_{n \to p} (x_n) \times \RR.
	\]
\item Consider the situation when $q$ is idempotent and $p \in \beta \NN+q$. Assumption \eqref{it:31:A} is satisfied for any sequence $x_n$ given by $x_n = \lim_{m \to q} x'_{n+m}$ for a bounded sequence $x_n'$. Conversely, if $x_n$ satisfies \eqref{it:31:A} then it is given by the above formula with $x_n' = x_n$. 
\end{enumerate}
\end{remark}
\begin{proof}
	Take any $f \in \fI(V)$. We will show that for $p$-almost all $n$, $f(x_n,t) = 0$ as a polynomial in $t$. Expand $f$ as 
	\[\textstyle
		f(x,t) = \sum_{j=0}^s t^j f_j(x).
	\]
	Proceeding by induction on $s$, we will show that $f_j(x_n) = 0$ for $p$-almost all $n$ and for all $0 \leq j \leq s$. The case $s = 0$ is trivial, so assume that $s \geq 1$. Since $p+q = p$, the ultrafilter $q$ is not principal and it follows from \eqref{eq:34:00-2} that
	\begin{equation}\label{eq:34:02-2}
		\forall^p_n \ \forall^q_m \ \sum_{j=0}^s t_{n+m}^{j-s} f_j(x_{n+m}) = 0.	
	\end{equation}
	Keeping $n$ fixed and passing to the limit with respect to $m$ we conclude that
	\begin{equation}\label{eq:34:03-2}
		\forall^p_n \ f_s(x_n) = f_s \bra{\lim_{m \to q} x_{n+m} } = 0.	
	\end{equation}
	Let $f'(x,t)$ denote the truncated polynomial
	\[\textstyle
		f'(x,t) = \sum_{j=0}^{s-1} t^j f_j(x).	
	\]
	Then \eqref{eq:34:03-2} and Lemma \ref{lem:alg-lim-basic} imply that $f' \in \fI(V)$:
	\[
		\forall^p_n \ f'(x_n,t_n) = f(x_n,t_n) = 0.
	\]
	It remains to apply the inductive assumption to $f'$.
\end{proof}

{
We record a special case of Proposition \ref{prop:sep-poly-IP} relevant to rotations on the torus. 
}

\begin{corollary}\label{cor:sep-poly-IP}
	Let $q \in \beta \NN$ be idempotent, let $p \in \beta \NN + q$, $k \in \NN_{\geq 2}$. Let $x \in [0,1)^d$ be such that $\displaystyle \lim_{m \to q} \fp{k^m x} = x$. Let $V \subset \RR^d \times \RR$ be an algebraic variety such that 
	\begin{equation}\label{eq:34:00-3}
		\forall^p_n \ \bra{\fp{k^n x},k^n} \in V.	
	\end{equation}
	Then we also have the ostensibly stronger condition
	\begin{equation}\label{eq:34:01-3}
		\forall^p_n \ \fp{k^n x} \times \RR \subset V.
	\end{equation}
\end{corollary}
\begin{proof}
	This will follow from Proposition \ref{prop:sep-poly-IP} as soon as we show that
\begin{equation}\label{eq:129:1}
\fp{k^n x} = \lim_{m \to q} \fp{ k^{n+m} x} \text{ for each } n \in \NN.
\end{equation}
Reasoning separately for each coordinate, we may assume without loss of generality that $d = 1$. If $x$ is rational then the denominator of $x$ is coprime to $k$ and \eqref{eq:129:1} follows readily. If $x$ is irrational then \eqref{eq:129:1} follows from the fact that the map $t \mapsto \fp{t}$ is continuous at $x$.\end{proof}

{
Our next result is an analogue of Proposition \ref{prop:sep-poly-IP} for semialgebraic sets. 
}

\begin{proposition}\label{prop:sep-semialg-IP}
	Let $q \in \beta \NN$ be idempotent and $p \in \beta \NN + q$. Let $(x_n,t_n) \in \RR^{d} \times \RR$ ($n \in \NN$) be a sequence such that 
\begin{inparaenum}[(1)]
\item\label{it:32:A} $\displaystyle \lim_{m \to q} x_{n+m} = x_n$ for each $n \in \NN$ and 
\item\label{it:32:B} $t_n \to \infty$ as $n \to \infty$.
\end{inparaenum} 
Put $V := \displaystyle \alglim_{n \to p}(x_n)$ and let $S \subset \RR^{d+1}$ be a semialgebraic set such that 
\begin{equation}\label{eq:35:01}
\forall^p_n\ (x_n,t_n) \in S.
\end{equation}
Then there exists a relatively open set $U \subset V$ such that $ x_n \in U$ for $p$-almost all $n$ and a continuous map $f \colon U \to \RR$ such that
\begin{align}\label{eq:35:30}
 \set{ (x,y) \in \RR^{d+1} }{ x \in U,\ y \geq f(x) } \subset S. 
 \end{align}
In particular, for $p$-almost all $n$ there exists a semialgebraic set $Q \subset \RR^d$ and a threshold $l \in \RR$ such that 
\begin{align}\label{eq:35:02}
\forall^{n+q}_m \ x_{m} &\in Q &\text{ and }&& Q \times [l, \infty) &\subset S.
\end{align}

\end{proposition}
\begin{proof}
By Proposition \ref{prop:sep-poly-IP}, $\displaystyle \alglim_{n \to p} (x_n,t_n) = V \times \RR$. Replacing $S$ with $S \cap V \times \RR$ if necessary, we may assume that $\algcl(S) = V \times \RR$ (cf.\ Lemma \ref{lem:alg-lim-basic}\eqref{it:alg-lim-basic:A}). Decomposing $S$ and using partition regularity, we may assume that $S$ is a basic semialgebraic set. Consequently, $S$ takes the form 
\[ 
S = \fS(\fI(V \times \RR),G) = \set{(x,t) \in V}{ g_j(x,t) > 0 \text{ for all } 1 \leq j \leq r}
\]
for a finite set $G = \{g_1,\dots, g_{r}\}$ of polynomial maps $\RR^d \times \RR \to \RR$. Let $h_j(x)$ be the leading coefficient of $g_j(x,t)$ as a polynomial in $t$ ($1 \leq j \leq r$). We may assume without loss of generality that none of $h_j$ ($1 \leq j \leq r$) vanishes identically on $V$, since otherwise we could replace $g_j(x,t)$ with a polynomial of lower degree in $t$.
Let 
\begin{align*}
U  &= \set{x \in V}{h_j(x) > 0 \text{ for all } 1 \leq j \leq r}, \\
Y &= \set{x \in V}{h_j(x) = 0 \text{ for at least one } 1 \leq j \leq r}.
\end{align*}
It is also straightforward to construct a continuous map $f \colon U \to \RR$ such that if $x \in U$ and $t \geq f(x)$ then $g_j(x,t) > 0$ for all $1 \leq j \leq r$. To see this, expand $g_j(x,t)$ as $g_j(x,t) = h_j(x) t^{i_j} + g_{j,1}(x) t^{i_j -1} + \dots + g_{j,i_j}(x)$, and put
\[
	f(x) = 1+ \max_{1 \leq j \leq r} \frac{\abs{g_{j,1}(x)} + \abs{g_{j,2}(x)} + \dots + \abs{g_{j,i_j}(x)} }{h_j(x)}.
\]

The definitions above ensure that \eqref{eq:35:30} holds: if $(x,y) \in \RR^{d+1}$, $x \in U$ and $y \geq f(x)$ then $g_j(x,y) > 0$ for all $1 \leq j \leq r$ and consequently $(x,y) \in S$. It remains to show that $x_n \in U$ for $p$-almost all $n$. Since $p+q = q$, we have
\[
	\forall^p_n \ \forall^q_m \ (x_{n+m}, t_{n+m}) \in S. 
\]
We may assume that $t_n > 0$ for all $n$. Letting $g_j'(x,t) := g_j(x,t) - t^{i_j} h_j(x)$ denote the truncated version of $g_j$, for each $1 \leq j \leq r$ we conclude that
\[
	\forall^p_n \ \forall^q_m \ h_j(x_{n+m}) +g_j'(x_{n+m},t_{n+m})t_{n+m}^{-i_j} > 0. 
\]
Passing to the limit $m \to q$ we obtain
\[
	\forall^p_n \ h_j(x_{n}) \geq 0, 
\]
meaning that $x_n \in U \cup Y$ for $p$-almost all $n$. Since $Y$ is a proper subvariety of $V$, we have $x_n \not \in Y$ for $p$-almost all $n$, which finishes the argument.

For the additional part, it is enough to take $Q_n$ to be the intersection of $V$ with any open ball centred at $x_n$ whose closure is disjoint from $Y$.
\end{proof}

  \section{\checkmark\ Torus}\label{sec:torus}

\subsection{\checkmark\ Setup}\label{ssec:torus-setup}\mbox{}
{
Before we approach the proof of Theorem \ref{thm:A} in full generality, we first consider the special case of generalised polynomials which can be represented using Bergelson--Leibman machinery on a torus (or, equivalently, using Leibman's Theorem \ref{thm:Leibman} with degree $1$ classical monomials). This is the simplest non-trivial case of Theorem \ref{thm:A} and it allows us to present some of the main ideas of the proof in a less complicated context. }

{
\textit{Throughout this section, the dimension $d \geq 1$ and the basis $k \geq 2$ are fixed.}
}
\begin{theorem}\label{thm:main-torus}
	Let $x \in \RR^d$ and let $S \subset [0,1)^d$ be a semialgebraic set. Suppose that the set of $n \in \NN$ such that $\fp{k^n x} \in S$ is central. Then the set of $l \in \NN$ such that $\fp{lk^n x} \in S$ for infinitely many $n \in \NN$ is $\IP^*_+$.
\end{theorem}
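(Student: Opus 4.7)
I would prove the theorem by reducing it to an application of the distal-recurrence Theorem \ref{prop:recurrence-IP*} on a torus rotation, using the ultrafilter machinery of Section \ref{sec:prelims} together with the separation Proposition \ref{prop:semialg-sep-IP-2}.

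The first step is to pass to ultrafilters. By centrality I fix a minimal idempotent $p \in \beta\NN$ with $\set{n \in \NN}{\fp{k^n x} \in S} \in p$, and set $y := \lim_{n \to p} \fp{k^n x} \in \overline{S}$. The reasoning of Corollary \ref{cor:poly-sep-IP} (idempotence $p+p=p$ combined with continuity of $\fp{\cdot}$ away from $\ZZ^d$) gives $\lim_{m \to p} \fp{k^m y} = y$, so that $y$ is $p$-recurrent under the $\times k$ map. I then apply Proposition \ref{prop:semialg-sep-IP-2} to the sequence $(x_n, t_n) := (\fp{k^n y}, k^n)$ with $R := \overline{S} \times \RR$, obtaining a basic semialgebraic set $Q \subseteq \overline{S}$ containing $y$, relatively open in the variety $V := \alglim_{n \to p}(\fp{k^n y})$, such that $\fp{k^n y} \in Q$ for $p$-almost all $n$. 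After intersecting $Q$ with the relatively open stratum of $S$ inside the Zariski closure of $S$, I may further assume $Q \subseteq S$.

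Next, the core step is to set up a minimal distal system. Let $H \subseteq \RR^d/\ZZ^d$ denote the closed subgroup generated by the class of $y$, equipped with the rotation $\tau_y(z) := z + y$; this is a minimal distal system. I aim to exhibit a non-empty relatively open $U \subseteq H$ with $U \subseteq Q$, at which point Theorem \ref{prop:recurrence-IP*} yields that $L := \set{l \in \NN}{\fp{ly} \in U}$ is $\IP^*_+$. To construct $U$, I would relate $V$ to $H$ via the translation stabilizer $\cW := \set{v \in \RR^d}{V + v = V}$ (Lemma \ref{lem:stab-is-alg}\eqref{it:20:C}): the $p$-recurrence of $y$ should force $H$ to lie in the coset $y + \cW$ modulo $\ZZ^d$ in a neighbourhood of $y$, so that a sufficiently small $H$-neighbourhood of $y$ falls inside the relatively open $Q$.

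Finally, I transfer back: for $l \in L$, continuity of $z \mapsto \fp{lz}$ at $y$ (valid off $\ZZ^d$) combined with $\fp{k^n x} \to y$ along $p$ gives $\fp{lk^n x} = \fp{l \fp{k^n x}}$ close to $\fp{ly} \in U$ for $p$-almost all $n$, and openness of $U$ keeps these perturbations inside $U \subseteq S$; hence $\fp{lk^n x} \in S$ for infinitely many $n$, so $L \subseteq E^*$ and the conclusion follows. The main obstacle is producing the open $U \subseteq H \cap S$ in the third step: this requires reconciling the algebraic structure of $V$ with the group structure of $H$ and managing boundary effects between $S$ and $\overline{S}$ introduced by working with $R = \overline{S} \times \RR$. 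If direct matching fails, I would either iterate Proposition \ref{prop:semialg-sep-IP-2} to further refine $V$, or induct on $\dim V$ after quotienting by $\cW$ and reducing to a lower-dimensional torus.
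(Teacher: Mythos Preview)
Your overall strategy---pass to a uniformly recurrent point, identify an algebraic/group-theoretic structure in the orbit, and reduce to Theorem~\ref{prop:recurrence-IP*} for a torus rotation---is the same as the paper's. But two genuine gaps prevent the argument from going through as written.

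\textbf{The main gap: why is $V$ compatible with $H$?} Your ``core step'' asserts that the translation stabilizer $\cW$ of $V$ is large enough that a neighbourhood of $y$ in $H$ lies inside $y+\cW$, hence inside $Q$. A priori $V=\alglim_{n\to p}\fp{k^n y}$ is merely an irreducible variety; it could be a curve inside the affine span of $H$, in which case $\cW=\{0\}$ and no open $U\subset H$ fits in $Q$. The paper closes this gap with Proposition~\ref{lem:idempotent}: by analysing the group $\GG$ generated by those affine maps $T_k^n|_q$ that preserve $V_q$, it shows $V_q$ is an \emph{affine space defined over $\QQ$}. The argument uses the semidirect-product structure $\GG\hookrightarrow\QQ^d\rtimes\ZZ$ and the boundedness of the orbit $\fp{k^n y}$ to force $\pi(y-z)=0$ for a rational fixed point $z$ and the projection $\pi$ orthogonal to $\cW$. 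This is the substantive algebraic input you are missing; ``$p$-recurrence of $y$ should force\ldots'' does not by itself supply it.

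\textbf{A secondary gap: the use of Proposition~\ref{prop:semialg-sep-IP-2} and the $S$ vs.\ $\overline{S}$ issue.} Applying Proposition~\ref{prop:semialg-sep-IP-2} with $R=\overline{S}\times\RR$ is vacuous in the $t$-variable: the conclusion $Q_u\times[l_u,\infty)\subset R$ reduces to $Q_u\subset\overline{S}$, which you already knew. Moreover, once you replace $x$ by $y$ you only have $\fp{k^n y}\in\overline{S}$, not $\in S$, and your intersection-with-a-stratum step need not recover an open piece of $S$ inside $V$. The paper avoids both problems in Theorem~\ref{thm:main-strong-torus} by keeping the original orbit $x^m$ and encoding the two-variable condition in
\[
R=\set{(x,t)}{\forall^q_m\ x+t(x^m-x^q)\in S},
\]
so that $(x^{n+q},k^n)\in R$ genuinely uses the $t$-coordinate; Proposition~\ref{prop:semialg-sep-IP-2} then separates out a set $Q$ with $Q\times[l_0,\infty)\subset R$, and Corollary~\ref{cor:main-ur-1} (itself resting on Proposition~\ref{lem:idempotent}) produces the $\IP^*_+$ set of $l$ with $T_l|_q(x^q)\in Q$.
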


{\label{ssec:setup-torus}
Let $x^0 \in [0,1)^d$ be an arbitrary point, fixed throughout the section. For $l \in \NN$, let  $T_l \colon [0,1)^d \to [0,1)^d$ be the $\times l$ map given by 
\begin{equation}\label{eq:def-of-T-ab}
T_l(x) := \fp{lx} = (\fp{lx_i})_{i=1}^d \qquad (x \in [0,1)^d).
\end{equation}
Immediately from the definition, we see that $T_l \circ T_m = T_{lm}$ for all $l,m \in \NN$. 
Put $x^n := T^n_k(x^0) = \fp{k^n x^0}$ for $n \in \NN$ and more generally $x^p = \displaystyle\lim_{n \to p} x^n \in [0,1]^d$ for $p \in \beta \NN$. The maps $T_l$ are piecewise affine. For $l \in \NN$ and $p \in \beta \NN_0$, define
\begin{equation}\label{eq:def-of-T|p-ab}
T_l|_p(x) := l x - \lim_{n \to p} \ip{l x^n} = l(x-x^p) + \lim_{n \to p} \fp{l x^n} \qquad (x \in \RR^d),
\end{equation}
so that $T_l|_p$ is an affine map such that $T_l(x^n) = T_l|_p(x^n)$ for $p$-almost all $n$.
Note that $\lim_{n \to p} \fp{l x^n} = \fp{lx^p}$ as long as $\fp{lx^p} \in (0,1)^d$ and that $T^{m}_k|_p(x^p) = x^{m+p}$ ($m \in \NN$).
We are interested in the situation when the points $x^n$ belong to a certain semialgebraic set for many $n$, which motivates us to further denote (cf.\ \eqref{eq:def-of-alglim})
\begin{equation}\label{eq:def-of-Vp-ab}
	V_p := \alglim_{n \to p}\bra{x^n} = \bigcap_{I \in p} \algcl\set{ x^n }{ n \in I} \qquad(p \in \beta \NN_0).
\end{equation}
Recall from Lemma \ref{lem:alg-lim-basic} that the varieties $V_p$ are irreducible and are minimal with respect to the property that $x_n \in V_p$ for $p$-almost all $n$. For $n \in \NN_0$ the corresponding varieties consist of a single point, $V_n = \{x^n\}$. We next investigate the behaviour of $V_p$ under the $\times k$ maps $T_k|_p$. 
}
\begin{lemma}\label{lem:translation}
	Let $p,q \in \beta \NN_0$. Then
\begin{equation}\label{eq:72:00}
	 \forall^p_n \ T_k^n|_q(V_q) \subset V_{p+q}.
\end{equation}	
	Moreover, if $q$ is minimal then
\begin{equation}\label{eq:72:01}
\forall^p_n \ V_{n+q} = T_k^n|_q(V_q) = V_{p+q}.
\end{equation}	
and the set $\set{ V_{u}}{ u \in \beta \NN +q}$ is finite.
\end{lemma}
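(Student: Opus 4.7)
The plan is to establish the three assertions in sequence, exploiting the compatibility of the affine maps $T_k^n|_q$ with the true trajectory $(x^m)$ at $q$-large indices together with the local-constancy property provided by Lemma \ref{lem:alglim-basic}.\eqref{it:21:D}. For \eqref{eq:72:00}, I would first observe that for each fixed $n \in \NN_0$ the integer vector $\ip{k^n x^m}$ is constant equal to $\lim_{j \to q}\ip{k^n x^j}$ for $q$-almost all $m$, so that $T_k^n|_q(x^m) = \fp{k^n x^m} = x^{n+m}$ on that $q$-large set. By the defining property of $V_{p+q}$ (Lemma \ref{lem:alglim-basic}.\eqref{it:21:A}), the set of $n$ for which $x^{n+m} \in V_{p+q}$ for $q$-almost all $m$ belongs to $p$. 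For each such $n$, the preimage $(T_k^n|_q)^{-1}(V_{p+q})$ is an algebraic variety containing $x^m$ for $q$-almost all $m$, hence contains $V_q$ by Lemma \ref{lem:alglim-basic}.\eqref{it:21:B}, and therefore $T_k^n|_q(V_q) \subset V_{p+q}$.

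For \eqref{eq:72:01}, I would handle the two equalities independently. The identity $V_{n+q} = T_k^n|_q(V_q)$ holds for every $n \in \NN_0$ even without minimality: invertibility of the affine map $T_k^n|_q$ makes $T_k^n|_q(V_q)$ an irreducible algebraic variety, and the double inclusion is verified by the same preimage-of-a-variety argument used in the first step, applied in both directions (with $V_{n+q}$ in place of $V_{p+q}$). The remaining equality $V_{n+q} = V_{p+q}$ for $p$-almost all $n$ is a continuity statement: since $q$ is non-principal, so is $p+q$, and by Lemma \ref{lem:alglim-basic}.\eqref{it:21:D} there is an open neighbourhood $\Omega$ of $p+q$ on which $u \mapsto V_u$ is constantly equal to $V_{p+q}$; continuity of $u \mapsto u+q$ then supplies an open neighbourhood of $p$ whose intersection with $\NN_0$ lies in $p$ and on which $V_{n+q} = V_{p+q}$.

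For the finiteness assertion, minimality of $q$ ensures that the left ideal $\beta\NN_0 + q$ is the continuous image of the compact space $\beta\NN_0$ under $u \mapsto u+q$, hence compact, and that it lies entirely within $\beta\NN \setminus \NN$. The map $u \mapsto V_u$ is locally constant on this compact set by Lemma \ref{lem:alglim-basic}.\eqref{it:21:D}, and a locally constant function on a compact space takes only finitely many values. The main point requiring care throughout is to maintain the distinction between the genuine iterates $T_k^n$ and their affine approximants $T_k^n|_q$: everything hinges on the fact that they coincide on the $q$-large set where $\ip{k^n x^m}$ stabilises, and it is precisely this coincidence that allows algebraic information encoded in $V_q$ to be transported to $V_{n+q}$ and ultimately $V_{p+q}$.
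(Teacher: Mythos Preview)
Your argument is correct, and for \eqref{eq:72:00} it coincides with the paper's. For \eqref{eq:72:01} and the finiteness assertion, however, you take a genuinely different route.

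The paper exploits minimality of $q$ algebraically: it picks $r$ with $r+p+q=q$, applies \eqref{eq:72:00} twice to obtain the chain
\[
\forall^r_m\,\forall^p_n\ T_k^m|_{p+q}\circ T_k^n|_q(V_q)\subset T_k^m|_{p+q}(V_{p+q})\subset V_q,
\]
and then invokes Lemma~\ref{lem:stab-is-alg}\eqref{it:20:B} (an invertible affine map sending a variety into itself is a bijection onto it) to force all inclusions to be equalities. Finiteness is then deduced by contradiction from \eqref{eq:72:01}. Your approach is instead topological: you verify $V_{n+q}=T_k^n|_q(V_q)$ directly for every $n$ by the two-sided preimage argument, and obtain $V_{n+q}=V_{p+q}$ for $p$-almost all $n$ from the local constancy of $u\mapsto V_u$ on $\beta\NN\setminus\NN$ (Lemma~\ref{lem:alglim-basic}\eqref{it:21:D}) together with continuity of $u\mapsto u+q$. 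Finiteness then drops out immediately from compactness of $\beta\NN_0+q$.

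Your route is in fact slightly stronger---it shows \eqref{eq:72:01} whenever $q$ is merely non-principal---and arguably more transparent. The paper's route, on the other hand, uses the minimal-ideal structure in a way that parallels the later nilpotent generalisation (cf.\ Remark~\ref{rmk:translation} and Lemma~\ref{lem:translation-ur}), where the Noetherian ``sandwich'' argument with Lemma~\ref{lem:stab-is-alg} becomes the natural tool.
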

\begin{proof}
It follows directly from the relevant definitions that
\[
	\forall^p_n \ \forall^q_m \ x^{n+m} = T_k^n|_q(x^m) \in V_{p+q}.
\]
Since the affine map $ T_k^n|_q$ is invertible, we may rewrite this as
\[
	\forall^p_n \ \forall^q_m \ x^m  \in T_k^n|_q^{-1}( V_{p+q}).
\]
Recalling the definition of $V_q$, we conclude that 
\[
	\forall^p_n \ V_q \subset T_k^n|_q^{-1}( V_{p+q} ),
\]
and \eqref{eq:72:00} follows by applying $T_k^n|_q$ to both sides.

If $q$ is additionally minimal then there exists $r \in \beta \NN$ such that $r+p+q = q$. Applying \eqref{eq:72:00} twice we obtain
\begin{align*}
		\forall^r_m & \ \forall^p_n \ T^{m}_k|_{p+q} \circ T^n_k|_{q} (V_{q}) \subset T^{m}_k|_{p+q}( V_{p+q} ), \\
\forall^r_m & \ T^{m}_k|_{p+q}( V_{p+q} ) \subset V_{r+p+q} = V_q. 
\end{align*}
	This is only possible if all inclusions are in fact equalities (cf.\ Lemma \ref{lem:stab-is-alg}), which implies \eqref{eq:72:01}.
	
	To prove the final part of the statement, it will suffice to show that the number of distinct varieties among $V_{n+q}$ ($n \in \NN$) is finite. Suppose otherwise and pick an infinite set $I \subset \NN$ such that the varieties $V_{n+q}$ ($n \in I$) are pairwise distinct. Then there exists $p \in \beta \NN \setminus \NN$ such that $I \in p$, which contradicts \eqref{eq:72:01}.
\end{proof}	
\begin{remark}\label{rmk:translation}
For future reference, note that the above argument uses the fact that $T^n_k|_q \in \AGL(d)$ for any $q \in \beta \NN_0$ and $n \in \NN_0$, and $T^n_k|_q(x^m) = x^{n+m}$ for $q$-almost all $m$, but no other properties of the maps $T^n_k|_q$.
\end{remark}

\subsection{\checkmark\ Uniform recurrence}\mbox{}\label{ssec:torus-ur}
{
In this section we consider the case when $x^q = x^0$ for a minimal idempotent $q$. If $[0,1)^d$ is identified with $\RR^d/\ZZ^d$, thus making the map $T_k$ continuous, then this is equivalent to $x^0$ being uniformly recurrent (Theorem \ref{thm:recurrence-central}). Without such identification, the discontinuity of $T_k$ slightly complicates the picture. The main result in this section is the following variant of Theorem \ref{thm:main-torus}.
}

\begin{theorem}\label{thm:main-ur-torus}
	Let $q \in \beta \NN$ be a minimal idempotent, let $p \in \beta \NN+q$, and let $S \subset [0,1)^d$ be a semialgebraic set. Suppose that $x^q = x^0$ and that $x^n \in S$ for $p$-almost all $n$. Then the set of $l \in \NN$ such that $T_l(x^0) \in S$ is $\IP^*_+$.  
\end{theorem}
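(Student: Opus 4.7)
The plan is to combine the semialgebraic separation-of-variables provided by Proposition \ref{prop:semialg-sep-IP-2} with Furstenberg's $\IP^*_+$-recurrence for the minimal distal rotation on the orbit closure of $x^0$ under the map $y \mapsto \fp{y + x^0}$.

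First, I would apply Proposition \ref{prop:semialg-sep-IP-2} to the sequence $(x^n, k^n)_{n \in \NN_0} \subset \RR^d \times \RR$ and the ambient semialgebraic set $R := S \times \RR$. The compatibility hypothesis $\lim_{m \to q} x^{n+m} = x^n$ reduces, via $x^{n+m} = T_k^n(x^m)$ and $x^q = x^0$, to continuity of the piecewise linear map $T_k^n$ at $x^0$, which can be handled coordinatewise as in Corollary \ref{cor:poly-sep-IP}. Taking $u = p$ in the conclusion produces a basic semialgebraic set $Q := Q_p$ and a threshold $l_p > 0$ with $Q \times [l_p, \infty) \subset S \times \RR$---forcing $Q \subset S$---while the construction in the proof of that proposition shows that $Q = V_p \cap \Ball(x^p, \rho/2)$ is a relatively open neighborhood of $x^p$ in the irreducible variety $V_p$.

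Next, set $H := \topcl\set{\fp{l x^0}}{l \in \NN_0} \subset [0,1)^d$, which is a closed subgroup of the torus $\RR^d/\ZZ^d$ (and in particular a compact abelian Lie group). The translation $R_{x^0}(y) := \fp{y + x^0}$ makes $(H, R_{x^0})$ a minimal distal dynamical system, so Theorem \ref{prop:recurrence-IP*} ensures that for every non-empty relatively open $U \subset H$, the set $\set{l \in \NN}{\fp{l x^0} \in U}$ is $\IP^*_+$. Since $\IP^*_+$-ness is preserved under supersets, it suffices to exhibit such a $U$ inside $S$. Because $\fp{k^n x^0} \in H$ for every $n$ and $H$ is closed, we have $x^p \in H$, so the task reduces to showing that $Q \cap H$ is a relative neighborhood of $x^p$ in $H$. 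Lifting to $\RR^d$, this amounts to the claim that $V_p$ contains the affine coset $x^p + W$, where $W \subset \RR^d$ is the rational linear subspace lifting the identity component of $H$.

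I expect this last geometric claim to be the main obstacle. My approach rests on the dilation invariance encoded in Lemma \ref{lem:translation}: since $q$ is minimal, $T_k^n|_q(V_q) = V_q$ for $q$-many $n$, and the companion identity $T_k^n|_q(V_q) = V_p$, valid for $p$-many $n$, transfers an analogous invariance to $V_p$ via the affine bijections $T_k^n|_q$. Writing the relevant maps as dilations $x \mapsto k^n(x - x^p) + x^{n+p}$ and invoking Lemma \ref{lem:stab-is-alg} (the stabilizer of an algebraic variety inside $\Aff(d)$ is algebraic), the Zariski-density of the scalars $\{k^n\}$ in $\RR$ together with the boundedness of the translation parts should force the stabilizer of $V_p$ to contain a continuous one-parameter family of dilations centered at $x^p$; this makes $V_p - x^p$ conic at the origin, and irreducibility then collapses it to a linear subspace $W'$. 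Finally, since $x^n - x^p \in W'$ for $p$-many $n$ and the projections of these points to $[0,1)^d$ sweep out a dense subset of the relevant cosets of the identity component of $H$ (using the uniform recurrence $x^q = x^0$), one concludes $W \subseteq W'$. With $x^p + W \subset V_p$ in hand, $\Ball(x^p, \rho/2) \cap (x^p + W)$ projects to a non-empty relatively open neighborhood of $x^p$ in $H$ contained in $Q \subset S$, and an application of Theorem \ref{prop:recurrence-IP*} completes the proof.
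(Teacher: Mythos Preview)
Your overall strategy---extract a relatively open piece of $V_p$ inside $S$ and invoke Furstenberg's recurrence for the rotation $y \mapsto \fp{y+x^0}$ on the orbit closure $H$---matches the paper's. The gap is in the final paragraph. First, the inference ``$V_p - x^p$ is an irreducible cone, hence a linear subspace'' is false: the quadric $\{xy = z^2\} \subset \RR^3$ is an irreducible cone that is not linear. Proposition~\ref{lem:idempotent} uses more than scale-invariance: it analyses the full group $\GG$ generated by the maps $T_k^n|_q$ preserving $V_q$, separates off its translational part $\cW$, and then exploits boundedness of the orbit $(x^{n+q})_n$ to locate a rational fixed point $z$ and conclude $V_q = z + \cW$. (Note also that the fixed points of the individual maps $T_k^n|_q$ are $(k^n x^q - x^{n+q})/(k^n-1)$, which only approach $x^q$; there is no single centre of dilation without further work.)

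Second, even granting that $V_p - x^p = W'$ is linear, your argument for $W \subseteq W'$ does not go through: the points $x^n = \fp{k^n x^0}$ form the $\times k$-orbit of $x^0$, not its rotation orbit, and there is no reason for them to be dense in any coset of $W$ inside $H$. What the paper actually uses is that $V_q$ is affine and \emph{defined over $\QQ$} (again Proposition~\ref{lem:idempotent}). After an $\SL(d,\ZZ)$ change of basis this forces the last $d-e$ coordinates of $x^0$ to be rational, so $H$ itself sits in a finite union of rational translates of $V_q$, and the desired containment becomes automatic. Without rationality you cannot link $V_p$ to $H$. (A minor aside: taking $u = p$ in Proposition~\ref{prop:semialg-sep-IP-2} is not justified, since $p$ may lie in the exceptional set $\Upsilon$; but that detour is unnecessary anyway, as a basic $S \cap V_p$ is already relatively open in $V_p$.)
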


Before we proceed further, we address a minor technical issue related to the discontinuity of the maps $T_k$ (cf.\ Corollary \ref{cor:sep-poly-IP}).
\begin{lemma}\label{lem:recurrent_if_x^q=x^0}
	If $q \in \beta \NN$ and $x^q = x^0$ then $x^{p+q} = x^p$ for all $p \in \beta \NN$.
\end{lemma}
\begin{proof}
	It suffices to check that for each $1 \leq i \leq d$ and each $n \in \NN$ we have $x^{n+q}_i = x^n_i$. If $x_i^0 \not \in \QQ$ then this follows from continuity of the map $x \mapsto \fp{k^n x}$ at $x_i^q = x_i^0$. If $x_i^0 \in \QQ$ then it is enough to notice that the denominator of $x_i^0$ is coprime to $k$ and hence the sequence $x^{m}_i$ is periodic.
\end{proof}

{
Under the assumptions of Theorem \ref{thm:main-ur-torus} we have a very satisfactory description of $V_q$, from which the statement of said theorem easily follows.
}
\begin{proposition}\label{lem:idempotent}
	Let $q \in \beta \NN$ be a minimal idempotent and suppose that $x^q = x^0$. Then $V_q$ is an affine space defined over $\QQ$.
\end{proposition}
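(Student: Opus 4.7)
The plan is to exploit the integer-affine invariance $T_k^n|_q(V_q) = V_q$ from Lemma~\ref{lem:translation} to force $V_q$ into affine-linear shape. Since $x^q = x^0 \in V_q$, set $W := V_q - x^0$, an irreducible variety containing $0$. Noting that $T_k^n|_q$ takes the form $x \mapsto k^n x - a_n$ with $a_n \in \ZZ^d$ (the integer shift coming from the fractional-part computation in the definition of $T_k^n|_q$), the invariance translates to $k^n W = W - \epsilon_n$, where $\epsilon_n := x^{n+q} - x^0 = (k^n - 1)x^0 - a_n$. Crucially, $\epsilon_n \to 0$ along $q$ because $x^{n+q} \to x^q = x^0$.

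The first step is to show that $W$ is a cone. For any $f \in \fI(W)$ of degree $\delta$, Taylor-expand
\[
f(k^n y + \epsilon_n) \;=\; \sum_{\gamma} (k^n)^{|\gamma|}\, \frac{\partial^{\gamma} f(\epsilon_n)}{\gamma!}\, y^{\gamma},
\]
which lies in $\fI(W)$ (as a polynomial in $y$) for $q$-almost all $n$. Dividing by $(k^n)^{\delta}$ and taking the coefficient-wise limit along $q$, the facts that $\epsilon_n \to 0$ and $k^n \to \infty$ isolate the top-degree homogeneous part of $f$, which must therefore also vanish on $W$. Inducting on $\delta$ shows $\fI(W)$ is generated by homogeneous polynomials, so $W$ is a cone, and in particular $k^n W = W$. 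Feeding this back into $k^n W = W - \epsilon_n$ yields $\epsilon_n \in \Lambda := \{v \in \RR^d : W + v = W\}$ for $q$-almost all $n$, where $\Lambda$ is a vector subspace by Lemma~\ref{lem:stab-is-alg}.\eqref{it:20:C}; note also $\Lambda \subset W$ since $0 \in W$ gives $v = 0 + v \in W + v = W$ for each $v \in \Lambda$.

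I would then conclude by induction on $\dim V_q$ that $V_q = x^0 + \Lambda$ is an affine subspace defined over $\QQ$. In the base case $\Lambda = 0$, we get $\epsilon_n = 0$ for $q$-almost all $n$, whence $(k^n - 1) x^0 = a_n \in \ZZ^d$ and therefore $x^0 \in \QQ^d$ has denominator coprime to $k$; its $T_k$-orbit is then finite, and irreducibility of $V_q$ forces $V_q = \{x^0\}$, a $\QQ$-affine point. When $\Lambda \neq 0$, pass to the quotient $\RR^d/\Lambda$: the projected variety $\pi(V_q)$ is irreducible of strictly smaller dimension, preserved by the descended integer-affine maps, and the new ``epsilon'' $\pi(\epsilon_n)$ vanishes since $\epsilon_n \in \Lambda$; applying the inductive hypothesis in the quotient gives $\pi(V_q) = \{\pi(x^0)\}$, so $V_q = x^0 + \Lambda$ as sets. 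The main obstacle will be upgrading this set-theoretic identity to $\QQ$-rationality of $V_q$, which requires showing $\Lambda$ is spanned by rational vectors and that the quotient's induction can be organised to preserve a $\QQ$-structure at each stage. I expect to handle this by applying Lemma~\ref{lem:alg-cl-Q} to the integer points $\{T_k^n|_q\} \subset \Aff(d, \ZZ)$, whose Zariski closure in $\Aff(d)$ is automatically $\QQ$-defined and stabilises $V_q$; combined with the identification of $V_q$ as the orbit closure of $x^0$ under this $\QQ$-defined subgroup, this should pin down $\Lambda$ as a rational subspace and yield the desired $\QQ$-affine structure on $V_q$.
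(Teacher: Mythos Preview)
Your cone argument is correct and elegant: showing that $W = V_q - x^0$ is homogeneous by extracting the top-degree part of each $f \in \fI(W)$ via the scaling $k^n$ and the vanishing shift $\epsilon_n$ is a clean way to establish that $x^0$ is a centre of scaling symmetry of $V_q$. This matches the spirit of the alternative proof sketched in the paper's Remark following the proposition, rather than the paper's main proof (which proceeds directly via the group $\GG \subset \QQ^d \rtimes \ZZ$ generated by the stabilising $T_k^n|_q$ and a boundedness argument).

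The genuine gap is the induction. Passing to the quotient $\RR^d/\Lambda$ does not produce an instance of the proposition to which the inductive hypothesis applies: the descended maps $\bar T_n(\bar x) = k^n \bar x - \pi(a_n)$ have translation parts $\pi(a_n)$ that are \emph{not} integers in any natural lattice of $\RR^d/\Lambda$ (unless $\Lambda$ is already rational, which is what you are trying to prove), so your base-case argument ``$a_n \in \ZZ^d \Rightarrow x^0 \in \QQ^d \Rightarrow$ finite orbit'' does not transfer. More to the point, the induction is unnecessary. The missing observation is that under the hypothesis $x^q = x^0$ one has $x^{n+q} = x^n$ for every $n$: indeed $T_k^n|_q(x^0) = k^n x^0 - \lim_{m\to q}[k^n x^m]$, and since $x^m \to x^0$ and $[\,\cdot\,]$ is continuous at $k^n x^0$ (the rational-coordinate case being handled by periodicity; cf.\ the argument in Corollary~\ref{cor:poly-sep-IP}), this equals $k^n x^0 - [k^n x^0] = x^n$. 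Thus your $\epsilon_n$ is actually $x^n - x^0$, and $\epsilon_n \in \Lambda$ for $q$-almost all $n$ gives $V_q = \alglim_{n\to q}(x^n) \subset x^0 + \Lambda$ immediately, whence $V_q = x^0 + \Lambda$ with no induction.

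For rationality, your sketch via the Zariski closure of $\{T_k^n|_q\}$ in $\Aff(d)$ is pointed in the right direction but is not yet a proof. The paper's argument is more concrete: the group $\GG$ generated by the stabilising $T_k^n|_q$ sits in $\QQ^d \rtimes \ZZ$, its translation subgroup $\GG'$ spans a rational subspace $\cW \subset \Lambda$, any non-trivial dilation in $\GG$ has a rational fixed point $z \in V_q$, and projecting the bounded orbit $(x^n)$ onto $\cW^\perp$ forces $x^0 - z \in \cW$, giving $V_q = z + \cW$ with both pieces rational. You would need an argument of this strength (in particular the boundedness step, which shows $\cW = \Lambda$) to close the gap.
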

\begin{proof}
Let $\GG < \AGL(d)$ be the group generated by those among the maps $T_k^n|_q$ ($n \in \NN$) which preserve $V_q$. By Lemma \ref{lem:translation}, $T_k^n|_q \in \GG$ for $q$-almost all $n$. All maps in $\GG$ take the form $T_{a,n}(v) = k^n v - a$ for some $a \in \QQ^d$ and $n \in \ZZ$. Hence, we may identify $\GG$ with a subgroup $\tilde \GG$ of the semidirect product $\QQ^d \rtimes \ZZ$ with the group operation given by $(a,n).(b,m) = (a+k^nb,n+m)$ via the map $T_{a,n} \mapsto (a,n)$. 
	
	The projection onto the second coordinate $(a,n) \mapsto n$ gives rise to a group homomorphism $\varphi \colon \GG \to \ZZ$. The image of $\varphi$ takes the form $\varphi(\GG) = m \ZZ$ for some $m \in \NN$ and we let $\GG' := \varphi^{-1}(0)$ denote the kernel of $\varphi$. Fix $T \in \GG$ with $\varphi(T) = m$, so that any $S \in \GG$ can be uniquely decomposed as $S = S' T^n$ where $S' \in \GG'$ and $n \in \ZZ$. Let $\cW \subset \RR^d$ denote the vector space (over $\RR$) spanned by all $a \in \QQ^d$ such that $T_{a,0} \in \GG'$, and let $\pi \colon \RR^d \to \cW^{\perp}$ denote the orthogonal projection. If $x \in \cW$ then $V_q$ is preserved under the map $v \mapsto v - x$ (cf.\ Lemma \ref{lem:stab-is-alg}), so $\cW \subset V_q-z$ for any point $z \in V_q$. It is also clear that $\cW$ is defined over $\QQ$. It remains to show $V_q \subset z+\cW$ for some point $z \in \QQ^d$.
	
	Consider the sequence of points $\pi(x^n)$ ($n \in \NN$). On one hand, this sequence is $q$-almost everywhere bounded in the sense that
	\begin{equation}\label{eq:73:50}
		\forall^q_n \ \pi(x^n) \in \pi\bra{ [0,1)^d } \subset \Ball\bra{0,\sqrt{d}}.
	\end{equation}
	On the other hand, for $q$-almost all $n$ we have the decomposition $T_k^n|_q = S'_n T^{n/m}$ for some $S_n' \in \GG'$, whence
	\begin{equation}\label{eq:73:51}
		\forall^q_n \ \pi\bra{x^n} = \pi\bra{x^{n+q}} = \pi\bra{T_k^n|_q(x^0)} = \pi\bra{S_n' T^{n/m}(x^0)} = \pi\bra{T^{n/m}(x^0)}.
	\end{equation}
	(Note that $m \mid n$ for $q$-almost all $n$ since $q$ is idempotent.) 
	Let $z \in \QQ^d$ be the unique fixed point of $T$, so that $T^{n/m}(v) = k^n(v-z) + z$. Then
	\begin{equation}\label{eq:73:52}
		\forall^q_n \ \pi(x^n) = k^n \pi(x^0-z)+\pi(z).
	\end{equation}
	Combining \eqref{eq:73:50} and \eqref{eq:73:52}, we conclude that $\pi(x^0-z)= 0$, meaning that $x^0 \in z+\cW$. Hence, $x^n \in z+\cW$ for $q$-almost all $n$, and consequently $V_q \subset z+\cW$. Together with earlier remarks, this finishes the argument.
\end{proof}

\begin{remark}
	We sketch an alternative proof of Proposition \ref{lem:idempotent}, which is perhaps more natural but also less amenable to generalisations.
	The argument splits into two separate steps: first we show that $V_q$ is an affine space, and then we conclude that it is defined over $\QQ$.
	
	For the first part, one can show in general (assuming that $q$ is minimal, but not that $x^q = x^0$ or that $q$ is idempotent) that $x^q$ is a centre of scaling symmetry of $V_q$. Since $q$ is idempotent, by the same token $x^{n+q}$ is a centre of $V_{n+q} = V_q$ for $q$-almost all $n$. Under the assumption that $x^q = x^0$ we conclude that $\alglim_{m \to q} (x^{n+q}) = V_q$, whence centres of $V_q$ are Zariski dense in $V_q$. Since the set of all centres of scaling symmetries of any set is an affine space contained in that set, we conclude that $V_q$ is an affine space.
	
	Secondly, in order to show that $V_q$ is defined over $\QQ$ (as an affine space), it will suffice to show the following more general fact: If $x \in [0,1)^d$, $I \subset \NN$, and the Zariski closure $V \subset \RR^d$ of the set of points $\set{ \fp{l x}}{l \in I}$ is an affine space, then $V$ is defined over $\QQ$. This can be shown by an inductive argument with respect to $d$. The key observation is that if all of the points $ \fp{l x}$ ($l \in I$) satisfy a non-trivial affine relation, then these points also satisfy a non-trivial affine relation with integer coefficients. This observation is noted in \cite[Prop.\ 3.10]{ByszewskiKonieczny-2018}.
\end{remark}

\begin{proof}[Proof of Theorem \ref{thm:main-ur-torus}]
	By Proposition \ref{lem:idempotent}, $V_q$ is an affine space defined over $\QQ$, and by Lemma \ref{lem:translation} so is $V_p$ (in fact, $V_p$ is a translate of $V_q$). Changing the basis (cf.\ Lemma \ref{lem:semialg-wlog-SL}) we may further assume that $V_p = z +\RR^e \times \{0\}^{d-e}$ for some $0 \leq e \leq d$ and $z \in \{0\}^e \times \QQ^{d-e}$. In particular $x^0 \in \RR^e \times \QQ^{d-e}$, so all of the points $T_l(x^0)$, $l \in \NN$, lie in a finite union of translates of $\RR^e \times \{0\}^{d-e}$. We may assume without loss of generality that $S \subset V_p$ and that $S$ is a basic semialgebraic set, in which case it is an open subset of $V_p$. Identifying the orbit closure of $0 \in \RR^d/\ZZ^d$ under the rotation by $x^0$ with a finite union of translates of $V_q$ in the natural way, we have thus represented the set of $l \in \NN$ such that $T_l(x^0) \in S$ as the set of those $l \in \NN$ for which the orbit of a point under a rotation on a torus hits a given open set. All non-empty sets of this form are $\IP^*_+$ by Theorem \ref{thm:recurrence-IP*}.
\end{proof}

\begin{remark}
	The proof of Theorem \ref{thm:recurrence-IP*} for rotations on compact abelian groups is somewhat simpler than the general case. Indeed, it is enough to show that if $p \in \beta \NN$ is idempotent and $\alpha \in \RR/\ZZ$ then $\displaystyle\lim_{n \to p} \alpha n = 0$. This follows directly from the fact that the only idempotent element of $\RR/\ZZ$ is $0$. 
\end{remark}
 
\subsection{\checkmark\ General case}\mbox{}\label{ssec:torus-general}
{
Theorem \ref{thm:main-ur-torus} from the previous section has the following useful corollary, applicable with no assumption on $x^0$. Discontinuity of the fractional part function once again leads to slight technical difficulties, which can be overcome in several ways (cf.\ Corollary \ref{cor:sep-poly-IP}, Lemma \ref{lem:recurrent_if_x^q=x^0}).
}

\begin{corollary}\label{cor:main-ur-1}
	Let $q \in \beta \NN$ be a minimal idempotent, let $p \in \beta \NN + q$, and let $S \subset [0,1]^d$ be a semialgebraic set. Suppose that
	\[
		\forall_n^p \ x^{n+q} \in S.
	\]
	Then the set $\set{ l \in \NN}{ T_l|_q(x^q) \in S}$ is $\mathrm{IP}_{+}^*$.
\end{corollary}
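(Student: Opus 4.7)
The plan is to apply Theorem~\ref{thm:main-ur-torus} to the orbit based at $y := x^q$ (rather than at $x^0$), using idempotency of $q$ to recover the uniform recurrence hypothesis of that theorem.

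First I would introduce the shifted orbit $\tilde x^n := T_k^n(y)$ and identify it with $x^{n+q}$ for $q$-almost all $n$. The identity $T_k^n|_q(y) = \lim_{m \to q} T_k^n(x^m) = \lim_{m \to q} x^{n+m} = x^{n+q}$ is immediate from the definition of $T_k^n|_q$, so the matter reduces to showing that $T_k^n$ is continuous at $y$ for $q$-almost all $n$. A coordinate-by-coordinate analysis parallel to the one at the end of the proof of Corollary~\ref{cor:poly-sep-IP} handles this: each coordinate $y_i$ of $y = \lim_{n \to q} x^n$ is either irrational, zero, or rational whose denominator has at least one prime factor that does not divide $k$ (the last case being forced by uniform recurrence of $y$ under the $\times k$ map, which rules out denominators all of whose prime factors divide $k$). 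In all these cases $k^n y_i$ is never a positive integer, so $T_k^n$ is continuous at $y$ for every $n$, giving $\tilde x^n = x^{n+q}$ throughout $\NN_0$.

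With this identification in hand, the hypotheses of Theorem~\ref{thm:main-ur-torus} for the orbit $\tilde x$ are immediate: idempotency of $q$ yields $\tilde x^q = \lim_{n \to q} \tilde x^n = \lim_{n \to q} x^{n+q} = x^{q+q} = y = \tilde x^0$, while the standing hypothesis $\forall^p_n\ x^{n+q} \in S$ becomes $\forall^p_n\ \tilde x^n \in S$. Theorem~\ref{thm:main-ur-torus} then delivers that $\{l \in \NN : T_l(y) \in S\}$ is $\IP^*_+$.

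The final step is to transfer this conclusion to the target set $\{l \in \NN : T_l|_q(y) \in S\}$. The same coordinate analysis shows $T_l(y) = T_l|_q(y)$ as points of $[0,1]^d$ for every $l$ such that $ly$ has no positive integer coordinate, i.e., for $l$ outside a finite union of arithmetic progressions $b_i\NN$ with each $b_i$ coprime to $k$. I expect the main obstacle to lie precisely here, in arguing that this boundary discrepancy does not destroy the $\IP^*_+$ property. One way to handle it is to apply Theorem~\ref{thm:main-ur-torus} inductively to the lower-dimensional semialgebraic strata of $\partial S$; an alternative is to invoke the affine structure of $V_q$ (which by Proposition~\ref{lem:idempotent} is a $\QQ$-affine subspace) and deduce the $\IP^*_+$ property directly from rotation dynamics on the corresponding quotient torus, where the boundary $0 \sim 1$ identification is built in and the discrepancy disappears.
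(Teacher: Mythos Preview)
Your overall strategy is right, but there is a genuine gap in step 1 and step 3 is not completed. The trichotomy you give for $y_i$ omits the value $y_i=1$: the limit $y=x^q$ lives in $[0,1]^d$, not $[0,1)^d$, and your uniform-recurrence argument really takes place on the torus, where it only shows that $\bar y_i$ has denominator coprime to $k$. When $\bar y_i=\bar 0$ this allows both lifts $y_i=0$ and $y_i=1$, and the second genuinely occurs (take $x^0_i$ whose base-$k$ expansion contains arbitrarily long blocks of the digit $k-1$). In that case $T_k^n(y)$ is undefined on $[0,1)^d$, and if one extends by the usual fractional part one gets $\tilde x^n_i=0$ while $x^{n+q}_i=1$, so $\tilde x^n\neq x^{n+q}$. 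As for step 3, the observation that the discrepancy set lies in a finite union of arithmetic progressions does not by itself transfer the $\IP^*_+$ property; your suggestions (a) and (b) are plausible directions but neither is worked out.

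The paper dissolves both difficulties with one move: instead of $\tilde x^0=x^q$ and the original $S$, take $\tilde x^0=\fp{x^q}\in[0,1)^d$ and replace $S$ by $S':=\fp{S}$. The point is that for each rational coordinate $i$, one has either $x^n_i\geq x^q_i$ for $q$-almost all $n$ (then $T_l|_q(x^q)_i=\fp{lx^q_i}$ for all $l$) or $x^n_i<x^q_i$ for $q$-almost all $n$ (then $T_l|_q(x^q)_i=1-\fp{-lx^q_i}$ for all $l$); in either case $T_l|_q(x^q)_i$ is a fixed function of $\fp{lx^q_i}$, independent of $l$. After first using finiteness of the orbit on the rational coordinates to reduce to $S$ lying in a single rational fibre, this gives the exact equivalence $T_l|_q(x^q)\in S\iff\fp{lx^q}\in S'$ for every $l\in\NN$, with no exceptional set, and Theorem~\ref{thm:main-ur-torus} applied to $\tilde x^0=\fp{x^q}$ and $S'$ finishes the proof directly.
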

\begin{proof}
	Suppose first that all coordinates of $x^q$ are irrational. Then $T_l|_q(x^q) = \fp{l x^q}$ for all $l \in \NN$. Hence, the claim follows directly from Theorem \ref{thm:main-ur-torus} applied with $\tilde x^0 = x^q$.

	In general, let $\cE \subset \{1,2,\dots,d\}$ be the set of $i$ with $x^q_i \in \QQ$. Since the orbit $T_l|_q(x^q)_\cE$ ($l \in \NN$) is finite, may assume without loss of generality that there exists $z \in \QQ^{\cE}$ such that $x_\cE = z$ for all $x \in S$. (Here and elsewhere, we use the convention where $x_\cE = (x_i)_{i\in \cE}$ for $x \in \RR^d$.) Let also $S' := \fp{S} = \set{ \fp{x}}{x \in S}$. For any $i \in \cE$ one of the following holds: either $x^{n}_i \geq x^q_i$ for $q$-almost all $n$, in which case $T_l|_q(x^q)_i = \fp{lx^q_i}$ for all $l \in \NN$; or $x^{n}_i < x^q_i$ for $q$-almost all $n$, in which case $T_l|_q(x^q)_i = 1-\fp{-lx^q_i}$ for all $l \in \NN$. In either case, $T_l|_q(x^q)_i$ is uniquely determined by $i$ and $\fp{lx^q_i}$. It follows that $T_l|_q(x^q) \in S$ if and only if $\fp{lx^q} \in S'$ ($l \in \NN$). The claim now follows from Theorem \ref{thm:main-ur-torus} applied with $\tilde x^0 = \fp{x^q}$.
\end{proof}

\begin{remark}
	We could also have proven Corollary \ref{cor:main-ur-1} directly by an argument analogous to the proof of Theorem \ref{thm:main-ur-torus}. While this would have reduced the amount of technical issues we need to deal with, we chose the marginally longer route because we believe Theorem \ref{thm:main-ur-torus} to be more intuitively appealing than Corollary \ref{cor:main-ur-1} (in particular, the former avoids the use of the maps $T_l|_q$).
\end{remark}

{
We are now ready to prove a slightly more precise variant of Theorem \ref{thm:main-torus}. 
}

\begin{theorem}\label{thm:main-strong-torus}
	Let $q \in \beta \NN$ be a minimal idempotent, let $p \in \beta \NN + q$ and let $S \subset [0,1)^d$ be a semialgebraic set. Suppose that $x^n \in S$ for $p$-almost all $n$. Then the set of $l \in \NN$ such that $T_l(x^n) \in S$ for $q$-almost all $n$ is $\IP^*_+$.
\end{theorem}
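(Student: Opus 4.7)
The plan is to reduce the statement to an application of Proposition \ref{prop:recurrence-IP*} on a rotation orbit associated to $x^q$, via a continuity argument that exploits the affine refinement $T_l|_q$ together with the analysis of the variety along the sequence $y_n := x^{n+q}$.

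First, I would perform the following continuity reduction. For each fixed $l \in \NN$ and $q$-almost all $n$, the integer part $\ip{l x^n}$ stabilises at $c_l := \lim_{n \to q} \ip{l x^n}$, and so $T_l(x^n) = T_l|_q(x^n) = l x^n - c_l$, an affine function of $x^n$. Since $T_l|_q$ is continuous and $x^n \to x^q$ as $n \to q$, we have $T_l(x^n) \to T_l|_q(x^q)$. Hence if $T_l|_q(x^q)$ lies in the relative interior of $S$ inside $W := \alglim_{n \to p}(y_n)$, then $T_l(x^n) \in S$ for $q$-almost all $n$, and consequently $l$ belongs to the target set. It is therefore enough to show that
\[ L' := \set{l \in \NN}{T_l|_q(x^q) \in \inter_W(S \cap W)} \]
is $\IP^*_+$.

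Second, I would verify that $W$ is an affine space and that $\inter_W(S \cap W)$ is nonempty. The sequence $y_n$ is $q$-recurrent because $q+q = q$, so Proposition \ref{lem:idempotent} applied with the uniformly recurrent starting point $x^q$ gives that $\tilde V_q := \alglim_{n \to q}(y_n)$ is an affine space defined over $\QQ$. The analogue of Lemma \ref{lem:translation} for the sequence $y_n$ then shows that $W$ is an affine translate of $\tilde V_q$, hence $W$ is itself an affine space; after Lemma \ref{lem:wlog-SL} we may normalise its form. For the interior, our hypothesis combined with $p = p+q$ yields $x^{n+m} \in S$ for $p$-almost all $n$ and $q$-almost all $m$; passing to the $q$-limit in $m$ (using closedness of $\topcl(S)$) gives $y_n \in \topcl(S) \cap W$ for $p$-almost all $n$. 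Since $W$ is by construction the Zariski closure of those points and is irreducible (Lemma \ref{lem:alglim-basic}), the semialgebraic set $\topcl(S) \cap W$ must have full dimension in $W$, and therefore $\inter_W(S \cap W)$ is nonempty.

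Third, exactly as in the proof of Corollary \ref{cor:main-ur-1}, the map $l \mapsto T_l|_q(x^q)$ factors through a rotation on a torus: after handling the finitely many coordinates of $x^q$ that happen to be rational by a case analysis (replacing $\fp{l x^q_i}$ by $1 - \fp{-l x^q_i}$ where appropriate), the orbit $\set{T_l|_q(x^q)}{l \in \NN}$ lies in a finite union of translates of $\tilde V_q \bmod \ZZ^d$ and is an orbit of a minimal rotation. Invoking Proposition \ref{prop:recurrence-IP*} for this minimal distal system with the nonempty relatively-open subset $\inter_W(S \cap W)$ of the orbit closure yields $L' \in \IP^*_+$, and since $L' \subset L$ this finishes the argument. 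The main obstacle will be the careful identification of the rotation orbit in the third step, which parallels the two-case argument in Corollary \ref{cor:main-ur-1} but must be combined with the book-keeping for $W$ (which differs from the $V_q$ of the uniformly-recurrent setting whenever $x^q \neq x^0$).
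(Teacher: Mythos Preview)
The continuity reduction in your first step contains a genuine gap. You claim that $T_l|_q(x^q) \in \inter_W(S \cap W)$ implies $T_l(x^n) \in S$ for $q$-almost all $n$. You correctly observe that $T_l(x^n) = T_l|_q(x^n) \to T_l|_q(x^q)$ as $n \to q$, but the approaching points satisfy $T_l|_q(x^n) = T_l|_q(x^q) + l(x^n - x^q)$, and there is no reason for them to lie in $W$: the variety $W$ is the Zariski closure of the \emph{limit} points $y_n = x^{n+q} = \lim_{m\to q} x^{n+m}$, not of the actual orbit points $x^n$ (and in the non--uniformly-recurrent situation $x^q \neq x^0$ these genuinely differ). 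Membership of $T_l|_q(x^q)$ in the \emph{relative} interior of $S$ inside $W$ says nothing about membership in $S$ for points approaching $T_l|_q(x^q)$ transversally to $W$. Worse, the displacement $l(x^n - x^q)$ scales with $l$, so for large $l$ the points $T_l|_q(x^n)$ need not be close to $T_l|_q(x^q)$ at all for any \emph{fixed} finite set of $n$'s; the condition ``$T_l(x^n)\in S$ for $q$-almost all $n$'' genuinely depends on both $T_l|_q(x^q)$ and $l$.

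This $(x,t)$-coupling is precisely the difficulty, and the paper does not try to bypass it. Instead it encodes the exact target condition as a semialgebraic set
\[
R = \set{(x,t)}{\forall^q_m\ x + t(x^m - x^q) \in S} \subset \RR^d\times\RR,
\]
so that the goal becomes $(T_l|_q(x^q),l)\in R$. The hypothesis gives $(x^{n+q},k^n)\in R$ for $p$-almost all $n$; then Proposition~\ref{prop:semialg-sep-IP-2} (the separation-of-variables result) produces a product $Q\times[l_0,\infty)\subset R$ with $x^{n+q}\in Q$ along some $r\in\beta\NN_0+q$, and Corollary~\ref{cor:main-ur-1} supplies an $\IP^*_+$ set of $l$ with $T_l|_q(x^q)\in Q$. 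Your sketch jumps straight to the rotation step without an analogue of this decoupling, and that is the missing idea. (A smaller issue: in your second step you only obtain $y_n\in\topcl(S)\cap W$, which shows $\topcl(S)\cap W$ is Zariski dense in $W$ but does not by itself give $\inter_W(S\cap W)\neq\emptyset$, since the $y_n$ could all lie on the boundary $\topcl(S)\setminus S$.)
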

\begin{proof}
We may assume without loss of generality that $S \subset V_p$. For any $l \in \NN$, it follows directly from the definitions that
\begin{equation}\label{eq:94:00}
	 \forall^q_{m} \ T_l(x^m) = l(x^m-x^q) + T_l|_q(x^q).
\end{equation}
In particular, putting $l =k^n$, we find that
\begin{equation}\label{eq:94:01}
	\forall^p_{n} \ \forall^q_{m} \
	x^{n + m} = k^n(x^m - x^q) + x^{n+q} \in S.
\end{equation}
Consider the set 
\begin{equation}\label{eq:94:02}
	R := \set{(x,t) \in \RR^d \times \RR }{\forall^q_{m} \ x + t(x^m - x^q) \in S }.
\end{equation}
It follows from Proposition \ref{prop:semialg-limit} (cf.\ Corollary \ref{cor:semialg-limit}) that $R$ is semialgebraic, and \eqref{eq:94:01} translates into
\begin{equation}\label{eq:94:11}
	\forall^p_{n} \ \bra{x^{n+q}, k^n} \in R.
\end{equation}
We aim to show that the set of $l \in \NN$ with $\bra{T_l|_q(x^q), l} \in R$ is $\IP^*_+$.

It follows from Proposition \ref{prop:sep-semialg-IP} (applied with $x_n = x^{n+q}$ and $t_n = k^n$, cf.\ Corollary \ref{cor:sep-poly-IP}) that there exist a semialgebraic set $Q$, an integer $l_0 \geq 0$ and an ultrafilter $r \in \NN + q$ such that 
\begin{align*}
	\forall^{r}_n \ x^{n+q} \in Q \quad \text{ and } \quad Q \times [l_0,\infty) \subset R.
\end{align*}
By Corollary \ref{cor:main-ur-1} there exists an $\IP^*_+$ set ${L} \subset \NN$ of such that $T_l|_q(x^q) \in Q$ for all $l \in {L}$. Hence, $\bra{T_l|_q(x^q), l} \in R$ for all $l$ in the $\IP^*_+$ set ${L} \cap [l_0, \infty)$.
\end{proof}
 
  \section{Generalised $\times k$ maps}\label{sec:setup}

\subsection{\checkmark\ Basic definitions}\mbox{}\label{ssec:setup-basic}
{
In this section we set up notation and introduce objects which will be crucial in the proof of the general case of Theorem \ref{thm:A}. The key insight is that for suitable families of generalised monomials one can construct well-behaved analogues of the $\times k$ maps familiar from Section \ref{sec:torus}.
}

{
In order to conveniently index the generalised monomials discussed in Section \ref{sec:prelims},
we introduce the set of bracket indices $\cB$, consisting of formal expressions containing positive integers and brackets $\fpp{\cdot}$. Formally, $\cB$ is defined as the smallest family such that $\NN \subset \cB$ and if $\kappa, \lambda \in \cB$ then $\kappa \fpp{\lambda} \in \cB$. Expressions differing only by the ordering of factors are considered equal: $\kappa \fpp{\lambda_1}\dots\fpp{\lambda_s} = \kappa \fpp{\lambda_{\pi(1)}}\dots\fpp{\lambda_{\pi(s)}}$ if $\pi$ is a permutation of $\{1,2,\dots,s\}$.

We use $\cB$ to index the generalised monomials which can be constructed from a given sequence of classical monomials in a single variable. Let $v_i \in \RR[t]$ ($i \in \NN$) be a sequence of monomials, given by
\begin{align}\label{eq:def-of-nu-N}
v_i(t) = \alpha_i t^{d_i}, \quad \text{ where } \alpha_i \in \RR \text{ and } d_i \in \NN \qquad (i \in \NN).
\end{align}
We then extend $v_\bullet$ and $d_\bullet$ to $\cB$, defining inductively
\begin{equation}\label{eq:def-of-nu-B}
v_\mu(t) := v_{\kappa}(t) \fp{v_\lambda(t)} \text{ and } d_{\mu} := d_{\kappa} + d_{\lambda}  \text{ for any } \mu = \kappa \fpp{\lambda} \in \cB \setminus \NN.
\end{equation}
The \emph{height} of an index $\mu \in \cB$, denoted by $h_\mu$, is defined as the maximal number of nested brackets appearing in $\mu$. More precisely, $h_i = 0$ for $i \in \NN$ and
\begin{equation}\label{eq:def-of-height}
h_\mu :=  \max\{ h_\kappa, h_\lambda+1\}  \text{ for any } \mu = \kappa \fpp{\lambda} \in \cB \setminus \NN.
\end{equation}
It is a matter of a simple exercise to see that these definitions are well-posed, in the sense that if $\mu \in \cB$ has two different representations $\mu = \kappa \fpp{\lambda} = \kappa' \fpp{\lambda'}$ then both of them give the same values for $v_{\mu}$, $d_\mu$ and $h_\mu$.
}

{
There are several partial orders of interest on $\cB$. We will say that an index $\nu \in \cB$ is \emph{derivable} from $\mu \in \cB$, denoted $\nu \preceq \mu$, if it can be obtained by removing some factors from $\mu$. Formally, $\preceq$ is defined as the smallest partial order on $\cB$ such that
\begin{align}\label{eq:def-of-derivable}
\kappa &\preceq \kappa\fpp{\lambda} 
&\text{ and }&& 
\text{ if } \kappa \preceq \kappa' \text{ and }\lambda \preceq \lambda'
\text{ then }
\kappa & \fpp{\lambda} \preceq \kappa' \fpp{\lambda'}  \quad (\kappa,\kappa',\lambda,\lambda' \in \cB).
\end{align}
The set $\cB$ also carries the orders induced from $\NN_0$ by the height $h_\bullet$ and the grading $d_\bullet$ maps (which, of course, depends on the degrees $d_i$, $i \in \NN$). These orders are compatible in the sense that $\nu \preceq \mu$ implies that $h_{\nu} \leq h_{\mu}$ and $d_{\nu} \leq d_{\mu}$ ($\mu,\nu \in \cB$). 
}

{
With the notation introduced above, Leibman's Theorem \ref{thm:Leibman} (in one variable) can be rephrased as saying that for any generalised polynomial $g \colon \ZZ \to \RR^e$ there exists a choice of coefficients $\alpha_i$ and degrees $d_i$ ($i \in \NN$) as well as a finite set $\cD \subset \cB$ such that $g(n)$ is a piecewise polynomial function of $\bra{ v_\kappa^{\alpha}(t) }_{\kappa \in \cD}$. Note that one can always enlarge $\cD$, so we can additionally assume that $\cD$ is downwards closed in the sense that
\begin{equation}\label{eq:def-of-D}
\text{ if } \mu \in \cD \text{ and } \nu \preceq \mu \text{ then } \nu \in \cD.
\end{equation}
We define the \emph{complexity} of $\cD$ to be the vector $\complexity(\cD) = (c_0,c_1,\dots) \in \NN_0^\infty$, where $c_i = \abs{\set{\mu \in \cD}{h_\mu = i}}$. The set of eventually zero sequences taking values in $\NN_0$ is well-ordered by the reverse lexicographic order, which can naturally be prolonged to sequences with values in $\NN_0 \cup \{\infty\}$. 

Our long term strategy is to use Theorem \ref{thm:Leibman} mentioned above to represent the characteristic function of the set $E$ appearing in Theorem \ref{thm:A}. This will ultimately allow us to deduce Theorem \ref{thm:A} from a recurrence statement for generalised monomials. 
Because the constructions we plan to carry out depend on the index set $\cD$ and the grading $d_\bullet$, we accept the following convention.
}

\textit{
We fix once and for all a grading $d_\bullet$ satisfying \eqref{eq:def-of-nu-B} and additionally assume that for each $d \in \NN$ there exist infinitely many $i \in \NN$ such that $d_i = d$.
Throughout the paper, $\cD \subset \cB$ denotes a finite set satisfying \eqref{eq:def-of-D}. All objects we construct are allowed to depend on $\cD$ and $d_\bullet$ unless explicitly stated otherwise. For $\alpha \in \RR^{\cD \cap \NN}$ and $\mu \in \cD$, we let $v^{\alpha}_{\mu}(t)$ denote the generalised polynomials given by \eqref{eq:def-of-nu-N} and \eqref{eq:def-of-nu-B}.
}

Having fixed the choice of $\cD$, we collect the generalised polynomials $v_\mu^\alpha$ for $\mu \in \cD$ into a single (multidimensional) generalised polynomial $v^\alpha \colon \RR \to \RR^{\cD}$ given by
\begin{equation}\label{eq:def-of-nu}
	v^{\alpha}(t) = \bra{ v_\mu^{\alpha}(t) }_{\mu \in \cD}. 
\end{equation}

\begin{remark}\label{rmk:apology}
	At this point we owe the Reader a few words of justification for the conventions we have assumed. Subsequent sections are quite heavy on definitions, many of which depend on $\cD$ and $d_\bullet$, as well as other objects. Bearing in mind that the notation already gets rather cumbersome in several places, we would rather not make the dependence on $\cD$ and $d_\bullet$ explicit. On the other hand, we do need to occasionally alter the choice of $\cD$; indeed, the proof of our main result proceeds by induction on $\cD$. For this reason, we give $\cD$ a rather unsatisfactory ontological status of an immutable object which we nevertheless occasionally alter. This forces us to use phrases such as ``let $ v^{\beta}(t)$ ($\beta \in \RR^{\cE \cap \NN}$) be the same as $v^{\alpha}(t)$ except with $\cE$ in place of $\cD$'', which is formally meaningless, but hopefully understandable for the Reader. The author considers this solution the lesser evil. Fortunately, we can avoid the analogous problem for the grading $d_\bullet$ by letting it be fixed throughout the paper. This has the minor downside that we cannot always assume that $\cD \cap \NN$ is an initial segment of $\NN$, but there are no particularly strong reasons why we would want to assume that in the first place. The values $d_i$ for $i \in \NN \setminus \cD$ are mostly irrelevant.  
\end{remark}

\begin{example}
\label{ex:toy-1}
	Let us consider a running example where $\cD =  \{1,2,1\fpp{2},2\fpp{1},3\}$ and $d_1 = d_2 = 1$ and $d_3 = 2$. This choice of $\cD$ satisfies \eqref{eq:def-of-D}, \eqref{eq:def-of-nu-B} yields $ d_{1\fpp{2}} = d_{2\fpp{1}} = 2$ and \eqref{eq:def-of-height} yields $h_{1} = h_{2} = h_3 = 0$ and $ h_{1\fpp{2}} = h_{2\fpp{1}} = 1$.
	
The order $\preceq$ is given by $1 \prec 1\fpp{2}$, $2 \prec 2\fpp{1}$; all remaining pairs of distinct indices are incomparable. The order induced by the degree $d_\bullet$ separates the elements of $\cD$ into two equivalence classes $\{1,2\}$ (degree $1$) and $\{1\fpp{2},2\fpp{1},3\}$ (degree $2$).
Similarly, the order induced by the height separates the elements of $\cD$ into two equivalence classes $\{1,2,3\}$ (height $0$) and $\{1\fpp{2},2\fpp{1}\}$ (height $1$).
	
For notational convenience we will always write the coordinates in $\cD$ in the same order as above. In particular,
\[v^{\alpha}(n) = \big( \alpha_1 n,\ \alpha_2 n,\ \alpha_1 n \fp{\alpha_2n},\ \alpha_2n \fp{\alpha_1n},\ \alpha_3 n^2 \big), \qquad (\alpha \in \RR^3). \qedhere\]
\end{example}
 
\subsection{Geometry}\label{ssec:setup-geo}\mbox{}
{
In this section we introduce some geometric objects and constructions which will be useful in subsequent sections. The space $\RR^{\cD}$ naturally decomposes into the direct sum of spaces corresponding to different degrees. Let
\begin{equation}\label{eq:def-of-s}
	s := \max\set{d_\kappa}{\kappa \in \cD} \text{ and } \cD_j := \set{\kappa \in \cD}{ d_\kappa = j} \text{ for } 1 \leq j \leq s.
\end{equation}
Define also the subspaces 
\begin{equation}\label{eq:def-of-Vj}
	\cV_j := \set{ x \in \RR^{\cD} }{ x_\kappa = 0 \text{ for all } \kappa \in \cD \setminus \cD_j} = \linspan\set{ e_\kappa}{\kappa \in \cD_j},
\end{equation}
where $e_\mu = (e_{\mu,\nu})_{\nu \in \cD}$ denotes the basis vector with $e_{\mu,\nu} = \braif{\mu = \nu}$ ($\mu,\nu \in \cD$). It is clear that $\RR^{\cD}$ is the orthogonal sum of $\cV_j$ over $1 \leq j \leq s$ (here and elsewhere, we endow $\RR^{\cD}$ with the standard scalar product).
}

\begin{example}\label{ex:toy-15}
	In the running example discussed above (see Example \ref{ex:toy-1}) we have $s = 2$, $\cD_1 = \{1,2\}$ and $\cD_2 = \{ 1\fpp{2},2\fpp{1},3\}$. Writing the coordinates in the usual order, we have
\begin{align*}
		\cV_1 &= \RR e_1 + \RR e_2 = \set{ (x_1,x_2,0,0,0)}{ x_1,x_2 \in \RR}, \text{ and }\\
		\cV_2 &= \RR e_{1\fpp{2}} + \RR e_{2\fpp{1}}+ \RR e_3 = \set{ (0,0,x_{1\fpp{2}},x_{2\fpp{1}},x_3)}{ x_{1\fpp{2}},x_{2\fpp{1}},x_3 \in \RR}.
\end{align*}
\end{example}

{
Following the usual convention, we write $\End(\cD) = \End(\RR^{\cD})$ for the ring of $\cD \times \cD$ matrices, and assume the same convention for $\GL(\cD),$ $\SL(\cD)$, and other matrix groups. For matrices with integer or rational coefficients we write $\End(\cD,\ZZ)$, $\End(\cD,\QQ)$, etc. We will say that a matrix $A = (A_{\mu,\nu})_{\mu,\nu \in \cD} \in \End(\cD)$ is \emph{lower triangular} (with respect to $\preceq$) if $A_{\mu,\nu} \neq 0$ implies $\mu \succeq \nu$ ($\mu,\nu \in \cD$). In particular, any lower triangular matrix written in the block form corresponding to the decomposition $\cD = \bigcup_{j=1}^s \cD_j$ has only zero blocks above the diagonal.

We further introduce the notion of a \emph{standard} matrix. We will say that a matrix $A \in \End(\cD)$ is \emph{standard} if it is lower triangular and there exists $t > 0$ such that the diagonal entries of $A$ are given by $A_{\mu,\mu} = t^{d_\mu}$ for all $\mu \in \cD$.  We let $\ST(\cD)$ denote the set of standard matrices:
\begin{equation}\label{eq:def-of-ST(D)}
	\ST(\cD) := \set{ A \in \End(\cD) }{ A \text{ is lower triang.{} and } \operatorname{diag}(A) = (t^{d_\mu})_{\mu \in \cD} }.
\end{equation}
(The choice of the name is motivated by the connection between these groups and the multiplication maps $x \mapsto \fp{kx}$, elucidated in the upcoming discussion.)
Standard lower triangular matrices form a group. If a standard lower triangular matrix is written in the block form then the diagonal blocks are proportional to the identity.
}

\begin{example}\label{ex:toy-16}
In the running example, a matrix $A \in \End(\cD)$ is a member of $\ST(\cD)$ if and only if it takes the form 
\[
	A = 
\begin{bmatrix}
t & 0 & 0 & 0 & 0  \\
0 & t & 0 & 0 & 0 \\
\ast & 0 & t^2  & 0 & 0 \\
0 & \ast & 0 & t^2 & 0 \\
0 & 0 & 0 & 0 & t^2
\end{bmatrix},
\]
where $t > 0$ and $\ast$ denote unspecified real entries.
\end{example}

{
We let $\st(\cD)$ denote the Lie algebra of $\ST(\cD)$, and $\Lambda$ denote the diagonal matrix with $\Lambda_{\mu,\mu} = d_\mu$ ($\mu \in \cD$). The spaces $\cV_j$ ($1 \leq j \leq s$) are the eigenspaces of $\Lambda$ and $\st(\cD)$ is spanned by $\Lambda$ and strictly lower triangular basic matrices $E^{\mu,\nu}$ ($\mu, \nu \in \cD,\ \mu \succ \nu$) given by $E^{\mu,\nu}_{\kappa,\lambda} = \braif{\kappa = \mu \wedge \lambda = \nu}$ ($\kappa,\lambda \in \cD$). We also define $\Delta_t$ to be the diagonal matrix with $(\Delta_t)_{\mu,\mu} = t^{d_\mu}$ ($t \in \RR$), so that in particular $\Delta_k = \exp(\log(k) \Lambda)$ ($k \in \NN$). We call a  matrix $A \in \ST(\cD)$ \emph{special} if all of its diagonal entries are equal to $1$ (i.e., if it is unipotent). The group of special lower triangular\ matrices is denoted by $\ST'(\cD)$ and the corresponding Lie algebra, denoted by $\st'(\cD)$, consists of strictly lower triangular matrices. Hence, we have the decomposition $\st(\cD) = \st'(\cD) + \RR \Lambda$ and any $A \in \ST(\cD)$ can be uniquely written as $A = \Delta_t A'$ with $t > 0$ and $A' \in \STp(\cD)$.
}

\begin{example}\label{ex:toy-17}
In the running example, the matrices $\Lambda$ and $\Delta_k$ are given by
\begin{align*}
	\Lambda = 
\begin{bmatrix}
1 & 0 & 0 & 0 & 0  \\
0 & 1 & 0 & 0 & 0 \\
0 & 0 & 2  & 0 & 0 \\
0 & 0 & 0 & 2 & 0 \\
0 & 0 & 0 & 0 & 2
\end{bmatrix}, \qquad
	\qquad  \Delta_k = 
\begin{bmatrix}
k & 0 & 0 & 0 & 0  \\
0 & k & 0 & 0 & 0 \\
0 & 0 & k^2  & 0 & 0 \\
0 & 0 & 0 & k^2 & 0 \\
0 & 0 & 0 & 0 & k^2
\end{bmatrix}.
\end{align*}

The Lie group $\ST'(\cD)$ and the Lie algebra $\st'(\cD)$ consist of all matrices $A$ and $X$ respectively which have the form 
\begin{align*}
	A = 
\begin{bmatrix}
1 & 0 & 0 & 0 & 0  \\
0 & 1 & 0 & 0 & 0 \\
\ast & 0 & 1  & 0 & 0 \\
0 & \ast & 0 & 1 & 0 \\
0 & 0 & 0 & 0 & 1
\end{bmatrix},  \qquad
	\qquad  X = 
\begin{bmatrix}
0 & 0 & 0 & 0 & 0  \\
0 & 0 & 0 & 0 & 0 \\
\ast & 0 & 0  & 0 & 0 \\
0 & \ast & 0 & 0 & 0 \\
0 & 0 & 0 & 0 & 0
\end{bmatrix}.
\end{align*}
\end{example}

In order to work with affine maps more conveniently, we identify $\RR^{\cD}$ with $\{1\} \times \RR^{\cD} \subset \RR \times \RR^{\cD}$. Under this identification, an affine map on $\RR^{\cD}$ given by $x \mapsto Ax + b$ corresponds to the linear map $(u,x) \mapsto (u,A x + ub)$ on $\RR \times \RR^{\cD}$. We define 
\begin{equation}\label{eq:def-of-bST}
\bST(\cD) := \set{\begin{bmatrix}
1 & 0 \\ b & A\end{bmatrix}}{ A \in \ST(\cD),\ b \in \RR^{\cD}}.
\end{equation}
\emph{Mutatis mutandis}, $\bST(\cD)$ can be identified with $\ST(\cD \cup \{0\})$ where $0 \prec \mu$ for all $\mu \in \cD$ and $d_0 = 0$. Because of this identification, we will occasionally apply to $\bST(\cD)$ results which were only formally proved for $\ST(\cD)$. We analogously let $\bar{\st}(\cD)$ denote the Lie algebra of $\bar{\ST}(\cD)$, and define $\bar{\ST}{}'(\cD)$ and $\bar{\st}{}'(\cD)$ accordingly. We further put
\begin{align}\label{eq:def-of-bLambda}
\bar \Lambda:= \begin{bmatrix} 0 & 0 \\ 0 & \Lambda \end{bmatrix},
\qquad \bar \Delta_k := \begin{bmatrix} 0 & 0 \\ 0 & \Delta_k \end{bmatrix}.
\end{align}
 
\subsection{Multiplication by $k$}\mbox{}\label{ssec:setup-Tk}
{
Recall that in the abelian case discussed in Section \ref{sec:torus}, the $\times k$ maps $T_k \colon [0,1)^d \to [0,1)^d$ played an important role. These simple maps almost trivially have several desirable properties: they are piecewise affine, $T_k(x)$ is given by a generalised polynomial formula in $x$ and $k$, and $T_k \circ T_l = T_{kl}$ for all $l,k \in \NN$. In this section we will construct maps on $[0,1)^\cD$ with analogous properties.
}

\begin{example}\label{ex:toy-2}
	We continue with the running example. A direct computation shows that 
\[
	S_k(v^{\alpha}(m)) = v^{\alpha}(km) 
	\quad \text{ for all $\alpha \in \RR^{\cD\cap \NN}$ and $k,m \in \NN$,}
\]
	where the maps $S_k \colon \RR^\cD \to \RR^\cD$ are defined for $x \in \RR^{\cD}$ by
	\[
		S_k(x) := \big( k x_1,\ k x_2,\ k^2 x_{1\fpp{2}} - kx_1[k\fp{x_2}],\ k^2 x_{2\fpp{1}} - kx_2[k\fp{x_1}],\ k^2 x_3 \big).
	\]
For $x \in [0,1)^{\cD}$, put also $T_k(x) := \fp{S_k(x)}$. Another standard computation yields
\[
	T_k(\fp{ v^{\alpha}(m) } ) = \fp{v^{\alpha}(km)} \quad
	\text{ for all $\alpha \in \RR^{\cD\cap \NN}$ and $k,m \in \NN$.}
\]

Let $k \in \NN$. In order to record some noteworthy properties of the maps $S_k$ and $T_k$ defined above, it is convenient to write them in the matrix form 
	\begin{align*}
		S_k(x) &= A_k(x) x &\text{and}&&  T_k(x) &= A_k(x)x - b_k(x),
	\end{align*}
	 where $b_k(x) := \ip{S_k(x)}$ and the matrix $A_k(x)$ is given by  
\begin{equation}
	A_k(x) :=	
\begin{bmatrix}
k & 0 & 0 & 0 & 0 \\
0 & k & 0 & 0 & 0 \\
-k[k\fp{x_2}] & 0 & k^2 & 0 & 0 \\
0 & -k[k\fp{x_1}] & 0 & k^2 & 0 \\
0 & 0 & 0 & 0 & k^2
\end{bmatrix}.
\end{equation}
Under the identification of affine maps on $\RR^\cD$ with linear maps on $\RR \times \RR^{\cD}$, for $x \in [0,1)^{\cD}$ we can write $T_k(x) = \bar{A}_k(x)x$, where the matrix $\bar{A}_k(x)$ is given by
\begin{equation}\label{eq:751:1}
	\bar{A}_k(x) :=	
\begin{bmatrix}
1 & 0 & 0 & 0 & 0 & 0 \\
- [kx_1] & k & 0 & 0 & 0 & 0 \\
- [kx_2] &0 & k & 0 & 0 & 0 \\
-\ip{k^2 x_{1\fpp{2}} - k x_1 \ip{kx_2}} &-k[k{x_2}] & 0 & k^2 & 0 & 0 \\
-\ip{k^2 x_{2\fpp{1}} - k x_2 \ip{kx_1}} &0 & -k[k{x_1}] & 0 & k^2 & 0 \\
- [k^2x_3] &0 & 0 & 0 & 0 & k^2
\end{bmatrix}.
\end{equation}
The matrices $A_k$ and $\bar{A}_k$ are standard lower triangular and have integer entries:
\[
	A_k(x) \in \ST(\cD,\ZZ) \text{ for all } x \in \RR^\cD
	\text{ and } \bar A_k(x) \in \bar \ST(\cD,\ZZ) \text{ for all } x \in [0,1)^\cD.
\]
In fact, more is true: Many of the entries of $\bar{A}_k(x)$ are divisible by powers of $k$, and extracting the highest powers of $k$ apparent from formula \eqref{eq:751:1} yields
\[ \bar{A}_k(x) \Delta_k^{-1} \in \bar\ST{}'(\cD,\ZZ) \text{ for all } x \in [0,1)^\cD.\]

The dependence of $\bar{A}_k(x)$ on $x$ is relatively mild. Firstly, $\bar{A}_k(x)$ is a generalised polynomial in $k$ and $x$. In the course of the argument it will be important to keep track of which entries of $\bar{A}_k(x)$ depend on which coordinates of $x$, but we defer the precise statement until later. Here, we just remark that $A_k(x)$ depends only on $x_1$ and $x_2$ but not the remaining coordinates $x_{1\fpp{2}}, x_{2\fpp{1}},x_3$ of $x$.

If both $x \in [0,1)^{\cD}$ and $T_k(x)$ are given, then $\bar{A}_k(x)$ can described by an even simpler formula. For $i \in \{1,2\}$ we have
\[
	-k[k{x_i}] = k^2 x_i - k\fp{kx_i} = k^2 x_i - k T_k(x)_i,
\]
whence $A_k(x)$ is a polynomial function in $k$, $x$ and $T_k(x)$. We can also express $b_k(x)$ as
\[
	b_k(x) = [S_k(x)] = A_k(x)x - T_k(x)
\]
and hence $\bar{A}_k(x)$ is a polynomial function in $k,x$ and $T_k(x)$. 
\end{example}

{
Our next goal is to generalise the construction from the above example. We will construct families of maps (generalising the $\times k$ map on the unit cube and the scaling by $k$, respectively)
	\begin{align}\label{eq:64:80}
	T_k \colon [0,1)^{\cD} &\to [0,1)^{\cD}, & 
	S_k \colon \RR^\cD &\to \RR^{\cD}, & (k \in \NN),&
	\end{align} 
	which are given in the matrix form by
	\begin{align}
	S_k(x) &:= A_k(x)x &(x \in \RR^{\cD}), \label{eq:def-of-T-tilde}
	\\ T_k(x) &:= \fp{S_k(x)} = A_k(x)x - b_k(x) = \bar{A}_k(x)x &(x \in [0,1)^{\cD}), \label{eq:def-of-T}
	\end{align}
	where we use the identification of affine maps on $\RR^\cD$ with linear maps on $\RR \times \RR^\cD$ discussed in Section \ref{ssec:setup-geo}, and $\bar A_k$ and $b_k$ are given by
	\begin{align}\label{eq:def-of-A-bar}
	b_k(x) := \ip{S_k(x)} = \ip{ A_k(x) x}, \qquad
	\bar A_k(x) := 
	\begin{bmatrix}
	1 & 0 \\
	-b_k(x) & A_k(x)
	\end{bmatrix}.
	\end{align}
Treating formulae \eqref{eq:def-of-T-tilde}, \eqref{eq:def-of-T} and \eqref{eq:def-of-A-bar} as definitions of $S_k$, $T_k$, $b_k$ and $\bar{A}_k$, it remains to define $A_k$. We do so inductively, row by row. Note that the corresponding entry $S_k(x)_\mu = \sum_{\nu \in \cD} A_k(x)_{\mu,\nu}x_\nu$ becomes determined as soon as we construct the row $A_k(x)_{\mu,\ast}$. For $i \in \cD \cap \NN$ we put $A_k(x)_{i,\nu} := k^{d_i} \braif{i=\nu}$, so that 
$S_k(x)_i = k^{d_i} x_i$. 

Consider now $\mu = \kappa \fpp{\lambda} \in \cD \setminus \NN$ and assume that the rows $A_{k}(x)_{\kappa,\ast}$ and $A_{k}(x)_{\lambda,\ast}$ have been constructed. As long as the fragment of $A_{k}(x)$ constructed so far is lower triangular (we will shortly show that this is the case) we have expansions
\begin{align}
\label{eq:76:00} 	S_{k}(x)_\kappa &= \sum_{\sigma \preceq \kappa} A_{k}(x)_{\kappa,\sigma} x_{\sigma},\\
\label{eq:76:01} 	S_{k}(x)_{\lambda} &= \sum_{\tau \preceq \lambda} A_{k}(x)_{\lambda,\tau} x_{\tau}.
\end{align}
Let $c_{k,\lambda}(x) \in \ZZ$ denote the correction term uniquely determined by
\begin{equation}
\label{eq:def-of-c} 
\fp{S_{k}(x)_{\lambda}} = \sum_{\tau \preceq \lambda} A_{k}(x)_{\lambda,\tau} \fp{x_{\tau}} + c_{k,\lambda}(x),
\end{equation}
Combining \eqref{eq:76:00} and \eqref{eq:def-of-c} gives
\begin{equation}
\label{eq:76:13} 
S_{k}(x)_\kappa \fp{S_{k}(x)_{\lambda}} = 
\sum_{\sigma \preceq \kappa}  \sum_{\tau \preceq \lambda} A_{k}(x)_{\kappa,\sigma} A_{k}(x)_{\lambda,\tau} x_{\sigma} \fp{x_{\tau}} + \sum_{\sigma \preceq \kappa}  A_{k}(x)_{\kappa,\sigma} c_{k,\lambda}(x) x_{\sigma}.
\end{equation}
This motivates us to define $A_k(x)_{\mu,\ast}$ by
\begin{equation}
\label{eq:def-of-A} 
A_k(x)_{\mu,\nu} = 
\sum_{\sigma \preceq \kappa} \sum_{\tau \preceq \lambda} A_{k}(x)_{\kappa,\sigma} A_{k}(x)_{\lambda,\tau} \braif{\sigma\fpp{\tau} = \nu }  + A_{k}(x)_{\kappa,\nu} c_{k,\lambda}(x).
\end{equation}
}

\begin{proposition}\label{prop:Tk-basic}
	The maps defined by  \eqref{eq:def-of-T-tilde}, \eqref{eq:def-of-T}, \eqref{eq:def-of-A-bar} and \eqref{eq:def-of-A} have the following properties.
\begin{enumerate}
\item\label{it:Tk-basic:integral} $A_k(x) \in \ST'(\cD,\ZZ)\Delta_k \subset \ST(\cD,\ZZ)$ for all $k \in \NN,\ x \in \RR^{\cD}$;
\item\label{it:Tk-basic:times-k} $v^{\alpha}(km) = S_k(v^{\alpha}(m))$ for all $k,m \in \NN$ and $\alpha \in \RR^{\cD}$; 
\item\label{it:Tk-basic:dependence} for any $\mu, \nu \in \cD$, the coefficient $A_k(x)_{\mu,\nu}$ depends only on $k$ and $\fp{x}_{\xi}$ where $\xi \in \cD$, $d_\xi + d_\nu \leq d_\mu$ and $h_{\xi} < h_{\mu}$;
\item\label{it:Tk-basic:gen-poly} there exists a generalised polynomial map 
\[ \NN \times [0,1)^\cD \ni (k,x) \mapsto A_k(x) \in \ST(\cD); \]
\item\label{it:Tk-basic:poly} there exists a polynomial map 
\[ \NN \times [0,1)^\cD \times [0,1)^\cD \ni (k,x,T_k(x))\mapsto A_k(x) \in \ST(\cD);\] 
\item\label{it:Tk-basic:commute} $S_k \circ S_l = S_l \circ S_k = S_{kl}$ and ${T}_k \circ {T}_l = {T}_l \circ {T}_k = {T}_{kl}$ for all $k,l \in \NN$.
\end{enumerate}
\end{proposition}
\begin{proof}
\begin{enumerate}[wide]
\item We need to show that for each $\mu,\nu \in \cD$ and $x \in \RR^{\cD}$ the entry $A_k(x)_{\mu, \nu}$ is either: zero if $\nu \not\preceq \mu$; equal to $k^{d_\mu}$ if $\nu = \mu$; or an integer divisible by $k^{d_\mu}$ if $\nu \prec \mu$.    We proceed by induction on $\mu$. If $\mu \in \cD \cap \NN$ then the above properties follow directly from the definition of $A_k(x)$. Suppose next that $\mu = \kappa \fpp{\lambda}$ and the claim has been shown for $\kappa$ and $\lambda$. In particular, since the row $A_k(x)_{\lambda,\ast}$ has integer entries, the term $c_{k,\lambda}(x)$ defined by \eqref{eq:def-of-c} takes integer values for all $x \in \RR^{\cD}$. It follows directly from \eqref{eq:def-of-A} that $A_k(x)_{\mu,\nu} = 0$ if $\nu \not\preceq \mu$ and that
\[
A_k(x)_{\mu,\mu} = 
A_{k}(x)_{\kappa,\kappa} A_{k}(x)_{\lambda,\lambda} = k^{d_\kappa+d_\lambda} = k^{d_\mu}.
\] 
Lastly, if $\nu \prec \mu$ then each of the summands $A_{k}(x)_{\kappa,\sigma} A_{k}(x)_{\lambda,\tau}$ with $\sigma \preceq \kappa$, $\lambda \preceq \tau$ and $\nu = \kappa \fpp{\lambda}$ appearing in \eqref{eq:def-of-A} is divisible by $k^{d_\tau + d_\sigma} = k^{d_\nu}$. Similarly $A_k(x)_{\kappa,\nu}$ is divisible by $k^{d_\nu}$. Hence, $A_k(x)_{\mu,\nu}$ is divisible by $k^{d_\nu}$. 

\item Because $A_k(x)$ are lower triangular, it follows directly from \eqref{eq:76:13} and \eqref{eq:def-of-A} that 
\begin{equation}
\label{eq:76:15} 
S_{k}(x)_{\mu} = 
S_{k}(x)_\kappa \fp{S_{k}(x)_{\lambda}} + 
\sum_{\sigma \preceq \kappa}  \sum_{\tau \preceq \lambda} A_{k}(x)_{\kappa,\sigma} A_{k}(x)_{\lambda,\tau} \bra{x_{\sigma\fpp{\tau}}- x_{\sigma} \fp{x_{\tau}}}
\end{equation}
 for any $\mu = \kappa \fpp{\lambda} \in \cD \setminus \NN$. Substituting $x = v^{\alpha}(m)$ and recalling that 
 \[
 v^{\alpha}_{\sigma\fpp{\tau}}(m) = v^{\alpha}_{\sigma}(m) \fp{v^{\alpha}_{\tau}(m)} \text{  for all } \sigma, \tau \in \cB \text{ such that } \sigma\fpp{\tau}\in \cD,
 \]
we conclude that 
\begin{equation}
\label{eq:76:70} 
S_{k}(v^{\alpha}(m))_{\mu} = S_{k}(v^{\alpha}(m))_\kappa \fp{S_{k}(v^{\alpha}(m))_{\lambda}}
\text{ for any }
\mu = \kappa \fpp{\lambda} \in \cD \setminus \NN.
\end{equation}
For $i \in \cD \cap \NN$ we have
\begin{equation}
\label{eq:76:71} 
 S_{k}(v^{\alpha}(m))_{i} = k^{d_i} v^{\alpha}_i(m) = \alpha_i k^{d_i} m^{d_i} = v^{\alpha}_i(km).
\end{equation}
Combining \eqref{eq:76:70}, \eqref{eq:76:71} and \eqref{eq:def-of-nu-B} yields $S_{k}(v^{\alpha}(m)) = v^{\alpha}(km)$.

\item Let $\mu = \kappa \fpp{\lambda} \in \cB$. 
It is clear from \eqref{eq:def-of-A} that if $A_k(x)_{\kappa,\tau}$ and $A_k(x)_{\lambda, \sigma}$ ($\tau, \sigma \in \cD$) all depend only on $k$ and $\fp{x}$ then so does and $A_k(x)_{\mu,\nu}$ ($\nu \in \cD$). It remains to pin down the set $\cC_{\mu,\nu} \subset \cD$ of indices on which $A_k(x)_{\mu,\nu}$ depends. Since the diagonal entries of $A_k(x)$ do not depend on $x$, we can take $C_{\mu,\mu} = \emptyset$ for all $\mu \in \cD$. For general $\mu, \nu$, direct inspection of \eqref{eq:def-of-A} show that if $\xi \in \cC_{\mu,\nu}$ then one of the following possibilities holds:
\begin{enumerate}
\item there exist $\sigma \preceq \kappa$ and $\tau \preceq \lambda$ such that $\nu = \sigma \fpp{\tau}$ and 
	\begin{inparaenum}
	\item $\xi \in \cC_{\kappa, \sigma}$, or
	\item $\xi \in \cC_{\lambda,\tau}$;
	\end{inparaenum}
\item $\xi \in \cC_{\kappa, \nu}$; 
\item $\nu \preceq \kappa$ and 
	\begin{inparaenum}
	\item $\xi \in \cC_{\lambda, \tau}$ for some $\tau \preceq \lambda$, or
	\item $\xi \preceq \lambda$.
	\end{inparaenum}		
\end{enumerate}
In each of these cases, we can easily verify the required bounds on the degree and height of $\xi$
\begin{enumerate}
\item 
	\begin{enumerate}
	\item $d_\xi \leq d_{\kappa } - d_{\sigma} = d_{\mu} - d_{\nu} - (d_{\lambda} - d_{\tau}) < d_{\mu} - d_{\nu}$ and $h_{\xi} < h_{\kappa} \leq h_{\mu}$; 
	\item $d_\xi \leq d_{\lambda } - d_{\tau} = d_{\mu} - d_{\nu} - (d_{\kappa} - d_{\sigma}) < d_{\mu} - d_{\nu}$ and $h_{\xi} < h_{\lambda} < h_{\mu}$ 
	 $\xi \in \cC_{\lambda,\tau}$;
	\end{enumerate}
\item $d_{\xi} \leq d_{\kappa} - d_{\nu} < d_{\mu} - d_{\nu}$ and $h_{\xi} < h_{\kappa} \leq h_{\mu}$; 
\item 
	\begin{enumerate}
	\item $d_{\xi} \leq d_{\lambda} - d_{\tau} \leq d_{\mu} - d_{\kappa} \leq d_{\mu} - d_{\nu}$ and $h_{\xi} < h_{\lambda} < h_{\mu}$
	\item $d_{\xi} \leq d_{\lambda} \leq d_{\mu} - d_{\nu}$ and $h_{\xi} < h_{\lambda} < h_{\mu}$.
	\end{enumerate}
\end{enumerate}

\item This point follow by direct inspection of \eqref{eq:def-of-A} and \eqref{eq:def-of-c}.

\item Ditto.

\item We will prove marginally more, namely that 
\begin{equation}
\label{eq:76:80}
A_k(S_l(x))A_l(x) = A_{kl}(x) \text{ for all } x \in \RR^{\cD}.
\end{equation}
Once this is proved, it immediately follows that 
\[
S_k \circ S_l(x) =  A_k(S_l(x))A_l(x)x = A_{kl}(x)x = S_{kl}(x) \text{ for all } x \in \RR^{\cD}.
\]
Since $A_k(x)$ depends only on $k$ and $\fp{x}$, it also follows that
\begin{align*}
 T_k \circ T_l(x)
&= \fp{ A_k(T_l(x))A_l(x)x }
\\& = \fp{ A_k(S_l(x))A_l(x)x }
= \fp{ S_{kl}(x) } = T_{kl}(x) 
\text{ for all } x \in [0,1)^{\cD}.
\end{align*}
Hence, it will suffice to prove \eqref{eq:76:80}. We prove this equality inductively, row by row, for fixed $x \in \RR^\cD$.
For rows with indices in $\cD \cap \NN$ the equality in \eqref{eq:76:80} is clear. Take any $\mu = \kappa \fpp{\lambda} \in \cD \setminus \NN$ and assume that the claim has already been proved for rows indexed by $\kappa$ and by $\lambda$. We will show that 
\begin{equation}\label{eq:def-of-W-good-y}
\bra{ A_k(S_l(x))A_l(x)y }_\mu = \bra{A_{kl}(x)y}_\mu
\end{equation} 
for all $y \in \RR^{\cD}$. Let $W \subset \RR^{\cD}$ be the set consisting of those $y \in \RR^{\cD}$ for which \eqref{eq:def-of-W-good-y} holds. Since both sides of \eqref{eq:def-of-W-good-y} are linear in $y$, $W$ is a vector space.

For $y,z \in \RR^\cD$ let us define, generalising \eqref{eq:def-of-c} slightly,
\begin{equation}
\label{eq:def-of-c-2} 
c_{k,\lambda}(z,y) = \fp{A_k(z) y}_\lambda - \sum_{\tau \preceq \lambda} A_{k}(z)_{\lambda,\tau} \fp{y}_{\tau} = -\ip{\sum_{\tau \preceq \lambda} A_{k}(z)_{\lambda,\tau} \fp{y}_{\tau}}.
\end{equation}
In particular, $c_{k,\lambda}(x,x) = c_{k,\lambda}(x)$. Let also put
\[ 
	C_{k,\lambda}(z) := \set{ y \in \RR^{\cD}}{ c_{k,\lambda}(z,y) = c_{k,\lambda}(z)}.
\]
If $y \in C_{k,\lambda}(z)$ then it follows by the same token as \eqref{eq:76:15} that
\begin{equation}
\label{eq:84:10} 
\bra{A_{k}(z)y}_{\mu} = 
\bra{A_{k}(z)y}_\kappa \fp{A_{k}(z)y}_{\lambda} + 
\sum_{\sigma \preceq \kappa}  \sum_{\tau \preceq \lambda} A_{k}(z)_{\kappa,\sigma} A_{k}(z)_{\lambda,\tau} \bra{y_{\sigma\fpp{\tau}}- y_{\sigma} \fp{y}_{\tau}}
\end{equation}
Hence, if $A_l(x)y \in C_{k,\lambda}(S_l(x))$, we can expand the left hand side of \eqref{eq:def-of-W-good-y} as
\begin{align*}
\bra{ A_k(S_l(x))A_l(x)y}_{\mu} &= \Sigma_1 + \Sigma_2,
\end{align*}
where $\Sigma_1$ and $\Sigma_2$ are given by
\begin{align*}
\Sigma_1 &:= \bra{ A_k(S_l(x))A_l(x)y}_{\kappa} \fp{ A_k(S_l(x))A_l(x)y}_{\lambda}
\\ \Sigma_2 &:=
\sum_{\sigma \preceq \kappa}  \sum_{\tau \preceq \lambda}
A_{k}(S_l(x))_{\kappa,\sigma} A_{k}(S_l(x))_{\lambda,\tau}
\bra{\bra{A_l(x)y}_{\sigma\fpp{\tau}}- \bra{A_l(x)y}_{\sigma} \fp{A_l(x)y}_{\tau} }.
\end{align*}
The first summand is, by the inductive assumption, given by
\[
\Sigma_1 =
\bra{A_{kl}(x)y}_\kappa \fp{A_{kl}(x)y}_{\lambda}.
\]
Using \eqref{eq:84:10} again to expand the second summand, under the additional assumption that $y \in C_{l,\lambda}(x)$ we obtain
\begin{equation*}
\Sigma_2 = \sum_{\sigma \preceq \kappa} \sum_{\tau \preceq \lambda}
A_{k}(S_l(x))_{\kappa,\sigma} A_{k}(S_l(x))_{\lambda,\tau} 
\sum_{\rho \preceq \sigma} \sum_{\theta \preceq \tau} 
A_{l}(x)_{\sigma,\rho} A_{l}(x)_{\tau,\theta} 
\bra{y_{\rho\fpp{\theta}}- y_{\rho} \fp{y_{\theta}}}.
\end{equation*}
Collapsing the sum over $\sigma$ and $\tau$ and using the inductive assumption transforms the above expression into
\begin{align*}
\Sigma_2 &= \sum_{\rho \preceq \kappa} \sum_{\theta \preceq \lambda}
\bra{ A_{k}(S_l(x)) A_l(x) }_{\rho,\kappa}
\bra{ A_{k}(S_l(x)) A_l(x) }_{\theta,\lambda}
\bra{y_{\rho\fpp{\theta}}- y_{\rho} \fp{y_{\theta}}}
\\ &= \sum_{\rho \preceq \kappa} \sum_{\theta \preceq \lambda}
\bra{ A_{kl}(x) }_{\rho,\kappa}
\bra{ A_{kl}(x) }_{\theta,\lambda}
\bra{y_{\rho\fpp{\theta}}- y_{\rho} \fp{y_{\theta}}}.
\end{align*}
Using \eqref{eq:84:10} once more, assuming that $y \in C_{kl,\lambda}(x)$, we conclude that 
\[\Sigma_1 + \Sigma_2 = \bra{ A_{kl}(x)y }_{\mu}.\]
Hence, $W$ contains any vector $y$ such that 
\begin{equation}\label{eq:84:50}
{A_l(x)y} \in C_{k,\lambda}({S_l(x)}),\ y \in C_{l,\lambda}(x) \text{ and }y \in C_{kl,\lambda}(x).
\end{equation}
It remains to show that the set of $y$'s satisfying the last three conditions spans $\RR^{\cD}$.

Note first that, almost trivially, $x$ satisfies all of the conditions in \eqref{eq:84:50}. Hence, we will be interested in $y$ close to $x$. Let $\delta > 0$ be a small parameter, to be determined in the course of the argument. We will consider $y$ of the form $y_\nu = x_\nu + \e_{\nu} z_\nu$ where $z \in [1/2,1]^\cD$ and $\e_\nu > 0$ are arranged so that $\e_{\rho} < \delta \e_{\sigma}$ for any $\rho,\sigma \in \cD$ with $\rho < \sigma$ and $\e_\sigma < \delta$ for any $\sigma \in \cD$. Because $y_{\nu} \in (x_\nu,x_\nu+\delta)$ for each $\nu \in \cD$, choosing sufficiently small $\delta$ we can guarantee that $y \in [0,1)^{\cD}$. Furthermore, recalling \eqref{eq:def-of-c-2} we note that
\[
	\sum_{\tau \preceq \lambda} A_l(x)_{\lambda,\tau}y_{\tau} =					\sum_{\tau \preceq \lambda} A_l(x)_{\lambda,\tau}x_{\tau} + \e_{\lambda}\bra{ k^{d_\lambda}z_{\lambda} + \sum_{\tau \prec \lambda} \frac{\e_\tau}{\e_\lambda} A_l(x)_{\lambda,\tau} z_\tau}
\]
approaches $\sum_{\tau \preceq \lambda} A_l(x)_{\lambda,\tau}x_{\tau}$ from above as $\delta \to 0$. Hence, $y \in C_{l,\lambda}(x)$ assuming $\delta$ is sufficiently small, and by the same token also $y \in C_{kl,\lambda}(x)$. Finally, assuming again that $\delta$ is small enough, we have
\[
	\bra{ A_l(x)y }_{\nu}= S_l(x)_{\nu} + \e_\nu (k^{d_\nu}z_\nu + O(\delta))
\] 
for any $\nu \in \cD$. Hence, using an argument fully analogous to the one above one can show that $A_l(x)y \in  C_{k,\lambda}({S_l(x)})$ for $\delta$ small enough. Consequently, if $\delta > 0$ is small enough then any $y$ of the form described above belongs to $W$. Since $z_\nu$ and $\e_\nu$ were allowed to vary freely in an open region, $W$ has a non-empty interior and hence $W = \RR^{\cD}$, as needed. \qedhere
\end{enumerate}\end{proof}

\subsection{\checkmark\ Lie algebra lemmas}\mbox{}
{
We record several basic lemmas concerning the objects defined in Sections \ref{ssec:setup-basic} and \ref{ssec:setup-geo}. Our first result will allow us to change the basis in a way that simplifies the reasoning (cf.\ Lemma \ref{lem:semialg-wlog-SL}). 
}

\begin{lemma}\label{lem:diag-in-ST}
	Let $A \in \ST(\cD) \setminus \ST'(\cD)$. Then $A$ is diagonalisable by a transition matrix in $\ST'(\cD)$.
\end{lemma}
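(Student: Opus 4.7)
Since $A \in \ST(\cD)$, the matrix $A$ is lower triangular with $A_{\mu,\mu} = t^{d_\mu}$ for some $t > 0$, and the hypothesis $A \notin \ST'(\cD)$ amounts to $t \neq 1$. The plan is to construct an eigenbasis for $A$ whose transition matrix lies in $\ST'(\cD)$ by direct back-substitution, rather than appealing to abstract facts about the group $\ST'(\cD)$ or the exponential map.

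Specifically, for each $\mu \in \cD$ I would look for an eigenvector $v_\mu$ of $A$ with eigenvalue $t^{d_\mu}$ of the form
\[ v_\mu \;=\; e_\mu \,+\, \sum_{\rho \succ \mu} a_{\mu,\rho}\, e_\rho, \]
and then define $P$ to be the matrix whose column indexed by $\mu$ is $v_\mu$. Such a $P$ is automatically in $\ST'(\cD)$, being lower triangular with ones on the diagonal in the $\preceq$-ordering, and it satisfies $AP = P\Delta_t$ by the eigenvector property, whence $P^{-1} A P = \Delta_t$.

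The coefficients $a_{\mu,\rho}$ would be determined by writing out the $\rho$-th component of $A v_\mu = t^{d_\mu} v_\mu$. Using $A_{\rho,\rho} = t^{d_\rho}$ together with the fact that $A_{\rho,\sigma} \neq 0$ forces $\sigma \preceq \rho$, this component reduces to
\[ (t^{d_\mu} - t^{d_\rho})\, a_{\mu,\rho} \;=\; A_{\rho,\mu} \,+\, \sum_{\mu \prec \sigma \prec \rho} A_{\rho,\sigma}\, a_{\mu,\sigma}. \]
Since the right-hand side involves only $a_{\mu,\sigma}$ with $\sigma \prec \rho$, the values $a_{\mu,\rho}$ can be computed by recursion on $\rho$ in the partial order $\preceq$, starting from those $\rho$ minimal among indices strictly above $\mu$.

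The one thing that genuinely needs checking, and which I expect to be the main obstacle, is that the denominator $t^{d_\mu} - t^{d_\rho}$ never vanishes for $\rho \succ \mu$. This rests on two observations: first, that $\rho \succ \mu$ strictly implies $d_\rho > d_\mu$ strictly, which follows by structural induction on the definition of $\preceq$ using $d_{\kappa \fpp{\lambda}} = d_\kappa + d_\lambda$ together with $d_\lambda \geq 1$ for every $\lambda \in \cB$; and second, that $t > 0$ with $t \neq 1$ implies $t^a \neq t^b$ for distinct non-negative integers $a,b$. It is precisely here that the hypothesis $A \notin \ST'(\cD)$ (i.e.\ $t \neq 1$) enters in an essential way; without it, the recursion would be obstructed exactly on the pairs $(\mu,\rho)$ corresponding to the non-trivial below-diagonal entries one needs to eliminate.
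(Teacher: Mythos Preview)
Your proof is correct and is essentially the paper's argument written out in detail: the paper simply asserts that for each $\mu \in \cD$ there is an eigenvector of the required triangular form, ``readily proved by downwards induction on $\mu$'', which is exactly the back-substitution you carry out explicitly. (Incidentally, the paper's one-line proof places the eigenvector in $e_\mu + \operatorname{span}\{e_\nu : \nu \prec \mu\}$, which appears to be a typo for $\nu \succ \mu$; your direction is the correct one, as is forced by the requirement that the transition matrix with these eigenvectors as columns lie in $\ST'(\cD)$.)
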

\begin{proof}
	The claim amounts to the statement that for each $\mu \in \cD$, $A$ has an eigenvector in $e_\mu + \linspan\set{e_\nu}{\nu \prec \mu}$ corresponding to the eigenvalue $A_{\mu,\mu} \neq 0$. This is readily proved by downwards induction on $\mu$.   
\end{proof}

{
Recall that the Lie algebra $\stp(\cD)$ consists of strictly lower trinagular matrices (with respect to $\preceq$). Extending $\preceq$ to a total order in an arbitrary way, we can construe $\stp(\cD)$ as a Lie subalgebra of the algebra of (ordinary) strictly lower triangular matrices. As a consequence, the exponential map on $\stp(\cD)$ is particularly well-behaved.
}

\begin{example}\label{ex:exp-nil}
In the running example, $\bstp(\cD)$ consist of matrices $Z$ of the form
\begin{equation}\label{eq:421:0}
Z =  
\begin{bmatrix}
0 & 0 & 0 & 0 & 0 & 0 \\
Z_{1,0} & 0 & 0 & 0 & 0 & 0 \\
Z_{2,0} & 0 & 0 & 0& 0 & 0 \\
Z_{1\fpp{2},0} & Z_{1\fpp{2},1} & 0 & 0 & 0 & 0 \\
Z_{2\fpp{1},0} & 0 & Z_{2\fpp{1},2} & 0 & 0 & 0 \\
Z_{3,0} & 0 &0 & 0 & 0 & 0 
\end{bmatrix}.
\end{equation}
The exponential of a matrix $Z$ as above takes the form
\begin{equation}\label{eq:421:1}
\exp(Z) =  
\begin{bmatrix}
1 & 0 & 0 & 0 & 0 & 0 \\
Z_{1,0} & 1 & 0 & 0 & 0 & 0 \\
Z_{2,0} & 0 & 1 & 0& 0 & 0 \\
Y_{1\fpp{2},0} & Z_{1\fpp{2},1} & 0 & 1 & 0 & 0 \\
Y_{2\fpp{1},0} & 0 & Z_{2\fpp{1},2} & 0 & 1 & 0 \\
Z_{3,0} & 0 &0 & 0 & 0 & 1 
\end{bmatrix},
\end{equation}
where the remaining coefficients are given by
\begin{align*}
Y_{1\fpp{2},0} &= Z_{1\fpp{2},0} + \frac{1}{2} Z_{1\fpp{2},1}Z_{1,0}, &&&
Y_{2\fpp{1},0} &= Z_{2\fpp{1},0} + \frac{1}{2} Z_{2\fpp{1},2}Z_{2,0}.
\end{align*}
\end{example}

\begin{lemma}\label{lem:exp-is-poly-nil}
	The exponential map $\exp \colon \stp(\cD) \to \ST'(\cD)$ is a diffeomorphism given by rational polynomial formulae. Moreover, the same applies to the logarithmic map $\log \colon \ST'(\cD) \to \stp(\cD)$.
\end{lemma}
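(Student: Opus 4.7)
The key observation is nilpotency. Extend the partial order $\preceq$ on $\cD$ to a total order; then any matrix $Z \in \stp(\cD)$, being strictly lower triangular with respect to $\preceq$, is also strictly lower triangular with respect to this total order in the ordinary sense. Hence $Z$ is nilpotent with $Z^{|\cD|} = 0$, so the exponential series terminates:
\[
	\exp(Z) = \sum_{k=0}^{|\cD|-1} \frac{1}{k!} Z^k.
\]
This is manifestly a polynomial formula with rational coefficients in the entries of $Z$. An analogous observation applies to $A \in \ST'(\cD)$: the matrix $A - I$ is strictly lower triangular with respect to $\preceq$ (hence nilpotent), so
\[
	\log(A) = \sum_{k=1}^{|\cD|-1} \frac{(-1)^{k+1}}{k} (A-I)^k,
\]
which is again a rational polynomial formula in the entries of $A$.

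Next I would check that these maps have the correct targets. The set of matrices which are (strictly) lower triangular with respect to $\preceq$ is closed under matrix multiplication, so $Z^k$ is strictly lower triangular with respect to $\preceq$ for all $k \geq 1$. Consequently $\exp(Z) - I$ is strictly lower triangular with respect to $\preceq$, and thus $\exp(Z) \in \ST'(\cD)$. Conversely, $(A-I)^k$ is strictly lower triangular with respect to $\preceq$, so $\log(A) \in \stp(\cD)$.

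Finally, the identities $\exp \circ \log = \mathrm{id}_{\ST'(\cD)}$ and $\log \circ \exp = \mathrm{id}_{\stp(\cD)}$ follow from the usual formal power series identities $\exp(\log(1+x)) = 1+x$ and $\log(\exp(x)) = x$ in $\QQ[[x]]$, applied to the nilpotent elements $A - I$ and $Z$ in the (associative) matrix algebra; since the involved series terminate after finitely many terms, no analytic considerations are needed. This shows that $\exp$ and $\log$ are mutually inverse bijections given by rational polynomial formulae, hence diffeomorphisms of the required kind. I do not anticipate any essential obstacle; the entire argument rests on the fact that strict lower triangularity with respect to $\preceq$ is inherited under matrix multiplication and implies nilpotency with index at most $|\cD|$.
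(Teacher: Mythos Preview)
Your argument is correct. The paper does not actually give a proof here but simply cites \cite[Thm.\ 1.2.1 and Prop.\ 1.2.7]{CorwinGreenleaf-book}; your direct argument via nilpotency and the truncated exponential and logarithm series is exactly the standard proof underlying that citation, so the approaches coincide in substance.
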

\begin{proof}
\cite[Thm.\ 1.2.1 + Prop.\ 1.2.7]{CorwinGreenleaf-book}
\end{proof}

{	
The exponential map on $\st(\cD)$ is marginally more complicated, but nevertheless quite tractable. 
}

\begin{example}\label{ex:exp}
Continuing the running example (cf.\ Example \ref{ex:exp-nil}), $\bar \st(\cD)\setminus \bstp(\cD)$ consists of matrices of the form $t(Z+\bar \Lambda)$, where $t \in \RR \setminus \{0\}$ and $Z$ takes the form \eqref{eq:421:0}. The exponential map
\[ \exp\colon \bar\st(\cD) \setminus \bar\st{}'(\cD) \to \bar\ST(\cD) \setminus \bar\ST{}'(\cD) \]
is described by the formula
\begin{equation}
	\exp \big( t (Z + \bar \Lambda) \big) = 
\begin{bmatrix}
1 & 0 & 0 & 0 & 0 & 0 \\
Y_{1,0} & e^t & 0 & 0 & 0 & 0  \\
Y_{2,0} & 0 & e^t & 0& 0 & 0 \\
Y_{1\fpp{2},0} & Y_{1\fpp{2},1} & 0 & e^{2t} & 0 & 0 \\
Y_{2\fpp{1},0} & 0 & Y_{1\fpp{2},2} & 0 & e^{2t} & 0 \\
Y_{3,0} & 0 & 0 & 0 & 0  & e^{2t}
\end{bmatrix},	
\end{equation}
where the coefficients $Y_{\mu,\nu}$ are given by
\begin{align*}
Y_{1,0} &= (e^t-1) Z_{1,0},&&&
Y_{2,0} &= (e^t-1) Z_{2,0},\\
Y_{1\fpp{2},1} &= e^t(e^t-1) Z_{1\fpp{2},1},&&&
Y_{2\fpp{1},2} &= e^t(e^t-1) Z_{2\fpp{1},2},\\
Y_{1\fpp{2},0} &= 
\frac{e^{2t}-1}{2} Z_{1\fpp{2},0} + \frac{(e^t-1)^2}{2} Z_{1\fpp{2},1} Z_{1,0} 
,&&&
Y_{2\fpp{1},0} &= 
\frac{e^{2t}-1}{2} Z_{2\fpp{1},0} + \frac{(e^t-1)^2}{2} Z_{2\fpp{1},2} Z_{2,0},\\
Y_{3,0} &= (e^{2t}-1)Z_{3,0}.
\end{align*}
\end{example}

\begin{lemma}\label{lem:exp-is-poly}
	The exponential map $\st(\cD) \to \ST(\cD)$ is a diffeomorphism. The map 
	\begin{align}
	\label{eq:69:50} \RR \times \stp(\cD) \ni (t,Z) \mapsto \exp( t(\Lambda +Z)) \in \ST(\cD)
	\end{align}
	 is given by a polynomial with rational coefficients in $e^t$ and $Z$. 
\end{lemma}
\begin{proof}\color{White}

Since $\exp$ is a local diffeomorphism, to verify that it is a diffeomorphism it is enough to check that it is bijective. We can express any $Z \in \st(\cD)$ in the form
	\begin{equation}\label{eq:60:00}\textstyle
		Z = \sum_{\kappa \succ \lambda} Z_{\kappa,\lambda} E^{\kappa,\lambda}.
	\end{equation}
	Expanding the exponential map into a power series we find that
	\begin{equation}\label{eq:60:01}
			\exp(t(\Lambda+Z)) = \sum_{n = 0}^\infty \frac{t^n}{n!} \bra{ \Lambda + Z}^n.
	\end{equation}
	Expanding $(\Lambda +Z)^n$ yields the sum of $\Lambda^n$ and terms of the form
	\begin{equation}\label{eq:60:70}
\Lambda^{n_1} Z \Lambda^{n_2} \dots \Lambda^{n_{r-1}} Z \Lambda^{n_r} 
	\end{equation}
	where $ r \geq 2$, $n_1,n_2,\dots,n_r \geq 0$ and $n_1+\dots+n_r = n-r+1$.
	
	Recall that $\Lambda E^{\kappa,\lambda} = d_\kappa E^{\kappa,\lambda}$ and $E^{\kappa,\lambda}\Lambda = d_\lambda E^{\kappa,\lambda}$, and that $E^{\kappa,\lambda} E^{\mu,\nu}$ is equal to either $E^{\kappa,\nu}$ or $0$, depending on whether $\lambda = \mu$ or not ($\kappa,\lambda,\nu,\mu \in \cD$). Hence, inserting \eqref{eq:60:00} into \eqref{eq:60:70} yields the sum of terms of the form	
	\begin{equation}\label{eq:60:71}
	\textstyle \bra{\prod_{i=1}^{r-1} Z_{\kappa_i,\kappa_{i+1}}}
	\bra{\prod_{i=1}^r d_{\kappa_i}^{n_i}} E^{\kappa_1,\kappa_r}
	\end{equation}
	where $\kappa_1 \succ \kappa_2 \succ \dots \succ \kappa_r$ and $r, n_1,\dots,n_r$ are as above. It is elementary (even if somewhat mundane) to derive for pairwise distinct $x_i \in \RR$ a formula of the form
	\begin{equation}\label{eq:60:02}
	\sum_{n_1,\dots,n_r} \braif{n_1+\dots+n_r = n} \prod_{i=1}^r x_i^{n_i} = \sum_{i=1}^r a_i(x_1,\dots,x_r) x_i^n,
	\end{equation}
	where $a_i$ are coefficients dependent only on $r$ and $x_1,\dots,x_r$. (In fact, these coefficients are given by $a_i = x_i^{r-1}/\prod_{j \neq i}(x_i - x_j)$, but the values do not play a role in the reasoning). Thus, letting $\vec \kappa := (\kappa_1,\dots,\kappa_r)$, and setting
\begin{align*}
	a_i'(\vec \kappa) &:= a_i(d_{\kappa_1},\dots,d_{\kappa_r}), &&&
	Z(\vec{\kappa}) &:= {\textstyle\prod_{i=1}^{r-1} Z_{\kappa_i,\kappa_{i+1}}}
\end{align*}	
we obtain the expansion
	\begin{equation}\label{eq:60:03}
	\bra{ \Lambda + Z}^n 
	= \Lambda^n + \sum_{\kappa \succ \lambda} E^{\kappa,\lambda} 
		\sum_{\substack{ \kappa_1 \succ \dots \succ \kappa_r\\ \kappa_1 = \kappa,\, \kappa_r = \lambda} } Z(\vec{\kappa}) \sum_{i=1}^r
	a_i'(\vec\kappa) d_{\kappa_i}^n .
	\end{equation}
	Plugging \eqref{eq:60:03} into \eqref{eq:60:01} and changing the order of summation, we conclude that
	\begin{equation}\label{eq:60:04}
	\exp(t(\Lambda+Z)) = \exp(t\Lambda)+ \sum_{\kappa \succ \lambda} E^{\kappa,\lambda}
	\sum_{\substack{ \kappa_1 \succ \dots \succ \kappa_r\\ \kappa_1 = \kappa,\, \kappa_r = \lambda} } Z(\vec{\kappa})
	 \sum_{i=1}^r a_i'(\vec{\kappa}) \exp\bra{d_{\kappa_i}t}.
	\end{equation}
	Since the length $r$ of any decreasing sequence $\kappa_1 \succ \kappa_2 \succ \dots \succ \kappa_r$ is bounded by $\abs{\cD}$, this implies that the polynomial map in \eqref{eq:69:50} is a polynomial in $e^t$ and $Z$.
	
	Next, we construct the inverse of the exponential map. Suppose that $A \in \ST(\cD) \setminus \STp(\cD)$. Let  $t = t(A) := \log(A_{\mu, \mu})/d_\mu$ for some $\mu \in \cD$; the definition of $\ST(\cD)$ ensures that this is well-defined and independent of the choice of $\mu$. We next construct $Z = Z(A) \in \st'(\cD)$ such that $\exp(t(\Lambda + Z)) = A$; so far, we have ensured that the diagonal entries agree.
	
	We assign values to the matrix entries $Z_{\kappa,\lambda}$ ($\kappa, \lambda \in \cD$, $\kappa \succ \lambda$) by induction on the length of the longest path 
	\[
		r(\kappa, \lambda) := \max\set{r \in \NN}{\text{ there exists } \vec\kappa \text{ such that }\kappa = \kappa_1 \succ \dots \succ \kappa_r = \lambda}.
	\]
	By \eqref{eq:60:04}, the requirement that $\exp(t(\Lambda + Z))_{\kappa,\lambda} = A_{\kappa,\lambda}$ is equivalent to
	\begin{align*}
			A_{\kappa,\lambda} 
	&= \sum_{\substack{ \kappa_1 \succ \dots \succ \kappa_r\\ \kappa_1 = \kappa,\, \kappa_r = \lambda} } Z(\vec{\kappa})
	 \sum_{i=1}^r a_i'(\vec{\kappa}) \exp\bra{d_{\kappa_i}t}
	 \\& = Z_{\kappa,\lambda}\bra{a_1'(\kappa,\lambda)\exp(d_{\kappa}t) + a_2'(\kappa,\lambda)\exp(d_{\lambda}t) } + R(A),
	\end{align*}	
	where the remainder term $R(A)$ depends only on the entries of $Z$ that have already been computed in the previous steps. It follows that $Z_{\kappa,\lambda}$ with the required properties exists and is determined uniquely.
\end{proof}

{
As a consequence of Lemma \ref{lem:exp-is-poly}, it makes sense to speak of non-integer powers of matrices in $\ST(\cD)$, given by $A^t = \exp(t \log(A))$ for $A \in \ST(\cD)$ and $t \in \RR$. Moreover, we have the following analogue of Lemma \ref{lem:stab-is-alg}.
}

\begin{lemma}\label{lem:stab-is-linear}
	Let $V \subset \RR^{\cD}$ be an algebraic variety, let $A \in \ST(\cD)$, and suppose that $A(V) \subset V$. Then $A^t(V) = V$ for all $t \in \RR$.
\end{lemma}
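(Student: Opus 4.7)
The plan is to exploit the explicit description of the one-parameter subgroup $t\mapsto A^{t}$ provided by Lemma \ref{lem:exp-is-poly} and reduce the invariance of $V$ to the vanishing of one-variable polynomials on infinite sets. Under the natural hypothesis $A(V)\subset V$ (without which the statement fails for any $V$ not preserved by $A$), the diagonal entries $A_{\mu,\mu}=t^{d_{\mu}}>0$ make $A$ invertible, so Lemma \ref{lem:stab-is-alg}\eqref{it:20:B} upgrades this to $A(V)=V$, and consequently $A^{n}(V)=V$ for every $n\in\ZZ$.

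Fix $f\in\fI(V)$ and $v\in V$ and consider the scalar function $\varphi(t):=f(A^{t}v)$, which we know satisfies $\varphi(n)=0$ for every $n\in\ZZ$. I split on whether $A$ is special. If $A\in\ST'(\cD)$, then $\log A\in\stp(\cD)$ is strictly lower triangular and hence nilpotent, so by Lemma \ref{lem:exp-is-poly-nil} the map $t\mapsto A^{t}=\exp(t\log A)$ is a polynomial in $t$, and therefore $\varphi$ is a polynomial in $t$ with infinitely many zeros, hence identically zero. If instead $A\in\ST(\cD)\setminus\ST'(\cD)$, Lemma \ref{lem:exp-is-poly} writes $\log A=s(\Lambda+Z)$ with $s=t(A)\neq 0$ and $Z\in\stp(\cD)$ determined by $A$; then $A^{t}=\exp(ts(\Lambda+Z))$ has entries that are \emph{polynomials} in $e^{ts}$ whose coefficients depend only on $Z$ (not on $t$). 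Thus $\varphi(t)=P(e^{ts})$ for a fixed polynomial $P$ in one variable, and since $s\neq 0$ the values $\{e^{ns}\}_{n\in\ZZ}$ are pairwise distinct, so $P$ has infinitely many roots and vanishes identically.

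In either case $\varphi\equiv 0$, giving $A^{t}(v)\in V$ for all $t\in\RR$ and $v\in V$, i.e.\ $A^{t}(V)\subset V$. Applying the same argument to $A^{-1}\in\ST(\cD)$, which also stabilises $V$ since $A(V)=V$, one obtains $A^{-t}(V)\subset V$, and the two inclusions yield $A^{t}(V)=V$.

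The only genuinely delicate point is the polynomial (as opposed to merely real-analytic) dependence of the entries of $A^{t}$ on $e^{ts}$; this is exactly what Lemma \ref{lem:exp-is-poly} is designed to deliver, and without it the vanishing of $\varphi$ on $\ZZ$ would not force $\varphi\equiv 0$. Everything else is a routine application of the elementary fact that a nonzero polynomial has finitely many roots.
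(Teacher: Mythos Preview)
Your proof is correct and follows essentially the same strategy as the paper: split into the special and non-special cases, use that $A^{t}$ is polynomial in $t$ (resp.\ in $e^{ts}$), and conclude from the fact that a nonzero polynomial has finitely many roots. The only cosmetic difference is that in the non-special case the paper first diagonalises via Lemma~\ref{lem:diag-in-ST} to reduce to $A=\Delta_{a}$ and then argues that $\{t:\Delta_{t}(V)\subset V\}$ is algebraic and infinite, whereas you invoke Lemma~\ref{lem:exp-is-poly} directly to get the polynomial dependence on $e^{ts}$ without changing basis; both routes amount to the same idea.
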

\begin{proof}
	Recall that by Lemma \ref{lem:stab-is-alg}\eqref{it:stab-is-alg:A} for any $t \in \RR$ the condition $A^t(V) \subset V$ is equivalent to the ostensibly stronger condition $A^t(V) = V$. Also note that $A^n(V) = V$ for all $n \in \NN$.

	Suppose first that $A \in \ST'(\cD)$. Then $A^t$ is a polynomial in $t$. Hence, by Lemma \ref{lem:stab-is-alg}\eqref{it:stab-is-alg:B} the set $S := \set{t \in \RR}{A^t(V) \subset V}$ is algebraic, and it is also infinite since $n \in S$ for all $n \in \NN$. It follows that $S = \RR$, meaning that $A^t(V) = V$ for all $t \in \RR$.

	Secondly, suppose that $A \in \ST(\cD) \setminus \ST'(\cD)$. By Lemma \ref{lem:diag-in-ST}, we may assume without loss of generality that $A = \Delta_a$ for some $a \in (0,1) \cup (1,\infty)$. By Lemma \ref{lem:stab-is-alg}, the set $R:= \set{t \in \RR}{\Delta_t(V) \subset V}$ is algebraic, and it is also infinite since $a^n \in A$ for all $n \in \NN$. It follows that $R = \RR$, and $A^t(V) = V$ for all $t \in \RR$.
\end{proof} 
 \section{Recurrence theorem}\label{sec:recurrent}

In this section we will prove Theorem \ref{thm:recurrence}, asserting a rather strong recurrence property of the $\times k$ maps $T_k$ introduced in the previous section (cf.{} Prop.{} \ref{prop:Tk-basic}). Later, in Section \ref{sec:main}, we will see that our main result, Theorem \ref{thm:A}, follows relatively easily from the aforementioned recurrence theorem.

\subsection{\checkmark\ Setup}

{
In order to state our next result, we will need to introduce some further objects and conventions. The content of this section is closely analogous to the material in Section \ref{ssec:torus-setup}, concerning the special case of the torus.
}

\textit{
Throughout the remainder of the paper, the basis $k \geq 2$ is fixed, and $x^0$ denotes a point in $[0,1)^{\cD}$. All objects we construct are allowed to depend on $k$ and $x^0$, unless explicitly stated otherwise.}

{
For $p \in \beta \NN$ we define (with $T_k$ given by \eqref{eq:def-of-T})
\begin{equation}\label{eq:def-of-x^p}
	x^p := \lim_{n \to p} T^n_k(x^0).
\end{equation}
In particular, $x^n = T^n_k(x^0)$ for $n \in \NN$. Recall that for any $l \in \NN$ (under the identification discussed in Section \ref{ssec:setup-geo} and using the notation introduced in Section \ref{ssec:setup-Tk}) we have
\begin{equation}
T_l(x) = \bar{A}_l (x)x = A_l(x)x-b_l(x) \text{ for } x \in [0,1)^\cD.
\end{equation}
We further define matrices $\bar A_l|_p$, $A_l|_p$ and vectors $b_l|_p$ by
\begin{align}\label{eq:def-of-Alp}
\bar A_{l}|_p = \begin{bmatrix}
1 & 0 \\
-b_{l}|_p & A_{l}|_p   
\end{bmatrix} := \lim_{n \to p} \bA_l(x^n) \in \bar\ST(\cD,\ZZ).
\end{align}
Since $\bar{A}_l(x)$ have bounded integer entries as $x$ ranges over $[0,1)^\cD$ we have an ostensibly stronger property that 
\begin{align}
\forall^p_n\  \bar A_l(x^n) &= \bar A_{l}|_p,  &\text{ i.e., }&& \forall^p_n\ A_l(x^n) &= A_{l}|_p \ \text{ and } \ \forall^p_n\ b_{l}(x^n) = b_{l}|_p.
\end{align}
Accordingly, slightly abusing the notation, we define the affine maps $T_l|_p \colon \RR^{\cD} \to \RR^{\cD}$ by
\begin{equation}\label{eq:def-of-Tlp}
T_l|_p(x) := \bar{A}_l|_p x = A_{l}|_p x - b_l|_p.
\end{equation}
For $p,q \in \beta \NN_0$, we define the irreducible varieties 
\begin{equation}\label{eq:def-of-V-2}
	V_p^q := \alglim_{n \to p}(x^{n+q})= \bigcap_{I \in p} \algcl\set{ T_k^n|_q(x^q) }{ n \in I}.
\end{equation}
This construction is slightly more general than the construction of the varieties $V_p$ is \eqref{eq:def-of-Vp-ab}, which can be recovered by taking $q = 0$. We record some basic properties of thus defined objects (cf.\ Lemma \ref{lem:translation}). 
}

\begin{lemma}\label{lem:translation-ur}
Let $p,q,r \in \beta \NN_0$.
\begin{enumerate}[wide]
\item\label{it:trans-ur:A} $\forall^r_n \ T_k^n|_{p+q}(V_p^q) \subset V_{r+p}^q$ with equality if $p$ is minimal.
\item\label{it:trans-ur:B} $V_{r}^{p+q} \subset V_{r+p}^q$ with equality if $q$ and $r$ are minimal.
\end{enumerate}
\end{lemma}
\begin{proof}
Item \eqref{it:trans-ur:A} follows by the same argument as Lemma \ref{lem:translation} (with $x^q$ in place of $x^0$). The inclusion in  \eqref{it:trans-ur:B} follows directly from Lemma \ref{lem:alg-lim-basic}\eqref{it:alg-lim-basic:E}. We also record the following consequence of Lemma \ref{lem:stab-is-alg} and \eqref{it:trans-ur:A}: If $p$ and $r$ belong to the same minimal left ideal and $V^{q}_{r} \subset V^q_{p}$ then $V^{q}_{r} = V^q_{p}$.

Assume now that $q$ and $r$ are minimal. Then there exists $u \in \beta \NN + r$ such that $u+p+q = q$. Hence, we have the chain of inclusions
\begin{equation}\label{eq:172:60}
	V^{p+q}_{r} \subset V^{q}_{r+p} = V^{u+p+q}_{r+p} \subset V^{p+q}_{r+p+u}.
\end{equation}
Since $r$ and $r+p+u$ belong to the same minimal left ideal $\beta \NN_0 + r$, the extreme terms in \eqref{eq:172:60} are equal. In particular, the second part of \eqref{it:trans-ur:B} holds.
\end{proof}

{Recall that if $p,q \in \beta \NN_0$ then $p \sim q$ means that $p$ and $q$ generate the same left ideals.
For minimal $p \in \beta \NN$ we additionally define 
\begin{equation}\label{eq:def-of-V_p^q}
	\Vp_p := V^{q}_p \text{ where } q \sim p \text{ and } q \text{ is an idempotent}.
\end{equation}
It follows from Lemma \ref{lem:translation-ur}\eqref{it:trans-ur:B} that this definition is well posed, i.e., $\Vp_p$ does not depend on the choice of the idempotent $q$ subject to the above constraints. Indeed, if $q$ and $q'$ are two idempotents generating the same minimal left ideal as $p$ then
\(
	V^{q}_{p} = V^{q}_{p+q'} = V^{q'+q}_{p} = V^{q'}_p.
\)

{\color{NavyBlue} The proof of the following result will occupy the remainer of this section. Recall that $\KN \subset \beta \NN$ and $\Xi \subset \beta \NN$ were defined in Section \ref{ssec:prelims-ultrafilters}.
}

\begin{theorem}\label{thm:recurrence}\color{Mahogany}
	Let $p \sim q \in \KN$ and assume that $x^p,x^q \in (0,1)^{\cD}$. Then there exists $a \in \Xi$ such that
\begin{enumerate}[label={\normalfont({$\mathrm{R}_\arabic*$})}]
\item\label{it:1:A} $\displaystyle \lim_{l \to a} T_l|_p(x^p) = x^q$,
\item\label{it:1:B} $\displaystyle \forall^a_l \ T_l|_p(\Vp_p) = \Vp_q$.
\end{enumerate}
\end{theorem}

\begin{remark}
	The assumption that $x^p$ and $x^q$ should avoid the boundary of the cube $[0,1]^d$ is added for technical reasons. The author believes that it should be possible to remove it. In many (but unfortunately not all) cases this can be achieved by a change of basis (cf.\ Lemma \ref{lem:semialg-wlog-SL}). In applications, we are primarily interested in the case where $p = q$ is idempotent. We consider the more general situation mostly because it is necessary for the inductive argument. 
\end{remark}

\renewcommand{\t}{t}
\subsection{Proof strategy}\label{ssec:recurrent-strategy}

Let us now discuss the outline of the proof of Theorem \ref{thm:recurrence} and introduce the remaining notation which will be needed in the course of the argument.

The proof of Theorem \ref{thm:recurrence} proceeds by induction with respect to $\cD$. 
In each step, we remove from $\cD$ the indices highest degree, not counting indices in $\NN$; we informally refer to these as the ``top'' indices. More precisely, we put
\begin{align}\label{eq:def-of-max-d}
	\s &:= \max\set{d_\mu}{\mu \in \cD \setminus \NN},
\end{align}
and accordingly we introduce the subsets of $\cD$ given by
\begin{align}
\label{eq:def-of-D-top}
	\cD_{\mathrm{top}} &:= \set{\mu \in \cD}{d_\mu = \s}, &&&
	\cD_{\mathrm{ab}} &:= \set{\mu \in \cD}{d_\mu > \s} \subset \NN,\\
\label{eq:def-of-D-low}	\cD_{\mathrm{low}} &:= \set{\mu \in \cD}{d_\mu < \s}, &&&
	\cE &:= \cD_{\mathrm{ab}} \cup \cD_{\mathrm{low}} =  \cD \setminus \cD_{\mathrm{top}}.
\end{align}
Using the notion of complexity introduced in Section \ref{ssec:setup-basic}, we observe that $\complexity(\cE) < \complexity(\cD)$, as long as $\cD \not \subset \NN$. We also put
\begin{equation}\label{eq:def-of-Vtop}
	\cV_{\mathrm{top}} := \set{ x \in \RR^{\cD} }{ x_\mu = 0 \text{ for all } \mu \not \in \cD_{\mathrm{top}} }.
\end{equation}

Modulo some technical issues, we plan to derive the Theorem \ref{thm:recurrence} for $\cD$ from the same theorem for $\cE \cup \cN$, where $\cN \subset \NN$. For this reason, we treat the case $\cD \subset \NN$ independently (see Section \ref{ssec:rec-combined}). We further subdivide the inductive step described above into two stages: Firstly, we show that Theorem \ref{thm:recurrence} for $\cE \cup \cN$ implies a weaker variant of the same theorem for $\cD$, where conditions \ref{it:1:A} and \ref{it:1:B} are replaced with weaker conditions \ref{it:3:A} and \ref{it:3:B} which will be introduced shortly.
Secondly we show that this weaker version can be bootstrapped to obtain the original version. Additionally, we keep track of relations between the maps $T_{lm}|_p$ and $T_l|_p \circ T_m|_p$ recorded by condition \ref{it:1:C}, \ref{it:2:C}, \ref{it:3:C}. 

Let us now state the conditions alluded to above.
For a vector $v \in \cV_{\mathrm{top}}$, we consider the analogues of \ref{it:1:A} and \ref{it:1:B} that additionally include a shift by $v$:
\begin{enumerate}[label={\normalfont({$\mathrm{R}_\arabic*'$})}]
\item\label{it:2:A} $\displaystyle \lim_{l \to a} T_l|_p(x^p) = x^q + v$,
\item\label{it:2:B} $\displaystyle \forall^a_l \ T_l|_p(\Vp_p) = \Vp_q + v$.
\end{enumerate}
Accordingly, for a bounded sequence $v_l \in \cV_{\mathrm{top}}$ ($l \in \NN$), we consider the analogues of \ref{it:1:A} and \ref{it:1:B} where the shifts are allowed to depend on $l$:
\begin{enumerate}[label={\normalfont({$\mathrm{R}_\arabic*''$})}]
\item\label{it:3:A} $\displaystyle \lim_{l \to a} T_l|_p(x^p) = x^q + \lim_{l \to a} v_l$,
\item\label{it:3:B} $\displaystyle \forall^a_l \ T_l|_p(\Vp_p) = \Vp_q + v_l$.
\end{enumerate}

For technical reasons, we are also need to keep track of how the maps $T_{lm}|_p$ relate to the maps $T_l|_p$ and $T_m|_p$ ($m,l \in \NN$). As a motivating example, we point out that conditions \ref{it:1:A} and \ref{it:1:B} almost --- but not quite --- imply a new condition
\begin{enumerate}[label={\normalfont({$\mathrm{R}_\arabic*$})}]\setcounter{enumi}{2}
\item \label{it:1:C} $\displaystyle \forall_m \ \forall^a_l \ T_{ml}|_p = T_{m}|_q \circ T_l|_p$.
\end{enumerate}
Indeed, for any $m \in \NN$ and $a$-almost all $l$ it follows from the definitions that
\[
	\bar{A}_{ml}|_p = \lim_{n \to p} \bar{A}_m\bra{T_l|_p(x^n)} \cdot \bar{A}_l|_p \overset{!}{=} \bar{A}_m|_q \cdot \bar{A}_l|_p,
\]
where the equality labelled with the exclamation mark holds as long as $\bar{A}_m$ is continuous at $x^q$. For a sequence $w_m \in \cV_{\mathrm{top}}$ ($m \in \NN$), we introduce the analogue of \ref{it:1:C} that includes a shift by $w_m$:
\begin{enumerate}[label={\normalfont({$\mathrm{R}_\arabic*'$})}]\setcounter{enumi}{2}
\item \label{it:2:C} $\displaystyle \forall_m \ \forall^a_l \ T_{ml}|_p = T_{m}|_q \circ T_l|_p + w_m e_0^{\mathrm{T}}$.
\end{enumerate}
(Here and elsewhere, $e_0^{\mathrm{T}}$ denotes the constant map $1$ on $\RR^{\cD}$, which is consistent with the identification of $\Aff(\cD)$ with $\End(\{0\} \cup \cD)$ discussed in Section \ref{ssec:setup-geo}.) Similarly, for a sequence $w_{m,l} \in \cV_{\mathrm{top}}$ ($l,m \in \NN$), we consider the condition
\begin{enumerate}[label={\normalfont({$\mathrm{R}_\arabic*''$})}]\setcounter{enumi}{2}
\item \label{it:3:C} $\displaystyle \forall_m \ \forall^a_l \ T_{ml}|_p = T_{m}|_q \circ T_l|_p + w_{m,l} e_0^{\mathrm{T}}$,
\end{enumerate}

We introduce names for sets of ultrafilters which satisfy combinations of the above conditions that will appear in the argument.
\begin{align}
\label{eq:def-of-cH}
	\cH^q_p &:= \set{ a \in \beta \NN \setminus \NN }{\text{\ref{it:1:A}, \ref{it:1:B} and \ref{it:1:C} hold}}, \\
		\label{eq:def-of-tcH} \tcH^q_p &:= \set{ a \in \beta \NN \setminus \NN}{ \text{\ref{it:1:A}, \ref{it:1:B} and \ref{it:2:C} hold for some 
	} w_m \in \cV_{\mathrm{top}}},\\
	\label{eq:def-of-cG} \cG^q_p &:= \set{ a \in \beta \NN \setminus \NN}{ \text{\ref{it:2:A}, \ref{it:2:B} and \ref{it:2:C} hold for some 
	} v, w_m \in \cV_{\mathrm{top}}},\\
	\label{eq:def-of-tcG} \tcG^q_p &:= \set{ a \in \beta \NN \setminus \NN}{ \text{\ref{it:3:A}, \ref{it:3:B} and \ref{it:3:C} hold for some 
	} v_l, w_{m,l} \in \cV_{\mathrm{top}}}.
\end{align}
It follows directly from the definitions have the chain of inclusions
\[
	\cH^q_p \subseteq \tcH^q_p \subseteq \cG^q_p \subseteq \tcG^q_p.
\]
\begin{remark}\label{rmk:dep-on-D}
\begin{enumerate}[wide]
\item In the rare cases when we need to track dependence on $\cD$, we write $\cH^q_p[\cD]$, $\cG^q_p[\cD]$, etc., but we try to avoid this rather cumbersome piece of notation. (This happens primarily in Section \ref{ssec:recurrent-induction} where we make use of the inductive assumption.) 
\item For any $a \in \cG^q_p$, the vectors $v$ and $w_m$ ($m \in \NN$) are uniquely determined by conditions \ref{it:2:A} and \ref{it:2:C}. For fixed $p \sim q \in \KN$, this gives rise to the maps 
\begin{align}\label{eq:def-of-v-and-w}
	\cG^q_p \ni a &\mapsto v^a \in \cV_{\mathrm{top}},&&& \cG^q_p \ni a &\mapsto w^a_m \in \cV_{\mathrm{top}} \quad (m \in \NN).
\end{align}
(The ultrafilters $p,q$ will always be clear from the context and we omit them from the notation for the sake of not obfuscating the formulae excessively.) 
We may now characterise $\tcH^q_p$ (resp.\ $\cH^q_p$) as the set of those $a \in \cG^q_p$ such that $v^a = 0$ (resp.\ $v^a = w^a_m = 0$ for all $m \in \NN$).
\end{enumerate}
\end{remark}

We can now state the slight strengthening of Theorem \ref{thm:recurrence} which is the theorem which we will actually prove in this section.
\begin{theorem}\label{thm:recurrence-strong}\color{Mahogany}
Let $p \sim q \in \KN$. If $x^p,x^q \in (0,1)^{\cD}$ then $\tcH^q_p \cap \Xi \neq \emptyset$.
\end{theorem}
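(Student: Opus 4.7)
The plan is to induct on the complexity $\complexity(\cD)$, using the hierarchy $\tcH^q_p \subseteq \cG^q_p \subseteq \tcG^q_p$ as scaffolding: first produce an element of the weak class $\tcG^q_p \cap \Xi$ via the inductive hypothesis applied to $\cE := \cD \setminus \cD_{top}$, then successively tighten the conclusion to $\tcH^q_p \cap \Xi$.

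The base case $\cD \subset \NN$ should be essentially Section~\ref{sec:torus}. Here $\cD_{top} = \cD$, the map $T_l$ is the classical $\times l$ map on the torus $[0,1)^\cD$, and $\bar A_l|_p$ is an honest affine map, so the semigroup identity $T_{ml}|_p = T_m|_q \circ T_l|_p$ holds on the nose and $(R_3')$ is automatic with $w_m = 0$. Proposition~\ref{lem:idempotent} tells us that $\Vp_p$ is a rational affine subspace, which after a change of basis from Lemma~\ref{lem:wlog-SL} may be assumed to be a coordinate subtorus. Theorem~\ref{thm:main-strong-torus} then produces an $\IP^*_+$ set of return times satisfying $(R_1)$ and $(R_2)$ exactly; any such set has positive Banach density and therefore belongs to some $a \in \Xi$, yielding $a \in \tcH^q_p \cap \Xi$.

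For the inductive step I would set $\cE := \cD \setminus \cD_{top}$, note that $\complexity(\cE) < \complexity(\cD)$, and invoke Proposition~\ref{prop:Tk-basic}.\eqref{it:64:B1}: the $\cE$-coordinates of $T_l(x)$ depend only on the $\cE$-coordinates of $x$, so projection onto $\RR^\cE$ is a dynamically coherent sub-system. The inductive hypothesis applied to this projected data produces $a_0 \in \tcH^q_p[\cE] \cap \Xi$, controlling the lower coordinates exactly. To lift $a_0$ to $\cD$ I would use Proposition~\ref{prop:Tk-basic}.\eqref{it:64:STD}: the top-diagonal block of $A_l|_p$ is $l^s I$, so $T_l|_p(x^p)_{\cD_{top}}$ is an affine function of $l^s$ whose coefficients are generalised polynomials in the $\cE$-coordinates, hence already under $a_0$-control. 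Applying Proposition~\ref{prop:semialg-sep-IP-2} with the $\cE$-trajectory as the bounded sequence and $l^s$ as the divergent parameter $t_l$ decouples the top direction into a bounded semi-algebraic problem, and a second application of Theorem~\ref{thm:main-strong-torus} on the top subtorus $\cV_{top}/\ZZ^{\cD_{top}}$ (again a rational affine set by Proposition~\ref{lem:idempotent}) removes any residual shift $v_l$ in $(R_1)$ and $(R_2)$. Composing multiplicatively with an element of $\Xi$ preserves membership, since $\Xi$ is a multiplicative left ideal.

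The main obstacle will be this lifting step, specifically the passage from $\tcG^q_p$ to $\tcH^q_p$. The nonlinear correction terms $c_{k,\lambda}$ of~\eqref{eq:def-of-c} make the top block interact nontrivially with the lower coordinates, so no single change of basis diagonalises the system. Proposition~\ref{prop:semialg-sep-IP-2} is custom-built to absorb precisely the $l^s$-scaling in the top block, and is the essential wedge between ``recurrence of lower coordinates'' (the inductive hypothesis) and ``recurrence of all coordinates modulo at most a composition shift'' (the desired conclusion). Keeping the refined ultrafilter inside $\Xi$ throughout will repeatedly rely on the fact that $\Xi$ is a closed left ideal for both addition and multiplication in $\beta\NN_0$.
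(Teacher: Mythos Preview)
Your overall scaffold---induct on $\complexity(\cD)$, treat $\cD \subset \NN$ as the torus base case, and lift from $\cE = \cD \setminus \cD_{top}$ via Proposition~\ref{prop:semialg-sep-IP-2}---matches the paper. The lifting step you describe is essentially Proposition~\ref{prop:induction}, which indeed takes an element of $\tcH^q_p[\cE]$ and produces one of $\tcG^q_p[\cD]$.

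The genuine gap is the passage from $\tcG^q_p \cap \Xi$ back down to $\tcH^q_p \cap \Xi$, i.e.\ killing the residual top shift $v^a$. Your proposal, a second application of torus recurrence on $\cV_{top}/\ZZ^{\cD_{top}}$, does not work as stated. The obstruction is the multiplication formula of Proposition~\ref{lem:cGp-basic}: for $a \in \cG^r_q$ and $b \in \cG^q_p$ one has
\[
v^{a \cdot b} \equiv v^a + \lim_{m \to a}\{m^{\s} v^b\} \pmod{\ZZ^\cD},
\]
and the term $\lim_{m\to a}\{m^{\s} v^b\}$ is \emph{not} $v^b$ in general, so iterating or composing does not give an additive semigroup of shifts that you could drive to zero by torus recurrence. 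The paper resolves this with two ingredients you did not mention. First, a rigidity step (Proposition~\ref{prop:v_perp-discrete-general}, using the Lie structure of $\ST(\cD)$ and the rationality of $\cU_q$ from Proposition~\ref{lem:U-is-Q}) shows $\tcG^q_p = \cG^q_p$, so the shift $v_l$ is essentially constant. Second---and this is the crucial missing idea---one must carry along the auxiliary constraint $a \in \Zeta^q_p$, meaning $a \cdot k^p \eqab k^q$ for the polynomial-equidistribution equivalence. This constraint is bootstrapped from the inductive hypothesis by enlarging $\cE$ with extra abelian coordinates (Proposition~\ref{lem:bootstrap}), and it is exactly what makes the addition formula collapse to $v^{a\cdot b} \equiv v^a + v^b$ (Lemma~\ref{lem:v-addition}). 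Only then does the set $\{v^a \bmod \ZZ^\cD : a \in \cG^p_p \cap \Zeta^p_p \cap \Xi\}$ become a closed sub-semigroup of the torus, forcing it to contain~$0$ (Corollary~\ref{cor:v-add}).

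A secondary issue: in the base case the paper does not invoke Theorem~\ref{thm:main-strong-torus} (which concerns return-time sets, not the conditions \ref{it:1:A}--\ref{it:1:C}), but rather directly verifies that $k^r + e$ lies in $\cH^q_p$ for any additive idempotent $e$; choosing $e \in \EN \cap \Xi$ then lands in $\Xi$.
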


Before return to the discussion of the outline of the argument, we mention two final pieces of relevant notation. (This is also meant to keep the upcoming subsections self-contained, at least in terms of definitions.) For $q \in \KN$ we put 
\begin{equation}\label{eq:def-of-cU}
	\cU_q := \set{ u \in \cV_{\mathrm{top}}}{\Vp_q+u = \Vp_q}.
\end{equation}
and note that for any $p \sim q$ and $a \in \beta \NN$, condition \ref{it:2:B} determines $v^a$ uniquely up to a shift in $\cU_q$. For $p \sim q \in \KN$, we define
\begin{equation}\label{eq:def-of-Zeta}
	\Zeta_p^q := \set{ a \in \beta \NN_0}{ \lim_{l \to a \cdot k^p} \alpha l^d = \lim_{l \to  k^q } \alpha l^d \text{ for all } d \in \NN,\ \alpha \in \RR/\ZZ}.
\end{equation}
Intuitively, one can think of $\Zeta_p^q$ as the set of those $a \in \beta \NN_0$ that are guaranteed to satisfy condition \ref{it:1:A} in the case where $\cD$ is a subset of $\NN$ (but $\cD$ and $x^0$ are otherwise arbitrary).

With this notation in hand, we can explain our strategy in a more concrete way. (We still allow for a certain level of imprecision; for exact statements of the relevant results, see subsequent sections.) We will show the following:
\begin{enumerate}[label={(\arabic*)},ref={(\arabic*)},wide]
\item\label{it:summary-1} If $\tcH^p_q[\cE] $ intersects $\Delta$ for any $p \sim q$ ($q \in \KN$ and $x^p,x^q \in (0,1)^\cD$), then also $\tcG^q_p = \tcG^q_p[\cD]$ intersects $\Delta$ for any $p \sim q$. This is the only point where we use the inductive hypothesis. (See Section \ref{ssec:recurrent-induction}, Proposition \ref{prop:induction}.) The key idea is to describe the set $\tcG^p_q[\cD]$ in terms of semialgebraic geometry and the projection of the orbit $T_l|_p(x^p)$ ($l \in \NN$) to $\RR^{\cE}$.
\item\label{it:summary-2} The sets $\tcG^q_p$ and $\cG^q_p$ are actually equal. A key point in the argument is the observation that $\cU_q$ is a vector space defined over $\QQ$, which is also used later. (See Section \ref{ssec:rec-rigidity}, Proposition \ref{prop:rigidity}.)
\item\label{it:summary-3} The elements of $\cG^q_p$, $\cG^r_q$  for $p \sim q \sim r$ can be combined together to produce new elements of $\cG^{r}_{p}$: if $a \in \cG^r_q$ and $b \in \cG^q_p$ then $a \cdot b \in \cG^{r}_{p}$. (See Section \ref{ssec:rec-semigrp}, Proposition \ref{lem:cGp-basic}.)
\item\label{it:summary-4} In the discussion above, one can freely replace $\cG^q_p$ with $\Sigma^q_p \cap \Zeta^q_p$ (this corresponds, roughly, to replacing $\cD$ with $\cD \cup \cN$ for $\cN \subset \NN$, see Section \ref{ssec:rec-abel}). This leads to a simpler formula for $v^{a \cdot b}$ in \ref{it:summary-3}, which, after iteration, allows us to obtain elements of $\tcH^q_p \cap \Xi$ from elements of $\cG^q_p \cap \Xi$.
\end{enumerate}

The steps described above are mostly independent, so the order in which they are presented is to a certain degree arbitrary. We opt to start with \ref{it:summary-2}, which allows us to avoid distinguishing between $\tcG^q_p$ and $\cG^q_p$ in the remainder of the discussion. It is then natural to follow with \ref{it:summary-3} and \ref{it:summary-4}, and finally \ref{it:summary-1}.

\subsection{\checkmark\ Rigidity}\label{ssec:rec-rigidity} {\color{NavyBlue}

The main goal of this subsection is to prove the following.
\begin{proposition}\label{prop:rigidity}\color{Mahogany}
	If $p \sim q \in \KN$ then $\cG^q_p = \tilde \cG^q_p$.
\end{proposition}
 In other words, we claim that if $p\sim q \in \KN$ and $a \in \beta \NN \setminus \NN$ satisfies conditions \ref{it:3:A}, \ref{it:3:B} and \ref{it:3:C} for some sequences $v_l \in \cV_{\mathrm{top}}$ and $w_{m,l} \in \cV_{\mathrm{top}}$ ($l,m \in \NN$) then there exists a single vector $v \in \cV_{\mathrm{top}}$ and a sequence $w_m \in \cV_{\mathrm{top}}$ ($m \in \NN$) such that the same conditions  \ref{it:3:A}, \ref{it:3:B} and \ref{it:3:C} are satisfied for $v_l = v$ and $w_{m,l} = w_m$. We first address condition \ref{it:3:C}.
}

\begin{lemma}\label{lem:R3-wk<=>R3-str}\color{BurntOrange}
	Let $p \sim q \in \KN$, $a \in \beta \NN$ and $m \in \NN$. The following conditions are equivalent:
	\begin{enumerate}
	\item\label{it:51:A} There exists $w \in \cV_{\mathrm{top}}$ such that $\forall^a_l \ T_{ml}|_p = T_{m}|_q \circ T_{l}|_p + w e^{\mathrm{T}}_0$.
	\item\label{it:51:B} There exist $w_{l} \in \cV_{\mathrm{top}}$ ($l \in \NN$) such that $\forall^a_l \ T_{ml}|_p = T_{m}|_q \circ T_{l}|_p + w_{l} e^{\mathrm{T}}_0$.
	\end{enumerate}
\end{lemma}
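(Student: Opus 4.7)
The implication \ref{it:51:A} $\Rightarrow$ \ref{it:51:B} is immediate (take $w_l := w$). For the converse, the plan is to show that each $w_l$ is both integer-valued and uniformly bounded in $l$, so that $l \mapsto w_l$ ranges over a \emph{finite} subset of $\cV_{top} \cap \ZZ^{\cD}$; partition regularity of $a$ will then force $w_l$ to be equal to a single vector $w$ for $a$-almost all $l$, yielding \ref{it:51:A}.

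Assume \ref{it:51:B}. Rewriting the equation in matrix form in $\bar\ST(\cD)$ gives
\[ \bar A_{ml}|_p = \bar A_m|_q \bar A_l|_p + w_l e_0^{\mathrm T}. \]
Comparing the $\cD \times \cD$ blocks forces $A_{ml}|_p = A_m|_q A_l|_p$; comparing constant parts (equivalently, evaluating the affine identity in \ref{it:51:B} at $x^p$) yields
\[ w_l = T_{ml}|_p(x^p) - T_m|_q\bra{T_l|_p(x^p)}. \]

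For integrality: by Proposition~\ref{prop:Tk-basic}\eqref{it:64:STD} each $A_l(x) \in \ST(\cD, \ZZ)$, and $b_l(x) = \ip{A_l(x) x} \in \ZZ^{\cD}$ by definition, so the matrices $\bar A_l(x)$ all have integer entries. This is inherited by the $p$- and $q$-limits, since such a limit is attained pointwise by $\bar A_l(x^n)$ for $p$-almost all $n$ (the matrices take only finitely many values). Hence $w_l e_0^{\mathrm T}$, and thus $w_l$, is integer-valued.

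For boundedness: by construction $T_l|_p(x^p) = \lim_{n \to p} T_l(x^n)$ is a $p$-limit of points in $[0,1)^{\cD}$, so lies in $[0,1]^{\cD}$; the same applies to $T_{ml}|_p(x^p)$. Since $T_m|_q$ is affine and depends only on $m$ and $q$, there exists a constant $C = C(m, q)$ with $\abs{w_l} \leq C$ for all admissible $l$. Combined with integrality and the constraint $w_l \in \cV_{top}$, this forces $w_l$ to lie in the finite set $\cV_{top} \cap \ZZ^{\cD} \cap \Ball(0, C)$; partition regularity of $a$ then selects a single value $w$ in this set attained on an $a$-large set of $l$'s, and this $w$ is the one required in \ref{it:51:A}. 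The main substantive step is the boundedness estimate, which hinges on the fact that $T_l|_p(x^p)$ --- and not merely $S_l|_p(x^p)$ --- lies in a compact cube; the rest is essentially bookkeeping.
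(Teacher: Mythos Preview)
Your proof is correct and follows essentially the same approach as the paper: establish that $w_l$ has integer entries (from integrality of the matrices $\bar A_l|_p$, $\bar A_m|_q$, $\bar A_{ml}|_p$) and is uniformly bounded (by evaluating at $x^p$ and using $T_l|_p(x^p),\,T_{ml}|_p(x^p)\in[0,1]^{\cD}$), conclude it is finitely-valued, and then pick the single value attained $a$-almost everywhere. The paper phrases the last step as $w:=\lim_{l\to a} w_l$, which for a finitely-valued sequence is exactly your partition-regularity argument.
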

\begin{proof}\color{White}
	Condition \eqref{it:51:A} clearly implies \eqref{it:51:B}, so we only need to verify the reverse implication. Assume that \eqref{it:51:B} holds. For each $l \in \NN$ the vector $w_l$ has integer coefficients because all of the maps $T_{ml}|_p$, $T_m|_q$ and $T_l|_q$ have integer coefficients. Moreover, $w_l$ is bounded uniformly in $l$ since 
\[
	\norm{w_l} = \norm{ T_{ml}|_p(x^p) - T_m|_q(T_l|_p(x^p))} \leq (1+\norm{T_m|_q})\sqrt{\abs{\cD}}.
\]
	Hence, the sequence $w_l$ is finitely-valued and \eqref{it:51:A} holds with $w := \lim_{l \to a} w_l$. 
\end{proof}

{\color{Blue}
The situation is more complicated when it comes to conditions \ref{it:3:A} and \ref{it:3:B}. A particular source of difficulties lies in that fact that these conditions do not imply that $v_l$ is essentially a constant sequence. 
Instead, \ref{it:3:B} determines $v_l$ uniquely up to a shift in $\cU_q$ (cf.\ Lemma \ref{lem:stab-is-alg}). Recall that the space $\cU_q$ was defined in \eqref{eq:def-of-cU}. This leaves us with the task of proving the following.
}

\begin{proposition}\label{prop:v_perp-discrete-general}\color{Mahogany}
	
	For any $q \in \KN$ and $C > 0$ there exists a finite set $F \subset \RR^{\cD}$ such that for any $p \in \beta\NN +q$, any $v \in \cV_{\mathrm{top}}$ with $\norm{v}_{\infty} \leq C$ and any $l \in \NN$, if $T_l|_p(\Vp_p) = \Vp_q + v$ then $v \in F+\cU_q$. 
\end{proposition}

{\color{NavyBlue}
Once Proposition \ref{prop:v_perp-discrete-general} is proved, the main result of this section easily follows.
}

\begin{proof}[Proof of Proposition \ref{prop:rigidity}, assuming Proposition \ref{prop:v_perp-discrete-general}]\color{White}
	Suppose that $a \in \tilde \cG^q_p$ and that conditions \ref{it:3:A}, \ref{it:3:B} hold for a sequence $v_l \in \cV_{\mathrm{top}}$ ($l \in \NN$). Let $v_l^\perp$ denote the orthogonal projection of $v_l$ to the orthogonal complement $\cU^\perp_q$. Note that \ref{it:3:A} implies that 
	\[ \lim_{l \to a} \norm{v_l}_{\infty} = \lim_{l \to a}\norm{ T_l|_p(x^p) - x^q}_{\infty} \leq 1,\]
	so we may without loss of generality assume that $\norm{v_l}_\infty \leq 2$ for all $l \in \NN$.    
	It now follows from Proposition \ref{prop:v_perp-discrete-general} there exist a finite set $F^{\perp} \subset \cU^\perp_q$ such that $v_l^\perp \in F$ for $a$-almost all $l$. Since $F$ is finite, there exists $v^{\perp} \in F$ such that $v^{\perp}_l = v^\perp$ for $a$-almost all $l$. Put 
	\[v := \lim_{l \to a} v_l \in v^\perp + \cU_q.\]
	 Then $v$ satisfies conditions \ref{it:2:A}, \ref{it:2:B}. Likewise, if condition \ref{it:3:C} holds for a sequence $w_{m,l} \in \cV_{\mathrm{top}}$ ($l,m \in \NN$) then Lemma \ref{lem:R3-wk<=>R3-str} guarantees that there exists a sequence $w_{m} \in \cV_{\mathrm{top}}$ ($m \in \NN$) satisfying condition \ref{it:2:C}. It follows that  $a \in \cG^q_p$.
\end{proof}

{\color{NavyBlue}
The remainder of this subsection is devoted to the proof of Proposition \ref{prop:v_perp-discrete-general}. We have an almost immediate reduction to the case when $p = q$.
}
\begin{proposition}\label{prop:v_perp-discrete-diagonal}\color{Mahogany}
	For any $p \in \KN$ and $C > 0$ there exists a finite set $F \subset \RR^{\cD}$ such that for any $v \in \cV_{\mathrm{top}}$ with $\norm{v}_\infty \leq C$ and any $l \in \NN$, if $T_l|_p(\Vp_p) = \Vp_p + v$ then $v \in F+\cU_p$. 
\end{proposition}
\begin{proof}[Proof of Proposition \ref{prop:v_perp-discrete-general} assuming Proposition \ref{prop:v_perp-discrete-diagonal}]\color{White}
Pick $r \in \beta \NN_0$ such that $r+q=p$. For any $l \in \NN$ it follows from Lemma \ref{lem:translation-ur} and the definition of $T_l|_p$ that 
\begin{align*}
	\forall^r_n \ T_k^n|_q(\Vp_q) &= \Vp_p &\text{and}&& \forall^r_n \ T_l|_p \circ T_k^n|_q &= T_{lk^n}|_q.
\end{align*}
Suppose that $T_l|_p(\Vp_p) = \Vp_q+v$ for some $l \in \NN$ and $v \in \cV_{\mathrm{top}}$ with $\norm{v}_{\infty} \leq C$.
Then there exists $m \in \NN$ such that $T_{m}|_q(\Vp_q) = \Vp_q + v$ (for $r$-almost all $n$, one can take $m = lk^n$). By Proposition \ref{prop:v_perp-discrete-diagonal}, $v \in F + \cU_q$ for a finite set $F$.  
\end{proof}

{\color{Blue}
Let $p \in \KN$. In order to prove Proposition \ref{prop:v_perp-discrete-diagonal}, we need to look more closely into the geometry of $\Vp_p$. We recall that $\bar\ST(\cD)$ denotes the set of standard affine maps $\RR^{\cD} \to \RR^{\cD}$, cf.{} \eqref{eq:def-of-ST(D)}. The definitions made below are local to this subsection, that is, they do not appear later in the paper. 
Let 
\begin{equation}
	\HH_p := \set{T \in \bST(\cD)}{ T(\Vp_p) = \Vp_p}
\end{equation}
be the group consisting of all maps in $\bST(\cD)$ which preserve $\Vp_p$. It follows from Lemma \ref{lem:stab-is-linear} that $\HH_p$ is a connected Lie group. Further, let
\begin{equation}
	\GG_p := \grp{T_k^n|_p^t}{n \in \NN,\ T_k^n|_p(\Vp_p) = \Vp_p,\ t \in \RR}
\end{equation}
be the smallest connected Lie group containing all of maps $T_k^n|_p$ which preserve $\Vp_p$. In particular, if $r \in \beta \NN_0$ is such that $r+p = p$ then $T^n_{k}|_p \in \GG_p$ for $r$-almost all $n$ by Lemma \ref{lem:translation-ur}. Accordingly, we define the Lie algebras $\fh_p$ and $\fg_p$ associated to $\GG_p$ and $\HH_p$, as well as the normal subgroups 
\begin{align}
	\HH_p' &= \HH_p \cap \bSTp(\cD),&&& \GG_p' &= \GG_p \cap \bSTp(\cD),
\end{align}
and their Lie algebras $\fh'_p$ and $\fg'_p$. Finally, fix $n = n_p$ such that $T_k^{n}|_p \in \GG_p$ and put 
\begin{align}
	X_p &:= (\log T_k^{n}|_p)/(n \log k) \in \Lambda + \fg_p' \subset \fg_p, \\
	S_{l}|_{p} &:= \exp(\log(l)X_p) \in \GG_p \cap \Delta_l \bSTp(\cD), &(l \in \NN).
\end{align}
Note that $\GG_p$ is the semidirect product of the one parameter group $\set{\exp(tX_p)}{t \in \RR}$ and $\GG_p'$, and the analogous remark applies to $\HH_p$. It follows from Lemma \ref{lem:exp-is-poly} that $X_p$ and $S_l|_p$ have rational coefficients.
}

{\color{NavyBlue}
Recall that each map in $\bST{}(\cD) \setminus \bSTp(\cD)$ has exactly one fixed point in $\RR^{\cD}$. For $\GG \subset \Aff(\cD)$ and $x \in \RR^\cD$, let $\GG(x) = \set{ T(x)}{ T \in \GG}$ denote the orbit of $x$. We will need the following well-known fact.
}
\begin{theorem}\label{lem:unipotent-orbit-closed}\color{Mahogany}
	Let $d \in \NN$,  $x \in \RR^d$ and let $\GG < \GL(d)$ be a Lie group consisting of lower triangular unipotent matrices. Then $\GG(x)$ is an algebraic variety. 
\end{theorem}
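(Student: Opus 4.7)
The plan is to reduce the statement to the Kostant--Rosenlicht theorem: orbits of unipotent algebraic groups acting algebraically on affine varieties are Zariski closed. This theorem is valid over $\RR$ (see, e.g., Birkes, \emph{Orbits of linear algebraic groups}, Ann.\ Math.\ \textbf{93} (1971), 459--475), so granting it one only needs that $\GG$ itself is an algebraic subgroup of $\GL(d)$ and that the orbit map $T \mapsto T(x)$ is polynomial; the latter is immediate and the former is the main content.

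The first step is to show that $\GG$ is Zariski closed in $\GL(d)$. After passing to the identity component (the connected case is the only one required in the applications of this lemma) I may assume $\GG$ is connected. Let $\mathfrak{g} = \mathrm{Lie}(\GG)$; because every element of $\GG$ is lower triangular and unipotent, $\mathfrak{g}$ is a linear subspace of the Lie algebra $\mathfrak{n}$ of strictly lower triangular $d \times d$ matrices. On $\mathfrak{n}$ the exponential map $\exp \colon \mathfrak{n} \to I + \mathfrak{n}$ is a polynomial bijection with polynomial inverse $\log$---the Taylor series truncate at degree $d$ since $N^d = 0$ for every $N \in \mathfrak{n}$---and the Campbell--Baker--Hausdorff formula likewise terminates on $\mathfrak{n}$ and becomes a polynomial identity. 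Consequently $\exp(\mathfrak{g})$ is closed under multiplication, and by connectedness of $\GG$ we obtain $\GG = \exp(\mathfrak{g})$. Since $\mathfrak{g} \subset \mathfrak{n}$ is defined by linear equations, pulling these back through $\log$ exhibits $\GG = \log^{-1}(\mathfrak{g})$ as the zero locus inside $I + \mathfrak{n}$ of polynomial equations, hence as a real algebraic subvariety of $\GL(d)$.

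With this in hand, the orbit $\GG(x)$ is the image of the algebraic group $\GG$ under the polynomial map $T \mapsto T(x)$, and Kostant--Rosenlicht immediately yields that $\GG(x)$ is Zariski closed in $\RR^d$. The main obstacle is really only the reliance on the real version of Kostant--Rosenlicht; if a self-contained argument were preferred, one could proceed by induction on $\dim \GG$, peeling off a central one-parameter subgroup $\exp(\RR X)$ (for any nonzero $X$ in the non-trivial centre $Z(\mathfrak{g})$) whose orbit on any point is the polynomial moment curve $t \mapsto \sum_{i \geq 0} \tfrac{t^i}{i!} X^i x$ and is therefore algebraic (the vectors $x, Xx, \dots, X^{k-1}x$ with $k$ minimal such that $X^k x = 0$ are linearly independent, by a short nilpotency argument). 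The slightly delicate bookkeeping in the inductive step would be to transfer the residual orbit problem for $\GG / \exp(\RR X)$ to a lower-dimensional ambient vector space in order to apply the inductive hypothesis, and I would expect this to be where most of the technical effort lies.
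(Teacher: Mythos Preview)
Your argument is correct, and in fact you have given more detail than the paper does: the paper's proof consists solely of a citation to \cite[Thm.\ 22.3.6]{TauvelYu-book}, which is precisely the Kostant--Rosenlicht type statement you invoke. Your observation that a connected Lie subgroup of the strictly lower triangular group is automatically algebraic (via the polynomial $\exp$/$\log$ on nilpotent matrices) is exactly the missing step needed to apply such a theorem, and your caution about the real field is well placed; the Birkes reference handles this. Your remark that the statement as written is only used in the connected case is also apt: the paper applies it to $\GG_p'$, which is connected, and indeed the statement can fail for disconnected $\GG$ (e.g.\ the copy of $\ZZ$ inside the one-parameter unipotent group acting on $\RR^2$). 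The self-contained inductive sketch you offer is a reasonable alternative and not something the paper pursues.
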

\begin{proof}\color{LightGray}
	\cite[Thm.\ 22.3.6]{TauvelYu-book}.
\end{proof}

\begin{proposition}\label{prop:Vp=G(z)}\color{Mahogany}
	Let $p \in \KN$ and let $z \in \RR^{\cD}$ be the fixed point of a map  in $\GG_p \setminus \GG_p'$. Then $\Vp_p = \GG_p'(z)$. In particular, $\Vp_p$ is defined over $\QQ$.
\end{proposition}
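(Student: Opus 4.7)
The plan has three main steps: show $z \in \Vp_p$, establish $\GG_p'(z) \subseteq \Vp_p$ as a Zariski closed subvariety, and prove the reverse inclusion. I would first reduce to the canonical choice $z = z^*$, the unique common fixed point of the one-parameter subgroup $\{\exp(tX_p) : t \in \RR\}$; this fixed point exists and is unique because the linear part of each $\exp(tX_p)$ with $t \neq 0$ has no eigenvalue equal to $1$ (its diagonal entries are $e^{td_\mu}$ with $d_\mu > 0$). Once $\Vp_p = \GG_p'(z^*)$ is established, the contraction argument below applied to any $T \in \GG_p \setminus \GG_p'$ gives that its fixed point $z_T$ lies in $\Vp_p = \GG_p'(z^*)$, whence $z_T$ and $z^*$ share a $\GG_p'$-orbit and $\GG_p'(z_T) = \GG_p'(z^*) = \Vp_p$.

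For $z^* \in \Vp_p$, note that the linear part of $\exp(-tX_p)$ has eigenvalues $e^{-td_\mu} \to 0$ as $t \to +\infty$, so $\exp(-tX_p)(y) \to z^*$ for any $y \in \RR^\cD$. Picking $y \in \Vp_p$ and invoking Lemma \ref{lem:stab-is-linear}, all iterates remain in the Zariski-closed set $\Vp_p$, forcing $z^* \in \Vp_p$. The inclusion $\GG_p'(z^*) \subseteq \Vp_p$ is then automatic since $\GG_p'$ preserves $\Vp_p$, and Zariski closedness of the orbit follows from Theorem \ref{lem:unipotent-orbit-closed}: under the standard affine-to-linear identification, $\GG_p' \subseteq \bSTp(\cD)$ becomes a Lie group of lower triangular unipotent matrices.

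The main obstacle is the reverse inclusion $\Vp_p \subseteq \GG_p'(z^*)$. My plan: pick a minimal idempotent $r$ with $r+p = p$, so that by Lemma \ref{lem:translation-ur} we have $T_k^n|_p \in \GG_p$ for $r$-almost all $n$. Using the semidirect decomposition $\GG_p = \GG_p' \rtimes \{\exp(tX_p)\}$, write $T_k^n|_p = S_n \exp(n\log k \cdot X_p)$ with $S_n \in \GG_p'$ (matching of scaling parts forces $S_n \in \bSTp(\cD) \cap \GG_p = \GG_p'$). Since $z^*$ is fixed by $\exp(tX_p)$, this gives $T_k^n|_p(z^*) = S_n(z^*) \in \GG_p'(z^*)$ for $r$-a.a.\ $n$, and hence $\alglim_{n \to r}(T_k^n|_p(z^*)) \subseteq \GG_p'(z^*)$. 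The heart of the argument is to show that this closure contains $\Vp_p = V^q_p = \alglim_{n \to p}(x^{n+q})$. The plan is to compare the two orbits: $T_k^n|_p(z^*)$ and $x^{n+p} = T_k^n|_p(x^p)$ differ by the affine term $A_{k^n}|_p(z^* - x^p)$, linking them through an affine relation. Combining this with the translation identities of Lemma \ref{lem:translation-ur} relating $V^0_p$ to $V^q_p = \Vp_p$ and the irreducibility afforded by Lemma \ref{lem:alglim-basic} forces equality of the two closures.

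For the rationality claim, $z$ may be taken as the fixed point of the integer matrix $T_k^{n_0}|_p \in \bST(\cD, \ZZ) \setminus \bSTp(\cD)$ (for suitable $n_0$), which then lies in $\QQ^\cD$. Moreover $\GG_p'$ is itself defined over $\QQ$: it is the identity component of the Zariski $\QQ$-closure in $\bSTp(\cD)$ of the discrete subgroup of $\bSTp(\cD, \ZZ)$ generated by the normalised iterates $T_k^n|_p \exp(-n\log k \cdot X_p)$ for appropriate $n$. Hence $\GG_p'(z)$ is a $\QQ$-variety.
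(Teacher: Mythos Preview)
Your argument through the easy inclusion $\GG_p'(z^*) \subseteq \Vp_p$ is fine, but the reverse inclusion has a genuine gap. You propose to track $T_k^n|_p(z^*) = S_n(z^*) \in \GG_p'(z^*)$ and compare its closure along $r$ with that of $x^{n+p} = T_k^n|_p(x^p)$ via the ``affine relation'' $T_k^n|_p(z^*) - x^{n+p} = A_{k^n}|_p(z^* - x^p)$. But this difference is \emph{unbounded}: the eigenvalues of $A_{k^n}|_p$ are at least $k^n$, so the two sequences diverge from one another, and no appeal to irreducibility or to Lemma~\ref{lem:translation-ur} supplies a mechanism forcing their algebraic closures to coincide. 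As written, nothing prevents $\alglim_{n\to r}(S_n(z^*))$ from being a proper irreducible subvariety of $\GG_p'(z^*)$, and you never actually exhibit any $x^{n+p}$ inside $\GG_p'(z^*)$.

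The missing idea (which the paper uses) is to reverse the semidirect factorisation: write $T_k^n|_p = T^n \circ S_n'$ with $T \in \GG_p$ a fixed map conjugate to $\bar\Delta_k$ with fixed point $z$, and $S_n' \in \GG_p'$. Then $S_n'(x^p) = T^{-n}(x^{n+p})$, and since $T^{-n}$ contracts bounded sets towards $z$ while $x^{n+p}$ remains bounded, one obtains $\lim_{n\to r} S_n'(x^p) = z$. Closedness of the unipotent orbit (Theorem~\ref{lem:unipotent-orbit-closed}) now gives $z \in \GG_p'(x^p)$, so $x^p$ and $z$ lie in the same $\GG_p'$-orbit; a short conjugation then places each $x^{n+p}$ in $\GG_p'(z)$, whence $\Vp_p \subseteq \GG_p'(z)$. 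Your left-unipotent decomposition $T_k^n|_p = S_n \circ \exp(n\log k\, X_p)$ cannot produce this contraction, because applying it to $x^p$ leaves the divergent point $\exp(n\log k\, X_p)(x^p)$ on the inside. (A minor correction on the rationality part: the normalised iterates $T_k^n|_p\,\exp(-n\log k\, X_p)$ lie in $\bSTp(\cD,\QQ)$, not $\bSTp(\cD,\ZZ)$, since $S_k|_p$ need not be integral; this does not affect the conclusion.)
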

\begin{proof}\color{White} 
Pick $T \in \GG_p \setminus \GG_p'$ with $T(z) = z$. It follows from Lemma \ref{lem:stab-is-linear} that we may replace $T$ with $T^t$ for any $t \in \RR \setminus \{0\}$. Hence, by Lemma \ref{lem:diag-in-ST} we may assume that $T$ is similar to the diagonal matrix $\bar \Delta_k$ and there exists $P \in \bSTp(\cD)$ such that $T = P \bar \Delta_k P^{-1}$. 
Then for any point $x \in \RR^{\cD} \setminus \{z\}$ we have
\[
\norm{T^n(x)-z} = \norm{P \bar \Delta{}_k^n P^{-1}(x-z)} \geq \frac{ k^n \norm{x-z}}{\norm{P}\norm{P^{-1}}}.
\]
As a consequence, for any $\e > 0$ we have
\begin{align}
\label{eq:78:99}
T^{-n}(x) &\to z \text{ as } n \to \infty \text{ for any } x \in \RR^\cD,\\
\label{eq:78:00}
\norm{T^n(x)-z} &\to \infty  \text{ as } n \to \infty, \text{ uniformly in } x \in \RR^\cD \setminus \Ball(z,\e).
\end{align}

It follows from \eqref{eq:78:99} that $z \in \Vp_p$. Directly by the definition of $\GG_p'$, all maps in $\GG_p'$ preserve $\Vp_p$. Thus, $\GG_p'(z) \subset \Vp_p$. It remains to show the reverse inclusion.

Let $r \in \beta \NN$ be such that $r+p = p$, so that $T_k^n|_p \in \GG_p$ for $r$-almost all $n$. Because of the semidirect product structure of $\GG_p$, there exist $S_n' \in \GG_p'$ such that $T_k^n|_p = T^n \circ S_n'$ for $r$-almost all $n$, and thus
\begin{equation}\label{eq:18:80}
	\forall^r_n \ x^{n+p} = T_k^n|_p(x^p) = T^n \circ S_n'(x^p),
\end{equation}
Since the sequence $x^{n+p}$ is bounded, in light of \eqref{eq:78:00} it follows from \eqref{eq:18:80} that
\begin{equation}\label{eq:18:81}
\lim_{n \to r} S_n'(x^p) = z.
\end{equation}
Hence, $z$ belongs to the orbit $\GG_p'(x^p)$, which is closed as a consequence of Theorem \ref{lem:unipotent-orbit-closed}. It follows that there exists a map $S' \in \GG'_p$ such that $S'(x^p) = z$. Let $R_n'$ denote the unique map in $\GG_p'$ such that $R_n' \circ T^n \circ S' = T^n \circ S_n'$. Then
\begin{equation}\label{eq:18:82}
	\forall^r_n \ x^{n+p} = T^n \circ S_n' (x^p) = R_n' \circ T^n(z) = R_n'(z) \in \GG_p'(z).
\end{equation}
Since $\GG'_p(z)$ is an algebraic variety by Theorem \ref{lem:unipotent-orbit-closed}, it follows from \eqref{eq:18:82} and Lemma \ref{lem:alg-lim-basic} that $\Vp_p \subset \GG_p'(z)$ and the first part of the statement follows. 

For the second part, let $z$ be the fixed point of $S_k|_{p}$. Then $z$ is rational.
The Lie algebra $\fg_p'$ is spanned by rational vectors, namely the logarithms of these among the products of maps $T_k^n|_p$ which belong to $\GG_p'$. It follows that $\GG_p'(z)$ contains a topologically (hence also Zariski) dense set of rational points, namely all points of the form $\exp(Z)z$ where $Z \in \fg_p'$ has rational coefficients. Hence, $\GG_p'(z)$ is defined over $\QQ$ by Lemma \ref{lem:alg-cl-Q}.
\end{proof}

{\color{NavyBlue}
As alluded to before, the vector space $\cU_p$ plays an important role in this section. It is a fairly immediate consequence of Proposition \ref{prop:Vp=G(z)} that $\cU_p$ is defined over $\QQ$ as an algebraic variety. With some additional work, we extract a more precise statement, which is related to Proposition \ref{lem:idempotent}.
}

\begin{proposition}\label{lem:U-is-Q}\color{Mahogany}
	Let $p \in \KN$. Then the vector space $\cU_p$ is defined over $\QQ$.
\end{proposition}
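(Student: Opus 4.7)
The strategy is to reduce the claim to the corresponding statement for the full stabilizer
\[
W := \{v \in \RR^{\cD} : \Vp_p + v = \Vp_p\}.
\]
By Lemma~\ref{lem:stab-is-alg}(2),(3), $W$ is simultaneously an algebraic subvariety of $\RR^{\cD}$ and a vector subspace. Since $\cV_{top}$ is a coordinate subspace (and hence trivially defined over $\QQ$), and since $\cU_p = W \cap \cV_{top}$, it will suffice to prove that $W$ itself is defined over $\QQ$.

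For this I will exploit Proposition~\ref{prop:Vp=G(z)} in two ways. First, that proposition asserts that $\Vp_p$ is defined over $\QQ$, so by Lemma~\ref{lem:alg-cl-Q} combined with Hilbert's basis theorem we may fix generators $f_1, \dots, f_r \in \QQ[\mathbf{x}_1, \dots, \mathbf{x}_{\cD}]$ of the ideal $\fI(\Vp_p)$. Second, and more substantively, the proof of that proposition exhibits a Zariski dense subset of $\QQ$-rational points in $\Vp_p$, namely the set $X$ of points of the form $\exp(Z) z$ with $Z \in \fg'_p$ having rational entries. Combining these two ingredients, for any $v \in \RR^{\cD}$ we obtain the chain of equivalences
\[
v \in W \iff \Vp_p + v \subseteq \Vp_p \iff X + v \subseteq \Vp_p \iff f_i(x + v) = 0 \text{ for all } x \in X \text{ and } 1 \leq i \leq r.
\]
For each fixed $x \in X \subset \QQ^{\cD}$ and each $i$, the map $v \mapsto f_i(x + v)$ is a polynomial in $v$ with rational coefficients. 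Hence $W$ is the common vanishing locus of a family of polynomials with coefficients in $\QQ$, as required.

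The only nontrivial step in the chain of equivalences is the passage between the second and third conditions. The forward direction is immediate; for the reverse, one uses that translation is Zariski continuous and $\Vp_p$ is Zariski closed, so that $X + v \subseteq \Vp_p$ forces $\Vp_p + v = \algcl(X) + v = \algcl(X + v) \subseteq \Vp_p$, and equality then follows from Lemma~\ref{lem:stab-is-alg}(1). I expect the main obstacle to be simply securing a Zariski dense $\QQ$-rational subset of $\Vp_p$; this is precisely what the rational structure of the nilpotent Lie algebra $\fg'_p$ provides, and once Proposition~\ref{prop:Vp=G(z)} is invoked the remainder of the argument is essentially formal.
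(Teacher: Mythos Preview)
Your argument is correct for the proposition as literally stated (using the paper's definition of ``defined over $\QQ$'' as ``zero locus of rational polynomials''), and it takes a genuinely different route from the paper. The paper derives the explicit description $\cU_p = \fg_p' z \cap \cV_{top}$ by analysing which elements of $\GG_p'$ move the fixed point $z$ inside $\cV_{top}$; you instead argue abstractly that the translation stabiliser of any variety with a Zariski-dense set of rational points is cut out by rational equations, which is cleaner and more general.

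There is, however, a caveat concerning strength. The paper's explicit formula yields more than your argument does: since $\fg_p'$ is spanned by rational matrices and $z \in \QQ^{\cD}$, the formula shows that $\cU_p$ has a rational \emph{basis}. This is what is actually used in Proposition~\ref{prop:v_perp-discrete-diagonal}, where one needs the orthogonal projection onto $\cU_p^{\perp}$ to send rational vectors of bounded height to rational vectors of bounded height. Your argument only shows that $\cU_p$ is the real zero locus of rational polynomials, and for linear subspaces this is strictly weaker: the line $\fV_{\RR}(x^3 - 2y^3) = \RR\cdot(2^{1/3},1) \subset \RR^2$ is cut out by a rational polynomial yet has no nonzero rational point. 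So while your proof is valid, it does not by itself deliver the rational-basis conclusion that the downstream application requires; for that one seems to need something closer to the paper's structural description.
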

\begin{proof}\color{White}
	Let $T \in \GG_p \setminus \GG_p'$ be a map with rational coefficients and let $z \in \QQ^{\cD}$ be the fixed point of $T$. The key idea is to obtain an alternative characterisation of $\cU_p$, namely $\cU_p = \fg_p'z \cap \cV_{\mathrm{top}}$.
		
	Consider any vector $u \in \cU_p$. Then $z+u \in \Vp_p$, so it follows from the description of $\Vp_p$ in Proposition \ref{prop:Vp=G(z)} and the fact that the exponential map $\fg_p' \to \GG_p'$ is surjective that there exists $Z_u\in \fg_p'$ such that $\exp(Z_u)z = z+u$. Because all maps in $\GG_p'$ map $\cV_{\mathrm{top}}$ to $0$, we also have $\exp(Z_u)u = u$ and hence $\exp(Z_u)(z+tu) = z+(t+1)u$ for any $t \in \RR$. It follows that $\exp(Z_u)$ preserves the line $z + \RR u$. Hence, $\exp(t Z_u)z \in z+\RR u$ for any $t \in \RR$ and consequently $Z_uz \in \RR u$. Since $u \in \cU_p$ was arbitrary, $\cU_p \subset \fg_p'z$.
	
	Suppose conversely that $Z \in \fg_p'$ is such that $u_z := Zz \in \cV_{\mathrm{top}}$. Then $ z + u_z = \exp(Z)z \in \Vp_p$. It follows that
	\[
		\Vp_p \supseteq \GG_p'(z+u_z) = \GG_p'(z)+u_z = \Vp_p + u_z,
	\]
	meaning that $u_z \in \cU_p$. 
	
	Thus, we have shown that $\cU_p = \fg'_p z \cap \cV_{\mathrm{top}}$. Since $\fg_p'$ is defined over $\QQ$ and $z$ is rational, $\cU_p$ is also defined over $\QQ$.
\end{proof}

\begin{proof}[Proof of Proposition \ref{prop:v_perp-discrete-diagonal}]\color{White}
Since for any $l \in \NN$, the map $S_l|_p$ preserves $\Vp_p$, we have
\[
	T_l|_p(\Vp_p) = T_l|_p \circ S_l|_p^{-1} (\Vp_p).
\]
Note that $T_l|_p \circ S_l|_p^{-1} \in \bSTp(\cD)$. Recall that $A_l|_p$ is the matrix representation of $T_l|_p$ and let $B_l$ and $C_l = \bar A_l|_p B_l^{-1}$ be the matrix representations of $ S_l|_p $ and $T_l|_p \circ S_l|_p^{-1}$ respectively.  It follows from Lemma \ref{lem:diag-in-ST} that there exists $P \in \bST'(\cD,\QQ)$ such that $X_p = P \Lambda P^{-1}$ and hence $B_l = P \bar \Delta_l P^{-1}$. It follows from Proposition \ref{prop:Tk-basic} that there exists $A_l' \in \bSTp(\cD, \ZZ)$ such that $\bar A_l|_p = A_l' \bar \Delta_l$. Hence,
	\[
		 C_l = A_l' \bar \Delta_l P \bar \Delta_l^{-1} P^{-1}.
	\]
	Since $\bSTp(\cD, \ZZ)$ is preserved under taking inverses and under conjugation by $\Delta_l$, each of $A_l'$, $P^{-1}$ and $\bar \Delta_l P \bar \Delta_l^{-1}$ has rational entries with denominators bounded uniformly with respect to $l$, and hence so does $C_l \in \bSTp(\cD,\QQ)$. Let $Y_l := \log C_l \in \bSTp(\cD)$. Recall that by Lemma \ref{lem:exp-is-poly-nil} the logarithmic map $\bSTp(\cD) \to \bstp(\cD)$ is given by a polynomial formula with rational coefficients, so the coefficients of $Y_l$ are rational and have denominators bounded uniformly in $l$. 
		
	Suppose now that $v \in \cV_{\mathrm{top}}$ and there exists $l \in \NN$ such that
	\[
		T_l|_{p}(\Vp_p) = \Vp_p + v.
	\]	
	Since we are only interested in the equivalence class of $v$ modulo $\cU_p$, we may assume without loss of generality that $v \perp \cU_p$. By the usual abuse of notation, let $I + v e_0^{\mathrm{T}} = \exp(v e_0^{\mathrm{T}}) \in \bSTp(\cD)$ denote the shift by $v$. Put $Z_l := Y_l - v e_0^{\mathrm{T}}$. Since $I + v e_0^{\mathrm{T}}$ commutes with $\bSTp(\cD)$, 
	\[ \exp(Z_l)(\Vp_p) = T_l|_p(\Vp_p) - v = \Vp_p, \]
	meaning that $Z_l \in \fh_p'$. 
Written in the block form corresponding to the partition $\{0\} \cup \cD_{\mathrm{low}} \cup \cD_{\mathrm{top}} \cup \cD_{\mathrm{ab}}$, the matrices $C_l$, $Y_l$ and $Z_l$ take, for some $u_l \in \cV_{\mathrm{top}}$, the form
	\[
	C_l = 
	\begin{bmatrix}
	1 &   &  &  \\
	\ast & I &  &  \\
	\ast & \ast & I &  \\
	* & 0 & 0 & I 
	\end{bmatrix},
	\qquad 
	Y_l = 
	\begin{bmatrix}
	0 &  &  &  \\
	\ast & 0 &  &  \\
	u_l & \ast & 0 &  \\
	* & 0 & 0 & 0 
	\end{bmatrix},
	\qquad
	Z_l = 
	\begin{bmatrix}
	0 &  &  &  \\
	\ast & 0 &  &  \\
	u_l-v & \ast & 0 &  \\
	* & 0 & 0 & 0 
	\end{bmatrix}.
	\] 
	
	In order to avoid excessively complicating the notation, let us assume that $P = I$, meaning that $X_p = \Lambda$ and $S_l|_p = \bar \Delta_l$; we can always reduce to this case by change of basis (cf.\ Lemma \ref{lem:diag-in-ST}). Since $\fh_p$ is a Lie algebra containing $\Lambda = X_p$, it is closed under the map $Z \mapsto [\Lambda, Z]$. In fact, if follows by direct inspection of the diagonal entries that $[\Lambda,Z] \in \fh_p'$ for any $Z \in \fh_p$. As a consequence, for any $Z \in \fh_p$ there exists a decomposition $Z = \sum_\lambda Z^{(\lambda)}$, where  $Z^{(\lambda)} \in \fh_p'$ ($\lambda \in \NN_0$) are such that $[\Lambda,Z^{(\lambda)}] = \lambda Z^{(\lambda)}$. Note that $Z_{\mu, \nu}^{(\lambda)} = 0$ unless $d_{\mu} - d_{\nu} = \lambda$. 

In particular, inspecting the decomposition of $Z_l$ we conclude that $\fh_p'$ contains the matrix $(u_l-v)e_0^{\mathrm{T}} = Z^{(D)}_l$. Hence, $v \in u_l + \cU_p$, which is tantamount to saying that $v$ is the projection of $u_l$ onto $\cU_p^{\perp}$. Since $u_l$ is rational with bounded height and $\cU_p$ is defined over $\QQ$, it follows that $v$ is also rational with bounded height. In particular, there are finitely many possible values of $v$.	
\end{proof}

\subsection{\checkmark\ Semigroupoid structure}\label{ssec:rec-semigrp}\mbox{}
{\color{NavyBlue}
In this section we investigate general properties of the sets of ``ultrafilters of good recurrence'' $\cG^q_p$, $\tcH^q_p$ and $\cH^q_p$ defined by  \eqref{eq:def-of-cH}, \eqref{eq:def-of-tcH}, and \eqref{eq:def-of-cG}. To begin with, we note that these sets are not empty; indeed, they contain an element of a very specific form.
}

\begin{lemma}\color{BurntOrange}\label{lem:k^r-in-H}
	If $p \in \KN$ and $r \in \beta \NN_0$ then $k^r \in \cH^{r+p}_p$.
\end{lemma}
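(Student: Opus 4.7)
The plan is to verify the three conditions \ref{it:1:A}, \ref{it:1:B} and \ref{it:1:C} defining membership in $\cH^{r+p}_p$, directly from the definitions, with $a = k^r$. Since $k^r = \lim_{n \to r} k^n$, the quantifier $\forall^{k^r}_l$ translates into $\forall^r_n$ applied to $l = k^n$. Throughout, I will let $q_0 \in \EN$ be the unique idempotent with $q_0 \sim p$; because $p$ is minimal and $q_0$ is an idempotent in $\beta\NN + p$, one has $p + q_0 = p$, hence $\Vp_p = V^{q_0}_p$ and $T^n_k|_{p+q_0} = T^n_k|_p$.

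For \ref{it:1:A}, observe that for each $n \in \NN_0$
\[
	T_{k^n}|_p(x^p) = \bar A_{k^n}|_p \cdot x^p = \lim_{m \to p} \bar A_{k^n}(x^m) x^m = \lim_{m \to p} T_{k^n}(x^m) = \lim_{m \to p} x^{n+m} = x^{n+p},
\]
so $\lim_{n \to r} T_{k^n}|_p(x^p) = \lim_{n \to r} x^{n+p} = x^{r+p}$, which is exactly \ref{it:1:A}. For \ref{it:1:B}, apply Lemma \ref{lem:translation-ur} with the triple $(p, q_0, r)$ playing the roles of $(p, q, r)$ there. Since $p$ is minimal, equation \eqref{eq:172:20} gives
\[
	\forall^r_n \ T^n_k|_p(\Vp_p) \;=\; T^n_k|_{p+q_0}(V^{q_0}_p) \;=\; V^{q_0}_{r+p} \;=\; \Vp_{r+p},
\]
where the last equality uses $r+p \in \beta\NN + p = \beta\NN + q_0$, so $q_0 \sim r+p$.

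For \ref{it:1:C}, it suffices to work at the matrix level and show that
\[
	\forall_m \ \forall^r_n \ \bar A_{mk^n}|_p \;=\; \bar A_m|_{r+p} \cdot \bar A_{k^n}|_p.
\]
Fix $m \in \NN$. From the semigroup identity in Proposition \ref{prop:Tk-basic}.\eqref{it:64:commute}, extended to $\bar A$ as in the proof of \eqref{eq:76:80}, we have $\bar A_{mk^n}(x) = \bar A_m(T_{k^n}(x)) \bar A_{k^n}(x)$. Taking the limit along $p$ in the $x$ variable, and using that $T_{k^n}(x^i) = x^{i+n}$, one computes
\[
	\bar A_{mk^n}|_p \;=\; \lim_{i \to p} \bar A_m(x^{i+n}) \cdot \bar A_{k^n}|_p \;=\; \bar A_m|_{n+p} \cdot \bar A_{k^n}|_p.
\]
On the other hand, since $\bar A_m|_s$ takes only finitely many values as $s$ varies (integer entries, boundedly), the map $s \mapsto \bar A_m|_s$ is continuous from $\beta\NN_0$ to a discrete space, so $\bar A_m|_{r+p} = \lim_{n \to r} \bar A_m|_{n+p}$, which means $\bar A_m|_{n+p} = \bar A_m|_{r+p}$ for $r$-almost all $n$. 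Combining the two yields \ref{it:1:C}. The only slightly delicate point is the locally-constant/continuity argument in the last step, but this is immediate from compactness of the range of $\bar A_m$. No estimates or geometric lemmas are needed; the proof is a direct unpacking of notation combined with Lemma \ref{lem:translation-ur} and Proposition \ref{prop:Tk-basic}.
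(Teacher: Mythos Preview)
Your proof is correct and follows essentially the same approach as the paper's own proof: verify \ref{it:1:A} directly from the definition of $x^{n+p}$, invoke Lemma \ref{lem:translation-ur} for \ref{it:1:B}, and for \ref{it:1:C} use the cocycle identity $\bar A_{mk^n}(x) = \bar A_m(T_{k^n}(x))\bar A_{k^n}(x)$ together with the fact that $\bar A_m|_{n+p} = \bar A_m|_{r+p}$ for $r$-almost all $n$. One small inaccuracy: the idempotent $q_0 \in \EN$ with $q_0 \sim p$ is not unique (a minimal left ideal contains many idempotents), but this is harmless since $\Vp_p$ is independent of the choice, as the paper notes right after its definition.
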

\begin{proof}\color{White}
Put $q = r+p$. It follows directly from definitions of relevant terms that
\[
	\lim_{n \to r} T_k^n|_p(x^p) = x^{q}.
\]
It follows from Lemma \ref{lem:translation-ur} that
\[
	\forall_n^r \ T_k^n|_p(\Vp_p) = \Vp_{q}
\]
For any $m \in \NN$ it follows from the basic definitions that
\begin{align*}
	\forall^r_n \ \bA_{mk^n}|_p &= \lim_{h \to p} \bA_{mk^n}(x^h) 
	\\&= \lim_{h \to p} \bA_{m}(x^{h+n})  \bA_{k}^n(x^{h})
	= \bA_{m}|_{q}  \bA^n_{k}|_p =  \bA_{m}|_{q} \bA^n_{k}|_p,
\end{align*}
from which it follows that
\[ \forall^r_n \ T_m|_{q} \circ T_k^n|_p = T_{mk^n}|_p. \]
Hence, $k^r$ satisfies properties \ref{it:1:A}, \ref{it:1:B} and \ref{it:1:C}. 
\end{proof}

We next show some basic closure properties of the sets $\cG^q_p$ ($p,q \in \KN$), and of the vectors $v^{a}, w^{a}_m$ ($a \in \cG^q_p, m \in \NN$) introduced in \eqref{eq:def-of-v-and-w}.

\begin{proposition}\label{lem:cGp-basic}\color{Mahogany}
	Let $p \sim q \sim r \in \KN$.
	\begin{enumerate}
	\item\label{it:19:B} The maps $a \mapsto v^a$ and $a \mapsto w^a_m$ ($m \in \NN$) are continuous ($a \in \cG^q_p$).
	\item\label{it:19:A} The sets $\cG^q_p = \tcG^q_p$, $\tcH^q_p$ and $\cH^q_p$ are closed.
	\item\label{it:19:C} We have the inclusion $\cG^r_q \cdot \cG^q_p \subset \cG^r_p$ and if $a \in \cG^r_q$, $b \in \cG^q_p$ then 
	\begin{align}
	\label{eq:99:01}
	 v^{a \cdot b} &\equiv v^a + \lim_{m \to a} \fp{m^{\s} v^b} \pmod{\mathbb{Z^{\cD}}}, 
	 \\ \label{eq:99:02}
	 \forall_n \ \forall^a_l \ w_{n}^{a \cdot b} &= w^b_{nl} + w^a_{n} - n^{\s} w^b_l.
	\end{align}
	\end{enumerate}
\end{proposition}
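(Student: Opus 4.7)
The plan is to prove (a) and (b) together by expressing the three sets in terms of clopen conditions modulated by Proposition~\ref{prop:v_perp-discrete-general}, and then to establish (c) by unravelling the composition $a \cdot b$ and exploiting the linear action of $T_n|_q$ on $\cV_{top}$.

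\textbf{Parts (a) and (b).} Since $T_l|_p(x^p) \in [0,1]^{\cD}$ as a limit along $p$ of points in $[0,1)^\cD$, condition \ref{it:2:A} forces $\norm{v^a}_\infty \leq 1$ for every $a \in \cG^q_p$, so Proposition~\ref{prop:v_perp-discrete-general} yields a finite set $F \subset \cV_{top}$ containing representatives of every possible residue of $v^a$ modulo $\cU_q$. Similarly, Lemma~\ref{lem:R3-wk<=>R3-str} confines $w^a_m$ to a finite set $W_m \subset \cV_{top} \cap \ZZ^{\cD}$ for each $m$. Hence \ref{it:2:B} is equivalent to the clopen condition that $\{l \in \NN : T_l|_p(\Vp_p) = \Vp_q + v_0\} \in a$ for some $v_0 \in F$, and \ref{it:2:C} for each fixed $m$ is likewise a finite union of clopens. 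Condition \ref{it:2:A} is equivalent to $\lim_{l \to a} T_l|_p(x^p) \in x^q + \cV_{top}$, which is closed as the preimage of a closed set under the continuous Stone--\v{C}ech extension of $l \mapsto T_l|_p(x^p)$. Intersecting these exhibits $\cG^q_p$ as closed. The same Stone--\v{C}ech extension gives continuity of $a \mapsto v^a$, while continuity of $a \mapsto w^a_m$ follows from the finiteness of $W_m$ (so its fibres are clopen). Then $\tcH^q_p = \cG^q_p \cap (v^\bullet)^{-1}(0)$ and $\cH^q_p = \tcH^q_p \cap \bigcap_m (w^\bullet_m)^{-1}(0)$ are closed.

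\textbf{Part (c).} The key algebraic input is the scaling identity
\[
T_n|_q(y + v) = T_n|_q(y) + n^{\s} v \qquad (n \in \NN,\ v \in \cV_{top}),
\]
which follows from Proposition~\ref{prop:Tk-basic}.\eqref{it:64:STD}: write $A_n|_q = A'_n \Delta_n$ with $A'_n \in \STp(\cD,\ZZ)$ unipotent; any pair $\mu \succ \nu$ with $\mu \in \cD \setminus \NN$ satisfies $d_\mu > d_\nu$ (since $\mu = \kappa\fpp{\lambda}$ forces $d_\lambda > 0$), so $A'_n$ has no off-diagonal entries within $\cV_{top}$, while $\Delta_n|_{\cV_{top}} = n^{\s} I$. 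With this in hand, take $a \in \cG^r_q$ and $b \in \cG^q_p$, and unravel $\forall^{a \cdot b}_{L}\, P(L) \iff \forall^a_m \forall^b_{L'}\, P(mL')$. Applying \ref{it:2:C} for $b$ with coefficient $m$, then \ref{it:2:A} for $b$, and finally \ref{it:2:A} for $a$ gives
\[
\lim_{L \to a \cdot b} T_L|_p(x^p) = \lim_{m \to a}\bra{T_m|_q(x^q + v^b) + w^b_m} = x^r + v^a + \lim_{m \to a}\bra{m^{\s} v^b + w^b_m};
\]
since $w^b_m \in \ZZ^{\cD}$, reducing modulo $\ZZ^{\cD}$ produces \eqref{eq:99:01}. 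The same calculation with $\Vp_p$ in place of $x^p$ verifies \ref{it:2:B} for $a \cdot b$ (and recovers the same $v^{a \cdot b}$ modulo $\cU_r$). For \ref{it:2:C}, applying \ref{it:2:C} for $b$ with coefficient $nm$, then \ref{it:2:C} for $a$ with coefficient $n$, then \ref{it:2:C} for $b$ once more with coefficient $m$ (to rewrite $T_n|_r T_{mL'}|_p = T_n|_r T_m|_q T_{L'}|_p + n^{\s} w^b_m$) yields
\[
T_{nm L'}|_p = T_n|_r T_{mL'}|_p + \bra{w^a_n + w^b_{nm} - n^{\s} w^b_m} e_0^{\mathrm{T}},
\]
from which we read off \eqref{eq:99:02} (with $l = m$).

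\textbf{Main obstacle.} The main difficulty in (c) is the precise bookkeeping of applying \ref{it:2:C} for $b$ at two different coefficients ($nm$ and $m$) while interleaving with \ref{it:2:C} for $a$ at coefficient $n$, all nested inside the iterated limit $\forall^a_m \forall^b_{L'}$. Once the scaling identity on $\cV_{top}$ is in place, the rest is formal manipulation of affine maps.
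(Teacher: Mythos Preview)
Your approach for all three parts is essentially the same as the paper's, but there is a gap in part~(a) that resurfaces in part~(c).

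In (a), you decouple \ref{it:2:A} and \ref{it:2:B}: you argue that \ref{it:2:B} alone is clopen (via the finite set $F$) and that \ref{it:2:A} alone amounts to the closed condition $\lim_{l\to a} T_l|_p(x^p) \in x^q + \cV_{top}$. But the definition of $\cG^q_p$ requires \ref{it:2:A} and \ref{it:2:B} to hold for the \emph{same} $v$, and the intersection you describe may be strictly larger: an ultrafilter $a$ could satisfy $\lim_{l\to a} T_l|_p(x^p) = x^q + v^*$ with $v^* \in \cV_{top}$ and $T_l|_p(\Vp_p) = \Vp_q + v_0$ for $a$-a.e.\ $l$ with $v_0 \in F$, yet $v^* \not\equiv v_0 \pmod{\cU_q}$. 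The fix is easy: for each $v_0 \in F$ separately, add the closed compatibility condition $\lim_{l\to a} T_l|_p(x^p) - x^q \in v_0 + \cU_q$; then $\cG^q_p$ is a finite union of closed sets. The paper instead shows directly that any $a \in \cl(\cG^q_p)$ lies in $\tcG^q_p$ by explicitly constructing a witnessing sequence $v_l = u^a + v_l^\perp$, where $v_l^\perp \in \cU_q^\perp$ comes from \ref{it:3:B} and $u^a \in \cU_q$ is obtained by continuously extending $b \mapsto v^b - \lim_{l\to b} v_l^\perp$ from $\cG^q_p$ to its closure; it then invokes the already-proved equality $\tcG^q_p = \cG^q_p$.

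The same issue reappears in (c). Your ``same calculation with $\Vp_p$ in place of $x^p$'' yields
\[
\forall^a_l\ \forall^b_m\ T_{lm}|_p(\Vp_p) = \Vp_r + \bigl(v^a + l^{\s} v^b + w^b_l\bigr),
\]
which is \ref{it:3:B} with an $l$-dependent shift, not \ref{it:2:B} with the fixed vector $v^{a\cdot b}$. To upgrade you must show $(v^a + l^{\s} v^b + w^b_l) - v^{a\cdot b} \in \cU_r$ for $a$-a.e.\ $l$. This again uses Proposition~\ref{prop:v_perp-discrete-general}: the bounded shift $v^a + l^{\s} v^b + w^b_l$ has projection to $\cU_r^\perp$ lying in a finite set, hence constant ($=\pi(v^{a\cdot b})$) for $a$-a.e.\ $l$. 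The paper carries out exactly this step; your parenthetical ``recovers the same $v^{a\cdot b}$ modulo $\cU_r$'' suggests you see the point but do not supply the argument.
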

\begin{proof}\color{White}
\begin{enumerate}[wide]
\item The maps $a \mapsto v^a$ and $a \mapsto w^a_m$ can be defined as limits:
\[ v^a = x^q - \lim_{l \to a} T_l|_p(x^p) \qquad \text{ and } \qquad w^a_m = \lim_{l \to a} (T_{ml}|_p(x^p) - T_m|_q(T_l|_p(x^p)). \] Hence they are continuous by the definition of the limit (cf.{} Sec.{} \ref{ssec:prelims-ultrafilters}).

\item We first show that $\cG^q_p$ is closed. Let 
\[
	E_2 := \set{ l \in \NN}
		{ T_l|_p(\Vp_p) = \Vp_q + v_l^\perp \text{ for some } 
			v_l^{\perp} \in \cV_{\mathrm{top}} \cap \cU_q^\perp
	}.
\]
This definition is motivated by the observation that for $a \in \beta \NN$, the condition \ref{it:3:B} is satisfiable if and only if $a \in \bar E_2$. Note that for $l \in E_2$ the vector $v_l^\perp$ is determined uniquely, so we have a well-defined map $l \mapsto v_l^\perp$, $E_2 \to \cV_{\mathrm{top}} \cap \cU_q^\perp$. Since $\Sigma^q_p \subset \bar E_2$ and $\bar E_2$ is closed, also $\cl(\Sigma^q_p) \subset \bar E_2$.
Let us also consider the map $u \colon \cG^q_p \to \RR^{\cD}$ given by
\begin{equation}\label{eq:def-of-ua}
	u^a := \lim_{l \to a} T_l|_p(x^p) - x^q - \lim_{l \to a} v_l^{\perp} 
	=\lim_{l \to a} \bra{ v^a -  v_l^{\perp}},
\end{equation}
where the second equality follows from \ref{it:2:A}. For each $a \in \cG^q_p$ we have
\[
	\forall^a_l \ \Vp_q + v^a = T_l|_p(\Vp_p)= \Vp_q + v_l^\perp,
\]
and consequently $u^a \in \cU_q$. This means that, for $a$-almost all $l$, the vectors $u^a$ and $v_l^{\perp}$ are the orthogonal projections of $v^a$ onto $\cU_q^\perp$ and $\cU_q^\perp$ respectively, and in particular $\norm{u^a}_\infty \leq 1$. Put
\[
	E_1 := \set{ l \in \NN}{ \norm{ T_l|_p(x^p) - x^q - v_l^{\perp} }_\infty \leq 2}.
\]	
Then $\Sigma^q_p \subset \bar E_1$. We may extend $u$ to a map $\cl(\Sigma^q_p) \to \cU_q^\perp$ defined by the same formula \eqref{eq:def-of-ua} and (by a slight abuse of notation) denoted with the same symbol. One can now directly verify that conditions \ref{it:3:A} and \ref{it:3:B} are satisfied for arbitrary $a \in \cl(\Sigma^q_p)$ and the sequence $v_l$ given by $v_l = u^a + v_l^{\perp}$.

We next address for condition \ref{it:3:C}. Consider the set
\[
	E_3 := \set{ l \in \NN}{
		\text{ if } m \in \NN \text{ then } T_{ml}|_p = T_{m}|_q \circ T_l|_p + w_{m,l}e_0^{\mathrm{T}} \text{ for some } w_{m,l} \in \cV_{\mathrm{top}} 
	}
\]
For $l \in E_3$, the vector $w_{m,l}$ is determined uniquely by the conditions above, and condition \ref{it:3:C} implies that $\cG^q_p \subset \cl(\cG^q_p) \subset \bar E_3$. It follows that for each $a \in \cl(\cG^q_p)$ there exists a sequence $w_{m,l}$ such that condition \ref{it:3:C} is satisfied. Together with previous considerations, this implies that $\cl(\cG^q_p) \subset \tcG^q_p = \cG^q_p$, meaning that $\cG^q_p$ is closed.

We now turn to $\cH^q_p$ and $\tcH^q_p$. Recall that an ultrafilter $a \in \cG^q_p$ belongs to $\tcH^q_p$ if and only if $v^a = 0$. Hence the fact that $\tcH^q_p$ is closed follows from the previous item \eqref{it:19:B}. Likewise, $a \in \cH^q_p$ if and only if additionally $w^a_m = 0$ for all $m \in \NN$, so $\cH^q_p$ is closed for analogous reasons.

\item 

Let $a\in \cG^r_q$ and $b \in \cG^q_p$. Then, by \ref{it:2:A} and \ref{it:2:C} 
\begin{align*}
	\lim_{l\to a} \lim_{m \to b} \ T_{lm}|_p(x^p) 
	&= \lim_{l\to a} \lim_{m \to b} \bra{ T_l|_q \circ T_m|_p(x^p) + w_{l}^b}
	\\ &= \lim_{l\to a} \bra{ T_l|_q \bra{ x^p } + l^{\s} v^b + w_l^b }
	= x^r + v^a + \lim_{l\to a} \bra{l^{\s} v^b + w_l^b}.
\end{align*}
In particular, the limit exists since the sequence $T_{lm}|_p(x^p)$ is bounded, and \ref{it:2:A} holds for $a \cdot b$ with 
\begin{align}\label{eq:04:00}
	v^{a \cdot b} = v^a + \lim_{l\to a} \bra{l^{\s} v^b + w_l^b}.
\end{align}
Since $w_l^b$ has integer coordinates, formula \eqref{eq:99:01} follows by projecting \eqref{eq:04:00} to $\RR^{\cD}/\ZZ^{\cD}$. A similar computation shows that 
\begin{align*}
	\forall^a_l \ \forall^b_m \ T_{lm}|_p(\Vp_p) &= T_{l}|_q \circ T_m|_p(\Vp_p) + w_l^b \\
	&= 	T_{l}|_q(\Vp_q + v^b) + w_l^b = \Vp_r+ v^a + l^{\s} v^b + w_l^b.
\end{align*}
Let $\pi \colon \cV_{\mathrm{top}} \to \cU_r^{\perp}$ be the orthogonal projection. It follows from Proposition \ref{prop:v_perp-discrete-diagonal} and \eqref{eq:04:00} that
\[
	\forall^a_l \ \forall^b_m \ T_{lm}|_p(\Vp_p) = \Vp_r + \pi\bra{v^a + l^{\s} v^b + w_l^b} = \Vp_r + \pi\bra{ v^{a \cdot b} } = \Vp_p + v^{a \cdot b}.
\]
Hence, \ref{it:2:B} also holds for $a \cdot b$ with $v^{a \cdot b}$ given by \eqref{eq:04:00}. Finally, if $n \in \NN$ then 
\begin{align*}
	\forall^a_l \ \forall^b_m &\ T_{nlm}|_p 
		= T_{nl}|_{q} \circ T_{m}|_p + w^b_{nl} e_0^{\mathrm{T}} 
		= T_n|_r \circ T_l|_q \circ T_m|_p + w^b_{nl} e_0^{\mathrm{T}} + w^a_{n} e_0^{\mathrm{T}}, 
	\\ \forall^a_l \ \forall^b_m &\ T_n|_r \circ T_{lm}|_p = T_n|_r \circ T_l|_q \circ T_m|_p + n^{\s} w^b_l  e_0^{\mathrm{T}}.
\end{align*}
Comparing the two expressions we conclude that
\[
\forall^a_l \ \forall^b_m \ T_{nlm}|_p =  T_n|_p \circ T_{lm}|_p 
	+ \bra{w^b_{nl} + w^a_{n} - n^{\s} w^b_l}e_0^{\mathrm{T}}.
\]
Evaluating both sides at $x^p$ and passing to the relevant limits we obtain:
\[
\lim_{l \to a} \lim_{m \to b} T_{nlm}|_p(x^p) = 
\lim_{l \to a} \lim_{m \to b} \bra{ T_n|_p \circ T_{lm}|_p(x^p)} + 
\lim_{l \to a} \lim_{m \to b} \bra{ w^b_{nl} + w^a_{n} - n^{\s} w^b_l}.
\]
Note that the first two out of three limits above exist because $T_{lm}|_{p}(x^p)$ and $T_{nlm}|_{p}(x^p)$ are both bounded. It follows that the third limit exists as well. Hence, condition \ref{it:2:C} holds for $a \cdot b$ and for each $n \in \NN$ we have
\[
\forall^a_l \ w_{n}^{a \cdot b} =w^b_{nl} + w^a_{n} - n^{\s} w^b_l. \qedhere
\]
\end{enumerate}
\end{proof}

\subsection{Abelian component}\label{ssec:rec-abel}

In this section, we investigate in more detail the sets $\Zeta^q_p$ defined in \eqref{eq:def-of-Zeta}. For this purpose, it will be helpful to consider the relation $\eqab$ on $\beta \NN_0$ defined by declaring for two ulrafilters $a,a' \in \beta \NN_0$ that
\begin{equation}
	a \eqab a' \text{ if and only if } \lim_{m \to a} m^d \alpha = \lim_{m \to a'} m^d \alpha \text{ for all } d \in \NN,\ \alpha \in \RR/\ZZ.
\end{equation}
With this piece of notation, for $p \sim q \in \KN$ we obtain an alternative description
\begin{equation}
	\Zeta_p^q = \set{ a \in \beta \NN_0}{ a \cdot k^p \eqab k^q }.
\end{equation}
The following lemma shows basic properties of the relation we have just defined.

\begin{lemma}\color{BurntOrange}
	\begin{enumerate} 
	\item If $a,a' \in \beta \NN_0$ are two ultrafilters with $a \eqab a'$ then for each polynomial map $f \colon \ZZ \to \RR/\ZZ$ it holds that $\displaystyle\lim_{m \to a} f(m) = \lim_{m \to a'} f(m)$ 
	\item The relation $\eqab$ is an equivalence relation on $\beta \NN_0$.
	\item The equivalence classes of $\eqab$ are closed.
	\item If $a,a',b,b' \in \beta \NN_0$ and $a \eqab a'$ and $b \eqab b'$ then $a + b \eqab a'+b'$ and $a \cdot b \eqab a' \cdot b'$.
	\item If $p,a_n,a_n' \in \beta \NN_0$ and $a_n \eqab a_n'$ for $p$-almost all $n$, then 
	$\displaystyle\lim_{n \to p} a_n \eqab \lim_{n \to p} a_n' $.
	\end{enumerate}
\end{lemma} 
\begin{proof}\color{White}
	\begin{enumerate}[wide]
	\item Follows from the fact that any polynomial is a sum of monomials.
	
	\item For $a \in \beta \NN_0$, let $\varphi_a$ denote the map $\RR[x] \to \RR/\ZZ$ given by $f \mapsto \lim_{m \to a} f(m)$. Then $a \eqab a'$ if and only if $\varphi_a = \varphi_{a'}$. It follows that $\eqab$ is an equivalence relation.		
  
	\item The equivalence class of $a$ is the intersection of closed sets of the form 
	\[ \set{b \in \beta \NN_0}{ \lim_{m \to b} f(m) = \lim_{m \to a} f(m) },\]
	 where $f \colon \ZZ \to \RR/\ZZ$ is a  polynomial. 
	\item Note that polynomials are closed under limits in the sense that if $g \colon \ZZ \to \RR/\ZZ$ is a polynomial and $p \in \beta \NN_0$ is an ultrafilter then the map $m \mapsto \lim_{l \to p} g(m+l)$ is again a polynomial. Thus, 	for each polynomial $f \colon \ZZ \to \RR/\ZZ$, 
	\begin{align*}
			\lim_{m \to a + b} f(m) &= \lim_{m \to a} \lim_{l \to b} f(m+l) 
			= \lim_{m \to a} \lim_{l \to b'} f(m+l) 
			\\& = \lim_{m \to a'} \lim_{l \to b'} f(m+l) = \lim_{m \to a' + b'} f(m).
	\end{align*}	
Hence $a + b \eqab a' + b'$. The proof that $a \cdot b \eqab a' \cdot b'$ is fully analogous.
\item Put $a = \lim_{n \to p} a_n$ and $a' = \lim_{n \to p} a_n'$. Then for each polynomial $f \colon \ZZ \to \RR/\ZZ$ we have
\begin{align*}
\lim_{m \to a} f(m) = \lim_{n \to p} \lim_{m \to a_n} f(m) = 
\lim_{n \to p} \lim_{m \to a_n'} f(m) = \lim_{m \to a'} f(m).
\end{align*}
Hence, $a \eqab a'$. 	\qedhere
\end{enumerate}
\end{proof}

\begin{example}\color{OliveGreen}
	If $a \in \EN$ then $a \eqab 0$. More generally, the equivalence class of $0$ is a two-sided multiplicative ideal and an additive semigroup containing $\EN$.
\end{example}

Using the lemma above, we can derive some basic properties of the sets $\Zeta^q_p$. Note that they bear a strong resemblance to properties enjoyed by the sets $\cG^q_p$. 

\begin{lemma}\color{BurntOrange}\label{lem:Zeta-basic}
	\begin{enumerate} 
	\item If $p,r \in \beta \NN_0$ then $k^r \in \Zeta^{r+p}_p$.
	\item If $p,q \in \beta \NN_0$ then $\Zeta_p^q$ is closed and is a union of equivalence classes of $\eqab$.
	\item If $p, q, r \in \beta \NN_0$ then $\Zeta^r_q \cdot \Zeta^q_p \subset \Zeta^r_p$.
	\item If $p,q,r \in \beta \NN_0$ and $a_n \in \Zeta^{n+q}_p$ for $r$-almost all $n$, then $\lim_{n\to r} a_n \in \Zeta^{r+q}_p$. 
	\end{enumerate}
\end{lemma} 
\begin{proof}\color{White}
	\begin{enumerate}[wide]
	\item Follows from the fact that $k^r \cdot k^p = k^{r+p}$. 
	\item The set $\Zeta_p^q$ is the preimage of the (closed) equivalence class of $k^q$ via the continuous map $a \mapsto a\cdot k^p$, hence it is closed. If $a \eqab a'$ and $a \in \Zeta^q_p$ then 
	\(
	a' \cdot k^p \eqab a \cdot k^p \eqab k^q,
	\)
	so $a' \in \Zeta^q_p$.
	\item If $a \in \Zeta^r_q$ and $b \in \Zeta^q_p$ then
	\(
		a \cdot b \cdot k^p \eqab a \cdot k^q \eqab k^r,
	\)
	so $a \cdot b \in \Zeta^r_p$.
	\item Put $a := \lim_{n\to r} a_n$. Since $a_n \cdot k^p \eqab k^{n+q}$ for $r$-almost all $n$ and the map $b \mapsto b \cdot k^p$ is continuous, it follows that 
	\[ a \cdot k^p = \lim_{n \to r} a_n \cdot k^p \eqab \lim_{n \to r} k^n \cdot k^p = k^{r+q}.\]
	 and hence $a \in \Zeta^{r+q}_p$.	
	\qedhere
	\end{enumerate}
\end{proof}

{\color{NavyBlue}
The addition formula \eqref{eq:99:01}, expressing $v^{a \cdot b}$ in term of $v^a$ and $v^b$, becomes considerably simpler under the additional assumption that $a \in \Zeta^r_q$ and $b$ takes a special form. For the sake of simplicity, we only record the ``diagonal'' case, corresponding to $p=q=r$ in Proposition \ref{lem:cGp-basic}, but the interested Reader will easily derive a more general statement. 
}

\begin{lemma}\label{lem:v-addition}\color{BurntOrange}
	Let $p \in \KN$ and $a,b \in \cG^p_p$. Suppose further that $a \in \Zeta^p_p$ and that $b \in k^p \cdot \cG^q_p$ for some $q \sim p$ with $p+q = p$. Then $v^{a \cdot b} \equiv v^a + v^b \bmod{\ZZ^{\cD}}$. 
\end{lemma}
\begin{proof}
	Let $b = k^p \cdot c$ where $c \in \cG^q_p$. Recall that $k^p \in \cH^{p+q}_q = \cH^p_q$, so in particular $v^{k^p} = 0$. 
	By Proposition \ref{lem:cGp-basic},
	\begin{align}
		v^b = v^{k^p \cdot c} &\equiv \lim_{m \to k^p} \fp{ m^D v^c } \pmod{ \ZZ^{\cD} }.
	\end{align}
	By another application of Proposition \ref{lem:cGp-basic}, 
	\begin{align*}
		v^{a \cdot b} 
		&\equiv v^a + \lim_{m \to a} \fp{m^{\s} v^b}
		\equiv v^a + \lim_{m \to a}\fp{ m^{\s} \lim_{l \to k^p} \fp{l^{\s} v^c}} \\
		& \equiv v^a + \lim_{m \to a \cdot k^p} \fp{m^{\s} v^c} 
		\equiv v^a + \lim_{m \to k^p} \fp{m^{\s} v^c} \\
		& \equiv v^a + v^b		
		\pmod{ \ZZ^{\cD} }. \qedhere
	\end{align*}
	
\end{proof}

{\color{NavyBlue}
With Lemma \ref{lem:v-addition} in hand, we are ready to show how elements of $\cG^q_p$ can be combined to produce an element of $\cH^q_p$.
}

\begin{corollary}\color{Mahogany}\label{cor:v-add}
Let $p \in \KN$ and $x^p \in (0,1)^\cD$. The following conditions are equivalent:
\begin{enumerate}
\item\label{it:69:A} $\cG^q_p \cap \Zeta^q_p \cap \Xi \neq \emptyset$ for some $q \sim p$ with $x^q \in (0,1)^{\cD}$.
\item\label{it:69:B} $\tcH^q_p \cap \Zeta^q_p \cap \Xi \neq \emptyset$ for all $q \sim p$ with $x^q \in (0,1)^{\cD}$.
\end{enumerate}
\end{corollary}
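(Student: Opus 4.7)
The implication $(ii) \Rightarrow (i)$ is immediate: since $p \sim p$ and $x^p \in (0,1)^\cD$ by hypothesis, specialising $(ii)$ to $q = p$ produces an element of $\tcH^p_p \cap \Zeta^p_p \cap \Xi$, which is contained in $\cG^p_p \cap \Zeta^p_p \cap \Xi$ because $\tcH^p_p \subset \cG^p_p$.

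For $(i) \Rightarrow (ii)$, given $a_0 \in \cG^{q_0}_p \cap \Zeta^{q_0}_p \cap \Xi$, the plan is: (1) transport to the diagonal case $q_0 = p$; (2) extract a multiplicative idempotent $e$ from the compact semigroup $S := \cG^p_p \cap \Zeta^p_p \cap \Xi$ via Ellis--Numakura; (3) analyse $v^e$ and multiply $e$ on the left by $k^{r_0}$ for a non-principal additive idempotent $r_0$ satisfying $r_0 + p = p$ to kill the obstruction and land in $\tcH^p_p$; (4) transport to an arbitrary target $q \sim p$ by one further left multiplication by $k^{s'}$ with $s' + p = q$. For (1), pick $s \in \beta \NN_0$ with $s + q_0 = p$ (available by minimality of $p$): Lemma \ref{lem:k^r-in-H} gives $k^s \in \cH^p_{q_0}$, and Proposition \ref{lem:cGp-basic}.\eqref{it:19:C} combined with the fact that $\Xi$ is a multiplicative left ideal yields $k^s \cdot a_0 \in S$. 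Hence $S$ is non-empty; as the intersection of three closed multiplicative subsemigroups of $\beta \NN_0$, it is a closed subsemigroup of the compact right-topological semigroup $(\beta \NN_0, \cdot)$, so Ellis--Numakura provides an idempotent $e \in S$.

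Applying Proposition \ref{lem:cGp-basic}.\eqref{it:19:C} to $e \cdot e = e$ gives $\lim_{m \to e} \fp{m^\s v^e_\mu} \in \{0, 1\}$ for every $\mu \in \cD_{top}$. Combining this with the discreteness statement of Proposition \ref{prop:v_perp-discrete-diagonal} together with the $\QQ$-rationality of $\cU_p$ from Proposition \ref{lem:U-is-Q}---which together place $v^e$ in a finite union of rational cosets of the $\QQ$-defined subspace $\cU_p$---and with the $\Zeta^p_p$-identity applied to polynomials $P(x) = x/\ell$ for primes $\ell \nmid k$ (which forces $m \equiv 1 \pmod \ell$ for $e$-almost all $m$, incompatible with any such $\ell$ dividing a denominator of $v^e_\mu$), I expect to conclude that $v^e$ is rational with every coordinate denominator dividing some common power $k^J$. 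This arithmetic analysis of $v^e$ is the main technical obstacle.

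Granted this, apply Ellis--Numakura to the closed non-empty additive subsemigroup $H_p := \{u \in \beta \NN_0 : u + p = p\}$ to obtain a non-principal idempotent $r_0 \in H_p$ (existence of non-principal idempotents in $H_p$ follows from the general theory of minimal idempotents in $\beta \NN$), and set $a'_0 := k^{r_0} \cdot e$. By Lemma \ref{lem:k^r-in-H}, $k^{r_0} \in \cH^p_p$, so $v^{k^{r_0}} = 0$, and Proposition \ref{lem:cGp-basic}.\eqref{it:19:C} gives
\[
    v^{a'_0} \equiv \lim_{n \to r_0} \fp{k^{n\s} v^e} \pmod{\ZZ^\cD}.
\]
For $n\s \geq J$ we have $k^{n\s} v^e_\mu \in \ZZ$, so $\fp{k^{n\s} v^e_\mu} = 0$; since $r_0$ is non-principal the limit along $r_0$ vanishes, giving $v^{a'_0} \in \ZZ^\cD$. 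Combined with $v^{a'_0} \in (-1, 1)^\cD$ (from $x^p \in (0, 1)^\cD$ strictly and $T_l|_p(x^p) \in [0, 1]^\cD$ for all $l$) this forces $v^{a'_0} = 0$, hence $a'_0 \in \tcH^p_p \cap \Zeta^p_p \cap \Xi$. For the target $q \sim p$ with $x^q \in (0, 1)^\cD$, picking $s' \in \beta \NN_0$ with $s' + p = q$ and repeating the same transport argument yields $k^{s'} \cdot a'_0 \in \tcH^q_p \cap \Zeta^q_p \cap \Xi$, as required.
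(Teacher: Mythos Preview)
Your transport steps (reduction to the diagonal $q=p$ and final transport to arbitrary $q$ via $k^{s'}$) are fine, and the observation that $S:=\cG^p_p\cap\Zeta^p_p\cap\Xi$ is a closed multiplicative subsemigroup (so Ellis--Numakura produces a multiplicative idempotent $e\in S$) is correct. Your deduction that $\lim_{m\to e}\fp{m^{\s}v^e_\mu}\in\{0,1\}$ from the product formula is also correct, and your argument that \emph{if} $v^e_\mu$ is rational then its denominator can only involve primes dividing $k$ (via $m\equiv1\pmod\ell$ for $e$-almost all $m$ when $\ell\nmid k$) is valid.

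The genuine gap is precisely the step you flag: rationality of $v^e$. Proposition~\ref{prop:v_perp-discrete-diagonal} together with Proposition~\ref{lem:U-is-Q} only pins $v^e$ down to a rational coset $v_0+\cU_p$ of a $\QQ$-defined subspace $\cU_p\subset\cV_{top}$; the $\cU_p$-component of $v^e$ is entirely unconstrained by those results and can be irrational. Neither the idempotent identity nor membership in $\Zeta^p_p$ forces this component to be rational: the condition $\lim_{m\to e}\fp{m^{\s}\alpha}\in\{0,1\}$ is satisfied by plenty of irrational $\alpha$ along suitable ultrafilters, and the $\Zeta^p_p$ congruences $m\equiv1\pmod\ell$ say nothing about $m^{\s}\alpha\bmod\ZZ$ when $\alpha\notin\QQ$. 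Consequently there is no reason for $\lim_{n\to r_0}\fp{k^{n\s}v^e}$ to vanish, and $k^{r_0}\cdot e$ need not lie in $\tcH^p_p$.

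The paper sidesteps this obstacle with a different mechanism. Instead of isolating a single idempotent and analysing its $v$-value arithmetically, it fixes a minimal idempotent $r$ with $r+p=p$, sets $\bar a:=k^r\cdot a$, and observes (via Lemma~\ref{lem:v-addition}) that
\[
v^{\bar a\cdot\bar b}\equiv v^{\bar a}+v^{\bar b}\pmod{\ZZ^{\cD}}\qquad\text{for }a,b\in S.
\]
Hence the image $A=\{v^{\bar a}\bmod\ZZ^{\cD}:a\in S\}$ is an additive subsemigroup of the compact group $\RR^{\cD}/\ZZ^{\cD}$; its closure is therefore a subgroup and contains $0$. Continuity of $a\mapsto v^{\bar a}$ and closedness of $S$ then yield $c\in S$ with $v^c\equiv0\pmod{\ZZ^{\cD}}$, and the interior assumption on $x^p$ forces $v^c=0$. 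This argument never needs $v^a$ to be rational for any individual $a$; the subgroup-in-a-torus principle does all the work.
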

\begin{proof}\color{White}
It is clear that the condition \eqref{it:69:B} implies the condition \eqref{it:69:A}, so there is only one implication to prove. Suppose that \eqref{it:69:A} holds for some $q \sim p$. Without loss of generality, we may assume that $q = p$. (Otherwise, we can find $r \in \KN$ such that $p = r+q$ and $a \in \cG^q_p \cap \Zeta^q_p \cap \Xi$; then $k^r \cdot a \in \cG^p_p \cap \Zeta^p_p \cap \Xi$.) Also, it will suffice to show that $\tcH^p_p \cap \Zeta^p_p \cap \Xi \neq \emptyset$. (Otherwise, we can find $r \in \KN$ such that $q = r + p$ and $a \in \tcH^p_p \cap \Zeta^p_p \cap \Xi$; then $k^r \cdot a \in \tcH^q_p \cap \Zeta^q_p \cap \Xi$.)

Let us now pick $u \sim p$ which is idempotent and $r$ such that $r+p = u$. 
For $a \in \cG^p_p \cap \Zeta^p_p \cap \Xi$, let $\bar{a} := k^{p+r} \cdot a$; note that $\bar a \in \cG^p_p \cap \Zeta^p_p \cap \Xi$ and that $\bar{a} \in k^{p} \cdot \cG_p^u$. 
Hence, it follows from Corollary \ref{cor:v-add} that for any $a, b \in \cG^p_p \cap \Zeta^p_p \cap \Xi$ it holds that 
\begin{equation}\label{eq:781:1}
 v^{\bar a \cdot \bar b} \equiv v^{\bar a} + v^{\bar b} \bmod{\ZZ^{\cD}}. 
\end{equation}
Let us consider the set $A \subset \RR^{\cD}/\ZZ^{\cD}$ given by
\[
	A := \set{ v^{\bar a} \bmod{\ZZ^{\cD}} }{ a \in \cG^p_p \cap \Zeta^p_p \cap \Xi }.
\]
It follows from \eqref{eq:781:1} that $A$ is a semigroup, and in particular it contains elements arbitrarily close to $0$. Since $\cG^p_p \cap \Zeta^p_p \cap \Xi$ is closed, there exists $c \in \cG^p_p \cap \Zeta^p_p \cap \Xi$ such that $v^c \in \ZZ^{\cD}$. Since $x^p$ is an interior point of $(0,1)^{\cD}$, it follows that $v^c = 0$, meaning that $c \in \tcH^p_p$ and so $\tcH^p_p \cap \Zeta^p_p \cap \Xi \neq \emptyset$.
\end{proof}

{
We close this section by showing that, using Theorem \ref{thm:recurrence-strong} as a black box, we can produce ultrafilters in $\tcH^q_p \cap \Zeta^q_p$ (at the cost of a slight increase in the complexity of $\cD$). Recall that complexity was defined in Section \ref{ssec:setup-basic} and that we equip the family of eventually zero sequences in $\NN_0 \cup \{\infty\}$ with reverse lexicographical order. We will apply the following lemma with $\vec c = (\infty,c_1,c_2,\dots)$ where $c_i = \complexity(\cE)_i$; note that  if $\cD \not \subseteq \NN$ then 
$\complexity(\cE) < \vec c < \complexity(\cD)$. Following Remark \ref{rmk:dep-on-D}, in the following result we make the dependence of $\tcH^q_p$ on $\cD$ explicit, and hence write $\tcH^q_p[\cD]$, etc.
}

\begin{proposition}\label{lem:bootstrap}\color{Mahogany}
	Let $\vec c = (\infty,c_1,c_2,\dots) \in \bra{\NN_0 \cup \{\infty\}}^\infty$. 
	Suppose that Theorem \ref{thm:recurrence-strong} holds for all $\cD$ such that $\complexity(\cD) \leq \vec c$. Then, for all $\cD$ such that $\complexity(\cD) \leq \vec c$, the intersection $\tcH^q_p[\cD] \cap \Xi \cap \Zeta^q_p$ is nonempty for all $p \sim q \in \KN$ with $x^p,x^q \in (0,1)^\cD$.
\end{proposition}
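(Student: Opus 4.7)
The strategy is to apply the inductive hypothesis (Theorem~\ref{thm:recurrence} for $\complexity(\cD') \leq \vec c$) to enlargements $\tilde\cD \supset \cD$ obtained by adjoining fresh natural-number indices. Because the height-$0$ entry of $\vec c$ is $\infty$, any such enlargement remains within the scope of the hypothesis, while the added coordinates let us force both the polynomial conditions that constitute $\Zeta^q_p$ and the matrix compatibilities that constitute \ref{it:2:C}.

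Fix $p \sim q \in \KN$; after a harmless generic perturbation of $\alpha$ in the spirit of the derivation of Theorem~\ref{thm:A} from Theorem~\ref{thm:main-strong}, assume $x^p, x^q \in (0,1)^\cD$. Let $\Omega \subseteq \RR \times \NN$ consist of those pairs $(\beta, e)$ for which the limits $\lim_{n \to p} \fp{\beta k^{en}}$ and $\lim_{n \to q} \fp{\beta k^{en}}$ both lie in $(0,1)$; this is dense in $\RR \times \NN$. For any finite $\Phi \subset \Omega$, set $\tilde\cD_\Phi := \cD \sqcup \{i'_\phi\}_{\phi \in \Phi}$ with $i'_\phi \in \NN$ a fresh index of degree $e_\phi$ carrying coefficient $\beta_\phi$; since only height-$0$ indices are adjoined, $\complexity(\tilde\cD_\Phi) \leq \vec c$. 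The hypothesis therefore produces $\tilde a_\Phi \in \Xi$ satisfying \ref{it:1:A} and \ref{it:1:B} for $\tilde\cD_\Phi$, and by Proposition~\ref{prop:Tk-basic}\eqref{it:64:B1} (which ensures that the $\cD$-rows of $\tilde T_l$ depend only on $\cD$-coordinates) these conditions descend to the corresponding conditions for $\cD$. Furthermore, reading \ref{it:1:A} on the new coordinate $i'_\phi$ yields
\[
\lim_{l \to \tilde a_\Phi} \lim_{n \to p} \fp{\beta_\phi l^{e_\phi} k^{e_\phi n}} = \lim_{n \to q} \fp{\beta_\phi k^{e_\phi n}},
\]
i.e., $\tilde a_\Phi \cdot k^p \eqab k^q$ when tested against the monomial $m \mapsto \beta_\phi m^{e_\phi}$.

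To promote $\tilde a_\Phi$ into $\tcH^q_p[\cD]$ we must further force \ref{it:2:C}. For a fixed $m$, the matrix equality $A_{ml}|_p = A_m|_q A_l|_p$ together with the vanishing of the non-top part of $-b_{ml}|_p + A_m|_q b_l|_p + b_m|_q$ unwinds, via the explicit recursion of Proposition~\ref{prop:Tk-basic}, into a finite list of integer-valued polynomial identities modulo $\ZZ$; each such identity is implied by the $\eqab$-compatibility of $\tilde a_\Phi \cdot k^p$ with $k^q$ on an associated monomial lying in $\Omega$ (depending on $m$, $\cD$, $k$, $x^p$, and $x^q$). Consequently, by enlarging $\Phi$ to contain all those monomials for $m \leq M$, we can ensure that $\tilde a_\Phi$ also satisfies \ref{it:2:C} for every $m \leq M$.

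Setting $\cA_{\Phi, M} := \{a \in \Xi : \text{\ref{it:1:A}, \ref{it:1:B} hold}; \ a \cdot k^p \eqab k^q \text{ on } \Phi; \ \text{\ref{it:2:C} holds for } m \leq M\}$, each $\cA_{\Phi, M}$ is closed (by closedness of $\Xi$ and continuity of the relevant limits) and nonempty by the previous paragraph applied with some $\tilde\Phi \supseteq \Phi$ sufficiently large. Since $\cA_{\Phi_1 \cup \Phi_2, \max(M_1, M_2)} \subseteq \cA_{\Phi_1, M_1} \cap \cA_{\Phi_2, M_2}$ the family has the finite intersection property, and compactness of $\beta\NN$ supplies $a \in \bigcap_{\Phi, M} \cA_{\Phi, M}$; density of $\Omega$ together with closedness of $\Zeta^q_p$ then upgrades this to $a \in \tcH^q_p[\cD] \cap \Xi \cap \Zeta^q_p$. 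The hardest step is the third paragraph: unwinding the abstract matrix identity of \ref{it:2:C} into an explicit finite list of polynomial conditions, and matching each floor-function discontinuity arising from the recursive formulas of Proposition~\ref{prop:Tk-basic} to a monomial that can be placed in $\Phi$.
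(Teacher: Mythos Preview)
Your overall architecture matches the paper's: enlarge $\cD$ by fresh height-$0$ indices (permitted because $c_0=\infty$), apply the inductive hypothesis to the enlarged system, observe that the resulting ultrafilter restricts to one for $\cD$, and use compactness to pass from finitely many monomial constraints to all of $\Zeta^q_p$. That part is fine and essentially identical to what the paper does.

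The gap is your third paragraph, where you try to manufacture condition \ref{it:2:C} out of $\eqab$-compatibility. You assert that the identity $\bar A_{ml}|_p \equiv \bar A_m|_q\,\bar A_l|_p \pmod{\cV_{top}e_0^{\mathrm T}}$ ``unwinds \dots\ into a finite list of integer-valued polynomial identities modulo $\ZZ$'', each controllable by a monomial placed in $\Phi$. This is not justified, and for $\cD$ of height $\geq 2$ it appears to be false. Writing $\bar A_{ml}|_p = \lim_{n\to p}\bar A_m(T_l(x^n))\cdot \bar A_l|_p$, the question becomes whether $\lim_{n\to p}\bar A_m(T_l(x^n))$ equals $\bar A_m|_q$ (modulo the top part). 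By Proposition~\ref{prop:Tk-basic}\eqref{it:64:B1}, the entry $\bar A_m(y)_{\mu,\nu}$ depends on $\fp{y}_\xi$ for $h_\xi < h_\mu$. For $\mu$ of height $1$ this involves only height-$0$ coordinates $T_l(x^n)_\xi = \fp{(lk^n)^{d_\xi}\alpha_\xi}$, genuinely polynomial in $lk^n$; but for $\mu$ of height $\geq 2$ it involves height-$1$ coordinates $T_l(x^n)_\xi$, which are \emph{generalised} polynomials in $lk^n$ with nested fractional parts. The discontinuities of $\bar A_m$ along those directions are therefore not governed by conditions of the form $\fp{\beta(lk^n)^e}$, and cannot be captured by adjoining height-$0$ indices alone. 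Adding the indices that would capture them would raise $c_1, c_2,\dots$, violating $\complexity(\tilde\cD)\leq \vec c$.

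The paper sidesteps this entirely. Although the statement of the proposition names Theorem~\ref{thm:recurrence}, the proof actually invokes Theorem~\ref{thm:recurrence-strong} for the enlarged system $\hat\cD^{(N)}$ (and this is what is available in the induction, since the global argument inducts on Theorem~\ref{thm:recurrence-strong}). That stronger hypothesis already produces ultrafilters in $\tcH^q_p[\hat\cD^{(N)}]$, i.e.\ with \ref{it:2:C} built in; because the new indices lie in $\NN$ and appear in no bracket of $\cD$, one has the clean inclusion $\tcH^q_p[\hat\cD^{(N)}]\subset \tcH^q_p[\cD]$, and only the $\Zeta^q_p$ constraints remain to be forced by the added coordinates. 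In short: you are attempting to prove a harder statement than the paper does, and the extra step you introduce to get \ref{it:2:C} does not go through. If you instead take the hypothesis to be Theorem~\ref{thm:recurrence-strong} (as the paper's usage makes clear is intended), your third paragraph becomes unnecessary and the remainder of your argument is correct.
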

\begin{proof}\color{White}
For a monomial $\alpha x^d$ with $\alpha \in \RR/\ZZ$ and $d \in \NN$, let 
\[
	\Alpha(\alpha,d) := \set{ a \in \cG^q_p}
	{ \lim_{m \to a} m^d \lim_{n \to p} k^{dn} \alpha = \lim_{n\to q} k^{dn} \alpha }.
\]
Note that $a \in \Zeta^q_p$ if and only if $a \in \Alpha(\alpha,d)$ for all $\alpha \in \RR/\ZZ$ and $d \in \NN$. 

Pick any $\alpha$ and $d$. Suppose first that there exists $i \in \cD \cap \NN$ such that $x^0_i \equiv \alpha \bmod{\ZZ}$ and $d_i = d$. Let $\pi \colon \RR^{\cD} \to \RR/\ZZ$ be the projection map $x \mapsto x_i \bmod \ZZ$. Then for any $a \in \tcH^q_p[\cD]$,
\begin{align*}
	 \lim_{m \to a}  m^d \lim_{n \to p} k^{dn} \alpha
	&= \lim_{m \to a} \lim_{n \to p} \pi\bra{ T_m (x^n)} 
	\\&= \lim_{m \to a} \pi\bra{ T_m|_p (x^p)} = \pi(x^q) 
	= \lim_{n \to q} k^{dn} \alpha,
\end{align*}
and consequently $\tcH^q_p[\cD] \subset \Alpha(\alpha,d)$.

Secondly, suppose that $\lim_{n\to p} k^{dn} \a = 0$. Let $r \in \beta \NN$ be such that $q = r+p$. Then 
\[ \lim_{n\to q} k^{dn} \a = \lim_{m\to r} k^{dm} \lim_{n\to p} k^{dn} \a =  0,\]
and consequently $\tcH^q_p[\cD] \subset \Alpha(\alpha,d)$. We reach the same conclusion if $\lim_{n\to q} k^{dn} \a = 0$.

Thirdly, suppose that $\lim_{n\to p} k^{dn} \a \neq 0$, $\lim_{n\to q} k^{dn} \a \neq 0$ and there exists no index $i$ such that $x^0_i \equiv \alpha \bmod{\ZZ}$ and $d_i = d$. Consider a larger index set $\hat\cD = \cD \cup \{j\}$ where $j \in \NN \setminus \cD$ and $d_j = d$. Let $\hat x^0 \in \RR^{\hat \cD} \simeq \RR^{\cD} \times \RR$ be the lift of $x^0$ with $\hat x^0_j = \alpha$. The same reasoning as before shows that 
\[ \tcH^q_p[\hat\cD] \subset \Alpha(\alpha,d) \cap \tcH^q_p[\cD].\]

Iterating this construction we conclude that for any finite sequences $\alpha^{(1)}, \dots, \alpha^{(N)} \in \RR/\ZZ$ and $d^{(1)},\dots,d^{(N)} \in \NN$ there exists an index set $\hat\cD^{(N)}$ with $\complexity(\hat\cD^{(N)}) < \vec{c}$ such that
\begin{equation*}\label{eq:396:1}
 \tcH^q_p[\hat\cD^{(N)}] \subset \bigcap_{i=1}^N \Alpha(\alpha^{(i)},d^{(i)}) \cap \tcH^q_p[\cD]
\end{equation*}
Hence, it follows from Theorem \ref{thm:recurrence-strong} applied to the system $\hat\cD^{(N)}$ that 
\[
\bigcap_{i=1}^N \Alpha(\alpha^{(i)},d^{(i)}) \cap \tcH^q_p[\cD] \cap \Xi \neq \emptyset.
\]
Each of the sets $\Alpha(\alpha,d)$ is closed, so it follows from compactness of $\beta \NN_0$ that
\[
\Zeta^q_p \cap \tcH^q_p[\cD] \cap \Xi = \bigcap_{\alpha,d} \Alpha(\alpha,d) \cap \tcH^q_p[\cD] \cap \Xi \neq \emptyset,
\]
where the intersection runs over all $\alpha \in \RR/\ZZ$ and $d \in \NN$.
\end{proof} 

\subsection{Inductive step}\label{ssec:recurrent-induction}

In this section we collect the ingredients which will be needed for the inductive step. For this reason we again need to keep track of the index set $\cD$ (see Remark \ref{rmk:dep-on-D}). In particular, we write $\cG^q_p[\cD]$ rather than $\cG^q_p$, etc. Recall that $\cE$ is given by \eqref{eq:def-of-D-low}. The proof of the following proposition occupies the remainder of this section.

\begin{proposition}\color{Mahogany}\label{prop:induction} Let $p \sim q\in \KN$ let $r \in \beta \NN$ be such that $r+p = q$. 
	Let $a_n \in \tcH_p^{n+p}[\cE]$ for $r$-almost all $n$. Then $\displaystyle\lim_{n \to r} a_n \in \Sigma_p^q[\cD]$.
\end{proposition}

Fix $p,q,r$ as above. We would like to associate conditions \ref{it:3:A}, \ref{it:3:B} and \ref{it:3:C} to a statement concerning the maps $T_l|_p$ which can be expressed in terms of semialgebraic geometry. This motivates us to define for $\e > 0$:
\begin{align}
\label{eq:def-of-cS1}
\cS_1(\e) &:= \set{ (T,v) \in \Aff(\cD) \times \cV_{\mathrm{top}}}{ \norm{T(x^p) - (x^q+v)}  < \e}\\
\label{eq:def-of-cS2}
\cS_2 &:= \set{ (T,v) \in \Aff(\cD) \times \cV_{\mathrm{top}} }{ T(\Vp_p) = \Vp_q + v \text{ and } T \text{ is invertible} }
\end{align} 
It follows directly from expanding the relevant definitions that for an ultrafilter $a \in \beta \NN$ and a sequence $v_l \in \cV_{\mathrm{top}}$ ($l \in \NN$), the conditions \ref{it:3:A} and \ref{it:3:B} hold if and only if
\[ \forall^a_l \ \bra{T_l|_p, v_l} \in \bigcap_{\e > 0} \cS_1(\e) \cap \cS_2. \]
\begin{lemma}\label{lem:ind-1}
	The sets $\cS_1(\e)$ ($\e > 0$) and $\cS_2$ are semialgebraic.
\end{lemma}
\begin{proof}
	Follows immediately from the fact that balls are defined by semialgebraic formulae and  Lemma \ref{lem:stab-is-alg}.
\end{proof}
The situation with condition \ref{it:3:C} is marginally more complicated. For $a \in \beta \NN$, the existence of a sequence $w_{m,l} \in \cV_{\mathrm{top}}$ ($m,l \in \NN$) satisfying \ref{it:3:C} is equivalent to the following condition:
\begin{equation}\label{eq:it-3-C-variant}
 \forall_m\ \forall^a_l \ \forall^p_n \ \bA_m(T_l|_p(x^n)) \equiv  \bA_m|_q \bmod{\cV_{\mathrm{top}}e_0^{\mathrm{T}}},
\end{equation}
where $\cV_{\mathrm{top}}e_0^{\mathrm{T}} = \set{ w e_0^{\mathrm{T}}}{ w \in \cV_{\mathrm{top}} }$ and $we_0^{\mathrm{T}}$ denotes the constant (affine) map $v \mapsto w$ on $\RR^{\cD}$. For each $m \in \NN$ and $n \in \NN_0$ let $\cT_{m}^n(\e)$ be the set of pairs $(T,v) \in \cS_1(\e)$ with $T$ given by $T(x) = A x - b$ ($x \in \RR^\cD$), such that \
\[ \bA_m\bra{T(x^n)-v} \equiv \bA_m|_q \bmod{\cV_{\mathrm{top}}e_0^{\mathrm{T}}}\quad \text{ and } \quad \norm{A}_{\mathrm{F}} < 1/\norm{x^n-x^p}\]
where $\norm{A}_{\mathrm{F}} = \sqrt{ \operatorname{Tr} A^{\mathrm{T}} A} \geq  \norm{A}$ denotes the Frobenius norm. Put
\begin{align}
\label{eq:def-of-cS3}
\cS_3(\e) := \bigcap_{m=1}^{\ceil{1/\e}} \lim_{n \to p} \cT^n_m(\e).
\end{align} 
\begin{lemma}\label{lem:ind-2}
	The set $\cS_3(\e)$ ($\e > 0$) is semialgebraic.
\end{lemma}
\begin{proof}
If $(T,v) \in \cS_1(\e)$ and $\norm{T} < 1/\norm{x^n-x^p}$ then 
\[
	\norm{T(x^n) - v - x^q} \leq \norm{T} \cdot \norm{x^n - x^p} + \norm{T(x^p) - v - x^q} \leq 1 + \e < 2.
\]
It follows from Proposition \ref{prop:Tk-basic} and Lemma \ref{lem:gp=pw-poly} that the restriction of $\bA_m$ to $\Ball(x^q,2)$ is piecewise polynomial. Hence, each of the sets $\cT^n_m(\e)$ is semialgebraic ($m\in \NN$, $n \in \NN_0$). It follows from Proposition \ref{prop:semialg-limit} (cf.\ Example \ref{ex:semialg-param}) that for each $m \in \NN$ the set $\lim_{n \to p} \cT^n_m(\e)$ is semialgebraic. It follows that $\cS_3(\e)$ is a finite intersection of semialgebraic sets, and hence a semialgebraic set.
\end{proof}
Combining definitions \eqref{eq:def-of-cS1}, \eqref{eq:def-of-cS2} and \eqref{eq:def-of-cS3}, for $\e > 0$ we introduce  
\begin{equation}\label{eq:def-of-cT}
	\cR(\e) := \set{ T \in \Aff(\cD) }{ (T,v) \in \cS_1(\e) \cap \cS_2 \cap \cS_3(\e) \text{ for some } v \in \cV_{\mathrm{top}}}.
\end{equation}
\begin{lemma}\label{lem:ind-3}
	The set $\cR(\e)$ ($\e > 0$) is semialgebraic.
\end{lemma}
\begin{proof}
	Follows from Tarski--Seidenberg theorem.
\end{proof}

We are now ready to make the connection between the semialgebraic sets $\cR(\e)$ and the set $\Sigma^q_p[\cD]$ more precise.

\begin{lemma}\label{lem:ind-4}
	Let $a \in \beta \NN$. Then $a \in \Sigma^q_p[\cD]$ if and only if for each $\e > 0$ it holds that
	\begin{equation}\label{eq:378:1}
		\forall^a_l \ T_{l}|_{p} \in \cR(\e).
	\end{equation}	 

\end{lemma}
\begin{proof}
If $a \in \Sigma^q_p[\cD]$ then \eqref{eq:378:1} follows directly from the construction of the sets $\cR(\e)$ and accompanying discussion.

Suppose now that \eqref{eq:378:1} holds. It follows from the definition of $\cR(\e)$ that there exist $v_{\e,l} \in \cV_{\mathrm{top}}$ ($l \in \NN$, $\e > 0$) such that for each $\e > 0$ it holds that
\begin{align}
	\label{eq:49:11}
	\forall^a_l &\ \norm{T_l|_p(x^p) - (x^q +v_{\e,l}) } < \e \\ 
	\label{eq:49:12}
	\forall^a_l &\ T_l|_p(\Vp_p) = \Vp_q + v_{\e,l} \\ 
	\label{eq:49:13}
	\forall_m \ \forall^a_l \ \forall^p_n &\ \bA_m\bra{ T_{l}|_p(x^n) - v_{\e,l} } \equiv \bA_m|_q \bmod{\cV_{\mathrm{top}} e_0^{\mathrm{T}}}.
\end{align}
Let $w := \lim_{l \to a} T_l|_p(x^p) - x^q$. We conclude from \eqref{eq:49:11} that
\begin{align*}
	\lim_{\e \to 0} \lim_{l \to a} v_{\e,l} = w.
\end{align*}
In particular, we may assume without loss of generality that the sequence $v_{\e,l}$ is bounded.

Let $E_2 \subset \NN$ be the set of all integers $l \in \NN$ such that there exists $v \in \cV_{\mathrm{top}}$ with
\(
	T_l|_p(\Vp_p) = \Vp_q + v.
\) Note that \eqref{eq:49:12} guarantees that $E_2 \in a$. For each $l \in E_2$, let $v_l^{\perp} \in \cV_{\mathrm{top}}$ be a the unique vector with $v_l^{\perp} \in \cU_q^\perp$ and 
\[
	T_l|_p(\Vp_p) = \Vp_q + v_{l}^{\perp}.
\]
It follows from \eqref{eq:49:12} that $v_{\e,l} \in v_l^{\perp} + \cU_q$ ($\e > 0$).
For $l \in E_2$, let $v_l$ be the element of $v_l^{\perp} + \cU_q$ such that $\norm{w - v_l}$ is smallest possible (that is, the orthogonal projection of $w$). Then for each $\e > 0$ we have
\[
	\forall^a_l \ \norm{T_l|_p(x^p) - (x^q +v_{l}) } \leq \norm{T_l|_p(x^p) - (x^q +v_{\e,l}) } < \e.
\]
Letting $\e \to 0$ we conclude that
\[
	\lim_{l \to a} T_l|_p(x^p) = x^q + \lim_{l \to a}v_{l}.
\]
Thus, the conditions \ref{it:3:A} and \ref{it:3:B} are satisfied for the ultrafiler $a$ and the sequence $v_l$ ($l \in \NN$). 

The situation for \ref{it:3:C} is much simpler. Recall that $\bA_m|_q \bmod{\cV_{\mathrm{top}} e_0^{\mathrm{T}}}$ invariant under translations in $\cV_{\mathrm{top}}$. Hence, condition \ref{it:3:C} (in the equivalent form \eqref{eq:it-3-C-variant}) follows immediately from \eqref{eq:49:13}. 
We have shown that $a \in \tcG^q_p$. It remains to recall that $\tcG^q_p = \cG^q_p$ by Proposition \ref{prop:rigidity}. 
\end{proof}

\begin{proof}[Proof of Proposition \ref{prop:induction}]
By Proposition \ref{prop:Tk-basic}, the coefficients of the matrix $\bA_l|_p$ are polynomials functions in $l$, $x^{p}$ and $T_l|_{p}(x^{p})$. More precisely, $\bA_l|_p \bmod{\cV_{\mathrm{top}}e_0^{\mathrm{T}}}$ is a polynomial in $l$, $x^p_{\cE}$ and $T_l|_{p}(x^{p})_\cE$, where $x_\cE = (x_\mu)_{\mu \in \cE}$.  Moreover, each of the sets $\cS_1(\e), \cS_2, \cS_3(\e)$ ($\e > 0$) is preserved under the operation $(T,v) \mapsto (T+ue_0^{\mathrm{T}},v+u)$ for $u \in \cV_{\mathrm{top}}$. Hence, $\cR(\e)$ ($\e > 0$) is preserved under the operation $T \mapsto T+ue_0^{\mathrm{T}}$ for $u \in \cV_{\mathrm{top}}$. In other words, membership in $\cR(\e)$ ($\e > 0$) depends only on equivalence class modulo ${\cV_{\mathrm{top}}e_0^{\mathrm{T}}}$. Hence, for each $\e > 0$ there exists a semialgebraic set $R(\e)$ such that for each $l \in \NN$,
\begin{align}\label{eq:49:00}
 T_l|_p &\in \cR(\e) &\text{ if and only if }&& \bra{T_l|_{p}(x^{p})_\cE,l} &\in R(\e).
\end{align}

Temporarily fix $\e > 0$. Then $k^r \in \tcH^q_p[\cD] \subset \cG^q_p[\cD]$ by Lemma \ref{lem:k^r-in-H}, and hence it follows from \eqref{eq:49:00} and Lemma \ref{lem:ind-4} that
 \[
 \forall^r_n \ \bra{ T_k^n|_p(x^p)_{\cE}, k^n} = \bra{x^{n+p}_{\cE},k^n} \in R(\e).
 \]
Let $V^u_v[\cE]$ and $V^*_v[\cE]$ ($u,v \in \beta \NN$) be the varieties in $\RR^{\cE}$ defined in full analogy with $V^{u}_v$ and $V^*_{v}$, and put
\[ V := V^p_{r}[\cE] = V^*_{q}[\cE] = \alglim_{n\to r}\bra{x^{n+p}_{\cE}} \subset \RR^{\cE}.\]
It follows from Proposition \ref{prop:sep-semialg-IP} that there exists a relatively open set $U \subset \Vp_q$ and a continuous function $f \colon U \to \RR$ such that 
\begin{equation}\label{eq:49:01}
 \set{ (x,y) \in \RR^{\cE} \times \RR}{ x \in U, \ y \geq f(x) } \subset R(\e),
 \end{equation}
and for $r$-almost all $n$ we have \( x^{n+p}_{\cE} \in U.\) Moreover, since for $r$-almost all $n$ we have $a_n \in \tcH^{n+p}_p[\cE]$, it follows that
\[ \lim_{l \to a_n} T_l|_p(x^p)_{\cE} = x^{n+p}_{\cE} \qquad \text{ and } \qquad \forall^{a_n}_l\ T_l|_p(V_p^*[\cE]) = V_{n+p}^*[\cE] = V_q^*[\cE] = V.\]
In particular, $T_l|_p(x^p)_{\cE} \in V$ for $a_n$-almost all $l$, and consequently 
\[
	\forall^{a_n}_l\ T_l|_p(x^p)_{\cE} \in U.
\]
Since $a_n$ is non-principal, we also have $l \geq f(x^{n+p}_{\cE})+1$ for $a_n$-almost all $l$. Hence, bearing in mind that $f$ is continuous, it follows from \eqref{eq:49:01} that 
\begin{equation}\label{eq:49:02}
	\forall^r_n\ \forall^{a_n}_l\ \bra{ T_l|_p(x^p)_{\cE}, l} \in R(\e).
 \end{equation}
Hence, by \eqref{eq:49:00} we have
\begin{equation}\label{eq:49:03}
	\forall^r_n\ \forall^{a_n}_l\ T_l|_p \in \cR(\e).
 \end{equation}
Put $a := \lim_{n \to r} a_n$. Passing to the limit $n \to r$ in \eqref{eq:49:03} yields
\begin{equation}\label{eq:49:04}
	\forall^{a}_l\ T_l|_p \in \cR(\e).
 \end{equation}
Since $\e > 0$, it follows from Lemma \ref{lem:ind-4} that $a \in \cG^q_p[\cD]$.
\end{proof}

\subsection{Combining the ingredients}\label{ssec:rec-combined}
{\color{NavyBlue}
We are now ready to combine the ingredients discussed above to prove Theorem \ref{thm:recurrence-strong}. As alluded to before, we consider the abelian case separately.
}

\begin{proof}[Proof of Theorem \ref{thm:recurrence-strong}, case $\cD \subset \NN$]\color{LightGray}
	The maps $A_k \colon \RR^{\cD} \to \ST(\cD)$ are constant, and $A_k(x) = \Delta_k$ for each $x \in \RR^{\cD}$. This allows us to repeat the argument in the proof of Proposition \ref{lem:idempotent} almost verbatim to show that $\Vp_p$ and $\Vp_q$ are affine spaces defined over $\QQ$. (For instance, it follows from Proposition \ref{prop:Vp=G(z)} that $\Vp_p$ is an affine space, and it follows from Proposition \ref{lem:U-is-Q} that $\Vp_p$ is defined over $\QQ$.) Let $\cW$ be the vector space such that $\Vp_p = x^p + \cW$. 
	
	There exists an infinite set $E \subset \NN$ such that $\Vp_p$ is preserved under the maps $T_k^n|_p^{-1} \circ T_k^m|_p$ for all $m,n \in E$. Hence, $\cW$ is preserved by $\Delta_k^{h}$ for all $h \in E-E$, and it follows from linear algebra that $\cW$ is spanned by eigenvectors of $\Lambda$. Consequently, $\cW$ is preserved by $\Delta_l$ for all $l \in \NN$ and $\Vp_q = x^q + \cW$.
	
	At several points in the argument we encounter technical issues if some of the coordinates $x^p_i$ are rational. To deal with them, it will be convenient to define $\fp{t}^1 := 1-\fp{-t} \in (0,1]$ for $t \in \RR$, and for the sake of uniformity also put $\fp{t}^0 = \fp{t}$. Then $\fp{t}^0 = \fp{t}^1$ for $t \in \RR \setminus \ZZ$, while $\fp{t}^{\epsilon} = \epsilon$ for $t \in \{0,1\}$. For $i \in \cD$, let $\epsilon(i) = 0$ if $x^n_i \geq x^p_i$ for $p$-almost all $n$ and $\epsilon(i) = 1$ if $x^n_i < x^p_i$ for $p$-almost all $n$. Then
	\[
		T_m|_p(x^p)_i = \fp{m^{d_i} x^p_i}^{\epsilon(i)} \quad (m \in \NN,\ i \in \cD).
	\]
	
Let $r \in \beta \NN$ be such that $r+p = q$. We will show that any ultrafilter $a$ of the form $k^r + e$ with $e \in \EN$ satisfies conditions \ref{it:1:A}--\ref{it:1:C}. Since $\EN \cap \Xi \neq \emptyset$, from here we can conclude that $\cH^q_p \cap \Xi \neq \emptyset$ which is more than what is needed. Recall that for each polynomial map $f \colon \ZZ \to \RR/\ZZ$ we have $\lim_{l \to e} f(l) = f(0)$.

\begin{enumerate}[wide]
\item[\ref{it:1:A}:]
For each $i \in \cD$ we can compute that
\begin{align*}
	\lim_{l \to a} T_l|_p(x^p)_i 
	&= \lim_{n \to r} \lim_{l \to e} \fp{ (k^n + l)^{d_i} x^p_i}^{\epsilon(i)}
	\\&\overset{!}{=}  \lim_{n \to r} \fp{ k^{d_i n} x^p_i}^{\epsilon(i)} 
	= \lim_{n \to r} T_k^n|_p(x^p)_i = x^q_i;
\end{align*}
note that the labelled with the exclamation mark holds for slightly different reasons when $x^p_i$ is rational and irrational. 

\item[\ref{it:1:B}:] Because $\Vp_p$ is defined over $\QQ$ and $\cW$ splits into eigenspaces of $\Lambda$, the sequence $T_l|_p(\Vp_p) = T_l|_p(x^p) + \cW$ ($l \in \NN$) is finitely-valued. Hence, \ref{it:1:B} follows from \ref{it:1:A}. (See also Proposition \ref{prop:v_perp-discrete-general}.)

\item[\ref{it:1:C}:] Let $m \in \NN$ and note that for any $l \in \NN$, both of the maps $T_{ml}|_p$ and $T_m|_q \circ T_l|_p$ take the form $\Delta_{ml} - b$ with $b \in \ZZ^{\cD}$. Hence, it will suffice to verify that 
\[ 
	\forall^a_l \  \norm{  T_{ml}|_p(x) - \lim_{l \to a} T_{m}|_q \circ T_l|_p(x) } < 1
\]
for at least one point $x \in [0,1]^{\cD}$. In fact, we will prove a more precise statement,
\[
	\lim_{l \to a} T_{ml}|_p(x^p) = \lim_{l \to a} T_{m}|_q \circ T_l|_p(x^p) = T_{m}|_q(x^q).
\] 
This follows from a computation very similar to the one we encountered for item \ref{it:1:A}. Let $i \in \cD$; then 
\begin{align*}
	\lim_{l \to a} T_{ml}|_p(x^p)_i 
	&= \lim_{n \to r} \lim_{l \to e} \fp{ m^{d_i}(k^n + l)^{d_i} x^p_i}^{\epsilon(i)}
	\overset{!}{=}  \lim_{n \to r} \fp{ m^{d_i} k^{d_i n} x^p_i}^{\epsilon(i)}
	\\ &=   \lim_{n \to r} T_{m k^n}|_p(x^p)_i 
	= \lim_{n \to r} T_{m}|_q \circ T_k^n|_p(x^p)_i \qedhere
	= T_m|_q(x^q)_i.
\end{align*}
\end{enumerate} 
\end{proof}

We are now ready to combine the ingredients introduced so far to deal with the case when $\cD \not \subset \NN$. The argument is little more than a list of references to previously proved facts.

\begin{proof}[Proof of Theorem \ref{thm:recurrence-strong}, general case]
	We may assume that Theorem \ref{thm:recurrence-strong} has already been proved for all $\cD'$ with $\complexity(\cD') < \complexity(\cD)$ and that $\cD \not \subset \NN$. 	 By Proposition  \ref{lem:bootstrap}, the intersection $\tcH^{u}_p[\cE] \cap \Zeta^{u}_p \cap \Xi$ is nonempty  for any $u \sim p \in \KN$ with $x^{u} \in (0,1)^\cD$. Pick $r$ such that $r+p = q$. Then, for $r$-almost all $n$, we can find an ultrafilter $a_n \in \tcH^{n+p}_p[\cE] \cap \Zeta^{n+p}_p \cap \Xi$. Put $a := \lim_{n \to r} a_n$. Then, by Proposition \ref{prop:induction} and Lemma \ref{lem:Zeta-basic}, $a \in \tcH^{q}_p[\cE] \cap \Zeta^{q}_p \cap \Xi$. It follows from Corollary \ref{cor:v-add} that $\tcH^q_p[\cD] \cap \Zeta^q_p \cap \Xi \neq \emptyset$. In particular, $\tcH^q_p[\cD] \cap \Xi \neq \emptyset$, as needed.
\end{proof}  \section{Proof of the Main Theorem}\label{sec:main}
{
We are now ready to finish the proof of Theorem \ref{thm:A}. The argument is reminiscent of the derivation of Theorem \ref{thm:main-strong-torus} in the abelian case. Let us recall that main result of the previous section. It will be convenient to phrase it in a slightly more verbose way. (Recall that the relevant definitions related to ultrafilters can be found in Sec.{} \ref{ssec:prelims-ultrafilters}, and the $\times k$ maps $T_k$ were introduced in Sec.{} \ref{ssec:setup-Tk}.)
}

\begin{customthm}{\ref{thm:recurrence}${}^\prime$}\label{thm:recurrence-alpha}
	Let $p, q \in \beta \NN$ be minimal ultrafilters belonging to the same minimal left ideal and assume that $x^p, x^q \in (0,1)^{\cD}$. Then there exists an ultrafilter $a \in \beta \NN$ such that $d^*(E) > 0$ for all $E \in a$ and $\displaystyle \lim_{l \to a} T_l|_p(x^p) = x^q$ and $\displaystyle \forall^a_l \ T_l|_p(\Vp_p) = \Vp_q$.
\end{customthm}

Together with Proposition \ref{prop:sep-semialg-IP}, Theorem \ref{thm:recurrence-alpha} leads to the following result, which can be construed as an analogue of Theorem \ref{thm:main-strong-torus}. We point out that the argument proceeds along similar lines as the one in Section \ref{ssec:torus-general}, with Theorem \ref{thm:recurrence-alpha} playing the role of Corollary \ref{cor:main-ur-1}.

\begin{theorem}\label{thm:main-strong}
	Let $p \in \beta \NN$ be a minimal idempotent and let $S \subset [0,1)^\cD$ be a semialgebraic set. Suppose that $x^n \in S$ for $p$-almost all $n$ and that $x^p \in (0,1)^\cD$. Let 
\begin{equation*}
	L := \set{ l \in \NN }{ \forall^p_n \ T_l(x^n) \in S }.
\end{equation*}
Then the set $L$ has positive Banach density.
\end{theorem}

\begin{proof}
It follows directly from the relevant definitions that for any $l \in \NN$ we have
\begin{equation}\label{eq:19:00}
	 \forall^p_{m} \ T_l(x^m) = A_l|_p(x^m-x^p) + T_l|_p(x^p).
\end{equation}
By Proposition \ref{prop:Tk-basic}\eqref{it:Tk-basic:poly}, there exists a polynomial $B$ such that $A_l|_p = B(l,x^p,T_l|_p(x^p))$ for all $l \in \NN$. 
Consider the set 
\begin{equation}\label{eq:19:02}
	R := \set{(x,t) \in \RR^{\cD} \times \RR }{\forall^p_{m} \ x + B(t,x^p,x)(x^m - x^p) \in S }.
\end{equation}
Then $R$ is semialgebraic by Proposition \ref{prop:semialg-limit}. The definitions are set up so that 
\begin{equation*}
 L = \set{ l \in \NN}{ \bra{T_l|_p(x^p), l} \in R }.
\end{equation*}

The fact that $p$ is idempotent implies that
\begin{equation}\label{eq:19:01}
	\forall^p_{n} \ \forall^p_{m} \
	x^{n + m} = T_k^n(x^m) = A_k^n|_p(x^m - x^p) + x^{n+p} \in S,
\end{equation}
and hence we also have
\begin{equation}\label{eq:19:11}
	\forall^{p}_{n} \ \bra{x^{n+p}, k^n} \in R.
\end{equation}
By Proposition \ref{prop:sep-semialg-IP}, there exists a relatively open subset $U \subset \Vp_p$ and a continuous map $f \colon U \to \RR$ such that $x_n \in U$ for $p$-almost all $n$ and
\[
	\set{(x,y) \in \RR^{\cD} \times \RR}{x \in U,\ y \geq f(x)} \subset R. 
\]
Pick $n$ with $x_n \in U$ and $\Vp_{n+p} = \Vp_p$. By Theorem \ref{thm:recurrence-alpha}, there exists an ultrafilter $a \in \Xi$ such that $\lim_{l \to a} T_l|_p(x^p) = x^{n+p}$ and $\forall^a_l \ T_l|_p(\Vp_p) = \Vp_p$. Then for $a$-almost all $n$ we have $T_l|_p(x^p) \in \Vp_p$, and since $U$ is relatively open also $T_l|_p(x^p) \in U$. Accordingly, for $a$-almost all $l$ we have $l > f(x^{n+p}) + 1 > f(T_l|_p(x^p))$. Consequently, $\bra{T_l|_p(x^p), l} \in R$ for $a$-almost all $l$. Thus, $L \in a$ and in particular $d^*(L) > 0$.
\end{proof}

{
Finally, we derive Theorem \ref{thm:A} from Theorem \ref{thm:main-strong}. The argument is based on Leibman's Theorem \ref{thm:Leibman} and is analogous to how the abelian case of Theorem \ref{thm:A} is derived from Theorem \ref{thm:main-strong-torus}. On a more technical side, this is the place where we deal with complications corresponding to points on the boundary of the cube $[0,1]^{\cD}$, which we have until now mostly avoided.

We also point out that this is the only place where we use Proposition \ref{prop:gp-lim-is-gp}. Without it, we could have derived a version of Theorem \ref{thm:A} with a weaker notion of largeness, namely positive Banach density in place of $\IP^*_+$. This weaker version is still sufficient to prove Theorem \ref{thm:B}.
}

\begin{proof}[Proof of Theorem \ref{thm:A}]By Leibman's Theorem \ref{thm:Leibman}, for suitably chosen index set $\cD \subset \cB$ satisfying \eqref{eq:def-of-D} there exists $\alpha \in \RR^{\cD}$ and a piecewise polynomial set $S \subset [0,1)^\cD$ such that
	\begin{equation}\label{eq:976:1}
		E = \set{ m \in \NN}{ \fp{v^\alpha(m)} \in S}.
	\end{equation}
	(Recall that the grading $(d_\mu)_{\mu \in \cB}$ was fixed in Section \ref{ssec:setup-basic}.) Put $x^0 := \fp{v^\alpha(1)}$ so it follows from Proposition \ref{prop:Tk-basic}\eqref{it:Tk-basic:times-k} that 
	\begin{equation}\label{eq:976:2}
		\fp{v^\alpha(m)} = T_m(x^0) \qquad (m \in \NN).
	\end{equation}
	Let $q \in \beta \NN$ be a minimal idempotent. Since $k^n \in E$ for all $n \in \NN$, in particular
	\begin{equation}\label{eq:976:3}
		\forall^q_n \ x^n = T_k^n(x^0) = \fp{v^{\alpha}(k^n)} \in S.
	\end{equation}
	
	Suppose first that $x^q \in (0,1)^{\cD}$. Then by Theorem \ref{thm:main-strong} the set $L \subset \NN$ given by
	\begin{equation}\label{eq:976:5}
		L := \set{ l \in \NN}{ \forall^q_n \ T_l(x^n)  \in S}
	\end{equation}
	has positive Banach density. Moreover, $L$ is a generalised polynomial set by Proposition \ref{prop:gp-lim-is-gp} and Proposition \ref{prop:Tk-basic}\eqref{it:Tk-basic:gen-poly}. Hence, it follows from Bergelson--Leibman Theorem \ref{thm:Bergelson-Leibman} that $L$ is $\IP^*_+$. It remains to observe that if $l \in L$ then $lk^n \in E$ for $q$-almost all $n$, and in particular for infinitely many $n \in \NN_0$.
	
	Consider now the general case when $x^q$ may lie on the boundary of $[0,1]^{\cD}$. In order to reduce the problem to the previous case, we will replace the index set $\cD$ and the vector $\alpha \in \RR^{\cD \cap \NN}$ with another index set $\cC$ and vector $\beta \in \RR^{\cC \cap \NN}$ which still allow us to represent the set $E$ and which have the property that 
	\begin{equation}\label{eq:976:4}
\lim_{n \to q} \bra{ \fp{v^\beta_{\nu}(k^n) }}_{\nu \in \cC} \in (0,1)^{\cC}.
	\end{equation}
Let us say that the pair $(\cC,\beta)$ where $\cC \subset \cB$ satisfies the analogue of the downward closure property \eqref{eq:def-of-D} and $\beta \in \RR^{\cC \cap \NN}$ is \emph{more expressive} than the pair $(\cD,\alpha)$ if there exists a generalised polynomial map $h \colon \RR^{\cC} \to \RR^{\cD}$ such that 
	\begin{equation}\label{eq:976:6}
		\bra{ \fp{v^\alpha_{\mu}(m) }}_{\mu \in \cD} = h\bra{ \fp{v^\beta_{\nu}(m) }}_{\nu \in \cC} 
\qquad (m \in \NN). 
	\end{equation}
	If \eqref{eq:976:6} holds then the set $R = h^{-1}(S)$ is semialgebraic and in analogy with \eqref{eq:976:1} we have
	\begin{equation}\label{eq:976:1b}
		E = \set{ m \in \NN}{ \bra{ \fp{v^\beta_{\nu}(m) }}_{\nu \in \cC}  \in R}.
	\end{equation}	
	
Hence, we can freely replace $(\cD,\alpha)$ with any pair $(\cC,\beta)$ that is more expressive. As the first application, we show that we may assume that for each index $\mu \in \cD$ and for each $i \in \cD \cap \NN$, $i$ appears at most once in the expansion of $\mu$. In fact, we will ensure marginally more, namely that there exists a (partial) order $\sqsubseteq$ on $\cD \cap \NN$ such that any $\mu \in \cD$ is \emph{compatible with $\sqsubseteq$} in the following sense: if $\mu$ is written in the form $\mu = i \fpp{\lambda_1} \fpp{\lambda_2} \dots \fpp{\lambda_r}$ and $j \in \cD \cap \NN$ appears in one of $\lambda_1,\dots,\lambda_r$ then $i \sqsupset j$.
 Enumerate $\cD \cap \NN = \{i_1,\dots,i_N\}$ and let $t \in \NN$ be the largest number of times that any $i \in \NN$ appears in an index $\mu \in \cD$. Let $\cC \cap \NN$ consist of $t$ copies of $\cD \cap \NN$, that is, 
\[ 
\cC \cap \NN = \{i_1,\dots,i_N,i_1',\dots,i_N',\dots, \dots, i_1^{(t-1)}, \dots i_N^{(t-1)}\},
\]
where $d_{i_j^{(r)}} = d_{i_j}$ and $\beta_{i_j^{(r)}} = \alpha_{i_j}$ for all $1 \leq j \leq N$ and $0 \leq r < t$. Let $\sqsubseteq$ be the partial order on $\cC$ given by $i_j^{(r)} \sqsubseteq i_{j'}^{(r')}$ if and only if $r \leq r'$ ($0 \leq r,r, < t$ and $1 \leq j,j' \leq N$). For $\mu \in \cB$ whose representation contains only integers from $\cC \cap \NN$ let $\bar \mu$ denote the result of replacing every instance of $i_j^{(r)}$ with $i_j$. (Hence, for instance, if $\mu = i_1 \fpp{i_2' \fpp{i_3}\fpp{i_1''} }$ then $\bar{ \mu } = i_1 \fpp{i_2 \fpp{i_3}\fpp{i_1} }$.) Finally, let $\cC$ consist of those $\mu \in \cB$ which are compatible with $\sqsubseteq$ in the sense described above. It is routine to check that thus defined pair $(\cC,\beta)$ is more expressive than $(\cD, \alpha)$ (one can take $h$ to be the natural projection map $\RR^{\cC} \to \RR^{\cD}$, specified by $e_{\mu} \mapsto e_{\bar\mu}$ for each $\mu$) and that $\cC$ satisfies the analogue of \eqref{eq:def-of-D}. It remains to replace $(\cD,\alpha)$ with $(\cC,\beta)$.
	
In order to ensure \eqref{eq:976:4}, we perform the same construction as above with $t = 2$, except we set $\beta_{i_j'} = \alpha_{i_j} - \beta_{i_j}$, where $ \beta_{i_j}$ remain to be determined ($1 \leq j \leq N$) and let $\cC$ consist of all $\mu \in \cB$ with $\bar \mu \in \cD$. One can show by structural induction on $\mu \in \cD$ that $\fp{v^{\alpha}_\mu(m)}$ can be represented as a generalised polynomial expression in $\fp{v^{\beta}_\nu(m)}$ ($\nu \in \cC$). (For instance, $\fp{ v^{\alpha}_{i}(m) } = \fp{ \fp{v^{\beta}_{i}(m) } + \fp{v^{\beta}_{i'}(m) } }$ for $i \in \cC \cap \NN$.) Another structural induction argument shows that there exists a choice of $\beta$ such that \eqref{eq:976:4} holds. (In fact, one can pick $\beta_{i} = b_{i}/Q$, where $Q$ is a large prime and $b_{i}$ are chosen uniformly at random from $\{0,1,\dots,Q-1\}$; as $Q \to \infty$, the probability that \eqref{eq:976:4} holds tends to $1$ as $Q \to \infty$.)
\end{proof}

\begin{remark}
	The above argument shows that Theorem \ref{thm:A} remains true if the assumption that $k^n \in E$ for all $n \in \NN_0$ is replaced with the weaker assumption that the set of $n$ such that $k^n \in E$ is central. However, we are not aware of examples of sets $E$ satisfying the latter but not the former.
\end{remark}

\appendix 
\addcontentsline{toc}{chapter}{APPENDIX}

\section{Limits of semialgebraic sets}\label{ap:sg-lim}

We include here the material concerning the limits of semialgebraic sets which complements the discussion in Section \ref{ssec:prelims-semialgebraic}. In particular, we prove Proposition \ref{prop:semialg-limit}.

For a sequence of sets $S_n \subset \RR^d$, let us say that $\lim_{n\to\infty} S_n = S$ if for each $x \in \RR^d$ there exists an integer $n_0$ such that for all $n \geq n_0$ we have the equivalence $x \in S \iff x \in S_n$. In other words, $\lim_{n\to\infty} S_n = S$ if and only if $S = \lim_{n \to p} S_n$ for all non-principal ultrafilters $p \in \beta \NN \setminus \NN$.

\begin{example}\label{ex:set-limit-alg-bad}
\begin{enumerate}[wide]
\item For any open set $U \subset \RR^d$ there exists a sequence of open semialgebraic sets $S_n$ ($n \in \NN$) such that $\lim_{n\to\infty} S_n = U$. 
\item For any open and convex set $U \subset \RR^d$ there exists a sequence of open basic semialgebraic sets $S_n$ ($n \in \NN$) described only by inequalities of degree $1$ such that $\lim_{n\to\infty} S_n = U$. 
\end{enumerate}
\end{example}
\begin{proof}
\begin{enumerate}[wide]
\item For $n \in \NN$, let $S_n^{(0)}$ be the union of all closed cubes of the form $x + [0,2^{-n}]^d$ with $x \in 2^{-n}\ZZ^d$ and $\norm{x}_2 \leq n$, contained in $S$. Put also $S_n = \inter S_n^{(0)}$. Then $U = \bigcup_{n=1}^\infty S_n$ and $S_n \subset S_{n+1}$ for all $n$. Hence, $\lim_{n \to \infty} S_n = U$. It remains to notice that each of the sets $S_n$ is semialgebraic, which is an easy consequence of the observations that the family of semialgebraic sets is closed under union, and that cubes are semialgebraic.
  
\item Note that the interior of any polytope in $\RR^d$ is the intersection of a finite number of open half-spaces, and hence a basic semialgebraic set given by linear inequalities. Let $S_n^{(0)}$ ($n \in \NN$) be a sequence of convex polytopes with  $U = \bigcup_{n=1}^\infty S_n^{(0)}$ and $S_{n}^{(0)} \subset S_{n+1}^{(0)}$ for all $n$, and put $S_n = \inter S_n^{(0)}$. Then $U = \lim_{n\to \infty} S_n$. \qedhere
\end{enumerate}
\end{proof}

Recall that the statement of Proposition \ref{prop:semialg-limit} and the definition of complexity of a semialgebraic set appear in Section \ref{ssec:prelims-semialgebraic}.

\begin{proof}[Proof of Proposition \ref{prop:semialg-limit}]
Fix $p \in \beta \NN$. We will show that for any algebraic variety $V \subset \RR^d$ the following holds:
\begin{equation}\label{eq:claim:43}
	\textit{If $S_n \subset V$ $(n \in \NN)$ and $\sup_{n \in \NN} \complexity(S_n) < \infty$, then $\lim_{n \to p} S_n$ is semialgebraic.}
\end{equation}
If $V = \emptyset$ then \eqref{eq:claim:43} is vacuously true and we are ultimately interested in the case where $V = \RR^d$. 
Proceeding by induction on $V$, we may assume that \eqref{eq:claim:43} holds for any proper subvariety $V' \subsetneq V$ (recall that this mode of reasoning is valid since any strictly descending sequence of algebraic varieties has finite length). 

Let $S_n \subset V$ ($n \in \NN$) be a sequence of semialgebraic sets with bounded complexity, let
$ \displaystyle C := \sup_{n \in \NN} \complexity(S_n) < \infty$ and $\displaystyle S := \lim_{n \to p} S_n.$ Decomposing $S_n$ into basic components and using Lemma \ref{lem:set-limit-basic}, we may assume without loss of generality that that $S_n$ is a basic algebraic set for all $n \in \NN$. By the definition of complexity, there exist sets $F_n,G_n \subset \RR[\mathbf{x}_1,\dots,\mathbf{x}_d]$ such that 
	\begin{align*}
	S_n &= \fS(F_n,G_n) \text{ and } \sum_{f \in F_n} \deg(f) + \sum_{g \in G_n} \deg(g) \leq C 
	\text{ for all } n \in \NN.
	\end{align*}	
	Let $V_n := \fV(F_n)$; replacing $V_n$ with $V_n \cap V$ we may assume that $V_n \subset V$ for all $n \in \NN$. 
	Let $\cP \subset \RR[\mathbf{x}_1,\dots,\mathbf{x}_d]$ denote the vector space consisting of all polynomials with degree $\leq C$. Let $\cI := \cP \cap \fI(V)$ and let $\cR < \cP$ be a complement of $\cI$, meaning that $\cI \cap \cR = \{0\}$ and $\cI + \cR = \cP$. Pick also a norm $\norm{\cdot}$ on $\cR$. Note that any $f \in \cP \setminus \cI$ has a unique decomposition as
	\begin{equation}\label{eq:499:1}
		f = \lambda \bar f + h, \text{ where $\lambda \in \RR_{>0}$, $\bar f \in \cR$, $h \in \cI$ and $\norm{\bar f} = 1$.}
	\end{equation}
	The argument now splits into two cases, depending on how often the inclusion $V_n \subset V$ is strict.
	
Suppose first that for $p$-almost all $n$ we have strict containment $V_n \subsetneq V$, whence we can pick $f_n \in F_n \setminus \cI$. Let $f = \lambda_n \bar f_n + h_n$ be the decomposition of $f_n$ as in \eqref{eq:499:1} ($n \in \NN$). Since the unit sphere in $\cR$ is compact and disjoint from $\cI$, we may define 
\begin{align*}
\bar f := \lim_{n \to p } \bar f_n  \text{ and } U := \fV(\{ \bar f\}) \cap V \subsetneq V.
\end{align*}
If $x \in S$ then $x \in V$ and $x \in S_n$ for $p$-almost all $n$, so
\[
	\bar f(x) = \lim_{n \to p} \bar f_n(x) = \lim_{n \to p} \lambda_n^{-1}\bra{f_n(x) - h_n(x)} = 0.
\] 
It follows that $S \subset U \subsetneq V$ and $S$ is semialgebraic by the inductive assumption. 

Suppose next that $V_n = V$ for $p$-almost all $n$, so we can assume that $F_n = F$ for all $n \in \NN$, where $F$ is a finite family of polynomials with $\fV(F) = V$. Repeating entries if necessary, we may assume that all of the sets $G_n$ $(n \in \NN$) have the same size $s \leq C$, whence we can enumerate $G_n = \{g_n^1, g_n^2, \dots, g_n^s\}$. Let $g^j_n = \lambda^j_n \bar g^j_n + h_n^j$ be the decompositions of $g^j_n$ as in \eqref{eq:499:1} ($n \in \NN, 1 \leq j \leq s$). Using compactness of the unit sphere in $\cR$ again, we can define 
\[ \displaystyle \bar g{}^j := \lim_{n\to p} \bar g{}^j_n \ (1 \leq j \leq s) \text{ and } G := \{\bar g^1, \bar g^2,\dots,\bar g^s\}.\]
Consider the corresponding semialgebraic set
\[
	R := \fS(F,G) = \set{x \in V}{ \bar g^j(x) > 0 \text{ for all } 1 \leq j \leq s}
\]
as well as the ``boundary'' set
\[
	U := \fV\bra{F \cup \left\{ {\textstyle\prod_{j=1}^s \bar g^j } \right\}} = \set{x \in V}{ \bar g^j(x) = 0 \text{ for some } 1 \leq j \leq s}.
\]
(Note that $U$ contains the topological boundary of $R$ as a subset of $V$.)

For any $1 \leq j \leq s$ and $x \in V$, if $g^j_n(x) > 0$ for $p$-almost all $n$ then also $\bar g^j(x) \geq 0$; conversely, if $\bar g^j(x) > 0$ then $g^j_n(x) > 0$ for $p$-almost all $n$. It follows that $R \subset S \subset R \cup U$. Since $U \subsetneq V$, it follows from the inductive assumption that $S \cap U$ is semialgebraic. Consequently, $S = R \cup (S \cap U)$ is semialgebraic.
\end{proof}

\section{\checkmark\ Limits of generalised polynomials}\label{ssec:lemmas-gp-lim}\mbox{}
{
We now consider the question of closure of the set of generalised polynomials under pointwise limits, completing the discussion from Section \ref{ssec:prelims-gen-poly}. To begin with, consider the following motivating example concerning ordinary polynomials.
}

\begin{example}\label{ex:poly-limits}
\begin{enumerate}[wide]
\item If $h \colon \RR \to \RR$ is continuous then there exists a sequence of polynomials $f_i \colon \RR \to \RR$ ($i \in \NN$) such that $\lim_{i \to \infty} f_i(x) = h(x)$ for each $x \in \RR$. In particular, any sequence $a \colon \ZZ \to \RR$ is the pointwise limit of polynomial sequences. This is a simple consequence of the Stone--Weierstrass theorem.
\item If $f_i \colon \RR \to \RR$ ($i \in \NN$) is a sequence of polynomials such that $\sup_{i} \deg f_i < \infty$ and the pointwise limit $h(x) := \lim_{i \to \infty} f_i(x)$ exists for infinitely many $x \in \RR$ then the same limit exists for all $x \in \RR$ and $h$ is a polynomial map $\RR \to \RR$. This is a simple consequence of Lagrange interpolation.
\end{enumerate}
\end{example}

{
In analogy, one could hope that the limit of any sequence of generalised polynomials with bounded complexity is a generalised polynomial (for suitably defined notion of complexity). Unfortunately, the following example shows that even limits of extremely simple generalised polynomials need not be generalised polynomials. Recall that the Iverson bracket convention was introduced in Section \ref{ssec:prelims-notation}.
}

\begin{example}\label{ex:gp-limit-not-gp}
\begin{enumerate}[wide]
\item	Consider $g_1 \colon \ZZ \to \RR$ given by 
	\begin{equation*}
	g_1(n) := 1+\lim_{i \to \infty} \ip{n/i} = \braif{n \geq 0}.
	\end{equation*}
	Then $g_1$ is not a generalised polynomial on $\ZZ$.
\item	Consider $g_2 \colon \NN^2 \to \RR$ given by
	\begin{equation*}
	g_2(n,m) :=  1+\lim_{i \to \infty} \ip{(n-m)/i} = \braif{n \geq m}.
	\end{equation*}
 	Then $g_2$ is not a generalised polynomial on $\NN^2$.
\end{enumerate}
\end{example}

{
Surprisingly, the situation is radically different for generalised polynomials whose domain is $\NN$, as stated in Proposition \ref{prop:gp-lim-is-gp}.
The remainder of this subsection is devoted to the proof of the aforementioned proposition. We begin with a generalisation of Lemma \ref{lem:gp-a<g<b} for unbounded generalised polynomials on $\NN$.
}

\begin{lemma}\label{prop:gp-g>0}
	Let $g \colon \NN \to \RR$ be a generalised polynomial. Then the map $\NN \to \RR$ given by $n \mapsto \braif{g(n) \geq 0}$ is a generalised polynomial.
\end{lemma}
\begin{proof}
	Expand $g(n)$ as a polynomial in $n$ with bounded generalised polynomial coefficients:
	\begin{equation}\label{eq:825:1}
	\textstyle
		g(n) = \sum_{i=0}^d h_i(n) n^i.
	\end{equation}
	Existence of such expansion can be proved using structural induction on $g$ (cf.\ \cite[Sec.\ 10]{Leibman-2012}). 
	
	We proceed by induction on $d$. If $d = 0$ then $g = h_0$ is bounded and the claim follows from Lemma \ref{lem:gp-a<g<b}. Suppose now that $d > 0$ and let $0 < C < \infty$ be given by
	\begin{equation}\label{eq:825:2}
		C := \sup_{n \in \NN} \abs{ \frac{g(n) - h_d(n)n^d}{n^d} } 
		= \sup_{n \in \NN} \abs{h_{d-1}(n) + h_{d-2}(n)/n + \dots + h_0(n)/n^{d-1}}.
	\end{equation}
	Let $q \colon \NN \to \{0,1\} \subset \RR$ be the generalised polynomial given by
	\begin{equation}\label{eq:825:3}
		q(n) := \braif{ \abs{ h_d(n) } < C/n } = \braif{ -C < n h_d(n)  < C }.
	\end{equation}
	If $n \in \NN$ and $q(n) = 0$ then the term $h_d(n) n^d$ exceeds in absolute value the sum of all the other terms in the expansion of $g$ in \eqref{eq:825:1}, whence $\braif{g(n) \geq 0 } = \braif{h_d(n) \geq 0}$. Define further 
	\begin{align}\label{eq:825:4}
	h_{d-1}'(n) &= h_{d-1}(n)+2C\bra{\fp{ \frac{n h_d(n)}{2C} + \frac{1}{2}}-\frac{1}{2}},\\
\label{eq:825:5}\textstyle 	g'(n) &= h_{d-1}'(n) n^{d-1} + \sum_{i=0}^{d-2} h_i(n) n^i.
	\end{align} 
	By direct inspection, $h_{d-1}' \colon \NN \to \RR$ is a bounded generalised polynomial. If $n \in \NN$ and $q(n) = 1$ then $n h_d(n) + h_{d-1}(n) = h_{d-1}'(n)$ and consequently $g(n) = g'(n)$. 
	
	Combining the above observations we conclude for each $n \in \NN$ that
\begin{equation}\label{eq:825:6}
	\braif{g(n) \geq 0} = (1-q(n)) \braif{h_d(n) \geq 0} + q(n) \braif{g'(n) \geq 0}.
\end{equation}
By the inductive assumption, the maps $n \mapsto \braif{h_d(n) \geq 0}$ and $n \mapsto \braif{g'(n) \geq 0}$ are generalised polynomials on $\NN$, hence so is $n \mapsto \braif{g(n) \geq 0}$.
\end{proof} 

We next prove a variant of Proposition \ref{prop:gp-lim-is-gp} for generalised polynomials of particularly simple shape.

\begin{lemma}\label{lem:gp-lin-comb>0}
	Let $p \in \beta \NN$, let $x_i \in \RR^d$ $(i \in \NN)$ and let $g \colon \NN \to \RR^d$ be generalised polynomial. Then the map $\NN \to \RR$ given by 
	\(n \mapsto \braif{ \forall^p_i \ \bsp{g(n),x_i} \geq 0}\)
	is a generalised polynomial.
\end{lemma}
\begin{proof}
	Rescaling the vectors $x_i$ if necessary, we may assume without loss of generality that $\norm{x_i} \in \{0,1\}$ for each $i \in \NN$. 
	Put $x := \lim_{i \to p} x_i$ and $\Delta x_i := x_i - x$ ($i \in \NN$). 
If $n \in \NN$ is such that $\bsp{g(n),x} \neq 0$ then 
	\begin{equation*}
	\braif{ \forall^p_i \ \bsp{g(n),x_i} \geq 0} = \braif{ \bsp{g(n),x} \geq 0}.
	\end{equation*}
	Consider next the case when $\bsp{g(n),x} = 0$. Let $\cW = \RR x$ be the vector space spanned by $x$ and let $\pi \colon \RR^{d} \to \cW^{\perp}$ be the orthogonal projection. Further, let $x_i'$ be the sequence obtained from $\pi(\Delta x_i)$ by normalisation, that is, $x_i' = \pi(\Delta x_i)/\norm{\pi(\Delta x_i)}$ if $\Delta x_i \not \in \cW$ and $x_i' = 0$ if $\Delta x_i \in \cW$. Put also $x' = \lim_{i \to p} x_i'$. If $n \in \NN$ and $\bsp{g(n),x} = 0$ then
	\begin{equation*}
	\braif{ \forall^p_i \ \bsp{g(n),x_i} \geq 0} = \braif{\forall^p_i \ \bsp{g(n),\Delta x_i} \geq 0} = \braif{\forall^p_i \ \bsp{g(n), x_i'} \geq 0}.
	\end{equation*}
Combining the two cases considered above, for any $n \in \NN$ we conclude that
\begin{equation*}
	\braif{ \forall^p_i \ \bsp{g(n),x_i} \geq 0} =
	h(n) + \braif{ g(n) \perp \cW} \cdot \braif{ \forall^p_i \ \bsp{g(n),x_i'} \geq 0},
\end{equation*}
where $h(n) :=  \braif{ \bsp{g(n),x} > 0}$ is a generalised polynomial on $\NN$ by Lemma \ref{prop:gp-g>0}. Note that $x_i' \perp x$ for all $i \in \NN$, and that if $x = 0$ then the construction becomes trivial: $\cW = \{0\}$, $\pi = \mathrm{id}$, $x_i' = x_i$ ($i \in \NN$).

We next iterate the above construction. Suppose that for some $t \in \NN_0$ we have constructed a vector space $\cW_t$, a normalised sequence $x_i^{(t)} \in \cW_t^{\perp}$ ($i \in \NN$) with $x^{(t)} := \lim_{i \to p} x_i^{(t)}$ and a generalised polynomial $h_t\colon \NN \to \RR$ such that 
\begin{align}\label{eq:53:20}
	\braif{ \forall^p_i \ \bsp{g(n),x_i} \geq 0} =
	h_t(n) + \braif{ g(n) \perp \cW_t} \cdot \braif{ \forall^p_i \ \bsp{g(n),x_i^{(t)}} \geq 0}.
\end{align}
The initial steps of the construction are given by $\cW_0 = \{0\}$, $x^{(0)}_i = x_i$ for $i \in \NN$ and $h_0 = 0$, and $\cW_1 = \cW$, $x^{(1)}_i = x_i'$ for $i \in \NN$ and $h_0 = h$. 
For general $t$, let $\cW_{t+1} := \cW_{t} + \RR x^{(t)}$, let $\pi_t \colon \RR^{d} \to \cW_{t+1}^\perp$ be the orthogonal projection and let $x^{(t+1)}_i$ be the normalisation of $\pi\bra{\Delta x^{(t)}}$ for $i \in \NN$. Then
\begin{align*}
	\braif{ \forall^p_i \ \bsp{g(n),x_i} \geq 0} & = h_{t+1}(n) + \braif{ g(n) \perp \cW_{t+1}} \cdot \braif{\forall^p_i \ \bsp{g(n),x_i^{(t+1)}} \geq 0}, \\ \text{ where } 
	h_{t+1}(n) &:= h_t(n) + \braif{ g(n) \perp \cW_t} \cdot  \braif{ \bsp{h(n),x^{(t)}} > 0}.
\end{align*}
Hence, we obtain the analogue of \eqref{eq:53:20} with $t+1$ in place of $t$. 

The construction guarantees that $x^{(t)} \in \cW_t^{\perp} \cap \cW_{t+1}$ and $\cW_t \subset \cW_{t+1}$ for all $t \in \NN_0$. In particular, there are at most $d$ values of $t$ such that $x^{(t)} \neq 0$. Pick $s \in \NN$ such that $x^{(s)} = 0$. It follows from the definition of $x^{(s)}$ as a limit that $x^{(s)}_i = 0$ for $p$-almost all $i$. Substituting this into \eqref{eq:53:20} we obtain
\begin{align}\label{eq:53:20-a}
	\braif{ \forall^p_i \ \bsp{g(n),x_i} \geq 0} =
	h_s(n) + \braif{ g(n) \perp \cW_s },
\end{align}
which finishes the argument since the expression on the right hand side of \eqref{eq:53:20-a} is a generalised polynomial on $\NN$.
\end{proof}

{
	To state the next lemma, note that the polynomial ring $\RR[\mathbf{x}_1,\dots,\mathbf{x}_d]$ is a real vector space and hence it makes sense to speak of generalised polynomial maps $\NN \to \RR[\mathbf{x}_1,\dots,\mathbf{x}_d]$; these are the maps of the form 
	\(
		n \mapsto \sum_{j=1}^s g_j(n) f_j,
	\)
	where $g_j \colon \NN \to \RR$ are generalised polynomials and $f_j \in \RR[\mathbf{x}_1,\dots,\mathbf{x}_d]$ are polynomials. We might call a map of this form a generalised polynomial family of polynomials. Our next lemma can be viewed as an analogue of the elementary fact that the Taylor expansion of a polynomial map at any point is a polynomial map whose coefficients depend on the base point in a polynomial manner.
}

\begin{lemma}\label{lem:gp-derivative}
	Let $p \in \beta \NN$, let $x_i \in \RR^d$ $(i \in \NN)$ be a bounded sequence and let $h \colon \RR^d \times \NN \to \RR$ be a generalised polynomial. Put $x := \lim_{i \to p} x_i$ and $\Delta x_i := x_i -x$. Then there exists a generalised polynomial map $g \colon \NN \to \RR[\mathbf{x}_1,\dots,\mathbf{x}_d]$ (dependent on $p$, $(x_i)_{i=1}^\infty$ and $h$) such that
\begin{equation}\label{eq:854:0}
	\forall_n \ \forall^p_i \ h(x_i,n) = g(n)\bra{ \Delta x_i}.
\end{equation}	
\end{lemma}
\begin{proof}

	We proceed by structural induction on $h$. If $h$ is a polynomial then $ h(x_i,n) = h(x + \Delta x_i, n)$ is a polynomial in $\Delta x_i$, $x$ and $n$, which immediately implies the claim. Moreover, if the claim holds for $h'$ and $h''$ then it also holds for $h' + h''$ and $h' \cdot h''$ (one may take $g = g' + g''$ and $g = g' \cdot g''$ respectively).
	
	Suppose next that $h = \fp{ h' }$ and that the claim has already been proved for $h'$. Then by the inductive assumption, there exists a generalised polynomial family of polynomials $g' \colon \NN \to \RR[\mathbf{x}_1,\dots,\mathbf{x}_d]$ such that 
\begin{equation*}
	 \forall_n \ \forall^p_i \ h(x_i,n) = \fp{ h'(x_i,n) } = \fp{ g'(n)(\Delta x_i)}.
\end{equation*}
 We may write $g'(n)(\Delta x_i)$ as a linear combination of monomials in $\Delta x_i$ with generalised polynomials in $n$ as coefficients,
\begin{align*}
	g'(n)(\Delta x_i) &= \sum_{\alpha \in \NN_0^d} f_{\alpha}(n) \Delta x^{\alpha} 
	& \text{where}&& 
	y^\alpha & = \prod_{j=1}^d y_j^{\alpha_j},\ (y \in \RR^d, \alpha \in \NN_0^d).
\end{align*}
Let $g_0'(n) := f_{(0,\dots,0)}(n)$ denote the constant part of $g'$ and let $g_1'(n) := g'(n) - g_0'(n)$ denote the part vanishing at $(0,\dots,0)$ ($n \in \NN$). Using the fact that the fractional part map $\RR \to [0,1)$, $t \mapsto \fp{t}$ is continuous away from $\ZZ$ and has a jump discontinuity at each point of $\ZZ$, we may compute that
\begin{align*}
	\forall_n \ \forall^p_i \ h(x_i,t) &= \fp{ g'(n)(\Delta x_i)} 
	= \fp{ g'_0(n) +  g_1'(n)(\Delta x_i)} 
	\\ &= \fp{ g'_0(n)} +  g_1'(n)(\Delta x_i) + 
		\braif{ g'_0(n) \in \ZZ} \cdot \braif{\forall^p_j \ g_1'(n)(\Delta x_j) < 0 }.
\end{align*}
 It follows from Lemma \ref{lem:gp-lin-comb>0} that the map 
 \begin{equation*}
	n \mapsto \braif{\forall^p_j \ g_1'(n)(\Delta x_j) < 0 }
\end{equation*}	
is a generalised polynomial on $\NN$. 
Hence, the claim also holds for $h$. Thus, by structural induction, the claim holds for all generalised polynomials.
\end{proof}

With Lemmas \ref{lem:gp-lin-comb>0} and \ref{lem:gp-derivative} in hand, we are ready to finish the proof of the main result of this section. 

\begin{proof}[Proof of Proposition \ref{prop:gp-lim-is-gp}]
	We proceed by structural induction on $h$. If $h$ is a polynomial then $g$ is also a polynomial, and in particular the claim holds. Moreover, if the claim holds for $h'$ and $h''$ then it also holds for $h = h' + h''$ or $h' \cdot h''$ ($g = g' + g''$ or $g = g' \cdot g''$ respectively). It remains to consider the case when $h = \fp{h'}$ and the claim holds for $h'$. 
	
	Put $x := \lim_{i \to p} x_i$ and $\Delta x_i := x_i - x$. Let $f \colon \NN \to \RR[\bb x_1, \dots, \bb x_n]$ be a generalised polynomial such that	
	\begin{equation*}
	\forall_n \ \forall^p_i \ h'(x_i,n) = f(n)(\Delta x_i),
	\end{equation*}
	whose existence is guaranteed by Lemma \ref{lem:gp-derivative}. Arguing among similar lines as in the proof of Lemma \ref{lem:gp-derivative}, we decompose $f = f_0 + f_1$ where $f_0(n)$ is constant and $f_1(n)(0) = 0$ for all $n \in \NN$. Using piecewise continuity of the fractional part map, for each $n \in \NN$ we compute that
	\begin{align*}
		g(n) &= \lim_{i \to p} h(x_i,n) = \lim_{i \to p} \fp{ h'(x_i,n) }		
		= \lim_{i \to p} \fp{ f_0(n) + f_1(n)(\Delta x_i) }
		\\ &=  \fp{ f_0(n)} 
			+ \braif{ f_0(n) \in \ZZ} \cdot \braif{\forall^p_i \ f_1(n)(\Delta x_i) < 0}.
	\end{align*} 
	The final expression is a generalised polynomial on $\NN$ by Lemma \ref{lem:gp-lin-comb>0}, so $g$ is a generalised polynomial.
\end{proof}

\bibliographystyle{alphaabbr}
\bibliography{bibliography}

\end{document}